\def\myarabic#1{\normalfont(\roman{#1})}
\newlist{theoremlist}{enumerate}{1}
\setlist[theoremlist]{label=\myarabic{theoremlisti},ref={\myarabic{theoremlisti}},itemindent=0pt,labelindent=0pt,
	leftmargin=*,noitemsep}
\renewcommand{\p@theoremlisti}{\perh@ps{\thetheorem}}
\protected\def\perh@ps#1#2{\textup{#1#2}}
\newcommand{\itemrefperh@ps}[2]{\textup{#2}}
\newcommand{\itemref}[1]{\begingroup\let\perh@ps\itemrefperh@ps\ref{#1}\endgroup}
\newtheorem{theorem}{Theorem}[section]
\newtheorem{lemma}[theorem]{Lemma}
\newtheorem{proposition}[theorem]{Proposition}
\newtheorem{corollary}[theorem]{Corollary}
\theoremstyle{definition}
\newtheorem{remark}[theorem]{Remark}
\theoremstyle{definition}
\newtheorem{definition}[theorem]{Definition}
\theoremstyle{definition}
\newtheorem{problem}[theorem]{Problem}
\theoremstyle{definition}
\newtheorem{example}[theorem]{Example}
\crefname{figure}{Figure}{Figures}
\def\figref#1(#2){Figure~\hyperref[#1]{\ref*{#1}(#2)}}
\def\Ocal{\mathcal{O}}\def\Zcal{\mathcal{Z}}
\def\abf{\mathbf{a}}
\def\R{{\mathbb{R}}}
\def\Z{{\mathbb{Z}}}
\def\Q{{\mathbb{Q}}}
\def\T{{\mathbb{T}}}
\def\D{{\mathbb{D}}}
\def\A{{\mathbb{A}}}
\def\<{{\langle}}
\def\>{{\rangle}}
\def\eps{{\epsilon}}
\def\la{{\lambda}}
\def\id{\operatorname{id}}
\def\det{{ \operatorname{det}}}
\def\Im{{ \operatorname{Im}}}
\def\wt{\operatorname{wt}}
\def\mod{{\operatorname{mod}}}
\def\xing{{\operatorname{xing}}}
\def\supp{\operatorname{supp}}
\def\xrasim{\xrightarrow{\sim}}
\def\SL{\operatorname{SL}}
\def\Povtp_#1{\Pi_{#1}^{>0}}
\def\Povtnn_#1{\Pi_{#1}^{\geq0}}
\definecolor{calpolypomonagreen}{rgb}{0, 0.6, 0.2}
\tikzset{qvert/.style={draw,black,circle,fill=gray,minimum size=5pt,inner sep=0pt}  } 
\tikzset{bvert/.style={draw,circle,fill=black,minimum size=5pt,inner sep=0pt}  }  
\tikzset{gbvert/.style={draw, gray, circle,fill=gray,minimum size=5pt,inner sep=0pt}  } 
\tikzset{gvert/.style={draw,gray,circle,fill=white,minimum size=5pt,inner sep=0pt}  } 
\tikzset{wvert/.style={draw,circle,fill=white,minimum size=5pt,inner sep=0pt}  } 
\tikzset{fvert/.style={text=MidnightBlue}  } 
\tikzset{sqvert/.style={draw,black,rectangle,fill=black,minimum size=5pt,inner sep=0pt}  } 
\tikzset{lvert/.style={draw,circle,fill=black,minimum size=4pt,inner sep=0pt}  }  
\numberwithin{equation}{section}
\def\ilen|#1|{|#1|} %
\def\bfla{\bm{\la}} %
\def\e{e} %
\def\cc{\alpha} %
\def\bfcc{\bm{\alpha}} %
\def\rectangle{\mathbb{D}} %
\def\u{u}
\def\d{d}
\def\l{l}
\def\r{r} %
\def\sa{S} %
\def\sb{T} %
\def\am{\pi} %
\def\Nwdec{\dot{N}} %
\def\Ndec{\ddot{N}} %
\def\Sn{S_n}
\def\Saff{\widetilde{S}}
\def\Saffn{\Saff_n}
\def\Saffx_#1{\Saff_{#1}}
\def\Saffon{\Saff^{(0)}_n}
\def\Saffox_#1{\Saff^{(0)}_{#1}}
\def\Saffkn{\Saff^{(k)}_n}
\def\Saffxx_#1^#2{\Saff^{(#2)}_{#1}}
\def\Sext{\widehat{S}}
\def\Sextn{\Sext_n}
\def\Sextxx_#1^#2{\Sext^{#2}_{#1}}
\def\kop{\operatorname{k}}
\def\kopf{\operatorname{k}_f}
\def\kopx_#1{\operatorname{k}_{#1}}
\def\nop{\operatorname{n}}
\def\nopf{\operatorname{n}_f}
\def\nopx_#1{\operatorname{n}_{#1}}
\def\At{\widetilde{A}}
\def\La{\Lambda}
\def\Lan{\La}
\def\O{\Ocal}
\def\Omin{\O_{\min}}
\def\Inv{\operatorname{Inv}}
\def\rasi{\xrightarrow{s_i}}
\def\Cycle{C}
\def\restrictC{r_{\Cycle}}
\def\slopes_#1{\bm{\nu}_{#1}}
\def\SLF{\slopes_f}
\def\slp{\nu}
\def\slpfC{\slp_f(\Cycle)}
\def\slpfx(#1){\nu_f(#1)}
\def\slupp{C_{f,\slp}}
\def\sluppp{C_{f,\slp'}}
\def\Cycles{\bm{C}}
\def\CLF{\Cycles_f}
\def\CLFS{\Cycles_f(\slp)}
\def\fb{\bar f}
\def\approx{\stackrel{\scalebox{0.6}{\ \normalfont{c}}}{\sim}}
\def\edge{e}
\def\ef{\edge_f}
\def\ilen|#1|{|#1|} %
\def\bflaf{\bfla^f}
\def\lafs{\la^{f,\slp}}
\def\VC{E}
\def\VCD{\dot{E}}
\def\VCF{E_f}
\def\VCFD{\dot{E}_f}
\def\VCFPD{\dot{E}_{f'}}
\def\VCDDe{\ddot{E}}
\def\VCFDD{\ddot{E}_f}
\def\VCFPDD{\ddot{E}_{f'}}
\def\VCGDD{\ddot{E}_g}
\def\Zon{\Zcal}
\def\Area{\operatorname{Area}}
\def\xing{\operatorname{xing}}
\def\clicks{\operatorname{d}}
\def\rot{\operatorname{rot}}
\def\minv{\mu}
\def\bfccf{\bfcc_f}
\def\bfccpera{\bfcc_{\pera}}
\def\bfccperb{\bfcc_{\perb}}
\def\ccfs{\cc^{f,\slp}}
\def\ccfsp{\cc^{f,\slp'}}
\def\ccfps{\cc^{f',\slp}}
\def\fhat{\hat f}
\def\suppxx_#1(#2){\supp_{#1}(#2)}
\def\suppkx(#1){\supp_k(#1)}
\def\fs{f|_\slp}
\def\Ln{\mathcal{L}_n}
\def\Lng{\Ln^\circ}
\def\DiagOp{\operatorname{D}}
\def\LPConf(#1){\DiagOp(#1)} %
\def\Diag(#1){\DiagOp_f(#1)} %
\def\Diagx_#1(#2){\DiagOp_{#1}(#2)} %
\def\ellf{\ell_f}
\def\Straight{\operatorname{Str}}
\def\Stre{\Straight_{\eps}}
\def\Streg{\Straight_{\eps}^\circ}
\def\slpfi{\slp_f(i)}
\def\slpfj{\slp_f(j)}
\def\VCDD_#1{\ddot{E}_{#1}}
\def\dist{\operatorname{dist}}
\def\abf{\bm{a}}
\def\xx(#1){x_{#1}}
\def\xt{x(t)}
\def\xtz{x(t_0)}
\def\xtt_#1{x(#1)}
\def\xtx(#1){x_{#1}(t)}
\def\xtzx(#1){x_{#1}(t_0)}
\def\xtxt_#1(#2){x_{#2}(#1)}
\def\exop{\operatorname{exc}}
\def\excess#1{\exop(#1)}
\def\fkn{f_{k,n}}
\def\Path{P^{(f)}}
\def\dop{\operatorname{d}}
\def\dopf{\dop_f}
\def\Ibm_#1{\bm{I}(#1)}
\def\Icc{\Ibm_{\cc}}
\def\AGSET{B}
\def\seedpt{\bar p}
\def\Seedpt{\bar P}
\def\Seedptl{P}
\def\SP{\bm{P}}
\def\DP{\DiagOp}
\def\seedptp{p'}
\def\barseedptp{\bar{p}'}
\def\seedpts{\bm{p}}
\def\rotsig{\sigma}
\def\daffper{w}
\def\pera{f}
\def\perb{{\overline{f}}}
\def\SD{\bm{S}(D)} %
\def\lw{1.1pt}
\def\Msq{(M1)\xspace} %
\def\Mres{(M2)\xspace}
\def\MMs{\Msq--\Mres}
\def\RRs{(R1)--(R3)\xspace}
\def\ilen|#1|{|#1|_{\Z}} %
\def\e{e} %
\def\SG{\bm{S}(\Gamma)}
\def\SGp{\bm{S}(\Gamma')}
\def\SE{\bm{S}^e(\Gamma)}
\def\lab{\gamma}
\def\labe{\gamma^\e}
\def\labem{\bar\gamma^\e}
\def\er{\textcolor{red}{\e}}
\def\eb{\textcolor{blue}{\e'}}
\def\eer{{\textcolor{red}{\e_1}}}
\def\eeb{{\textcolor{blue}{\e_2}}}
\def\eeg{{\textcolor{green!80!black}{\e_3}}}
\def\laber{\gamma^{\er}}
\def\labeb{\gamma^{\eb}}
\def\Reg{F}
\def\Regs{\bm{F}}
\def\Rege{\Regs^\e}
\def\dashcolor{black!50} %
\def\CoxGens{\Pi}
\def\Chrs{\Sigma}
\def\Min{\operatorname{Min}}
\def\Cred{\textcolor{red}{\Cycle}}
\def\Cblue{\textcolor{blue}{\Cycle'}}
\def\wind#1{\operatorname{wind}(#1)}
\def\curve{\rho}
\def\curves{{\bm\rho}}
\def\sat{{\tilde\sa}}
\def\Gammat{{\tilde\Gamma}}
\def\dSaffHuge{(\Saffon \times \Saffon)\rtimes \langle \Lan \rangle/\langle \Lan^n \rangle}
\def\dSaff{\ddot{S}_n}
\def\trvl{zero-homology\xspace}
\def\amD{\am_D}
\def\Strip{\mathbb{S}}
\def\dist{\operatorname{dist}}
\let\ovl\overline
\def\perab{\phi}
\def\itwo{$-$ii}
\def\ithree{$-$iii}
\def\rev{\operatorname{rev}}
\begin{document}
	\numberwithin{equation}{section}
	
	\title{Move-reduced graphs on a torus}
	\author{Pavel Galashin and Terrence George}
	\address{Department of Mathematics, University of California, Los Angeles, CA 90095, USA}
	\email{{\href{mailto:galashin@math.ucla.edu}{galashin@math.ucla.edu}}}
	\address{Department of Mathematics, University of Michigan, Ann Arbor, MI 48103, USA}
	\email{{\href{mailto:georgete@umich.edu}{georgete@umich.edu}}}
	\thanks{P.G.\ was supported by an Alfred P. Sloan Research Fellowship and by the National Science Foundation under Grants No.~DMS-1954121 and No.~DMS-2046915.}
	\date{\today}
	
	\subjclass[2020]{
		Primary:
		05C10. %
		Secondary:
		13F60. %
	}
	
	\keywords{Bipartite graphs on a torus, square moves, affine permutations, conjugation.}

	\begin{abstract}
		We determine which bipartite graphs embedded in a torus are move-reduced. In addition, 
		we classify equivalence classes of such move-reduced graphs under square/spider moves.
		This extends the class of minimal graphs on a torus studied by Goncharov--Kenyon, and gives a toric analog of Postnikov's results on a disk.
	\end{abstract}
	
	\maketitle

	\maketitle

	\section*{Introduction}\label{sec:intro}
	Let $\T=\R^2/\Z^2$ be a torus, and let $\Gamma$ be a bipartite graph embedded in $\T$. We say that two such graphs $\Gamma,\Gamma'$ are \emph{move-equivalent} if they are related by the moves \MMs shown in \cref{fig:intro:moves}. We say that $\Gamma$ is \emph{move-reduced} if there does not exist a graph $\Gamma'$ move-equivalent to $\Gamma$ to which we can apply one of the \emph{reduction moves} \RRs shown in \cref{fig:localmoveplabic}. The goal of this paper is to describe which graphs $\Gamma$ are move-reduced, and which pairs of move-reduced graphs are move-equivalent. 
	A similar problem has been considered in~\cite{GK13} for the class of \emph{minimal graphs}. Each minimal graph is move-reduced, however, the converse is not true; see \cref{fig:move_red_vs_minimal}. 
	
	We briefly summarize our main results; see \cref{sec:main} for more details. It was shown in~\cite{GK13} that move-equivalence classes of minimal graphs are classified by their Newton polygons $N$. The sides of $N$ are obtained by taking the homology classes of strands in $\Gamma$. Here, a \emph{strand} is a path making a sharp right (resp., left) turn at each black (resp., white) vertex. A strand of a move-reduced (as opposed to minimal) graph $\Gamma$ may intersect itself, and this induces a \emph{weak decoration} $\bfla=(\la^\e)_{\e\in E(N)}$ of $N$, labeling each side $\e=(i,j)$ of $N$ by a partition $\la^\e$ of $\gcd(i,j)$. Our first main result (\cref{thm:intro:move_red}) gives a characterization of move-reduced graphs in terms of weakly decorated Newton polygons that parallels the results of~\cite{GK13,Postnikov}.
	
	Our second main result concerns move-equivalence classes of move-reduced graphs. The solution to this problem turns out to be more subtle than its counterparts in~\cite{GK13,Postnikov}. First, we show that in a move-reduced graph, different strands corresponding to the same side of $N$ never cross each other. This induces a \emph{strong decoration} $\bfcc=(\cc^\e)_{\e\in E(N)}$ of $N$, labeling each side $\e=(i,j)$ of $N$ with a cyclic composition $\cc^\e$ of $\gcd(i,j)$. We associate a \emph{rotation number} $\clicks(\bfcc)$ to $\bfcc$, and our second main result (\cref{thm:intro:move_eq}) is that the set of all move-reduced graphs with strongly decorated Newton polygon $(N,\bfcc)$ is a union of $\clicks(\bfcc)$ move-equivalence classes. The classes are distinguished by the value of an explicit \emph{modular invariant} $\minv(\Gamma)\in\Z/\clicks(\bfcc)\Z$ associated to each move-reduced graph $\Gamma$.

	Our motivation to study move-reduced graphs arises from the dimer model on $\Gamma$ and the associated \emph{spectral transform} of~\cite{KOS,KeOk}. Each weighted bipartite graph $(\Gamma,\wt)$ with positive real edge weights embedded in $\T$ determines a simple Harnack curve with a distinguished line bundle. It is thus natural to study which limiting objects appear when one sends some edge weights to zero. This corresponds to deleting edges from $\Gamma$ and then applying reduction moves. Note in particular that the move-reduced graph $\Gamma_2$ in \figref{fig:move_red_vs_minimal}(b) is obtained from the minimal graph $\Gamma_1$ in \figref{fig:move_red_vs_minimal}(a) by removing a single edge, which demonstrates that the class of move-reduced graphs is more naturally suited for this problem. %
	
	For the case of planar bipartite graphs in a disk, the resulting space of limiting objects is the \emph{totally nonnegative Grassmannian}~\cite{Postnikov}, where the role of the spectral transform is played by Postnikov's boundary measurement map. In particular, Postnikov characterized move-reduced graphs on a disk and showed that their move-equivalence classes are classified by \emph{positroids}. 
	The present manuscript is the first in a series of papers aimed at studying the toric analog of the totally nonnegative Grassmannian and its positroid stratification.

	\begin{figure}
		\begin{tabular}{ccc}
			\includegraphics[width=0.4\textwidth]{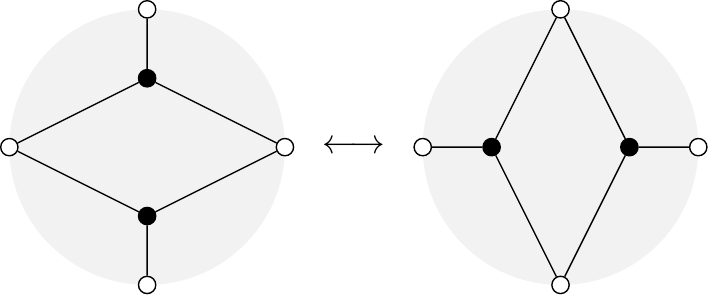}
			& \qquad &
			\includegraphics[width=0.4\textwidth]{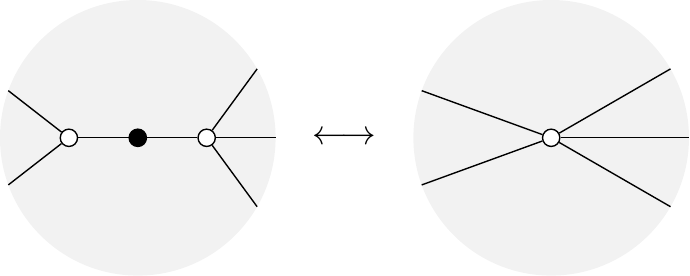}
			\\
			(M1) The spider move. & & (M2) The contraction-uncontraction move. 
		\end{tabular}
		\caption{\label{fig:intro:moves} Equivalence moves for bipartite graphs in $\T$. One can also apply these moves with the roles of white and black swapped. For \Msq, the vertices of the square are assumed to have degree at least three. For \Mres, the two white vertices are assumed to be distinct and have degree at least two. The shaded area denotes a small open disk inside $\T$.}
	\end{figure}
	
	\begin{figure}
		\def\twd{0.23\textwidth}
		\begin{tabular}{ccccc}
			\includegraphics[width=\twd]{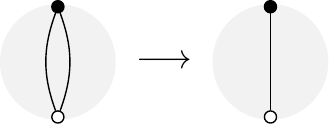}
			& \qquad &
			\includegraphics[width=\twd]{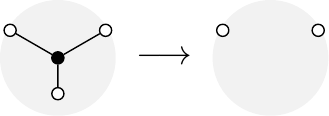}
			& \qquad &
			
			\includegraphics[width=\twd]{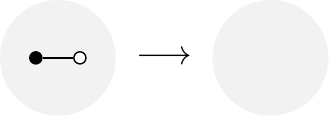}
			\\
			(R1) Parallel edge reduction. & & (R2) Leaf reduction. & & (R3) Dipole reduction.
		\end{tabular}
		\caption{\label{fig:localmoveplabic} Reduction moves for bipartite graphs. (R1) removes one of two parallel edges, (R2) removes a leaf together with its single neighbor, and (R3) removes an isolated edge. 
		}
	\end{figure}

	\section{Main results}\label{sec:main}

	In \cref{sec:intro:newton}, we introduce the notions of \emph{weakly} and \emph{strongly decorated polygons}. In \cref{sec:intro:move_reduced}, we will associate a weakly decorated polygon with any bipartite graph embedded in the torus, and we will use it to characterize move-reduced graphs. In \cref{sec:intro:move_eq}, we will associate a strongly decorated polygon to any move-reduced graph $\Gamma$, and will use it to characterize which graphs are move-equivalent to $\Gamma$.

	\subsection{Decorated polygons}\label{sec:intro:newton}
	A convex polygon $N$ in the plane $\R^2$ is called \textit{integral} if its vertices are contained in $\Z^2 \subset \R^2$. We denote the set of edges of $N$ by $E(N)$, and orient them counterclockwise around the boundary of $N$ so that each edge is a vector in $\Z^2$. %
	For an edge $\e=(a,b)$ of $N$, let $\ilen|\e|:=\gcd(a,b)$ be its \emph{integer length}. For vectors $\e,\e'\in\Z^2$, let $\det(\e,\e')$ be the determinant of the $2\times2$ matrix with columns $\e,\e'$. 

	A \emph{partition} of $n$ with $k$ parts is a tuple $\la=(\la_1\geq\la_2\geq\dots\geq\la_k>0)$ such that $|\la|:=\la_1+\la_2+\dots+\la_k=n$. A \textit{composition} of $n$ with $k$ parts is a tuple $\alpha=(\alpha_1,\alpha_2,\dots, \alpha_k) \in \Z^k_{> 0}$ such that $|\alpha|:=\alpha_1+\dots+\alpha_k=n$. A \textit{cyclic composition} of $n$ with $k$ parts is an equivalence class of compositions of $n$ with $k$ parts under cyclic shifts $(\alpha_1,\alpha_2,\dots,\alpha_k)\sim(\alpha_2,\dots,\alpha_k,\alpha_1)$. Thus, forgetting the order of the parts of a (cyclic) composition yields a partition.
	
	\begin{definition}\label{dfn:decor}\ 
		\begin{itemize}
			\item A \emph{weakly decorated polygon} is a pair $\Nwdec=(N,\bfla)$, where $N$ is a convex integral polygon, and $\bfla=(\la^\e)_{\e \in E(N)}$, where $\la^\e$ is a partition of $\ilen|\e|$.
			\item A \emph{strongly decorated polygon} is a pair $\Ndec=(N,\bfcc)$, where $N$ is a convex integral polygon, and $\bfcc=(\cc^\e)_{\e \in E(N)}$, where $\cc^\e$ is a cyclic composition of $\ilen|\e|$.
		\end{itemize}
	\end{definition}
	
	\begin{figure}
		\begin{tabular}{ccccccccc}
			\includegraphics[width=0.2\textwidth]{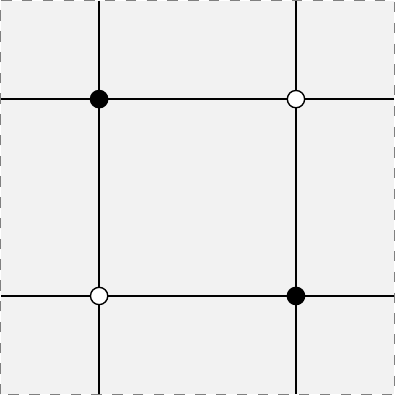}
			&
			\includegraphics[width=0.2\textwidth]{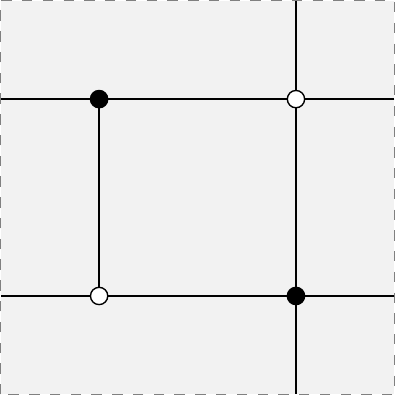}
			&
			\includegraphics[width=0.2\textwidth]{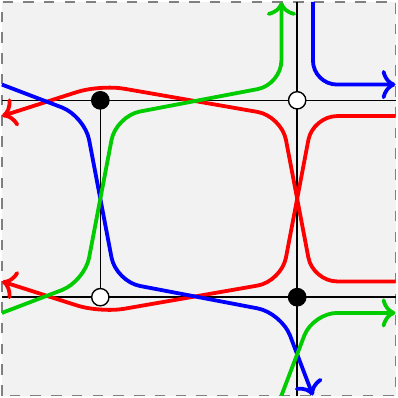}
			&
			\includegraphics[width=0.2\textwidth]{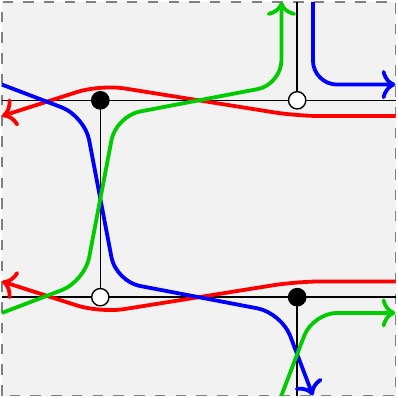}
			\\
			(a) Graph $\Gamma_1$. & (b) Graph $\Gamma_2$. & (c) $\Gamma_2$ with strands. & (d) $\Gamma_3$ with strands.
			\\
			&
			&
			\includegraphics[width=0.2\textwidth]{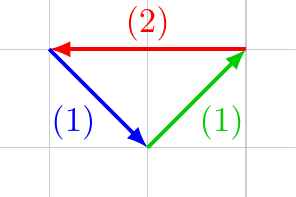}
			&
			\includegraphics[width=0.2\textwidth]{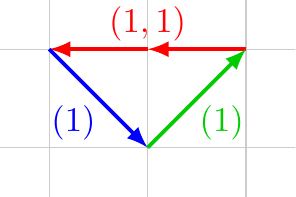}\\
			& & (e) $\Nwdec(\Gamma_2)$. & (f) $\Nwdec(\Gamma_3)$.
		\end{tabular}
		\caption{\label{fig:move_red_vs_minimal} The graphs $\Gamma_1$ and $\Gamma_3$ are minimal in the sense of~\cite{GK13} and therefore are move-reduced. The graph $\Gamma_2$ is not minimal but is move-reduced. See \cref{sec:intro:move_reduced} for a definition of strands and $\Nwdec(\Gamma)$.}
	\end{figure}

	\subsection{Move-reduced graphs}\label{sec:intro:move_reduced}
	Recall that a \emph{strand} or a \emph{zig-zag path} $\sa$ is a walk in $\Gamma$ that turns maximally right at the black vertices and maximally left at the white vertices of $\Gamma$. The set of strands of $\Gamma$ is denoted by $\SG$. Since $\Gamma$ is finite, a strand $\sa$ is a (not necessarily simple) closed walk, and we let $[\sa]\in\Z^2=H_1(\T,\Z)$ denote its homology. %
	Since each edge of $\Gamma$ is contained in two strands that traverse it in opposite directions, the sum $\sum_{\sa\in\SG} [\sa]$ is zero, so we can associate to $\Gamma$ a weakly decorated polygon $\Nwdec(\Gamma)=(N,\bfla)$ as follows. We let $N$ be the convex integral polygon $N$ (possibly degenerate, i.e., having $0$ area), unique up to translation, whose counterclockwise-oriented boundary consists of the vectors $([\sa])_{\sa \in\SG}$ in some order. We say that two strands $\sa,\sa'\in\SG$ are \emph{parallel} if $[\sa],[\sa']\neq0$ and $[\sa]\in\R_{>0}[\sa']$. For each edge $\e \in E(N)$, we let 
	\begin{equation*}%
		\SE:=\{\sa\in\SG \mid [\sa]\in\R_{>0}\e\}
	\end{equation*}
	denote the corresponding set of parallel strands. Thus, we have $\e=\sum_{\sa\in\SE} [\sa]$,  
	and we let $\la^\e:=(\ilen|[\sa]|)_{\sa\in\SE}$ be the corresponding partition of $\ilen|\e|$. The polygon $N$ is called the \emph{Newton polygon} of $\Gamma$, and we call $\Nwdec(\Gamma)$ the \emph{weakly decorated Newton polygon} of $\Gamma$. The weakly decorated Newton polygon is invariant under \MMs but not under \RRs.
	
	In \cref{prop:intro:exists}, we will see that for any weakly decorated polygon $\Nwdec$, there exists a move-reduced graph $\Gamma$ satisfying $\Nwdec(\Gamma)=\Nwdec$. On the other hand, it is clear that any graph $\Gamma$ can be transformed into a move-reduced graph using the moves \MMs and \RRs. %

	\begin{definition} \label{dfn:exc}
		For a partition $\la=(\la_1\geq\la_2\geq\dots\geq\la_k>0)$ of $n$ with $k$ parts, the \emph{excess} of $\la$ is defined by $\excess{\la}:=n-k=\sum_{i=1}^k(\la_i-1)$. If $\bfla=(\la^\e)_{\e\in E}$ is a collection of partitions, we denote  $\excess{\bfla}:=\sum_{\e\in E} \excess{\la^e}$. 
	\end{definition}

	A \emph{face} of $\Gamma$ is a connected component of $\T\setminus \Gamma$. Thus, a face of $\Gamma$ is contractible if and only if it is homeomorphic to an open disk. 
	We are ready to state our first main result.
	\begin{theorem}\label{thm:intro:move_red}
		Let $\Gamma$ be a bipartite graph embedded in $\T$ with weakly decorated Newton polygon $\Nwdec(\Gamma)=(N,\bfla)$. Assume that $\Gamma$ has a perfect matching. The following conditions are equivalent.
		\begin{enumerate}
			\item\label{item:intro:move_red} $\Gamma$ is move-reduced.
			\item\label{item:intro:area} $\Gamma$ has $2\Area(N)+\excess{\bfla}$ contractible faces, no contractible connected components, and no leaf vertices.
		\end{enumerate}
		Moreover, if $\Gamma$ is move-reduced and $\sa,\sa'\in\SG$ are two distinct parallel strands, then $\sa,\sa'$ do not share any vertices or edges of $\Gamma$.
	\end{theorem}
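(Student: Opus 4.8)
The plan is to establish the three conditions in \cref{thm:intro:move_red}\eqref{item:intro:area} --- the face count, the absence of contractible components, and the absence of leaves --- and then to prove the final ``parallel strands do not intersect'' statement, ideally extracting it as a byproduct of the Euler-characteristic bookkeeping. First I would record the relevant counting identity: for a bipartite graph $\Gamma$ embedded in $\T$ with $V$ vertices, $E$ edges, and $F$ faces, Euler's formula gives $V-E+F=0$. If in addition every face is contractible, then $F$ equals the number of bounded faces and the strand structure organizes $E$ and $V$: each edge lies on exactly two strands, and one counts the ``crossings'' of strands (a strand self-intersection or a pair of strand intersections) against the vertices. This is the toric analog of Postnikov's counting on the disk, where a reduced plabic graph with $F$ faces has exactly $\binom{k}{2}$ minus something many crossings; here the target count $2\Area(N)+\excess{\bfla}$ is exactly what Pick-type bookkeeping produces once one knows strands of the same parallel class do not cross and non-parallel strands $\sa,\sa'$ cross exactly $|\det([\sa],[\sa'])|$ times (the minimal geometric intersection number on the torus for their homology classes).

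The heart of the argument is therefore the implication \eqref{item:intro:move_red}$\Rightarrow$\eqref{item:intro:area} together with the parallel-strands claim; I would prove these together. Assume $\Gamma$ is move-reduced. The absence of leaves is immediate, since a leaf would allow reduction move (R2); the absence of a contractible connected component likewise follows since such a component, being move-reduced with a perfect matching and contractible, must be a single dipole (reducible by (R3)) --- here one uses the disk case of Postnikov's classification or a direct minimality argument on a disk. For the face count and the parallel-strands statement, the key structural input is that a move-reduced graph minimizes the number of crossings within its move-equivalence class: any pair of parallel strands sharing a vertex, or any self-intersecting strand whose self-crossing is ``removable,'' would create a configuration (a bigon or a monogon bounded by strand arcs) that can be undone by a sequence of spider and contraction moves followed by a reduction move, contradicting move-reducedness. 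I would make this precise by a bigon-removal lemma in the spirit of wiring-diagram/strand-diagram reductions: locate an innermost bigon or monogon formed by two strand arcs, check that the region it bounds contains no other strand crossings, and show the local picture is a reducible one. Once no two parallel strands cross and the total crossing number equals $\sum_{\e\neq\e'}|\det(\e,\e')|\,(\text{length factors}) + \sum_\e \excess{\la^\e}$-type terms, the identity $V-E+F=0$ combined with ``each edge on two strands'' and ``each vertex accounts for one local crossing-or-turn pattern'' yields $F=2\Area(N)+\excess{\bfla}$; the $2\Area(N)$ piece is precisely $\sum_{\e<\e'}|\det(\e,\e')|$ summed appropriately, which is twice the area of $N$ by the standard decomposition of a convex polygon's area into pairwise determinants of its edge vectors.

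For the converse \eqref{item:intro:area}$\Rightarrow$\eqref{item:intro:move_red}, I would argue contrapositively: if $\Gamma$ is \emph{not} move-reduced, then some graph $\Gamma'$ move-equivalent to $\Gamma$ admits a reduction move (R1)--(R3). Applying that reduction strictly decreases the number of faces while preserving $\Nwdec$, so $\Gamma'$ --- and hence $\Gamma$, since \MMs preserve the face count --- has more than $2\Area(N)+\excess{\bfla}$ contractible faces, or creates a leaf or a contractible component; in any case \eqref{item:intro:area} fails. (One must check that \MMs genuinely preserve the number of contractible faces, which is a direct local verification for the spider move and the contraction-uncontraction move, and that $2\Area(N)+\excess{\bfla}$ is a lower bound on the face count across the move-equivalence class, established via the crossing-minimization in the previous paragraph applied to \emph{any} graph in the class.)

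I expect the main obstacle to be the bigon/monogon removal lemma underlying both the parallel-strands claim and the sharpness of the face count: on a torus, strand diagrams are genuinely more delicate than wiring diagrams on a disk because homologically nontrivial strands cannot be ``pulled tight'' freely, and one must rule out the possibility that a self-crossing or a parallel crossing is essential (non-removable by moves). The resolution is to distinguish \emph{removable} crossings --- those bounding a disk region on one side with no other crossing inside --- from essential ones, show that a parallel pair or a self-crossing always produces a removable bigon or monogon (because parallel strands are homologically proportional, so any region they co-bound can be taken to be a disk), and then classify the reducible local configurations. Managing the interaction of this local analysis with the global homology constraint $\sum_{\sa}[\sa]=0$ and the convexity of $N$ is where the real work lies; everything else is Euler-characteristic bookkeeping and routine checking that the reduction and equivalence moves behave as claimed.
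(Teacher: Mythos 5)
Your proposal correctly identifies the shape of the theorem and even names the crux --- on a torus one must decide which strand crossings are removable by moves and which are essential --- but the step you offer to resolve it is a restatement of the problem, not a proof. You assert that ``a parallel pair or a self-crossing always produces a removable bigon or monogon (because parallel strands are homologically proportional, so any region they co-bound can be taken to be a disk).'' Neither half of this is available without substantial work. For self-crossings it is simply false as stated: a move-reduced strand $\sa$ with $\ilen|[\sa]|=d$ has exactly $d-1$ self-intersections (\cref{prop:propertiesofmoveredtcd}\eqref{prop:appB_propertiesofmoveredtcd2}), and these are essential --- the two loops at such a crossing each have nonzero homology and bound no disk; this is precisely why $\excess{\bfla}$ appears in the count. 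For parallel strands, the fact that a crossing yields a removable configuration is exactly the hard content of the theorem; the paper obtains it not by an innermost-bigon argument but by first proving \cref{thm:intro:vertical} (every move-reduced graph can be isotoped/moved so all strands cross a fixed vertical line minimally), converting the graph into an affine plabic fence $D(\daffper)$, and then invoking the He--Nie theorem on minimal-length elements in conjugacy classes of $\Saffon$ together with the explicit length formula $\ell(f)=\Area(\Zon(\VC))+\excess{\bfla}$ (\cref{lemma:c_red_ell}, \cref{cor:c_red_charact}). Your sketch supplies no substitute for this machinery, and an ``innermost bigon'' argument is genuinely problematic on the torus because the region bounded by two crossing arcs can contain strands that wrap around nontrivially and cannot be pushed out.

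There is also a concrete false step in your converse direction: you claim that applying a reduction move ``strictly decreases the number of faces while preserving $\Nwdec$.'' The paper states explicitly that $\Nwdec$ is \emph{not} invariant under \RRs (parallel-edge reduction reroutes the two strands through the bigon and changes the decoration). The paper circumvents this by working with Thurston's $1$--$0$ move (R1)$''$, which has the same left-hand side as (R1)$'$ but a different right-hand side chosen precisely to preserve strand connectivity and hence $\Nwdec$ (\cref{lem:movereduction}); without this substitution your comparison of the reduced graph's face count against $2\Area(N)+\excess{\bfla}$ for the \emph{original} $(N,\bfla)$ does not go through. Finally, your Euler-characteristic bookkeeping assumes all faces are contractible, which fails for move-reduced graphs on the torus (e.g.\ the fence for $\Lan^k$ has only annular faces); the paper instead proves that the number of contractible faces equals the number of trivalent black vertices by induction on the fence word (\cref{lem:affinelpabicfence}).
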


	\begin{figure}
		\def\twd{0.18\textwidth}
		\scalebox{0.9}{
			\begin{tabular}{ccc%
				}
				\includegraphics[width=\twd]{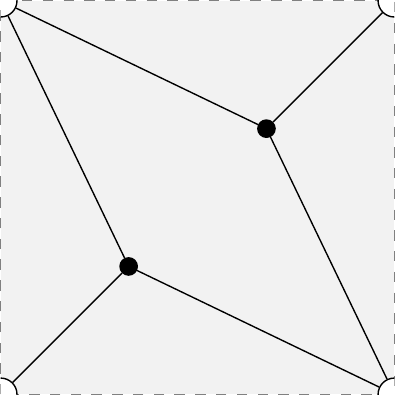} & 
				\qquad
				&
				\includegraphics[width=\twd]{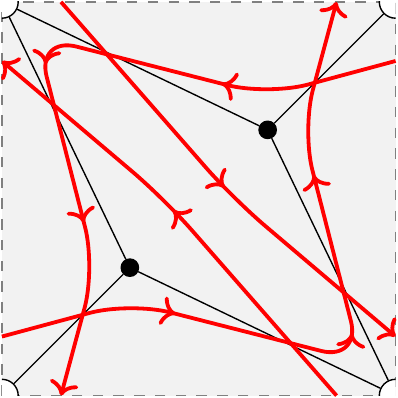}

			\end{tabular}
		}
		\caption{\label{fig:parallelbigons}A move-reduced graph with no perfect matchings and whose Newton polygon is a single point.} 
	\end{figure}

	\begin{remark}
		The assumption that $\Gamma$ has a perfect matching is essential; for example, \cref{thm:intro:move_red} fails for the graph $\Gamma$ in Figure~\ref{fig:parallelbigons}. This graph is move-reduced and does not have any perfect matchings. Thus, $\Gamma$ satisfies condition~\eqref{item:intro:move_red} but does not satisfy condition~\eqref{item:intro:area} of \cref{thm:intro:move_red}. Alternatively, if $\Gamma$ has no isolated vertices, the assumption that $\Gamma$ has a perfect matching can be replaced with either one of the following assumptions:
		\begin{itemize}
			\item the Newton polygon of $\Gamma$ is not a single point, or
			\item the number of black and white vertices in $\Gamma$ is the same;
		\end{itemize}
		see part~\ref{intro:Gamma_monogon} of \cref{thm:intro:vertical} below.
	\end{remark}
	
	\begin{remark} \label{rmk:intro:area}
		The condition that $\Gamma$ has $2\Area(N)+\excess{\bfla}$ contractible faces in~\eqref{item:intro:area} is equivalent to a statement that $\Gamma$ has the minimal possible number of contractible faces among all graphs with weakly decorated Newton polygon $\Nwdec(\Gamma)$.
	\end{remark}

	\begin{example}
		For the graphs $\Gamma_1,\Gamma_2,\Gamma_3$ shown in \cref{fig:move_red_vs_minimal}, the weakly decorated Newton polygons $\Nwdec(\Gamma_2),\Nwdec(\Gamma_3)$ are computed in \figref{fig:move_red_vs_minimal}(e--f). In particular, letting $\Nwdec(\Gamma_2)=(N,\bfla)$ and $\Nwdec(\Gamma_3)=(N,\bfla')$, we see that $\Area(N)=1$, $\excess{\bfla}=1$, and $\excess{\bfla'}=0$. This is consistent with \cref{thm:intro:move_red} since $\Gamma_2$ has $3$ faces, while $\Gamma_3$ has $2$ faces, all of which are contractible.
	\end{example}

	\subsection{Move-equivalence classes of move-reduced graphs}\label{sec:intro:move_eq}
	In this section, each graph is assumed to be bipartite and to have a perfect matching. Let $\Gamma$ be a move-reduced graph with Newton polygon $N$. By \cref{thm:intro:move_red}, for $\e\in E(N)$, any two strands $\sa\neq\sa'$ in $\SE$ do not share vertices or edges. Thus, we have a natural cyclic ordering on $\SE$ given by the direction of the normal vector to $\e$ that points into the interior of $N$. Let $\cc^\e=(\ilen|[\sa]|)_{\sa\in\SE}$ be the corresponding cyclic composition of $\ilen|\e|$. We set $\bfcc=(\cc^\e)_{\e \in E(N)}$, and we refer to $\Ndec(\Gamma):=(N,\bfcc)$ as the \emph{strongly decorated Newton polygon} of $\Gamma$. The following result is shown in \cref{sec:proof:exists}.
	
	\begin{proposition}\label{prop:intro:exists}
		For any strongly decorated polygon $\Ndec$, there exists a move-reduced graph $\Gamma$ that admits perfect matching and satisfies $\Ndec(\Gamma)=\Ndec$.
	\end{proposition}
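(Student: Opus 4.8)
The plan is to induct on $\excess{\bfla}$, where $\bfla=(\la^\e)_{\e\in E(N)}$ denotes the weak decoration underlying $\bfcc$; that is, $\la^\e$ is the partition of $\ilen|\e|$ obtained from the cyclic composition $\cc^\e$ by forgetting the cyclic order.

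\textbf{Base case $\excess{\bfla}=0$.} Then each $\cc^\e$ consists of $\ilen|\e|$ parts equal to $1$, so $\Ndec$ is the trivial decoration of $N$ and we must produce a move-reduced graph with a perfect matching whose zig-zag paths realize the primitive sides of $N$. When $N$ has positive area, the minimal graph associated with $N$ in~\cite{GK13} does the job: it is move-reduced (as noted above), it has exactly $\ilen|\e|$ zig-zag paths of homology class $\e/\ilen|\e|$ for each side $\e$, and it admits a perfect matching. When $N$ is degenerate we build $\Gamma$ directly: if $N$ is a point, take $\Gamma=\varnothing$; if $N$ is a segment of integer length $d$ in a primitive direction $\u$, take $\Gamma$ to be a disjoint union of $d$ even cycles (``necklaces''), each wrapping once around $\T$ in direction $\u$, which is readily seen to be move-reduced, to carry a perfect matching, and to have trivial decoration.

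\textbf{Inductive step.} Suppose $\excess{\bfla}>0$, so some $\cc^\e$ has a part $c\ge 2$. Let $\bfcc'$ be obtained from $\bfcc$ by replacing this part $c$ with two consecutive parts $1,\,c-1$ in the cyclic order; then $\excess{\bfla'}=\excess{\bfla}-1$, so by induction there is a move-reduced graph $\Gamma'$ with a perfect matching and $\Ndec(\Gamma')=(N,\bfcc')$. Let $S,S'$ be the strands of $\Gamma'$ parallel to $\e$ with $[S]=\e/\ilen|\e|$ and $[S']=(c-1)\,\e/\ilen|\e|$; by \cref{thm:intro:move_red} they are parallel, consecutive in the cyclic order, and share no vertex or edge. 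Now perform a local \emph{strand-merge}: first apply equivalence moves \MMs to pass to a move-reduced graph $\Gamma''$ (with the same decoration and still admitting a perfect matching) in which $S$ and $S'$ run along the boundary of a common \emph{contractible} face $F$; then add a single new edge $\e_0=(w_0,b_0)$ with $w_0,b_0\in F$ non-adjacent, inserted so as to create a \emph{co-oriented} crossing of $S$ and $S'$, and call the result $\Gamma$. One checks: $\Gamma$ has the same vertex set as $\Gamma''$, so any perfect matching of $\Gamma''$ is a perfect matching of $\Gamma$; adding an edge creates no leaves and no contractible connected components; the arc $\e_0$ cuts the disk $F$ into two disks, so the number of contractible faces grows by exactly one, yielding $2\Area(N)+\excess{\bfla'}+1=2\Area(N)+\excess{\bfla}$ of them; and the co-oriented crossing merges $S$ and $S'$ into a single strand of homology $[S]+[S']=c\cdot(\e/\ilen|\e|)$, leaving every other strand and every cyclic order unchanged, so $\Ndec(\Gamma)=(N,\bfcc)$. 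Since $\Gamma$ admits a perfect matching, \cref{thm:intro:move_red} shows $\Gamma$ is move-reduced, which completes the induction.

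\textbf{Main obstacle.} The crux is to make the strand-merge precise: one must show that, for any two parallel non-crossing consecutive strands $S,S'$ of a move-reduced graph, equivalence moves bring the graph into a form with a contractible face $F$ straddling $S$ and $S'$, and then identify the correct edge $\e_0$ to add inside $F$. The effect of adding $\e_0$ on zig-zag paths is a local computation --- tracing the maximal-turn walks through $F$ and verifying that the new crossing is co-oriented, that it joins $S$ to $S'$, and that no spurious (e.g.\ null-homotopic) zig-zag paths are created. Locating the right local configuration, and proving one always exists (including in small graphs such as the necklaces arising from degenerate $N$), is where the real work lies. An alternative route is to construct $\Gamma$ globally from a minimal arrangement of oriented closed curves on $\T$ realizing $(N,\bfcc)$ and invoke the dictionary between such arrangements and move-reduced graphs, at the price of then having to exhibit a perfect matching separately.
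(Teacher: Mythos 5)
Your inductive strategy founders exactly at the step you identify as the crux, and the problem is not just that the step is hard: the intermediate configuration you need provably does not exist. The moves \MMs preserve move-reducedness, so the graph $\Gamma''$ you reach by equivalence moves is still move-reduced; but part~\eqref{prop:appB_propertiesofmoveredtcd3} of \cref{prop:propertiesofmoveredtcd} states that in a move-reduced diagram no face contains portions of two distinct parallel strands in its boundary (this is the universal-cover argument of \cref{lem:abcd}, which forces a lifted self-intersection whenever two parallel strands co-bound a disk). Since a new edge must be embedded inside a single face $F$ with both endpoints on $\partial F$, and the strands rerouted through that edge can only be strands already running along $\partial F$, there is no way to add one edge that merges two distinct parallel strands $S$ and $S'$: they never border a common face of any graph move-equivalent to $\Gamma'$. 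So the ``strand-merge'' cannot be performed locally as described, and the induction does not close. (Your base case is fine; the paper indeed uses \cite{GK13} both for minimal graphs being move-reduced and for their perfect matchings.)

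The paper's proof is essentially the ``alternative route'' you mention in passing: it constructs $\Gamma$ globally rather than by surgery. After an $\SL_2(\Z)$ change of coordinates, the edges of $N$ are split into right- and left-pointing families $\VC_+,\VC_-$; the explicit $\eps$-straight arrow diagrams of \cref{sec:eps_straight_construct} produce c-reduced affine permutations $\pera,\perb$ realizing the two strongly decorated vector configurations; these assemble into a double affine permutation $\daffper$ whose affine plabic fence $D(\daffper)$ has exactly $2\Area(N)+\excess{\bfla}$ triple crossings by~\eqref{eq:zon_vs_N}, hence is move-reduced by \cref{thm:tcdmove_red}. The perfect matching is then exhibited separately, by deleting the letters of the reduced word responsible for strand self-intersections to reach a minimal graph in the sense of~\cite{GK13}, whose perfect matching is also one of $\Gamma$. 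If you want to keep an inductive flavor, the induction must happen at the level of reduced words/arrow diagrams (where splitting a part $c$ into $1,c-1$ corresponds to deleting a crossing within a block), not at the level of embedded graphs.
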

	
	The moves \MMs never change the homology of the strands and preserve the class of move-reduced graphs. Thus, if two move-reduced graphs $\Gamma,\Gamma'$ are move-equivalent then we have $\Ndec(\Gamma)=\Ndec(\Gamma')$. One is tempted to conjecture that the converse is also true, but that is not the case; for instance, the two graphs in \cref{fig:twsq} have the same strongly decorated Newton polygons, but they are not move-equivalent, since one graph is connected and the other one is not. See \cref{fig:two_big} for a more subtle example. 
	To remedy this issue, we make the following definition.
	
	\begin{figure}
		\def\twd{0.15\textwidth}
		\scalebox{0.975}{
			\begin{tabular}{ccccc}
				\includegraphics[width=\twd]{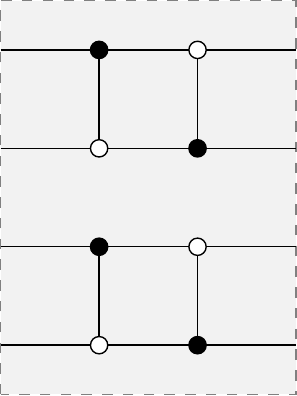}
				&
				\includegraphics[width=\twd]{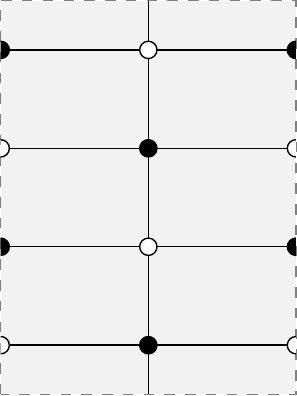}
				&
				\includegraphics[width=\twd]{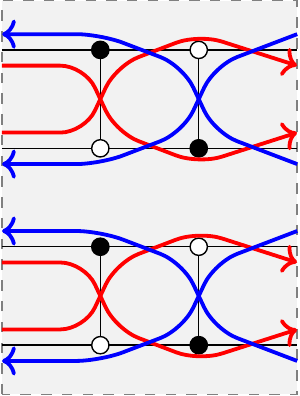}
				&
				\includegraphics[width=\twd]{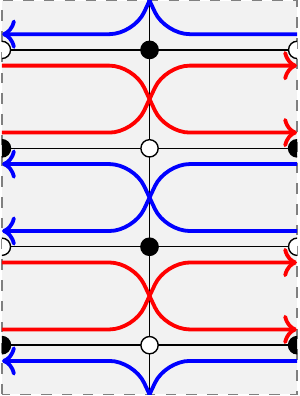}
				&
				\begin{tikzpicture}[baseline=(Z.base)]
					\coordinate(Z) at (0,-2);
					\node(A) at (0,0) {\includegraphics[width=0.22\textwidth]{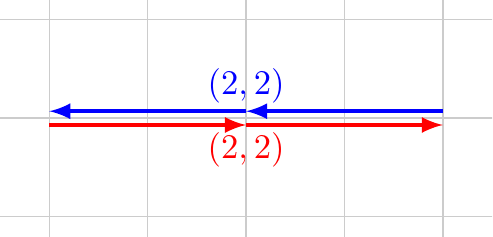}};
				\end{tikzpicture}
				\\
				(a) Graph $\Gamma_1$. & (b) Graph $\Gamma_2$. & (c) Strands in $\Gamma_1$. & (d) Strands in $\Gamma_2$.  & (e) $\Ndec(\Gamma_1)=\Ndec(\Gamma_2)$.
			\end{tabular}
		}
		\caption{\label{fig:twsq} Two move-reduced graphs that are not move-equivalent but have the same strongly decorated Newton polygons. The graph $\Gamma_2$ has vertices of degree $2$ at the vertical boundaries of the rectangle.}
	\end{figure}
	
	\begin{definition}\label{dfn:clicks}
		Let $\cc=(\cc_1,\cc_2,\dots,\cc_m)$ be a cyclic composition of $n=\cc_1+\cc_2+\dots+\cc_m$. Consider a partition
		$\Icc=\{I_1,I_2,\dots,I_m\}$ of $\Z/n\Z$ into cyclic intervals of size $|I_j|=\cc_j$ given by $I_1=[1,\cc_1]$, $I_2=[\cc_1+1,\cc_1+\cc_2]$, etc. 
		The \emph{rotation number $\rot(\cc)$} is the smallest integer $r\in[n]:=\{1,2,\dots,n\}$ such that $\rotsig^r(\Icc)=\Icc$, where $\rotsig:\Z/n\Z\to\Z/n\Z$ is the map sending $i\mapsto i+1\pmod n$ for all $i$, and $\rotsig(\Icc):=\{\rotsig(I_1),\rotsig(I_2),\dots,\rotsig(I_m)\}$.
	\end{definition}
	\noindent For example, $\rot((1,1,1,1,1,1))=1$, $\rot((2,1,2,1))=3$, and $\rot((2,2,1,1))=6$. We have $\rot((n))=n$ because by convention, we distinguish between cyclic intervals $[j,j+n-1]$ in $\Z/n\Z$ for different $j\in[n]$.
	
	The \emph{rotation number} of a collection $\bfcc=(\cc^\e)_{\e\in E}$ of cyclic compositions is given by
	\begin{equation}\label{eq:clicks_dfn}
		\clicks(\bfcc):=\gcd\{\rot(\cc^\e)\mid \e\in E\}.
	\end{equation}
	The following is our second main result.
	\begin{theorem}\label{thm:intro:move_eq}
		Let $\Ndec=(N,\bfcc)$ be a strongly decorated polygon. The set of move-reduced graphs $\Gamma$ satisfying $\Ndec(\Gamma)=\Ndec$ is a union of $\clicks(\bfcc)$ move-equivalence classes. Explicitly, two move-reduced graphs $\Gamma,\Gamma'$ are move-equivalent if and only if 
		\begin{equation*}%
			(\Ndec(\Gamma),\minv(\Gamma))=(\Ndec(\Gamma'),\minv(\Gamma')),
		\end{equation*} 
		where $\minv(\Gamma)\in\Z/\clicks(\bfcc)\Z$ is the \emph{modular invariant} defined in \cref{sec:intro:minv}.
	\end{theorem}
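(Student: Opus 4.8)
The plan is to translate the problem into a combinatorial model attached to the strands of $\Gamma$, and to reduce move-equivalence to conjugacy in the group $\dSaff=\dSaffHuge$. We may assume $\Area(N)>0$: when $N$ is a point or a segment, $\Gamma$ is ``one-dimensional'' and the statement follows from \cref{thm:intro:vertical}. So fix a strongly decorated polygon $\Ndec=(N,\bfcc)$ with $\Area(N)>0$. By the last assertion of \cref{thm:intro:move_red}, the strands of any move-reduced $\Gamma$ with $\Ndec(\Gamma)=\Ndec$ split into parallel families indexed by $\e\in E(N)$, the strands within a family being pairwise noncrossing and cyclically stacked with integer lengths recorded by $\cc^\e$. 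Fix once and for all a ``model'' family of closed curves $\curves$ on $\T$ realizing this picture. The first step is to show that the underlying (unoriented, color-forgetting) strand curves of every such $\Gamma$ are \emph{carried} by $\curves$: after an orientation-preserving isotopy of $\T$ they can be made to coincide with $\curves$. Here one uses that $\Gamma$ has the minimal number of contractible faces (condition~\eqref{item:intro:area} of \cref{thm:intro:move_red}), which forbids any superfluous topological complexity of the strand complex.

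Once $\Gamma$ is carried by $\curves$, it is determined up to the moves \MMs by the combinatorial data of how its faces and vertices are distributed among the cells and the self-intersections of $\T\setminus\curves$; we encode this as an element $\daffper(\Gamma)\in\dSaff$, well-defined only up to conjugacy because $\T$ has no distinguished basepoint and the curves in a family have no distinguished order. The structural claims to prove are: (i) the conjugacy class of $\daffper(\Gamma)$ is unchanged by the moves \MMs --- the contraction move \Mres\ does not change $\daffper(\Gamma)$, and the square move \Msq\ changes it by a conjugation, exactly as braid/square relations do in the disk case; (ii) for fixed ``cycle type'', which is precisely the datum $\bfcc$, the conjugacy classes realized by move-reduced graphs are $\clicks(\bfcc)$ in number and are naturally indexed by $\Z/\clicks(\bfcc)\Z$ via the total cyclic offset of $\Gamma$ along the curves of $\curves$, which is well-defined modulo the common rotational symmetry $\clicks(\bfcc)=\gcd\{\rot(\cc^\e)\}$; and (iii) two move-reduced graphs with the same conjugacy class of $\daffper$ are move-equivalent. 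Granting these, one defines $\minv(\Gamma)\in\Z/\clicks(\bfcc)\Z$ via the indexing in (ii); it is independent of the choice of $\curves$ since two models differ by a global rotation, shifting all offsets by a common multiple of $\clicks(\bfcc)$. The ``only if'' direction of the theorem is then immediate from (i) together with the move-invariance of $\Ndec$, and the count of $\clicks(\bfcc)$ classes is (ii) combined with \cref{prop:intro:exists} (which supplies at least one move-reduced graph admitting a perfect matching in each class).

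The ``if'' direction is (iii), and this is where I expect the main difficulty to lie. The plan is: (a) construct explicit normal-form move-reduced graphs $\Gamma_{\Ndec,j}$, one for each $j\in\Z/\clicks(\bfcc)\Z$, by inserting ``$2$-gon chains'' with controlled offset into a minimal graph of \cite{GK13} realizing the partition data $\bfla$ underlying $\bfcc$; and (b) show that any move-reduced $\Gamma$ with $\minv(\Gamma)=j$ can be transported to $\Gamma_{\Ndec,j}$ using only \MMs. Step (b) I would carry out by induction on the excess $\excess{\bfla}$ (equivalently, by \cref{thm:intro:move_red}, on the number of contractible faces): square moves let one slide the self-intersections of a strand until a bigon can be absorbed, splitting off a shorter strand and strictly decreasing the excess while preserving $\Ndec$ and $\minv$; the base case $\excess{\bfla}=0$ is the minimal case, where $\clicks(\bfcc)=1$ and uniqueness of the move-equivalence class is due to \cite{GK13}. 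Two technical points deserve care throughout: that the ``carried by $\curves$'' normalization and all intermediate moves can be kept compatible with the black/white coloring, so that the color-swapped versions of \MMs\ in \cref{fig:intro:moves} are properly accounted for; and that one never needs a reduction move \RRs\ to connect two move-reduced graphs, i.e.\ the move-reduced locus is connected under \MMs\ alone inside each class --- this must be established alongside (b) rather than assumed.
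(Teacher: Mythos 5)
Your reduction of move-equivalence to \emph{conjugacy} in $\dSaff$ is the point where the argument breaks. By \cref{prop:conjugate_VCFD}, the $\Saffon$-conjugacy class of an affine permutation is determined by its \emph{weakly} decorated vector configuration, and since $\dSaff$-conjugation acts by independent $\Saffon$-conjugation on the two factors together with simultaneous rotation by $\Lan$ (which already preserves each $\Saffon$-class), the conjugacy class of $\daffper(\Gamma)$ remembers only $\Nwdec(\Gamma)$. So for a fixed strongly decorated polygon there is a single relevant conjugacy class, not $\clicks(\bfcc)$ of them, and your claim (iii) --- that equal conjugacy classes imply move-equivalence --- is false: the two graphs in \cref{fig:twsq} (and in \cref{fig:two_big}) have conjugate $\daffper$ but are not move-equivalent. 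This is exactly the failure mode the paper flags in its discussion of~\cite{GSZ}: the moves \MMs realize only conjugations of the form $f\mapsto s_ifs_i$ with $\ell(s_ifs_i)\le\ell(f)$ (c-equivalence, \cref{dfn:c_equiv}), not arbitrary conjugation. The entire analytic core of the paper --- \cref{prop:ODE} (the ODE deformation to $\eps$-straight diagrams), block-shifting, and Marquis's \cref{prop:Mar_A} for the constant-slope case, assembled into \cref{thm:c-equivalent} --- exists precisely to show that the minimal-length locus of one conjugacy class with fixed strong decoration splits into $\clicks(\bfcc)$ c-equivalence classes separated by $\minv$. Your proposal has no substitute for this step, so neither direction of the theorem is actually established.

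Two further problems. First, you cannot dispose of the case where $N$ is a segment via \cref{thm:intro:vertical}: that theorem only normalizes intersections with a vertical line and says nothing about move-equivalence classes; the segment (constant-slope) case is where the whole difficulty is concentrated and is the case handled by \cref{prop:Mar_A}. Second, your induction in step (b) ``absorbs a bigon, splitting off a shorter strand and strictly decreasing the excess'' --- but absorbing a bigon is the reduction move (R1), not an equivalence move, and splitting a strand changes $\bfcc$ and hence $\Ndec$; so this induction cannot be used to connect two graphs within a move-equivalence class, nor does it preserve the invariants you are trying to match.
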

	
	\subsection{Modular invariant}\label{sec:intro:minv}
	We explain the construction of the modular invariant $\minv(\Gamma)$. Let $\Gamma$ be move-reduced and let $\Ndec(\Gamma)=(N,\bfcc)$ be its strongly decorated Newton polygon. Let $\e\in E(N)$ and set $r:=\rot(\cc^\e)$, $n:=|\cc^\e|=\ilen|\e|$. Thus, $r$ divides $n$. Let $\Rege$ be the set of connected components of $\T\setminus \bigcup_{\sa\in\SE} \sa$, which we call \emph{$\e$-regions}. Construct a labeling $\labe:\Rege\to\Z/n\Z$ 
	so that for any segment of a strand $\sa\in\SE$ adjacent to $\e$-regions $\Reg_-$ (resp., $\Reg_+$) to the right (resp., left) of $\sa$, the labels $\labe(\Reg_-),\labe(\Reg_+)\in\Z/n\Z$ satisfy $\labe(\Reg_+)\equiv \labe(\Reg_-)+1\pmod n$. 
	
	\begin{figure}
		\def\twd{0.145\textwidth}
		\begin{tabular}{cccccc}
			\includegraphics[width=\twd]{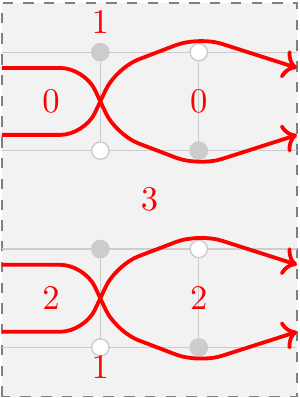}
			&
			\includegraphics[width=\twd]{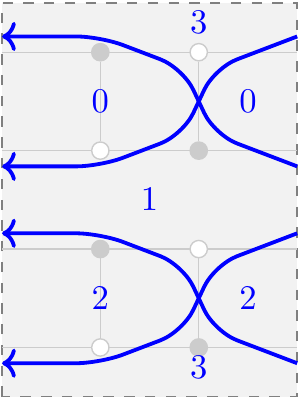}
			& 
			\includegraphics[width=\twd]{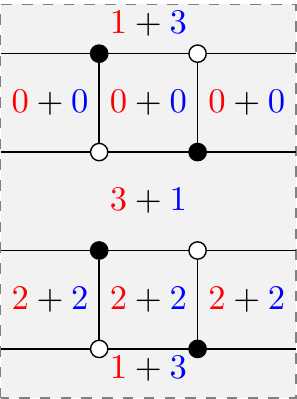}
			&
			\includegraphics[width=\twd]{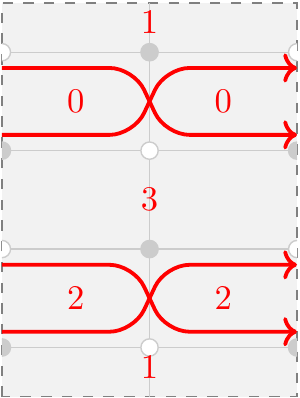}
			&
			\includegraphics[width=\twd]{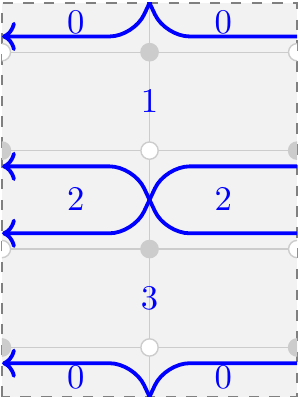}
			&
			\includegraphics[width=\twd]{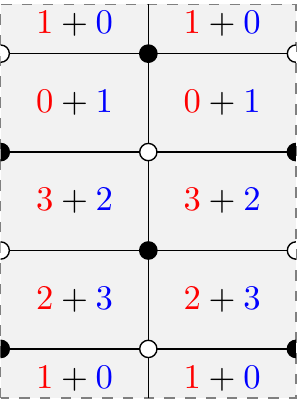}
			\\
			(a) $\laber$ for $\Gamma_1$. & (b) $\labeb$ for $\Gamma_1$. & (c) $\gamma$ for $\Gamma_1$. & (d)  $\laber$ for $\Gamma_2$.  & (e) $\labeb$ for $\Gamma_2$. & (f) $\gamma$ for $\Gamma_2$. 
		\end{tabular}
		\caption{\label{fig:twsq_minv} Computing the modular invariants (\cref{sec:intro:minv}) of graphs $\Gamma_1$ and $\Gamma_2$ from \cref{fig:twsq}. See \cref{ex:intro:minv}.}
	\end{figure}
	
	Clearly, there are $n$ ways to choose a labeling $\labe$ that satisfies the above conditions. We shall choose a particular one as follows. The labeling $\labe$ induces a partition $\Ibm_{\labe}$ of $\Z/n\Z$ into cyclic intervals so that for each strand $\sa\in\SE$, the associated cyclic interval contains $\labe(\Reg)$ for all $\Reg\in\Rege$ appearing immediately to the right of $\sa$; see \cref{fig:twsq_minv}  and \cref{ex:intro:minv}. Now, recall that $\cc^\e$ is a cyclic composition. Of all the cyclic shifts of $\cc^\e$, let $\cc^\e=(\cc^\e_1,\cc^\e_2,\dots,\cc^\e_m)$ be the lexicographically maximal one, and let $\Ibm_{\cc^\e}$ be the associated partition of $\Z/n\Z$ into cyclic intervals from \cref{dfn:clicks}. We say that the labeling $\labe$ is \emph{lex-maximal} if $\Ibm_{\labe}=\Ibm_{\cc^\e}$. Since $\rotsig^r(\Ibm_{\cc^\e})=\Ibm_{\cc^\e}$, we see that there are $n/r$ lex-maximal labelings  $\labe$. Fix one such labeling and let $\labem:\Rege\to\Z/r\Z$ be obtained by taking the values of $\labe$ modulo $r$. Thus, $\labem$ does not depend on the choice of $\labe$, and is an invariant of $\Gamma$.
	
	Repeat the above procedure for all $\e\in E(N)$. Let $\Regs(\Gamma)$ be the set of faces of $\Gamma$. We will construct a labeling $\lab:\Regs(\Gamma)\to\Z/d\Z$, where $d:=\clicks(\bfcc)$. For each face $\Reg\in\Regs(\Gamma)$, we set $\lab(\Reg):=\sum_{\e\in E(N)} \labem(\Reg) \pmod d$. This is a well-defined element of $\Z/d\Z$ in view of~\eqref{eq:clicks_dfn}. Moreover, any two adjacent faces $\Reg,\Reg'$ of $\Gamma$ are separated by two strands going in the opposite directions, so $\lab(\Reg)=\lab(\Reg')$. In other words, the labeling $\lab$ is constant. By definition, its value is the \emph{modular invariant} $\minv(\Gamma)\in \Z/d\Z$. 
	
	The moves \MMs induce bijections between $\e$-regions. Since all the faces involved in \MMs except the middle face in (M1) are in the same $\e$-regions, $\minv(\Gamma)$ is invariant under move-equivalence.
	
	\begin{example}\label{ex:intro:minv}
		Consider the graphs $\Gamma_1$ and $\Gamma_2$ from \cref{fig:twsq}. Let $\Ndec=(N,\bfcc)$ be their strongly decorated Newton polygon shown in \figref{fig:twsq}(e). Thus, $N$ is a line segment of length $4$, and let $\er=(4,0)$ and $\eb=(-4,0)$ be the two edges of $N$. We have $\cc:=\cc^{\er}=\cc^{\eb}=(2,2)$, and $\rot(\cc)=2$. Examples of lex-maximal labelings $\laber$ and $\labeb$ for $\Gamma_1$ and $\Gamma_2$ are shown in \figref{fig:twsq_minv}(a--b,d--e). The labeling $\lab$ for $\Gamma_1$ and $\Gamma_2$ is obtained by taking the labeling $\laber+\labeb$ shown in \figref{fig:twsq_minv}(c,f) modulo $\clicks(\bfcc)=2$. We see that in fact $\lab(\Reg_1)= 0\in\Z/2\Z$ is even for each face $\Reg_1$ of $\Gamma_1$, while $\lab(\Reg_2)= 1\in\Z/2\Z$ is odd for each face $\Reg_2$ of $\Gamma_2$. Therefore, $\minv(\Gamma_1)=0\in\Z/2\Z$ and $\minv(\Gamma_2)=1\in\Z/2\Z$, which is consistent with \cref{thm:intro:move_eq} since the graphs $\Gamma_1$ and $\Gamma_2$ are not move-equivalent.
	\end{example}

	\subsection{Overview of the proof}
	We shall proceed by relating bipartite graphs embedded in $\T$ to elements of the \emph{double affine symmetric group}, i.e., pairs of affine permutations. In \cref{sec:plabic,sec:cylinder}, we show the following result.

	\begin{theorem}\label{thm:intro:vertical}
		For any move-reduced graph $\Gamma$, exactly one of the following holds:
		\begin{enumerate}[label=(\roman*)]
			\item\label{intro:Gamma_monogon} $\Gamma$ has a single strand that is a simple \trvl loop. In this case, $\Gamma$ has no perfect matchings and has a different number of black and white vertices.
			\item\label{intro:Gamma_vertical} $\Gamma$ is move-equivalent to a graph $\Gamma'$ such that, for a suitable choice of the fundamental domain, each strand $\sa\in \SGp$ with $[\sa]=(i,j)$ intersects the vertical line $x=0$ minimally, i.e., exactly $|i|$ times.
		\end{enumerate}
	\end{theorem}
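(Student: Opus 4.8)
The plan is to fix the vertical circle $\Cycle:=\{x=0\}\subset\T$, which represents the homology class $(0,1)$, and to study the move-equivalence class of $\Gamma$ through its intersections with $\Cycle$. A strand $\sa$ with $[\sa]=(i,j)$ has algebraic intersection number $\det\big((i,j),(0,1)\big)=i$ with $\Cycle$, so after an isotopy putting $\Gamma$ and its strands in transverse position, $\sa$ meets $\Cycle$ in at least $|i|$ points; since the moves \MMs preserve the homology of strands, for every graph move-equivalent to $\Gamma$ the set $\big(\bigcup_{\sa\in\SG}\sa\big)\cap\Cycle$ has at least $\sum_{\sa}|i_\sa|$ points, and condition~\ref{intro:Gamma_vertical} asserts exactly that this bound is attained by some representative. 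Accordingly I would fix, within the move-equivalence class, a representative transverse to $\Cycle$ that minimizes the number $m(\Gamma)$ of such intersection points, and reduce the theorem to showing: if $m(\Gamma)>\sum_\sa|i_\sa|$, then $\Gamma$ is as in~\ref{intro:Gamma_monogon}. The mutual exclusivity of~\ref{intro:Gamma_monogon} and~\ref{intro:Gamma_vertical} will follow separately, since a graph as in~\ref{intro:Gamma_monogon} cannot be move-equivalent to a graph disjoint from $\Cycle$: such a graph would be a nonempty move-reduced bipartite graph on an annulus whose only strand is null-homotopic, which one checks is impossible (it reduces, via \RRs, to the empty graph).

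For the reduction step, I would cut $\T$ open along $\Cycle$ to an annulus carrying a bipartite graph with $m(\Gamma)$ degree-one boundary vertices — the setting in which such a graph is encoded by a pair of affine permutations. Strict inequality $m(\Gamma)>\sum_\sa|i_\sa|$ means this annulus graph is not in minimal position, and by the bigon criterion for immersed curves on surfaces other than the sphere (applied to the strands, keeping track of self-intersections) there is an innermost embedded disk $D\subset\T$ bounded by a subarc $a\subset\Cycle$ and a subarc $b$ of a single strand $\sa$, meeting at exactly two points, with $\operatorname{int}(D)$ disjoint from $\Cycle$ and containing no smaller such bigon; minimality of $m(\Gamma)$ also lets us assume $a$ meets no vertex of $\Gamma$. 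The heart of the argument is the local structure of $\Gamma\cap D$: the arc $b$ is a zig-zag segment, turning maximally right at black and maximally left at white vertices along it, and the goal is to show that the moves \MMs — supported near $D$, supplemented when necessary by uncontractions performed elsewhere to raise vertex degrees so that the spider move \Msq applies — allow one to isotope $b$ across $a$, lowering $m(\Gamma)$ by $2$ and contradicting minimality. Concretely one processes the turn of $b$ nearest a corner of $D$: a vertex of degree $\ge 3$ is carried across by spider moves, a degree-$2$ vertex is removed by a contraction \Mres; since $\Gamma$ is move-reduced the reduction moves \RRs never apply (in particular $\Gamma$ has no leaves), so this process cannot stall on low-degree or pendant vertices.

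The only configuration in which this process fails to yield a reduction is a ``rigid'' one in which $D$ carries essentially no graph, and a short case analysis of it should show that $\Gamma$ then consists of a single strand that is a simple zero-homology loop, i.e., case~\ref{intro:Gamma_monogon}. In that case $\sa$ is a separating simple closed curve bounding a disk $D_0\subset\T$, and by examining how the bipartite structure of $\Gamma$ sits on the two sides of $\sa$ — using again the turning rule along $\sa$ — one extracts a strict inequality between the numbers of black and white vertices of $\Gamma$ (exactly as in the example of \cref{fig:parallelbigons}), which in particular rules out a perfect matching. This establishes the supplementary claims in~\ref{intro:Gamma_monogon}, completing the dichotomy.

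I expect the principal obstacle to be the local analysis inside the bigon $D$: proving that the moves \MMs can always be orchestrated to push the zig-zag arc $b$ across the circle arc $a$, handling low-degree vertices, self-intersections of $\sa$, other strands passing through $D$, and the degree hypotheses built into \MMs, and cleanly isolating the lone rigid configuration responsible for case~\ref{intro:Gamma_monogon}. Secondary technical points are the precise form of the immersed bigon criterion for non-simple strands — which may first require reducing the number of self-intersections of the strand under consideration — and the verification, needed for mutual exclusivity, that a nonempty move-reduced bipartite graph on an annulus cannot have a single null-homotopic strand.
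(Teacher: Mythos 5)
Your strategy---minimizing the number of intersections with a fixed vertical circle over the move-equivalence class and removing an innermost bigon---is parallel in spirit to the paper's proof, which cuts $\T$ along that circle to a cylinder $\A$, encodes the strands by an affine matching, and lowers the intersection count via moves (P) and (T). But two of your steps have genuine gaps. The step you yourself flag as the ``principal obstacle''---pushing the zig-zag arc $b$ across the circle arc $a$ through whatever vertices, crossings and other strands lie inside the bigon---is the actual content of the theorem, and you do not prove it; the paper does not redo this local analysis either, but instead imports the disk theory of Postnikov and Thurston (\cref{thm:diskminimal}, \cref{prop:parallel}) together with \cref{lemma:gk} from \cite{GK13}, which is what makes \cref{lemma:movep,lemma:movet} work. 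Note that an ``innermost'' bigon in your sense can still contain returning arcs of other strands, so an induction as in \cref{lemma:movep} is unavoidable. More seriously, your characterization of the failure case is wrong: the obstruction to finding an innermost \emph{embedded} bigon with interior disjoint from the circle is not a ``rigid configuration carrying essentially no graph,'' but a returning arc that winds a nonzero number of times around the cylinder, so that the singular bigon provided by the immersed-curve criterion is not embedded and its interior necessarily meets the circle. The exceptional graphs of case~\ref{intro:Gamma_monogon} arising this way can have arbitrarily many vertices (see \cref{fig:parallelbigons}); detecting exactly when this happens is what the paper's affine-matching case analysis for $n=1$ accomplishes, and your plan has no substitute for it.

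The argument you sketch for the black/white vertex count in case~\ref{intro:Gamma_monogon} also starts from a false premise. A simple zero-homology loop is, by definition, one whose \emph{lift} to $\R^2$ is embedded; in $\T$ itself the strand generally self-intersects (distinct $\Z^2$-translates of the lift cross each other), so $\sa$ does not bound a disk $D_0\subset\T$ and there are no well-defined ``two sides'' to compare. The paper's \cref{prop:monogon} instead argues globally in the universal cover: it takes the $\Omega(N^2)$ translates of the lifted strand contained in $[0,N]^2$, notes that their total winding number is $\pm\Omega(N^2)$, resolves all crossings by the skein relation~\eqref{eq:skein}, and matches each resulting simple closed curve with a single vertex whose color determines its orientation, forcing the black/white imbalance to grow like $N^2$. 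Some argument of this global nature is required; a local inspection of the turning rule along $\sa$ will not produce it.
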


	\noindent In part~\ref{intro:Gamma_monogon}, a \emph{\trvl loop} is a strand $\sa$ satisfying $[\sa]=0$, and a \trvl loop $\sa$ is called \emph{simple} if the lift of $\sa$ to $\R^2$ under the covering map $\R^2\to\T$ is a simple (i.e., non self-intersecting) closed curve; see e.g. \figref{fig:parallelbigons}(a). In part~\ref{intro:Gamma_vertical}, choosing a fundamental domain corresponds to the standard $\SL_2(\Z)$-action on the Newton polygon of~$\Gamma$. 

	In \cref{sec:affine_plabic_fence}, we show that if~\ref{intro:Gamma_vertical} holds, then $\Gamma'$ can be put into a particular form called an \emph{affine plabic fence}. Such graphs correspond to shuffles of reduced words of two affine permutations on commuting sets of indices. In \cref{sec:affine-perm-cycl,sec:c-equiv-structure}, we study the associated conjugation problem for the affine symmetric group, relying on the results of~\cite{HN,Mar}. Finally, we complete the proof in Sections~\ref{sec:proof:exists}--\ref{sec:proof:move_eq}. %

	\subsection{Previous results}
	Our results specialize to~\cite[Theorem~2.5]{GK13} in the case of \emph{minimal graphs}, i.e., when no strand intersects itself in $\T$. This corresponds to all parts of $\la^\e$ and $\cc^\e$ being equal to $1$ for all $\e\in E(N)$.
	
	The idea of relating bipartite graphs embedded in $\T$ to conjugation of double affine permutations is not new and appears in~\cite{FM,GSZ}. A discussion of graphs that are move-reduced but not minimal in the sense of~\cite{GK13}, and in particular the graph $\Gamma_2$ in \figref{fig:move_red_vs_minimal}(b), appears in~\cite[Section~8.3]{FM}. 
	
	In~\cite[Section~4.4]{GSZ}, the authors also consider the problem of classifying move-reduced graphs and their move-equivalence classes. They associate a weakly decorated Newton polygon to each graph and prove a lemma classifying conjugacy classes in the double affine symmetric group. However, this classification does not imply a classification of move-reduced bipartite graphs and their move-equivalence classes. The reason is that the moves \MMs correspond only to particular kinds of conjugation in the affine symmetric group (see \cref{dfn:c_equiv}), not to arbitrary conjugation. 
	This discrepancy leads us to studying strongly decorated Newton polygons and modular invariants. We also note that in~\cite[Section~4.4]{GSZ}, the authors rely on \cref{thm:intro:vertical} and refer to~\cite{FM} for its proof, however, the argument in~\cite[Section~4.1]{FM} only applies to graphs whose strands go monotonously from left to right.

	\subsection*{Acknowledgments}
	We thank Xuhua He for bringing the paper~\cite{Mar} to our attention. We also thank Timothée Marquis for discussions related to~\cite{Mar} and for updating and extending~\cite{Mar} to the generality suited for our needs (cf. Remarks~\ref{rmk:alcove} and~\ref{rmk:Mar}). The first author is grateful to Thomas Lam for ideas that originated during the development of~\cite{GL_cat_combin}, which were influential for our overall proof strategy and specifically for the arguments in \cref{sec:affine-perm-cycl,sec:c-equiv-structure}.

	\section{Plabic graphs and triple-crossing diagrams}\label{sec:plabic}

	We discuss the properties of bipartite graphs embedded in $\T$ and explain how to recast them in the equivalent languages of \emph{plabic graphs}~\cite{Postnikov} and \emph{triple-crossing diagrams}~\cite{Thurston}.

	\subsection{Triple-crossing diagrams in the disk} \label{sec:tcddisk}
	The results of this section were independently discovered by \cite{Postnikov} and \cite{Thurston}. We state the results in terms of Thurston's notion of triple-crossing diagrams. 
	\begin{definition}\label{dfn:tcd}
		A \textit{triple-crossing diagram $D$ in the disk} $\D:=[0,1]^2$ is a smooth immersion of a disjoint union of oriented circles and closed intervals into $\D$, defined up to isotopy. The image of a connected component is called a \textit{strand}. The image of a circle is called a \textit{loop} and the image of a closed interval is called an \textit{arc}. The immersion is required to satisfy the following conditions:
		\begin{enumerate}
			\item Three strands cross at each intersection point. We call these intersection points \textit{triple crossings}.
			\item The endpoints of the arcs are distinct points on the boundary of $\D$, and there are no other points of $D$ on the boundary of $\D$.
			\item The orientations of the strands induce consistent orientations on the boundaries of the faces of $D$.
		\end{enumerate}
	\end{definition}
	Here, a \emph{face} of $D$ is a connected component of $\D\setminus D$. Property (3) implies that around every triple crossing, the orientations of strands alternate in and out, and that the orientations of the boundary vertices alternate in and out. If $D$ has $n$ arcs, then it has $2n$ boundary points, and the connectivity of the arcs induces a matching of the in-boundary points with the out-boundary points, called the \textit{trip permutation} in \cite{Postnikov}.
	
	\begin{definition}
		A triple-crossing diagram $D$ in the disk $\D$ is said to be \textit{reduced} if it has the fewest number of triple crossings among all triple-crossing diagrams with the same boundary matching.
	\end{definition}
	
	\begin{definition}\label{dfn:tcd_move_eq_red}
		Two triple-crossing diagrams are said to be \textit{move-equivalent} if they are related by move (M1)$'$ in Figure \ref{fig:localmovetcd}. A triple-crossing diagram $D$ is called \textit{move-reduced} if it is not move-equivalent to a triple-crossing diagram $D'$ to which one of the reduction moves (R1)$'-$(R2)$'$ can be applied.
	\end{definition}
	
	\begin{remark}
		Postnikov's reduction move (R1)$'$ in Figure \ref{fig:localmovetcd} differs from Thurston's $1-0$ move (see Figure \ref{fig:thurston10}). Postnikov's move will be more important for our eventual goal of understanding the behavior of the dimer model under taking limits, since it preserves dimer partition functions (cf. \cite[Theorem 12.1]{Postnikov}). On the other hand, Thurston's move preserves the boundary matching. It appears naturally in connection with double-affine permutations (see~\cref{sec:affine_plabic_fence} and~\cref{rem:postnikov10}), and will also be used in the proof of~\cref{thm:intro:move_red}. %
	\end{remark}
	
	\begin{figure}
		\def\twd{0.23\textwidth}
		\def\hsp{\qquad\qquad}
		\begin{tabular}{ccccc}
			\includegraphics[width=\twd]{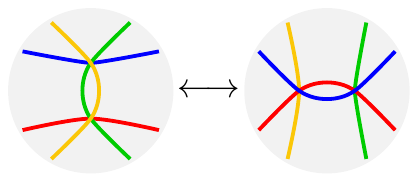}
			& \hsp &
			\includegraphics[width=\twd]{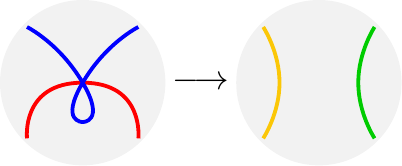}
			& \hsp &
			
			\includegraphics[width=\twd]{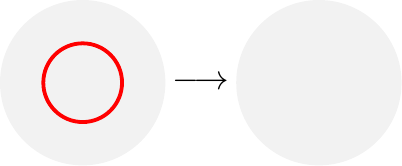}
			\\
			(M1)$'$. & & (R1)$'$. & & (R2)$'$.
		\end{tabular}
		\caption{\label{fig:localmovetcd} Equivalence move (M1)$'$ and reduction moves (R1)$'-$(R2)$'$ for triple-crossing diagrams. Each move has two possible strand orientations. (R2)$'$ removes a strand that is a simple loop. }
	\end{figure}

	A \textit{monogon} in $\D$ is a strand with a self-intersection. A \textit{parallel bigon} in $\D$ is a pair of strands with two intersection points $x\neq y$, with both strands oriented from $x$ to $y$. 
	
	\begin{theorem}[{\cite[Theorem 13.2 and Lemma 13.6]{Postnikov} and \cite[Theorem 7]{Thurston}}] \label{thm:diskminimal} Let $D$ be a triple-crossing diagram in $\D$. Then, the following are equivalent:
		\begin{enumerate}
			\item $D$ is move-reduced;
			\item $D$ is reduced; 
			\item $D$ contains no loops, monogons, or parallel bigons. 
		\end{enumerate}
	\end{theorem}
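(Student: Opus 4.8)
The plan is to prove $(3)\Rightarrow(2)$ by a direct crossing count, to prove the contrapositive $\neg(3)\Rightarrow\neg(1)\wedge\neg(2)$ by an ``innermost configuration'' argument (which already yields $(1)\Rightarrow(3)$ and $(2)\Rightarrow(3)$), and then to deduce $(2)\Rightarrow(1)$ formally. For $(3)\Rightarrow(2)$, suppose $D$ has no loops, monogons, or parallel bigons. Using the orientation-consistency axiom in \cref{dfn:tcd}, one first checks that any two strands of $D$ crossing twice would bound an innermost bigon that, by a short orientation analysis, must be parallel; hence in $D$ no strand crosses itself, no two distinct strands cross more than once, and there are no closed strands. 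Now let $D'$ be \emph{any} triple-crossing diagram in $\D$ with the same boundary matching. Counting, at each triple crossing, the three pairs formed by its local branches and summing over all crossings gives $3\cdot(\#\text{triple crossings of }D')=X(D')+S(D')$, where $X(D')$ records the crossing incidences between pairs of distinct strands (a triple crossing of three distinct strands being counted thrice) and $S(D')\ge 0$ absorbs the self-crossing incidences. For distinct arcs $\sa,\sa'$, the number $c_{\sa\sa'}(D')$ of crossing incidences between $\sa$ and $\sa'$ satisfies $c_{\sa\sa'}(D')\equiv \iota_{\sa\sa'}\pmod 2$ and $c_{\sa\sa'}(D')\ge \iota_{\sa\sa'}$, where $\iota_{\sa\sa'}\in\{0,1\}$ records whether the endpoints of $\sa$ and $\sa'$ interleave along $\partial\D$ and so depends only on the boundary matching. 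Summing, $3\cdot(\#\text{triple crossings of }D')\ge \sum_{\{\sa,\sa'\}}\iota_{\sa\sa'}$. For $D$ itself $S(D)=0$ and $c_{\sa\sa'}(D)\in\{0,1\}$, which forces $c_{\sa\sa'}(D)=\iota_{\sa\sa'}$ and turns the inequality into an equality; hence $D$ achieves the minimum and is reduced.

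For $\neg(3)\Rightarrow\neg(1)\wedge\neg(2)$, suppose $D$ contains a loop, a monogon, or a parallel bigon, and fix one bounding a disk $R\subseteq\D$ for which the number of strands passing through the interior of $R$ is minimal. The idea is to push these strands out of $R$ one at a time using (M1)$'$-moves supported near $R$; once $R$ is emptied, a loop is removed by (R2)$'$, a monogon by (R1)$'$, and an empty parallel bigon is converted by further (M1)$'$-moves into a configuration containing a monogon or a loop (this is where the orientation conventions of \cref{dfn:tcd} are used), reducing to the first two cases. In all cases $D$ is move-equivalent to a diagram admitting a reduction move, so $D$ is not move-reduced; and since (M1)$'$-moves preserve both the boundary matching and the number of triple crossings, while (R1)$'$, (R2)$'$ (or matching-preserving variants such as Thurston's $1-0$ move, Figure~\ref{fig:thurston10}) strictly decrease it, $D$ is not reduced either.

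Finally, $(2)\Rightarrow(1)$ follows formally: if a reduced diagram $D$ were move-equivalent to a diagram $D'$ admitting a reduction move, then — since (R1)$'$ and (R2)$'$ apply only to diagrams containing a loop, a monogon, or a parallel bigon — $D'$ would satisfy $\neg(3)$, hence by the implication $\neg(3)\Rightarrow\neg(2)$ just established $D'$ would not be reduced; but this is impossible, because (M1)$'$-moves preserve both the boundary matching and the number of triple crossings, so $D'$ shares these with the reduced diagram $D$. Combining, $(1)\Leftrightarrow(2)\Leftrightarrow(3)$. I expect the main difficulty to lie in the innermost-configuration step of the second paragraph: rigorously justifying the ``push the strands out of $R$'' procedure and, in particular, the elimination of an empty parallel bigon, together with the orientation bookkeeping that excludes anti-parallel bigons. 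This is the combinatorial heart of the statement and is exactly what is carried out in~\cite{Postnikov,Thurston}.
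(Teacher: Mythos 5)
The paper does not prove this statement itself --- it is quoted from \cite{Postnikov} and \cite{Thurston} --- so the only question is whether your argument is sound, and there is a genuine gap in your proof of $(3)\Rightarrow(2)$. The orientation analysis at its core is backwards. The consistent-orientation axiom of \cref{dfn:tcd} forces an innermost \emph{empty} bigon face to be \emph{anti-parallel}: if both arcs of the bigon were oriented from $x$ to $y$, the boundary of the enclosed face could not be consistently oriented. So it is parallel bigons that can never bound empty faces, while anti-parallel bigons are perfectly legal faces --- indeed both sides of move (M1)$'$ contain an anti-parallel bigon between the two strands shared by the two triple crossings, and these local pictures occur in reduced diagrams. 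Consequently your claim that a diagram satisfying (3) has $c_{\sa\sa'}\in\{0,1\}$ for all pairs of distinct strands is false, and the identity $3\cdot(\#\text{triple crossings})=\sum_{\{\sa,\sa'\}}\iota_{\sa\sa'}$ you derive from it is false even for reduced diagrams: a pair of strands crossing twice anti-parallel contributes $2$ to the left-hand sum and $0$ to the right-hand sum. The quantity $\frac13\sum\iota_{\sa\sa'}$ is a valid lower bound but is not in general attained, so showing that $D$ attains it cannot be the route to reducedness; the true minimum is controlled by a subtler invariant (in Postnikov's language, the length of the associated decorated/bounded affine permutation). A related symptom appears in your second paragraph: the case ``an empty parallel bigon'' is vacuous, since no parallel bigon can be emptied of strands while the diagram remains valid, so the reduction of that case needs to be argued differently (this is exactly what the cited proofs do, by shrinking the bigon rather than emptying it).

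The remaining structure is reasonable: the parity and monotonicity facts $c_{\sa\sa'}\equiv\iota_{\sa\sa'}\pmod 2$ and $c_{\sa\sa'}\geq\iota_{\sa\sa'}$ are correct, the implication $\neg(3)\Rightarrow\neg(1)$ by an innermost-region/pushing argument is indeed the combinatorial heart of \cite{Postnikov,Thurston} (which you explicitly defer to), and the formal deduction of $(2)\Rightarrow(1)$ from $\neg(3)\Rightarrow\neg(2)$ and the invariance of the crossing number under (M1)$'$ is fine. But as written, $(3)\Rightarrow(2)$ --- the one implication you attempt to prove in full --- does not go through, so the proposal does not constitute a proof.
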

	
	\begin{theorem}[{\cite[Corollary 14.7]{Postnikov} and \cite[Theorem 3]{Thurston}}]\label{thm:disk}
		All $n!$ matchings of in- and out-boundary points are realizable by move-reduced triple-crossing diagrams. 
	\end{theorem}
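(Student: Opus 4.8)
The plan is to reduce the claim to the existence of \emph{a single} triple-crossing diagram realizing a prescribed boundary matching, and then to extract a move-reduced one by minimality. Fix a matching $\pi$ of the $n$ in-boundary points with the $n$ out-boundary points. It suffices to produce \emph{some} triple-crossing diagram in $\D$ whose boundary matching is $\pi$: granting this, choose among all triple-crossing diagrams with boundary matching $\pi$ one, $D$, with the fewest triple crossings (first deleting any isolated loop, an operation that changes neither the crossings nor the boundary matching). Then $D$ is reduced by definition, so by \cref{thm:diskminimal} it has no loops, monogons, or parallel bigons and is move-reduced, and it realizes $\pi$ by construction. The reason for passing to a crossing-minimal representative, rather than reducing an arbitrary diagram directly, is that Postnikov's move (R1)$'$ of \cref{fig:localmovetcd} need not preserve the boundary matching; the crossing count, on the other hand, can only be lowered by matching-preserving moves such as Thurston's $1$-$0$ move.

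It remains to realize every $\pi$ by some triple-crossing diagram. I would do this through the correspondence with plabic graphs developed in the rest of \cref{sec:plabic}. Choosing a color for each fixed point of $\pi$ turns it into a decorated permutation, which by \cite[Corollary~14.7]{Postnikov} is the trip permutation of a reduced plabic graph $G$ in the disk; uncontracting vertices, we may assume every internal vertex of $G$ has degree $3$ while $G$ stays reduced. The zig-zag paths of $G$ then pass three at a time through each internal vertex, forming a diagram $D_G$ in $\D$ with boundary matching $\pi$; one checks that the strand orientations can be arranged so that condition~(3) of \cref{dfn:tcd} holds, making $D_G$ a genuine triple-crossing diagram. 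Alternatively, following \cite{Thurston}, one can argue directly inside the class of triple-crossing diagrams: start from the crossingless diagram (which realizes non-crossing matchings such as ``each in-point to the next out-point'') and reach an arbitrary $\pi$ by repeatedly inserting a triple crossing among three strands adjacent to the boundary, each insertion altering the boundary matching by an explicit local permutation, and then checking that these operations suffice to reach all $n!$ matchings.

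The main obstacle is this existence step; the rest is bookkeeping. In the plabic-graph approach the subtlety is one of conventions: verifying that the zig-zag diagram of a trivalent reduced plabic graph really satisfies \cref{dfn:tcd}---in particular the orientation-consistency condition---and induces exactly $\pi$ rather than a relabeling of it. In the direct approach the subtlety is combinatorial: a single triple crossing twists three consecutive strands by a $3$-cycle and hence cannot change the parity of the boundary matching, so one must also allow moves involving short arcs (and, transiently, loops) and prove that the resulting operations act transitively on matchings without ever being forced to create a monogon or a parallel bigon. Once a realizing diagram is at hand, the first paragraph produces a move-reduced one and finishes the proof.
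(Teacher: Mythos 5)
The paper gives no proof of this statement: it is quoted verbatim from \cite[Corollary 14.7]{Postnikov} and \cite[Theorem 3]{Thurston}, which already assert realizability by \emph{reduced} (hence, by \cref{thm:diskminimal}, move-reduced) diagrams. Your proposal is correct and amounts to essentially the same thing --- your existence step is exactly the cited result, and your crossing-minimization argument is the standard glue connecting ``reduced'' to ``move-reduced'' via \cref{thm:diskminimal}.
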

	
	\begin{theorem} [{\cite[Theorem 13.4]{Postnikov} and \cite[Theorem 5]{Thurston}}]
		Any two move-reduced triple-crossing diagrams with the same boundary matching are move-equivalent.
	\end{theorem}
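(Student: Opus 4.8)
The plan is to fix the boundary matching $\pi$ and prove that every move-reduced triple-crossing diagram realizing $\pi$ is (M1)$'$-equivalent to one canonical diagram, by peeling off ``boundary bridges.'' By \cref{thm:diskminimal}, move-reduced equals reduced, so two move-reduced diagrams $D,D'$ with the same boundary matching $\pi$ realize it with the same, minimal, number of triple crossings $k(\pi)$; I would induct on $k(\pi)$. For the base case $k(\pi)=0$: since two strands of a triple-crossing diagram can meet only at a triple crossing, a diagram with no crossings is a disjoint union of strands, and reducedness forbids loops, so $D$ is the unique disjoint-arc diagram realizing the (necessarily non-crossing) matching $\pi$; thus $D=D'$ up to isotopy.

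For $k(\pi)>0$ I would use Postnikov's bridge/BCFW construction (most transparent after translating, via the standard dictionary of this section, into reduced plabic graphs, under which (M1)$'$ becomes the square move): there is a canonically chosen pair of cyclically consecutive boundary indices at which one may \emph{attach} a bridge; the inverse operation, \emph{peeling} a bridge, sends a reduced diagram for $\pi$ to a reduced diagram for a matching $\pi'$ with $k(\pi')=k(\pi)-1$, with $\pi'$ determined by $\pi$ and that pair alone; and attaching a bridge at a fixed position carries (M1)$'$-moves to (M1)$'$-moves, since the bridge sits near the boundary, disjoint from where the later moves take place. The crux is then the \emph{Lemma}: every reduced diagram realizing $\pi$ is (M1)$'$-equivalent to one that has a peelable bridge at the prescribed position. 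Granting it, expose that bridge in $D$ and in $D'$, peel it off both to obtain reduced diagrams $D_0,D_0'$ with the same matching $\pi'$, apply the inductive hypothesis to get $D_0\sim D_0'$, and re-attach the bridge to both to conclude $D\sim D'$; existence of a reduced diagram at each stage is guaranteed by \cref{thm:disk}.

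I expect the Lemma to be the main obstacle. I would prove it by following the two strands incident to the prescribed boundary position and using the rigidity supplied by \cref{thm:diskminimal} — no strand self-intersects, and no two strands bound a parallel bigon — which tightly constrains how these strands enter the diagram. Combined with an innermost-face (minimal enclosed area) argument, this should produce a finite sequence of (M1)$'$ moves that combs the crossings near that position toward the boundary until a bridge appears, and one then checks that absence of loops and bigons is preserved throughout. (An alternative route, avoiding plabic graphs: cut $\D$ along the simple arc carried by a fixed boundary strand and induct on the number of strands, using that every other strand crosses that arc at most once; there the analogous difficulty is that one must first apply (M1)$'$ moves to standardize the order of the crossings along the cutting arc.)
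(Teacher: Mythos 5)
This theorem is not proved in the paper at all: it is quoted verbatim from \cite[Theorem~13.4]{Postnikov} and \cite[Theorem~5]{Thurston}, so there is no in-paper argument to compare against. Your proposed strategy — induct on the (common, minimal) number of crossings, expose a bridge near a prescribed boundary position in both diagrams, peel it, apply induction, and re-attach — is essentially the strategy of both cited proofs, and the surrounding bookkeeping in your write-up is organized correctly: the base case is fine, peeling does preserve reducedness (a bridge-attached reduced diagram for $\pi'$ realizes $\pi$ with $k(\pi')+1$ crossings, so $k(\pi')=k(\pi)-1$ and the peeled diagram achieves the minimum), and re-attaching a boundary bridge does commute with later (M1)$'$ moves.

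The genuine gap is exactly where you say you expect it: the Lemma that every reduced diagram realizing $\pi$ is (M1)$'$-equivalent to one exhibiting a peelable bridge at the prescribed position. That Lemma \emph{is} the theorem — everything else is routine — and ``follow the two incident strands, use no-self-intersection and no-parallel-bigon, and comb crossings toward the boundary by an innermost-face argument'' is a plan, not a proof. One must show that an innermost region bounded by the relevant strand and the boundary interval is always a face to which (M1)$'$ applies (and that it is a disk, that the move does not create a monogon or bigon, and that the process terminates); this is the content of Thurston's Lemma~12 and of Postnikov's argument around his Figure~13.4, each of which takes real work. Note that the paper records precisely this statement as \cref{prop:parallel} (again only by citation). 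If you are permitted to invoke \cref{prop:parallel} as a black box, your induction closes up and the proof is complete; if you are not, the combing step must be carried out in full, and as written it is not.
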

	Each pair of in- and out-endpoints in the matching divides the boundary of $\D$ into two intervals. Suppose that $I$ is a minimal such interval with respect to inclusion. We say that a strand $\sa$ whose endpoints are the endpoints of $I$ is \textit{boundary-parallel} if there are no triple crossings within the region between $\sa$ and $I$.
	
	\begin{proposition}[{\cite[Proof of Theorem 13.4 and Figure 13.4]{Postnikov} and \cite[Lemma 12]{Thurston}}] \label{prop:parallel}
		Suppose $I$ is an inclusion-minimal interval of the boundary matching of a move-reduced triple-crossing diagram $D$, and let $\sa$ be the strand in $D$ whose endpoints are the endpoints of $I$. Then, $D$ is move-equivalent to a triple-crossing diagram $D'$ in which $\sa$ is boundary-parallel. 
	\end{proposition}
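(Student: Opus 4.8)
The plan is to peel the strand $\sa$ toward the boundary arc $I$, removing the triple crossings it cuts off one at a time by move (M1)$'$.

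First I would record the structure of the region cut off by $\sa$. Since $D$ is move-reduced, \cref{thm:diskminimal} shows that $D$ has no loops, monogons, or parallel bigons; in particular $\sa$ has no self-crossings, so it is a simple arc, and together with $I$ it bounds an open disk $R\subset\D$ whose boundary is $\sa\cup I$. Because $I$ is an inclusion-minimal interval of the boundary matching, its interior contains no endpoints of $D$; hence every strand other than $\sa$ that meets $R$ must enter and leave $R$ through $\sa$, and each of its maximal arcs inside $R$ has both endpoints on $\sa$. Let $N(D)$ be the number of triple crossings lying in the open region $R$. By definition, $\sa$ is boundary-parallel exactly when $N(D)=0$.

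I would then induct on $N(D)$, the base case $N(D)=0$ being the desired conclusion. For the inductive step assume $N(D)\ge 1$ and produce a triple-crossing diagram $D'$, move-equivalent to $D$ (so related to $D$ by a sequence of moves (M1)$'$, cf.\ \cref{dfn:tcd_move_eq_red}), with the same strand connecting the endpoints of $I$, and with $N(D')<N(D)$; the inductive hypothesis applied to $D'$ then finishes the proof. To construct $D'$, consider a triple crossing $v$ of $D$ that lies in $R$ and is ``closest to $\sa$'': joined to a point of $\sa$ by an arc of a strand whose interior contains no triple crossing. Such a $v$ exists — trace any strand inward from a point where it crosses $\sa$ and take its first crossing inside $R$. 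Starting from $v$ together with one of the two other strands through it, one cuts out a sub-disk $T\subseteq\overline R$ bounded by a sub-arc of $\sa$ and two strand-arcs meeting at $v$. If $T$ has no triple crossing in its interior, the local picture of $\sa$ and these arcs near $v$ is a configuration to which (a short sequence of) moves (M1)$'$ applies, sliding $v$ across $\sa$ and strictly decreasing $N$; if $T$ does contain an interior triple crossing, then $T$ is a strictly smaller sub-disk of the same type and one recurses inside $T$, which terminates since $D$ has finitely many triple crossings. Reducedness is exactly what rules out degenerate obstructions along the way: no monogon or parallel bigon can block a slide because $D$ (and every diagram move-equivalent to it) has none.

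The main obstacle I anticipate is the combinatorial bookkeeping of the inductive step: checking that the innermost sub-disk $T$ can always be found, that the reconfiguration it supports is genuinely realized by moves (M1)$'$ rather than something larger, and that performing it does not create a new triple crossing inside $R$ (nor a monogon or parallel bigon anywhere), so that $N$ really drops and the strand tracked is still simple. One clean way to organize this is via an Euler-characteristic count for the planar graph formed inside $\overline R$ by $\sa$ together with all strand-arcs in $R$ — triple crossings interior to $R$ are $6$-valent, crossings on $\sa$ are $3$-valent on the $R$-side, and the two endpoints of $\sa$ are univalent — which forces the existence of a face adjacent to $\sa$ small enough to support (M1)$'$. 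Once this local step is established, the proposition follows immediately by induction.
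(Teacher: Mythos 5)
The paper itself gives no proof of \cref{prop:parallel}: it is quoted from Postnikov (proof of Theorem 13.4 and Figure 13.4) and Thurston (Lemma 12), and your overall strategy---induct on the number of triple crossings in the region $R$ cut off by $\sa$ and $I$, and clear them by pushing an innermost one across $\sa$ with (M1)$'$---is indeed how those sources argue. However, your setup rests on a false claim that then contaminates the bookkeeping: inclusion-minimality of $I$ does \emph{not} imply that the interior of $I$ contains no endpoints of $D$. Minimality only rules out a matched pair with \emph{both} endpoints in $I$; a strand may have one endpoint in $I$ and its partner outside. For instance, with six boundary points $p_1,\dots,p_6$ in cyclic order and matching $p_1\mapsto p_4$, $p_3\mapsto p_6$, $p_5\mapsto p_2$, all six intervals are inclusion-minimal and each contains two endpoints of other strands. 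Consequently strands can enter $R$ through $I$, your description of $R$ (``each maximal arc inside $R$ has both endpoints on $\sa$'') fails, and the Euler-characteristic count you propose (in which only the two endpoints of $\sa$ are univalent) starts from a false premise. This case genuinely arises in the paper's application of the proposition in \cref{lemma:movep}, so it cannot be dismissed.

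Second, the heart of the inductive step is asserted rather than proved. The configuration you isolate---a triangle $T$ bounded by a sub-arc of $\sa$ and two strand-arcs meeting at an interior triple crossing $v$---has \emph{three} triple crossings on its boundary ($v$ plus the two points where those arcs cross $\sa$), whereas (M1)$'$ applies to a pair of triple crossings through which two common strands pass. You would need to exhibit the actual sequence of (M1)$'$ moves carrying $v$ across $\sa$, check at each application that the orientation condition (3) of \cref{dfn:tcd} is met, and verify that the number of triple crossings strictly inside $R$ really drops: the move preserves the total number of triple crossings, so the decrease must come from a crossing migrating across $\sa$, which is exactly what needs to be shown. Writing ``(a short sequence of) moves (M1)$'$ applies'' defers the only nontrivial content of the proposition, so as written the argument has a gap at its core, in addition to the incorrect structural claim about $I$.
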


	\subsection{Plabic graphs and triple-crossing diagrams on the torus} \label{sec:plabictcd}

	A \textit{plabic graph} $\Gamma=(B \sqcup W,E)$ on a torus $\T$ is a (finite) graph embedded in $\T$ such that:
	\begin{enumerate}
		\item The vertices of $\Gamma$ are colored black or white. The set of black vertices (resp., white vertices) is denoted by $B$ (resp., $W$).
		\item The set of edges of $\Gamma$ is denoted by $E$. Each edge is incident to two vertices of opposite colors or incident to two white vertices.
		\item The black vertices are trivalent.
	\end{enumerate}
	
	We identify plabic graphs that are related by contracting an edge incident to two distinct white vertices into a single white vertex. Therefore, we can assume that each white-white edge is a loop based at a white vertex. 
	
	\begin{remark}
		Our definition of a plabic graph is more restrictive than that of~\cite{Postnikov}. Such plabic graphs were previously studied under the name \emph{white-partite}~\cite[Definition~7.14]{GPW} or \emph{black-trivalent}~\cite[Definition~8.1, Remark~8.2]{G_crit}.
	\end{remark}

	\begin{definition}\label{dfn:tcd_T}
		A \textit{triple-crossing diagram $D$ on the torus} $\T$ is a smooth immersion of a disjoint union of oriented circles into $\T$. The image of a circle is called a \textit{strand}, and the set of strands of $D$ is denoted $\SD$. The immersion is required to satisfy the following conditions:
		\begin{enumerate}
			\item\label{TCD1} Three strands cross at each intersection point. We call these intersection points \textit{triple crossings}.
			\item\label{TCD2} The orientations of the strands induce consistent orientations on the boundaries of the faces of $D$.
		\end{enumerate}
	\end{definition}
	Similarly to \cref{dfn:tcd}, a \emph{face} of $D$ is a connected component of $\T\setminus D$. 	The property~\eqref{TCD2} implies that around every triple crossing, the orientations of the strands alternate in and out. (However, the converse need not hold if $D$ has a non-contractible face.) Each strand $\sa$ in $D$ determines a homology class $[\sa] \in H_1(\T,\Z) \cong \Z^2$.
	\begin{lemma}\label{lemma::sumhomiszero}
		The sum of the homology classes of all strands is $0$ in $H_1(\T,\Z)$.
	\end{lemma}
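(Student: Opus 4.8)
The plan is to detect the class $\sum_{\sa\in\SD}[\sa]\in H_1(\T,\Z)$ by pairing it with simple closed curves and showing every such pairing vanishes, using property~\eqref{TCD2} as the key input. Fix an orientation of $\T$; this orients every face $F$ of $D$ and gives each arc of $D$ on $\partial F$ an induced boundary orientation. First I would extract from \eqref{TCD2} a $\{0,1\}$-coloring $\chi$ of the faces of $D$: put $\chi(F)=0$ if, along every arc of $\partial F$, the induced boundary orientation of $F$ agrees with the strand orientation, and $\chi(F)=1$ if it disagrees along every arc. Property~\eqref{TCD2} says one of these always holds, so $\chi$ is well defined (in particular \eqref{TCD2} forbids an arc from bounding the same face on both sides, since the two induced orientations of that arc would be opposite). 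The basic combinatorial fact is that across any arc $e$ of $D$ the two adjacent faces get opposite $\chi$-values: with $e$ oriented by its strand, the face to its left has induced boundary orientation equal to the strand direction along $e$, and the face to its right has the opposite one.

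Next, let $\gamma$ be a simple closed curve in $\T$ in general position with respect to $D$ (transverse to $D$, missing all triple crossings). Traversing $\gamma$ once, it visits a cyclic sequence of faces $F_0,F_1,\dots,F_{m-1},F_m=F_0$, passing from $F_{i-1}$ to $F_i$ through a point $p_i\in\gamma\cap D$. Since consecutive faces are separated by an arc, the values $\chi(F_0),\chi(F_1),\dots$ alternate; hence $m$ is even and exactly $m/2$ of the transitions are $0\to1$ while $m/2$ are $1\to0$. A short local computation with orientation conventions shows that the intersection sign $\varepsilon(p_i)$ of $\gamma$ with the strand through $p_i$ equals $+1$ precisely when the color changes $0\to1$ at $p_i$ and $-1$ when it changes $1\to0$. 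Therefore the algebraic intersection number satisfies $[\gamma]\cdot\bigl(\sum_{\sa\in\SD}[\sa]\bigr)=\sum_{i=1}^m\varepsilon(p_i)=\tfrac m2-\tfrac m2=0$.

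Since $\gamma$ was arbitrary and isotopy classes of simple closed curves span $H_1(\T,\Z)\cong\Z^2$, on which the intersection form is unimodular, we conclude $\sum_{\sa\in\SD}[\sa]=0$. Equivalently and more geometrically, the faces with $\chi=0$ (each carried with the ambient orientation of $\T$) form a $2$-chain whose boundary is exactly $\sum_{\sa\in\SD}\sa$, so this $1$-cycle is null-homologous; one has to be a little careful with this route when a face is not a disk (which can happen on the torus), which is why I prefer the intersection-number version. The only genuinely delicate steps are (i) reading off the well-definedness of $\chi$ and the opposite-colors-across-an-arc property from \eqref{TCD2}, and (ii) making all four orientation conventions — of $\T$, of $\partial F$, of strands, and of intersection signs — mutually consistent in the local sign check. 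The degenerate cases $\SD=\emptyset$ or $D$ crossing-free should be dispatched first and are immediate.
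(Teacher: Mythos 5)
Your proof is correct. It shares the paper's key input --- the $\{0,1\}$-coloring of faces extracted from property (2), which is exactly the paper's partition of $\T$ into the regions $R_+$ (faces whose induced orientation is counterclockwise) and $R_-$ --- but finishes differently. The paper's proof is a one-liner from that point: since the strand orientations agree with the boundary orientation of $R_+$, one has $\sum_{\sa\in\SD}\sa=\partial R_+$ as $1$-cycles, so the class is a boundary and hence zero (the paper phrases this as $2\sum[\sa]=[\partial R_+]-[\partial R_-]=[\partial\T]=0$). You instead detect the class by pairing it with test curves: colors alternate along any transverse simple closed curve $\gamma$, the intersection signs correlate with the direction of the color change, so the algebraic intersection number vanishes, and unimodularity of the intersection form on $H_1(\T,\Z)$ forces the class to be zero. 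Both routes are valid; the paper's is shorter, while yours is more hands-on and sidesteps your stated worry about non-disk faces --- though that worry is not actually an obstruction to the paper's argument, since $\partial R_+$ is the boundary of a genuine $2$-chain (e.g.\ after triangulating $\T$ compatibly with $D$) whether or not individual faces are disks. Your observation that (2) forbids an arc from bounding the same face on both sides is a point worth making explicit in either version, since it is what guarantees both that adjacent faces get opposite colors and that $\partial R_+$ has no cancelling contributions.
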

	\begin{proof}
		Let $R_+$ (resp., $R_-$) denote the union of the faces of $D$ such that the induced orientation is counterclockwise (resp., clockwise). Then, by property (2), we have that $\sum_{\sa \in \SD}\sa=\partial R_+ = -\partial R_-$ as $1$-cycles in $\T$, and $\overline{R}_+ \cup \overline{R}_-=\T$. Therefore,
		\[
		2 \sum_{\sa \in \SD}[\sa]=[\partial R_+]-[\partial R_-]=[\partial \T]=0. \qedhere
		\]
	\end{proof}
	\begin{figure}
		\def\twd{0.1\textwidth}
		\def\hsp{\hspace{0.1in}}
		\begin{tabular}{cccccc}
			\includegraphics[width=\twd]{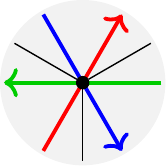}
			& \qquad &
			\hsp
			\includegraphics[width=\twd]{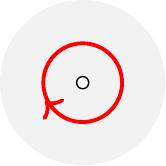}
			\hsp
			&
			\hsp
			\includegraphics[width=\twd]{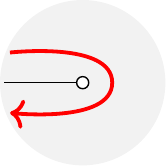}
			\hsp
			&
			\hsp
			\includegraphics[width=\twd]{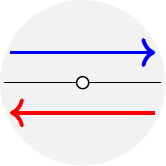}
			\hsp
			&
			\hsp
			\includegraphics[width=\twd]{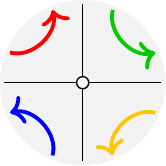}
			\hsp
			\\
			(a) Black vertex (degree-three).& &\multicolumn{4}{c}{(b) White vertex (arbitrary degree).} %
		\end{tabular}
		\caption{The procedure to convert plabic graphs into triple-crossing diagrams and vice versa. }\label{fig:tcd=plabic}
	\end{figure}	
	
	\begin{remark}	\label{rmk:translate1}
		A triple-crossing diagram can be converted into a plabic graph and vice versa using the local procedure shown in Figure \ref{fig:tcd=plabic}. The notions of \emph{move-reduced} and \emph{move-equivalent} plabic graphs are given by \cref{dfn:tcd_move_eq_red}. 
	\end{remark}
	
	\begin{remark}\label{rmk:translate2}
		If $\Gamma$ is a bipartite graph in $\T$ with all black vertices of degree at least three then one can convert $\Gamma$ into a plabic graph by applying a sequence of black uncontraction moves~\Mres.  When $\Gamma$ has black vertices of degree zero, one, or two,\footnote{This applies especially to the case of a degree-two black vertex connected to the same white vertex by both edges.} extra care needs to be taken; see \cref{sec:bipartite-to-tcd}. Conversely, any plabic graph can be converted into a bipartite graph by placing a black vertex of degree two in the middle of each white-white edge. 
		
		The notions of weakly/strongly decorated Newton polygons %
		and modular invariants introduced in \cref{sec:intro:move_reduced} for bipartite graphs extend to plabic graphs in an obvious way. Using \cref{rmk:translate1}, we can transfer them to triple-crossing diagrams. 
	\end{remark}

	In what follows, we will prove the versions of our main results translated into the language of triple-crossing diagrams and plabic graphs. The proof for bipartite graphs will then follow from the construction in \cref{sec:bipartite-to-tcd}. For instance, in \cref{sec:proof:move_red}, we will prove the following version of \cref{thm:intro:move_red}.
	\begin{theorem}  \label{thm:tcdmove_red}
		Let $D$ be a triple-crossing diagram with weakly decorated polygon $\Nwdec=(N,\bfla)$. Assume that $N$ is not a single point. Then, the following are equivalent:
		\begin{enumerate}
			\item\label{item:D:move_red} $D$ is move-reduced;
			\item\label{item:D:area} $D$ has no connected components that are contractible in $\T$ and contains $2\Area(N)+\excess{\bfla}$ triple crossings. 
		\end{enumerate}
		Similarly to \cref{rmk:intro:area}, $2\Area(N)+\excess{\bfla}$ is the minimal possible number of triple crossings for a triple-crossing diagram with weakly decorated Newton polygon $(N,\bfla)$. 
	\end{theorem}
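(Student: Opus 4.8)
The plan is to isolate a single counting statement for \emph{affine plabic fences} and then transport it through the equivalence and reduction moves.

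First I would record the elementary behavior of the moves. The move (M1)$'$ preserves the number of triple crossings, the weakly decorated polygon $\Nwdec$, and the partition of a diagram into connected components; and each reduction move (R1)$'$, (R2)$'$ preserves $\Nwdec$ while never increasing the number of triple crossings. Only (R2)$'$ needs an argument: it removes a strand $\sa$ that is a simple loop, and applying \cref{lemma::sumhomiszero} before and after the move forces $[\sa]=0$, so $\sa$ contributes neither to $N$ nor to any $\la^\e$; moreover, if the diagram has no contractible connected component, then such a loop must carry at least one triple crossing (else it would itself be a contractible component), so in that case the reduction is strict.

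The crux is: \textbf{a move-reduced affine plabic fence $D'$, with weakly decorated polygon $(N,\bfla)=\Nwdec(D')$, has exactly $2\Area(N)+\excess{\bfla}$ triple crossings} (and then automatically $N$ is not a point). I would prove this using the description of affine plabic fences from \cref{sec:affine_plabic_fence} as shuffles of reduced words of a pair of affine permutations $(\pera,\perb)$ on commuting sets of indices: the number of triple crossings is read off directly from those words, move-reducedness forces the words to be reduced, and the count then becomes an explicit function of $(\pera,\perb)$ which one matches against $2\Area(N)+\excess{\bfla}$. The matching is guided by the identities $2\Area(N)=\sum|\det([\sa],[\sa'])|$ (sum over unordered pairs of strands with non-parallel homology) and $\excess{\bfla}=\sum_{\sa}(\ilen|[\sa]|-1)$: the slopes of $\pera$ and $\perb$ trace out the lower and upper boundary chains of $N$, so crossings between the two strand families should account for the area term while crossings within a single family together with strand self-intersections should account for the excess term. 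Disentangling the reduced-word data of two affine permutations and matching it to the area and excess of the Newton polygon is, I expect, the main obstacle of the proof.

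Granting the crux, I would finish as follows. Given any triple-crossing diagram $D$ with $\Nwdec(D)=(N,\bfla)$, reduce it by the moves (M1)$'$, (R1)$'$, (R2)$'$ to a move-reduced $D_{\min}$; by the first step $\Nwdec(D_{\min})=(N,\bfla)$, so $N$ is still not a point, and the number of triple crossings only decreased. Since $D_{\min}$ is move-reduced with $N$ not a point, \cref{thm:intro:vertical} (transferred to triple-crossing diagrams through the translations of \cref{sec:plabic}) rules out case~\ref{intro:Gamma_monogon} and, after a suitable $\SL_2(\Z)$-change of fundamental domain (which leaves $2\Area(N)+\excess{\bfla}$ unchanged), puts $D_{\min}$ into move-equivalence with a move-reduced affine plabic fence; hence $D_{\min}$ has exactly $2\Area(N)+\excess{\bfla}$ triple crossings by the crux. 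Thus every $D$ with $\Nwdec(D)=(N,\bfla)$ has at least $2\Area(N)+\excess{\bfla}$ triple crossings, and \cref{prop:intro:exists} (applied to any cyclic refinement of $\bfla$, via \cref{sec:plabic}) shows this bound is attained, which is the last assertion of the theorem. For the implication \eqref{item:D:move_red}\,$\Rightarrow$\,\eqref{item:D:area}: a move-reduced $D$ has no contractible connected component, since such a component, viewed inside its disk, would be a loop-only triple-crossing diagram, hence (as the disk-moves embed into the torus-moves) move-reduced and therefore empty by \cref{thm:diskminimal} --- a contradiction; and taking $D_{\min}=D$ above shows that the triple-crossing count is $2\Area(N)+\excess{\bfla}$. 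For \eqref{item:D:area}\,$\Rightarrow$\,\eqref{item:D:move_red}: if $D$ satisfied \eqref{item:D:area} but were not move-reduced, it would be move-equivalent to some $D''$ admitting a reduction move; by the first step $D''$ again has $2\Area(N)+\excess{\bfla}$ triple crossings, weakly decorated polygon $(N,\bfla)$, and no contractible connected component, so the reduction strictly decreases the triple-crossing count while preserving $\Nwdec$, contradicting the lower bound. Hence $D$ is move-reduced.
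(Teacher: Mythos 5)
Your overall route is the same as the paper's: reduce a move-reduced diagram to an affine plabic fence via \cref{thm:intro:vertical} (this is \cref{lemma:tcdtoper}), identify the crossing count of such a fence with $\ell(\pera)+\ell(\perb)$ and evaluate it as $\Area(\Zon(\VC_\pera))+\Area(\Zon(\VC_\perb))+\excess{\bfla}=2\Area(N)+\excess{\bfla}$ (your ``crux'' is exactly \cref{lemma:c_red_ell} combined with the zonotope identity \cref{eq:zon_vs_N}, both already established in the paper, so you could simply cite them rather than re-derive them), and then get \itemref{item:D:area}$\Rightarrow$\itemref{item:D:move_red} by showing that a non-move-reduced diagram can be strictly reduced without changing $\Nwdec$.

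There is, however, one genuine gap in your ``first step'': the claim that Postnikov's reduction (R1)$'$ preserves $\Nwdec$. It does not. As the paper's remark following \cref{dfn:tcd_move_eq_red} points out, (R1)$'$ and Thurston's $1$--$0$ move (R1)$''$ (Figure~\ref{fig:thurston10}) have the same left-hand side but different right-hand sides: it is (R1)$''$ that preserves the connectivity of the strands, while (R1)$'$ reconnects them. On the torus a reconnection can change the homology classes of strands and hence both $N$ and $\bfla$ (e.g.\ merging two strands of class $(1,0)$ into one of class $(2,0)$ changes $\excess{\bfla}$), so after applying (R1)$'$ the target quantity $2\Area(N)+\excess{\bfla}$ may itself drop, and neither your lower-bound argument nor your contradiction in \itemref{item:D:area}$\Rightarrow$\itemref{item:D:move_red} goes through as written. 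The repair is exactly the paper's \cref{lem:movereduction}: since (R1)$'$ and (R1)$''$ have identical left-hand sides, a diagram is move-reduced if and only if no move-equivalent diagram admits (R1)$''$ or (R2)$'$; running your reduction process with (R1)$''$ in place of (R1)$'$ preserves $\Nwdec$ while strictly decreasing the number of triple crossings (and, as you and the paper both observe, (R2)$'$ cannot be reached before some (R1)$''$ when there are no contractible components). With that substitution your argument is correct.
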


	Let $\pi:\R^2 \rightarrow \T$ denote the universal covering map. The following result will be proved in~\cref{sec:proof:propertiestcdmovered}.
	\begin{proposition} \label{prop:propertiesofmoveredtcd}
		Let $D$ be a move-reduced triple-crossing diagram with Newton polygon~$N$.
		\begin{enumerate}
			\item\label{prop:appB_propertiesofmoveredtcd1}  The preimage $\tilde D$ contains no closed loops, and any lift $\tilde S$ of a strand $\sa \in \SD$ does not intersect itself;
			\item\label{prop:appB_propertiesofmoveredtcd2} Any strand $\sa \in \SD$ intersects itself $\ilen|[\sa]|-1$ times;
			\item\label{prop:appB_propertiesofmoveredtcd3}  Any two distinct parallel strands $\sa,\sa' \in \SD$ do not intersect, and there is no face of $D$ that contains portions of both strands in its boundary. 
		\end{enumerate}
	\end{proposition}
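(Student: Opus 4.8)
Each of the three assertions is invariant under the equivalence move (M1)$'$: it preserves the multiset of homology classes $\{[\sa]\}_{\sa\in\SD}$ (indeed, the whole weakly decorated Newton polygon), and, being a local move of Yang--Baxter type, it preserves the number of intersection points of every unordered pair of strands --- in particular the number of self-intersections of each individual strand --- as well as, for any two parallel strands, whether some face of $D$ has arcs of both on its boundary. It therefore suffices to prove the proposition for one representative of each move-equivalence class of move-reduced triple-crossing diagrams. A move-reduced $D$ is never a single simple \trvl loop, since such a diagram admits the reduction move (R2)$'$; hence the exceptional case in part~\ref{intro:Gamma_monogon} of \cref{thm:intro:vertical} does not occur, and --- translating \cref{thm:intro:vertical} into the language of triple-crossing diagrams via \cref{rmk:translate1} and passing to a move-equivalent diagram --- we may assume that $D$ is in vertical normal form, so that every strand $\sa$ with $[\sa]=(i,j)$ meets $\{x=0\}$ exactly $|i|$ times, and, by the construction of \cref{sec:affine_plabic_fence}, that $D$ is an affine plabic fence.

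\textbf{Reduction to minimal position.} In an affine plabic fence, each strand is weakly monotone in one coordinate direction, so each of its lifts to $\R^2$ is a properly embedded arc; none of these arcs is a closed loop, so $\tilde D$ contains no closed loops and no strand is null-homologous, and no lift of a strand meets itself. This is assertion (1). The affine plabic fence is assembled from a shuffle of reduced words of two affine permutations, and the crux of the argument is the equivalence of the following two statements: these words are reduced; and the periodic wiring diagram $\tilde D$ realizes the minimal number of crossings for its homotopy data, equivalently $\tilde D$ contains no \emph{bigon} --- a disk in $\R^2$ bounded by an arc of one strand-lift and an arc of another (or of the same lift translated by a nonzero element of $\Z^2$). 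Granting this, $D$, lifted to any intermediate cover of $\T$, is in minimal position, and the remaining claims follow from elementary facts about curves on the torus. A closed curve of homology class $c\in\Z^2$ in minimal position has exactly $\ilen|c|-1$ self-intersections, which is assertion (2). For assertion (3): two distinct parallel strands have vanishing algebraic intersection number, so in minimal position they are disjoint; and if a face of $D$ had arcs of both on its boundary, lifting would produce a complementary region pinched between two disjoint embedded monotone arcs of the same primitive direction, forcing a bigon or a \contracted component, a contradiction.

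\textbf{Main obstacle.} The essential difficulty is the equivalence ``the shuffle is reduced $\iff$ $\tilde D$ has no bigon'', turning combinatorial minimality of a Coxeter word into geometric minimality of the crossing pattern; this is exactly the content supplied by the analysis of the affine symmetric group in \cref{sec:affine_plabic_fence} and the conjugacy results it invokes, and it is what cannot be reduced to the disk picture of \cref{thm:diskminimal} nor to a naive innermost-disk argument (an innermost disk in $\R^2$ need not embed in $\T$). A secondary point is bookkeeping the ``no common face'' clause of (3): faces are not preserved set-wise by (M1)$'$, so one argues it directly on the normal form by the lifting argument above --- or, equivalently, by an innermost-disk argument carried out in the complement of the non-parallel strands of $D$, or by tracking the $\e$-regions of \cref{sec:intro:minv}, which the moves do respect.
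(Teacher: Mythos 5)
Your reduction to the affine plabic fence normal form rests on the claim that (M1)$'$ ``preserves the number of intersection points of every unordered pair of strands --- in particular the number of self-intersections of each individual strand.'' This is false, and it is exactly the difficulty the proof has to confront. The $2\to2$ move is not of Yang--Baxter type in this respect: on the left-hand side of (M1)$'$ two strand-germs pass through \emph{both} triple crossings and hence cross twice, forming a lens around the middle face; on the right-hand side that pair no longer crosses locally, and instead the other pair of germs crosses twice. (This is also why, as the paper notes when discussing the modular invariant, the middle face of (M1) changes its $\e$-regions.) Consequently, if the two germs forming the lens are portions of a single strand $\sa$, the move changes the self-intersection count of $\sa$; if they are portions of two parallel strands, it changes whether those strands meet. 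So neither assertion \itemref{prop:appB_propertiesofmoveredtcd2} nor the first clause of \itemref{prop:appB_propertiesofmoveredtcd3} (nor the ``no self-intersecting lift'' half of \itemref{prop:appB_propertiesofmoveredtcd1}) transfers from $D(\daffper)$ back to $D$ by invariance, and your proof of these parts collapses at the first step. What survives of your strategy is the ``no closed loops'' clause (homology classes of strands genuinely are move-invariant) and the verification on the normal form itself, which indeed follows from the c-reducedness of the associated affine permutations.

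The missing idea is to rule out the bad configurations \emph{directly on a move-reduced diagram} using the crossing-count minimality of \cref{thm:tcdmove_red}, rather than to transport the properties along the move-equivalence. The paper does this with the uncrossing (skein) operation: a self-intersection of a lift can be uncrossed to produce a diagram with the same weakly decorated Newton polygon and fewer triple crossings, contradicting minimality; and if a property such as \itemref{prop:appB_propertiesofmoveredtcd2} held for $D(\daffper)$ but failed for $D$, one looks at the first (M1)$'$ in the chain at which it flips, observes that the two lens-forming germs there must be portions of the offending strand(s), and uncrosses both triple crossings of the lens --- this drops the crossing count by two while $2\Area(N)+\excess{\bfla}$ drops by at most one, again contradicting \cref{thm:tcdmove_red}. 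You would need an argument of this kind (or some other mechanism) to close the gap; as written, the proposal proves the proposition only for the normal form.
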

	
	By part~\eqref{prop:appB_propertiesofmoveredtcd3} of~\cref{prop:propertiesofmoveredtcd}, there is a natural cyclic order on each set of parallel strands, so the strongly decorated Newton polygon $\Ndec$ is well-defined.

	\section{Reduction to the cylinder}\label{sec:cylinder}
	The goal of this section is to prove the triple-crossing diagram version of \cref{thm:intro:vertical}. 
	Consider a move-reduced triple-crossing diagram $D$ on the torus $\T$. Let $\rectangle$ be a fundamental rectangle for $\T$, and let $\u, \d, \l, \r$ denote the up, down, left and right sides of $\rectangle$, respectively. Identifying the $\u$ and $\d$ sides, we get a cylinder $\A$, and further identifying the $\l$ and $\r$ sides, we get a torus $\T$. We have quotient maps $\rectangle \rightarrow \A \rightarrow \T$. The images of the $\u$ and $\d$ sides in $\A$ or in $\T$ coincide and are referred to as the \emph{$\u-\d$ side}. Similarly, the images of the $\l$ and $\r$ sides in $\T$ are referred to as the \emph{$\l-\r$ side}.

	We say that triple-crossing diagrams $D$ and $D'$ are \textit{isotopic in $\T$} if there is an ambient isotopy of $\T$ taking $D$ to $D'$. When applying such isotopies, we fix the fundamental rectangle $\rectangle$. Using an isotopy in $\T$ if necessary, we assume that the intersections of strands with the sides of $\rectangle$ are transverse. A strand $\sa$ with homology class $(i,j)$ must intersect the $\l-\r$ side at least $i$ times and the $\u-\d$ side at least $j$ times. The preimage of a strand under the map $\A \rightarrow \T$ is either a union of arcs with endpoints on the boundary of $\A$ or a closed loop in $\A$.

	\subsection{Pushing strands through the boundary}
	
	\begin{figure}
		\def\twd{0.3\textwidth}
		\def\twdd{0.4909090909\textwidth}
		\begin{tabular}{ccc}
			\includegraphics[width=\twd]{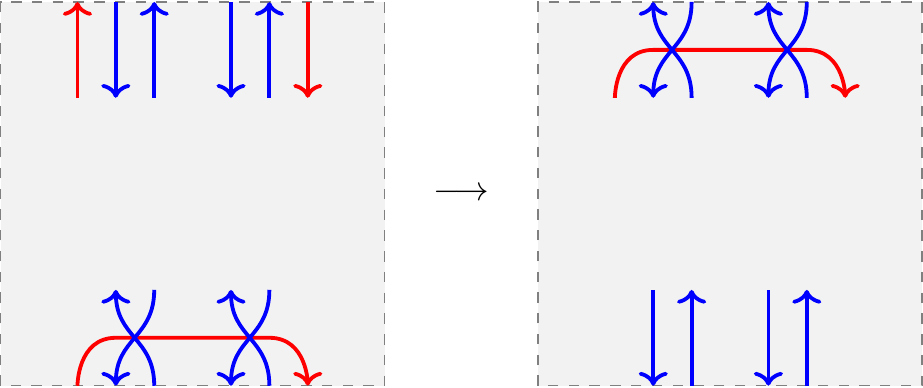}
			& \qquad \qquad &
			\includegraphics[width=\twdd]{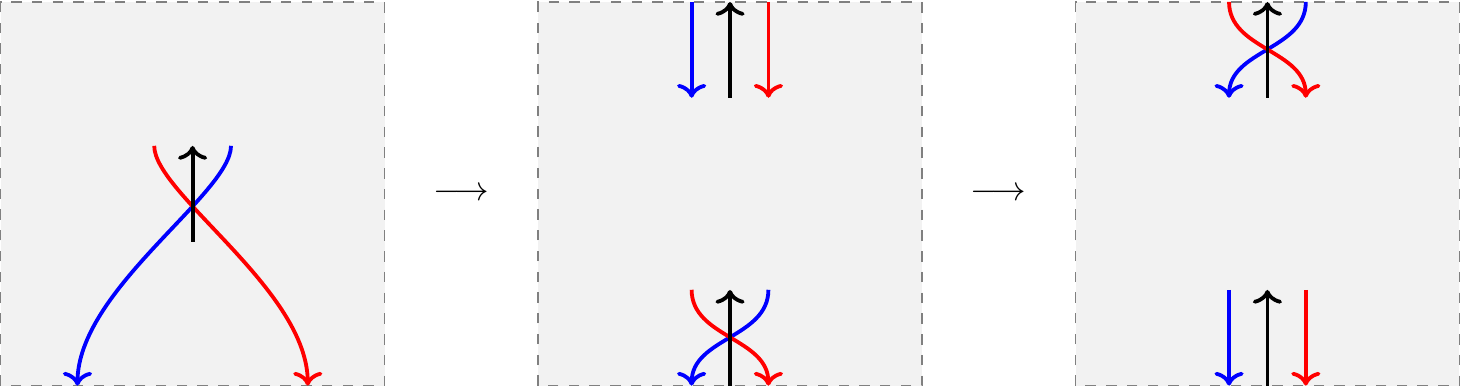}\\
			
			(a) Move (P). && (b) Move (T).
		\end{tabular}
		\caption{The move (P), pushing a boundary-parallel strand past the $\d$-side of $\rectangle$ removing the intersection points with the $\d$-side, and the move (T), interchanging the relative order of the endpoints of the red and blue strands along the $\d$-side of $\D$. thereby removing the triangular region bounded by the strands and the $\d$-side of $\rectangle$.} \label{fig:movePT}
	\end{figure}

	\begin{lemma} \label{lemma:movep}
		Suppose we have a strand $\sa$ in $\rectangle$ with both endpoints on a side $s$ of $\rectangle$. Then, using  moves and isotopy 
		in $\T$, we can remove the endpoints of $\sa$ in $s$ without increasing the number of intersections of any other strands with the boundary of $\rectangle$.
	\end{lemma}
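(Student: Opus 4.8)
The plan is to reduce to \cref{prop:parallel} and then push a boundary-parallel strand through the relevant side of $\rectangle$. By precomposing with a symmetry of $\rectangle$ we may assume $s=\d$. Let $p,q$ be the endpoints of $\sa$ on $\d$ and let $I\subseteq\d$ be the subinterval of $\partial\rectangle$ between them; since $\sa$ is a single arc, its interior lies in the interior of $\rectangle$, so $\sa\cup I$ is an embedded circle bounding a disk $B\subseteq\rectangle$. Because $D$ is move-reduced, \cref{prop:propertiesofmoveredtcd} rules out loops and monogons inside $B$ (each strand-arc of $D$ in $\rectangle$ being a piece of an embedded lift), and move-reducedness further precludes an embedded bigon disk trapped inside $B$; these are the only properties of the ambient diagram I will use.

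I would then argue by induction, with complexity the pair consisting of the number of triple crossings of $D$ inside $B$ and the number of endpoints of $D$ on the open interval $I$, ordered lexicographically. In the base case $B$ contains no triple crossings and $I$ carries no endpoint of $D$, which forces (via the absence of trapped bigons) $B$ to meet no part of $D$; thus $\sa$ is boundary-parallel, and after first stripping off, by the same lemma applied on the $\u$-side, any strand-arc with both endpoints on $\u$ that sits over $I$ (a terminating subrecursion), the isotopy sliding $\sa$ past the $\d$-side sweeps a disk disjoint from the rest of $D$. This is move (P) of \cref{fig:movePT}: it deletes the two endpoints of $\sa$ on $\d$ and changes no other intersection of $D$ with the $\u$--$\d$ or $\l$--$\r$ sides. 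For the inductive step, if some other strand has both endpoints on $I$ it cuts off a disk $B'\subsetneq B$ of strictly smaller complexity, so by induction we first remove its endpoints (pushing it past $\d$), strictly decreasing the complexity; otherwise no strand returns to $I$, hence every strand-arc meeting $B$ crosses $\sa$, and --- exactly as in the proof of \cref{prop:parallel} --- choosing a triple crossing of $B$ nearest $\sa$ and applying $(M1)'$ moves, we slide it across $\sa$ out of $B$, decreasing the number of triple crossings in $B$ while fixing $\partial\rectangle$. Iterating lands us in the base case.

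I expect the main obstacle to be the inductive step that removes triple crossings from $B$: making it precise amounts to re-running, in this setting, the Postnikov--Thurston boundary-parallelization argument behind \cref{prop:parallel}, and one cannot simply invoke \cref{prop:parallel} verbatim because $D|_\rectangle$ need not be move-reduced in the disk (it may acquire parallel bigons upon cutting $\T$ open). A secondary subtlety is the base-case bookkeeping on the far side of $\d$, i.e.\ checking that move (P) is a genuine ambient isotopy of $\T$; an alternative to the nested subrecursion there is to use move (T) of \cref{fig:movePT} to slide the obstructing triple crossings out through the $\d$-side. The remaining ingredients --- the reduction to $s=\d$, the disk structure of $B$, and the verification that $(M1)'$ and moves (P), (T) never raise a strand's number of intersections with $\partial\rectangle$ --- are routine.
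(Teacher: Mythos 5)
Your argument is essentially the paper's: the paper treats the case where $I$ is inclusion-minimal by invoking \cref{prop:parallel} to make $\sa$ boundary-parallel and then sliding it past the side $s$ (move (P)), and treats the general case by inducting on the number of nested intervals contained in $I$ — exactly your two inductive branches, just organized by a slightly different complexity measure. Your only real departure is to re-run the Postnikov--Thurston parallelization inside the disk $B$ instead of citing \cref{prop:parallel} outright; your caveat that $D\cap\rectangle$ need not be move-reduced as a diagram in the disk is a fair reading of where the paper is terse, since what the argument actually needs is only the absence of the local obstructions (loops, monogons, empty parallel bigons) inside $B$, which follows from move-reducedness on $\T$.
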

	\begin{proof}
		Let $I$ denote the interval in $s$ between the endpoints of $\sa$. 
                  Suppose $I$ is minimal with respect to inclusion among all intervals on the boundary of $\rectangle$ between endpoints of strands. Using Proposition \ref{prop:parallel}, we make $\sa$ boundary-parallel, and then apply an isotopy in $\T$ pushing the strand $\sa$ past the $s$-side of $\rectangle$
                  (\figref{fig:movePT}(a)).

                  If $I$ is not minimal, we use induction on the number of intervals contained in $I$. Suppose $I'$ is an inclusion-minimal interval contained in $I$. Using the above procedure, we can remove the endpoints of $I'$ and thereby reduce the number of intervals contained in $I$.
	\end{proof}
	
	\begin{definition}
		We call the above procedure \textit{move (P)}; see \figref{fig:movePT}(a).
	\end{definition}

	\begin{lemma}[{\cite[Figure~12]{GK13}}] \label{lemma:gk}
		Suppose we have a pair of strands in $\rectangle$ that have consecutive in- or out-endpoints on a side of $\rectangle$, and  moreover, suppose that these two strands cross in $\rectangle$. Then, the relative order of the two endpoints along $s$ can be reversed using moves and isotopy in $\T$ without increasing the number of intersections of any other strands with the boundary of $\rectangle$.
	\end{lemma}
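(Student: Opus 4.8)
The plan is to carry out, in the language of triple-crossing diagrams, the argument indicated by \cite[Figure~12]{GK13}. First I would fix notation: reversing all strand orientations if needed, assume the two endpoints are consecutive \emph{in}-endpoints $a_1,a_2$ appearing on $s$ in this order, and let $\sa_1,\sa_2$ be the corresponding strands of $D|_\rectangle$ (possibly two arcs of a single strand of $D$). Following $\sa_1,\sa_2$ inward from $a_1,a_2$, let $q$ be the first point at which these two arcs meet; then the initial segments of $\sa_1,\sa_2$ running from $a_1,a_2$ to $q$, together with the interval $I\subseteq s$ bounded by $a_1,a_2$, enclose a closed disk $\Delta\subseteq\rectangle$. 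Since $a_1,a_2$ are consecutive in the boundary matching, $\mathrm{int}(I)$ meets no strand, so every strand segment entering $\mathrm{int}(\Delta)$ exits it through $\sa_1\cup\sa_2$ (or through the apex $q$). This is the crossing analog of the configuration treated in \cref{lemma:movep}, where a strand with both endpoints on one side was pushed across it (move (P)); here a pair of strands has a self-crossing hanging off $s$, and the aim is to push that crossing across $s$.

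The main step is to reach, using only moves supported in $\mathrm{int}(\rectangle)$, the situation in which $\Delta$ is a single triangular face of $D$ (its apex being $q$, through one corner of which a third strand passes). I would argue this by induction on a complexity measure of $D|_\Delta$ — for instance, the number of triple crossings lying in $\mathrm{int}(\Delta)$ plus the number of loops of $D$ contained in $\Delta$. When that complexity is $0$, there is nothing to do. Otherwise, following the spider-move sequence of \cite[Figure~12]{GK13} translated to triple crossings via \cref{rmk:translate1}, one can apply a move (M1)$'$ near $\partial\Delta$ — or, in the degenerate cases where a vertex of the relevant quadrilateral would have degree less than three, a reduction move (R1)$'$ or (R2)$'$, Thurston's $1$--$0$ move, or a loop removal — that strictly decreases the complexity of $D|_\Delta$, e.g.\ by pushing a crossing toward $q$ and off $\Delta$ through $\sa_1$ or $\sa_2$, or by contracting a bigon formed by a strand segment with $\sa_1$, $\sa_2$, or itself. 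None of these moves touches $\partial\rectangle$, so all boundary intersection numbers are preserved.

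Once $\Delta$ is a single triangular face, I would finish by performing one ambient isotopy of $\T$ supported in a disk neighborhood of $\Delta\cup I$ that drags $\Delta$ across $s$: this interchanges the positions of $a_1$ and $a_2$ along $s$, absorbs the face $\Delta$, and — being supported near $I$ and disjoint from every other strand — leaves the intersection of every other strand with $\partial\rectangle$ unchanged. I would record this as move (T), the crossing analog of move (P) (\figref{fig:movePT}(b)). The main obstacle is the bookkeeping in the inductive step: checking that every local configuration inside $\Delta$ admits one of the allowed moves strictly decreasing the complexity, and in particular handling the degenerate square-move cases (collisions with vertices of degree below three, bigons whose corner triple crossings involve a fourth strand) and the precise local shape of move (T) when a third strand runs through the apex $q$. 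Since the minimal-graph version of exactly this case analysis is \cite[Figure~12]{GK13}, and allowing strand self-intersections only adds the bigon- and loop-contraction cases, which the same moves dispatch, I would present the proof by reduction to that figure rather than redrawing every configuration.
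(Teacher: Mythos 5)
First, note that the paper does not actually prove this lemma: it is imported verbatim from \cite[Figure~12]{GK13}, so there is no in-text argument to compare against, and your attempt to reconstruct the proof is reasonable in spirit (innermost region between the two endpoints, induction on the complexity inside it, then a single boundary move). However, the sketch has two genuine gaps, and the second one sits exactly at the crux of the cited figure.

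First, the terminal configuration you aim for is impossible, and the final step is asserted rather than checked. Around a triple crossing the six strand-ends alternate in and out (\cref{dfn:tcd_T}\eqref{TCD2}), so the two ends of $\sa_1,\sa_2$ entering $q$ from $a_1,a_2$ cannot be adjacent in the cyclic order at $q$: exactly one outgoing end points from $q$ into $\Delta$. Hence $\Delta$ is never ``a single triangular face''; after clearing, it still contains a strand segment running from $q$ to the unique out-endpoint of $s$ lying between $a_1$ and $a_2$ (the endpoints on a side alternate in/out, cf.\ the labeling \cref{eq:label_strip}). Your closing isotopy is therefore \emph{not} ``disjoint from every other strand'': it drags the third strand through $q$ across $s$, and the entire content of the lemma is the verification that this does not increase that strand's intersection number with $\partial\rectangle$. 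That verification is precisely what \cite[Figure~12]{GK13} (and \figref{fig:movePT}(b)) depicts, and your sketch replaces it with a false disjointness claim. Second, your inductive step invokes reduction moves ((R1)$'$, (R2)$'$, the $1$--$0$ move, loop removal) in degenerate cases. The conclusion of the lemma only allows equivalence moves and isotopy; a reduction changes the move-equivalence class and would invalidate the downstream use in \cref{thm:intro:vertical}\ref{intro:Gamma_vertical}, where move-equivalence is asserted. You must instead argue that such configurations cannot occur, or observe that their occurrence would contradict the standing assumption of \cref{sec:cylinder} that $D$ is move-reduced. (A smaller issue: your choice of $q$ as ``the first point where the two arcs meet'' does not produce an embedded disk if $\sa_1$ or $\sa_2$ self-intersects before reaching $q$, or if they are two arcs of one strand of $D$; an innermost-triangle argument is needed there too.)
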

	We remove the consecutiveness assumption from Lemma \ref{lemma:gk}. 
	\begin{lemma} \label{lemma:movet}
		Suppose we have a pair $(\sa, \sa')$ of strands with endpoints on a side $s$ of $\rectangle$ with the same orientation (i.e., both in or both out), and suppose that $\sa,\sa'$ cross in $\rectangle$. Then, the relative order of the endpoints of $\sa,\sa'$ in $s$ can be reversed using moves and isotopy in $\T$, thereby removing the triangular region bounded by $\sa,\sa'$ and the side $s$ without increasing the number of intersections of any other strands with the boundary of~$\rectangle$. 
	\end{lemma}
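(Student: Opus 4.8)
The plan is to mimic the proof of \cref{lemma:movep}, using \cref{lemma:gk} (and \cref{lemma:movep} itself, i.e.\ move~(P)) in the role played there by \cref{prop:parallel}. Write $p,p'$ for the endpoints of $\sa,\sa'$ on $s$, let $J\subseteq s$ be the closed segment between them, and let $m$ be the number of strand endpoints of $D$ in the interior of $J$. I induct on $m$. If $m=0$, then $p$ and $p'$ are consecutive endpoints on $s$, so \cref{lemma:gk} applies directly: it reverses their order along $s$, and the region it removes is exactly the triangular region bounded by $\sa$, $\sa'$ and $s$. Throughout, I take the triangle to be the disk $\Delta\subseteq\rectangle$ bounded by $J$ together with the arcs of $\sa$ and $\sa'$ emanating from $p$ and $p'$ up to their first mutual crossing $x$; an innermost argument provides such an $x$ for which the interior of $\Delta$ is disjoint from $\sa\cup\sa'$.

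For the inductive step, suppose $m\ge 1$, and let $q$ be the endpoint in the interior of $J$ closest to $p$, with strand $\tau$; thus $p$ and $q$ are consecutive on $s$. Since $q$ lies in the interior of $J$, the strand $\tau$ enters $\Delta$ at $q$, and it must leave $\Delta$; as the interior of $\Delta$ meets neither $\sa$ nor $\sa'$, exactly one of the following occurs. \textbf{(a)} The other endpoint of $\tau$ also lies on $s$: then \cref{lemma:movep} pushes $\tau$ past $s$ and deletes $q$ from the interior of $J$. \textbf{(b)} $\tau$ crosses $\sa$: then $\sa$ and $\tau$ are crossing strands with consecutive endpoints $p,q$ on $s$, so \cref{lemma:gk} reverses the order of $p$ and $q$, moving $q$ out of the open segment $(p,p')$. \textbf{(c)} $\tau$ crosses $\sa'$: then $\sa'$ and $\tau$ are crossing strands with endpoints $p',q$ on $s$ and with only $m-1<m$ strand endpoints strictly between $p'$ and $q$, so by the induction hypothesis we may reverse the order of $p'$ and $q$, again moving $q$ out of $(p,p')$. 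In each case $m$ strictly decreases, the crossing $x$ of $\sa$ and $\sa'$ lies in the interior of $\Delta$ and is left untouched, and no strand gains intersections with $\partial\rectangle$, so the induction closes.

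The only step requiring genuine care is the use of \cref{lemma:gk} in case~(b) and of the induction hypothesis in case~(c): both ask that the two strands in question have the \emph{same} orientation at their endpoints on $s$, so one must verify that $\tau$ meets $\sa$ (respectively $\sa'$) with the orientation at $q$ agreeing with that of $\sa$ at $p$ (respectively $\sa'$ at $p'$). I expect that, given the hypothesis that $\sa$ and $\sa'$ have the same orientation, this is forced by the consistency of face orientations, property~\eqref{TCD2}, around $\Delta$ or around the sub-triangle cut off by $\tau$; proving this — and, if it should fail, providing an alternative reduction when the intervening endpoint has the ``opposite'' orientation, a case to which neither \cref{lemma:gk} nor move~(P) applies directly — is the main obstacle. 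It is the counterpart of, and more delicate than, the minimality argument in the proof of \cref{lemma:movep}. Checking that the auxiliary applications of move~(P) and of \cref{lemma:gk} take place in a neighborhood of $J$, and hence disturb neither $x$ nor the remaining boundary intersections, is routine.
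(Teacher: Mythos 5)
Your reduction founders exactly where you flag it, and the difficulty does not resolve in the direction you hope. Taking $q$ to be the endpoint in the interior of $J$ \emph{closest} to $p$ is the wrong choice: for these diagrams the crossings of strands with a side of $\rectangle$ alternate in orientation (this is exactly the structure the paper exploits when it labels the boundary points of the strip alternately as in- and out-points in its discussion of affine matchings), so whenever the interior of $J$ contains any endpoints at all, the one nearest to $p$ generically has the \emph{opposite} orientation to $p$. Consistency of face orientations therefore works against you, not for you: your cases (b) and (c) would essentially never be applicable, and you have no reduction for an intervening endpoint of opposite orientation. So the induction does not close as written.

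The repair — and this is what the paper's proof does — is to ignore the opposite-orientation endpoints entirely. In \cref{lemma:gk}, ``consecutive in- or out-endpoints'' means consecutive among the endpoints of the \emph{same} orientation (compare its use in \cref{lemma:cross}, where $A_i$ and $A_{i+1}$ are consecutive in-points even though $A_{\ovl i}$ lies between them). Hence a strand whose in-endpoint lies strictly between the out-endpoints $p,p'$ is simply not an obstruction and never needs to be moved. The paper first clears strands with both endpoints in $I$ by move (P), observes that every remaining strand with an out-endpoint in the interior of $I$ must cross $\sa$ or $\sa'$, and then monotonically decreases the number $\ell$ of crossings among strands with out-endpoints in $I$ by repeated applications of \cref{lemma:gk} until only the $\sa$--$\sa'$ crossing remains, at which point $p,p'$ are consecutive out-endpoints and can be swapped. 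Your inductive skeleton can be salvaged by the same change: induct on the number of \emph{out}-endpoints in the interior of $J$ (after the move-(P) cleanup) and let $q$ be the nearest such out-endpoint to $p$; with that modification your cases (a)--(c) go through and the argument becomes an inductive rephrasing of the paper's. As written, however, the proof has a genuine gap.
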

	\begin{proof}
		Assume without loss of generality that both $\sa,\sa'$ have an out-endpoint in $s$. Let $I$ be the interval in $s$ between the endpoints of $\sa,\sa'$. Use move (P) to remove any strands that have both endpoints in $I$. Then, any strand with an out-endpoint in $I$ must cross at least one of $\sa,\sa'$. Let $\ell$ be the number of crossings formed by pairs of strands having an out-endpoint in $I$. Repeatedly using Lemma \ref{lemma:gk}, we can decrease $\ell$ until it becomes equal to $1$, in which case we can swap the endpoints of $\sa,\sa'$. 
	\end{proof}
	
	\begin{definition}
		We call the procedure in Lemma \ref{lemma:movet} \textit{move (T)}; see \figref{fig:movePT}(b).   
	\end{definition}
	
	\subsection{Affine matchings}\label{sec:affine-matchings}
	Fix $n\geq1$. Consider an infinite vertical strip $\Strip$ with points labeled 
	\begin{equation}\label{eq:label_strip}
		\dots, A_{\ovl0},A_1,A_{\ovl1},A_2,A_{\ovl2},\dots \quad\text{and}\quad \dots,B_{\ovl0},B_1,B_{\ovl1},B_2,B_{\ovl2},\dots
	\end{equation}
	on the left and the right boundary of $\Strip$, 
	so that the points $A_i,B_i$ are at the same height, and the points $A_{\ovl i},B_{\ovl i}$ are at the same height, for each $i\in\Z$. Let $A:=\{A_i\mid i\in\Z\}$, $\ovl A:=\{A_{\ovl i}\mid i\in\Z\}$, $B:=\{B_i\mid i\in\Z\}$, $\ovl B:=\{A_{\ovl i}\mid i\in\Z\}$. 
	
	\begin{definition}
		An \emph{affine matching} is a bijection $\am: A \sqcup \ovl B \to \ovl A \sqcup B$ such that $\am(A_{i+n})=\am(A_i)+n$ and $\am(B_{\ovl{i+n}})=B_{\ovl i}+n$ for all $i \in \Z$.
	\end{definition}
	\noindent This notion is closely related to the classical notion of \emph{affine permutations} discussed in \cref{sec:aff_perm_backgr}. An affine matching is represented by drawing an arrow from $x$ to $\am(x)$ inside $\Strip$ for all $x\in A\sqcup\ovl B$.

	A triple-crossing diagram $D$ in $\T$ gives rise to an affine matching $\amD$ as follows. Let $\rectangle$ be a fundamental rectangle. Using an $\operatorname{SL}_2(\Z)$ transformation, we can assume that there are no strands with homology classes in $(\Z \times \{0\}) \cup (\{0\} \times \Z)$ other than \trvl loops. Let $n$ denote the number of intersection points of strands with the $\l-\r$ side. Let $\mathbb S$ denote the infinite vertical strip that is the universal cover of $\A$. Then, $\mathbb S$ consists of $\Z$-many copies of $\rectangle$ glued along the $\u-\d$ sides, which we label $\dots, \rectangle_{-1}, \rectangle_0, \rectangle_{1},\dots$ from bottom to top. Applying an isotopy in $\T$, we may assume that the bottom-most intersection point of a strand with the left side of $\rectangle$ is oriented in.
	Label the intersection points of strands with the boundary of $\Strip$ as in~\eqref{eq:label_strip}. Thus, the points in $A \sqcup \ovl B$ are in-endpoints and the points in $\ovl A \sqcup B$ are out-endpoints. 
	The connectivity of strands in $\Strip$ determines an affine matching $\amD$, which, moreover, has total signed number of crossings through any horizontal line equal to $0$.
	
	\begin{remark}\label{rmk:strand_word}
		Each strand $\sa$ in $\mathbb S$ determines a word $w_{\sa}$ in the alphabet $\{\u,\d,\l,\r\}$ which records the crossings of $\sa$ with the sides of $\rectangle$. Using move (P), we can assume that there are no occurrences of $\u \d$ or $\d \u$ in $w_{\sa}$; thus, we have $w_{\sa}=xy^kz$ for some $x,z\in\{\l,\r\}$, $y\in\{\u,\d\}$, and $k\geq 0$. For a strand $\sa$ such that $w_{\sa}=xy^kz$, we denote by $\sa_1,\dots,\sa_{k+1}$ the corresponding strands in $\rectangle$.%
	\end{remark}	
	
	\begin{lemma}\label{lemma:cross}
		If the strands $\sa$ and $\sb$ emanating from $A_i$ and $A_{i+1}$ cross in $\Strip$, then we can swap their endpoints $A_i$ and $A_{i+1}$ using moves and isotopy in $\T$ without increasing the number of intersections of any other stands with the boundary of $\mathbb A$. 
	\end{lemma}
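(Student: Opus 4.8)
The plan is to reduce the statement to its disk counterpart, namely to \cref{lemma:gk} (equivalently, to move (T) of \cref{lemma:movet}), by cutting the cylinder $\A$ open along an auxiliary arc chosen so as not to separate the crossing of $\sa$ and $\sb$ from the endpoints $A_i,A_{i+1}$.

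First I would pass to the universal cover $\Strip$ of $\A$. By part~\eqref{prop:appB_propertiesofmoveredtcd1} of \cref{prop:propertiesofmoveredtcd}, applied via the inclusion $\Strip\hookrightarrow\R^2$ of $\Strip$ into the universal cover of $\T$, every strand-arc of $D$ in $\Strip$ is a simple arc. On each boundary line of $\Strip$ the in- and out-endpoints alternate (a consequence of property~\eqref{TCD2}), so $A_i$ and $A_{i+1}$ are consecutive among the in-endpoints, separated by a single out-endpoint $A_{\ovl i}$; let $J$ be the boundary segment of $\Strip$ joining $A_i$ to $A_{i+1}$, whose interior meets $D$ only at $A_{\ovl i}$. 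Among the crossings of $\sa$ and $\sb$ in $\Strip$ I would pick $X$ so that the sub-arcs $\sa|_{[A_i,X]}$ and $\sb|_{[A_{i+1},X]}$ meet only at $X$ (for instance, take $X$ to be the first crossing of $\sa$ with $\sb$ encountered along $\sa$, and then, if necessary, replace it by a crossing lying on the resulting sub-arc of $\sb$ closest to $A_{i+1}$). As these arcs are simple, their union with $J$ is an embedded circle bounding a disk $R\subset\Strip$; I would moreover first delete all triple crossings from the interior of $R$ using the move (M1)$'$ and isotopy, so that $R$ is as small and simple as possible.

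Next I would choose an embedded arc $\eta$ in $\A$ running from a point of $\partial\A$ immediately below the image of $A_i$ to a point of the other boundary circle at the same height, disjoint from the images in $\A$ of $\sa|_{[A_i,X]}$ and $\sb|_{[A_{i+1},X]}$. Cutting $\A$ along $\eta$ produces a fundamental rectangle $\rectangle'$ of $\T$ on one side of which $A_i$, $A_{\ovl i}$, $A_{i+1}$ appear consecutively, and in which $\sa$ and $\sb$ still cross (at $X$) because $\eta$ avoided those two arcs. Now \cref{lemma:gk}, applied to $\sa,\sb$ in $\rectangle'$, reverses the order of $A_i$ and $A_{i+1}$ using moves and isotopy in $\T$ without increasing the intersection number of any other strand with $\partial\rectangle'$; since $\partial\A\subseteq\partial\rectangle'$, the number of intersections with $\partial\A$ is not increased either, which is what is required.

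The hard part is the construction of $\eta$. On $\Strip$ it is easy to route an arc around the compact disk $R$ from just below $A_i$ to the opposite boundary line, but for such an arc to descend to an embedded arc on $\A$ it must be disjoint from all of its $\Z$-translates, and this may fail: the strands of $D$, though simple on $\Strip$, genuinely self-intersect on $\A$ (indeed already on $\T$, by part~\eqref{prop:appB_propertiesofmoveredtcd2} of \cref{prop:propertiesofmoveredtcd}), so the translates $R+(0,k)$ for $k\in\Z$ may overlap $R$. This is precisely the difficulty that the argument of \cite{FM}, valid only for strands monotone from left to right, does not confront, and move-reducedness is what lets one get past it: once the interior of $R$ has been emptied of triple crossings, $\sa|_{[A_i,X]}\cup\sb|_{[A_{i+1},X]}$ can be shrunk into a small disk, after which an arc $\eta$ with monotone $x$-coordinate --- hence automatically disjoint from its translates --- and avoiding $\bigcup_{k\in\Z}(R+(0,k))$ is readily produced. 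The remaining point of care is to verify that this preliminary simplification of $R$, together with the moves supplied by \cref{lemma:gk}, never increases any strand's intersection number with $\partial\A$; the safest way to ensure this is to carry everything out using the interior move (M1)$'$ and isotopies fixing $\rectangle'$, under which no boundary intersection number changes at all.
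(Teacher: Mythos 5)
There is a genuine gap, and it sits exactly at the two points you flag as ``the hard part.'' First, invoking part~\eqref{prop:appB_propertiesofmoveredtcd1} of \cref{prop:propertiesofmoveredtcd} here is circular: that proposition is proved via \cref{lemma:tcdtoper}, whose proof applies \cref{thm:intro:vertical}, whose proof in turn rests on \cref{lemma:cross}. Nothing available at this stage of the paper guarantees that lifts of strands to $\R^2$ (or to $\Strip$) are simple.

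Second, and more seriously, the cutting arc $\eta$ need not exist. If the first crossing of $\sa$ and $\sb$ lies in the copy $\rectangle_j$ with $j$ large (i.e., the words $w_{\sa},w_{\sb}$ of \cref{rmk:strand_word} contain many $\u$'s), then the images in $\A$ of $\sa|_{[A_i,X]}$ and $\sb|_{[A_{i+1},X]}$ wind around the cylinder and separate its two boundary circles, so \emph{every} arc joining the two boundary components of $\A$ meets them; no choice of $\eta$ avoids them. Your proposed remedy --- empty $R$ of triple crossings with (M1)$'$ and then ``shrink'' the two arcs into a small disk --- is not available: (M1)$'$ only relocates crossings (and whether they can all be pushed out of $R$ is itself a nontrivial claim, since other strands enter and leave $R$), and no isotopy of $\T$ fixing $\rectangle$ can unwind these arcs from the cylinder without dragging them across the $\u-\d$ side, which is precisely the operation that must be justified and accounted for in the boundary-intersection count. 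That unwinding is the entire content of the lemma: the paper's proof inducts on the minimal index $j$ such that $\sa_j$ crosses $\sb_j$, applying move (T) at the $\u-\d$ side to decrease $j$ by one, and only at $j=1$ applies move (T) at the $\l-\r$ side (your \cref{lemma:gk}/\cref{lemma:movet} step); it also treats separately the case $w_{\sb}=\r\r$ or $\r\d v$, where the crossing can be taken in $\rectangle_1$ directly. Your proposal defers this induction to an unjustified geometric claim, so the reduction to the disk does not go through.
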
	
	\begin{proof}
		By translating the fundamental rectangle, we can assume that $i=1$. Without loss of generality, we can assume that $w_\sa= \r \u^k z$ where $k \geq 0$ and $z\in\{\l,\r\}$. If $w_\sb=\r \r$ or $w_\sb=\r \d v$ for some word $v$, then the segments $\sa_1$ and $\sb_1$ cross in $\rectangle$ and we use Lemma \ref{lemma:movet}. Suppose $w_{\sb}=\r \u^m z$ with $m\geq0$ and $z \in \{\l,\r\}$. Consider a crossing of $\sa$ and $\sb$ in $\mathbb S$. If this crossing belongs to $\sa_j$ then it must also belong to $\sb_j$. Let $j\geq1$ be the minimal index such that $\sa_j$ crosses $\sb_j$. If $j\geq2$, then  applying move (T) at the $\u-\d$ side, we can swap the bottom endpoints of $\sa_j$ and $\sb_j$ so that the strands $\sa_{j-1},\sb_{j-1}$ would cross. We continue this process until $j=1$, in which case we apply \cref{lemma:movet} at the $\l-\r$ side.
	\end{proof}

	\subsection{Proof of Theorem~\ref{thm:intro:vertical}} We are now ready to prove~Theorem~\ref{thm:intro:vertical}.
	
	\begin{lemma}
		The number of intersections of strands in $D$ with the sides of $\A$ can be made either the minimum possible (i.e., equal to $\sum_{\sa\in\SD} |i|$, where $[\sa]=(i,j)$) or equal to $2$ using moves and isotopy in $\T$.
	\end{lemma}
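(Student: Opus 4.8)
The plan is to reduce the quantity $n:=\#(\SD\cap C)$, where $C$ denotes the $\l$--$\r$ curve of $\T$ (equivalently, the image of the two sides of $\A$), by induction on $n$. After the $\operatorname{SL}_2(\Z)$ normalization, $C$ may be taken to have homology class $(0,1)$, so a strand $\sa$ with $[\sa]=(i,j)$ meets $C$ algebraically $i$ times and hence in at least $|i|$ points; summing over $\sa\in\SD$ gives $n\ge\sum_{\sa\in\SD}|i|$, with equality precisely when no strand meets $C$ in a pair of oppositely signed points. Thus it suffices to prove: if $n>\max\{\sum_\sa|i|,\ 2\}$, then using moves and isotopy in $\T$ we can decrease $n$ by $2$ while keeping $D$ move-reduced. (After the first isotopy we assume, as in \cref{lemma:cross}, that all intersections of strands with $C$ are transverse and avoid the triple crossings of $D$.)

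For the reduction step, cut $\T$ along $C$ to recover the cylinder $\A$, so that each strand becomes a disjoint union of arcs with endpoints on $\partial\A=C\sqcup C$; by \cref{prop:propertiesofmoveredtcd}, no strand contributes a closed loop inside $\A$. If $n>\sum_\sa|i|$, some strand $\sa$ meets $C$ in two points of opposite sign, so in $\A$ it has an arc $a$ both of whose endpoints lie on the same copy of $C$ (a ``$C$-U-turn''). Among all such U-turn arcs, choose $a$ so that the sub-interval $\beta\subset\partial\A$ cut off by its endpoints is inclusion-minimal among the intervals between consecutive endpoints of arcs on $\partial\A$; then $\beta$ contains no endpoints in its interior, so any other arc entering the half-disk $H$ bounded by $a\cup\beta$ must cross $a$. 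Using \cref{lemma:gk} and \cref{lemma:cross} to reverse the relative order of the endpoints of arcs crossing $a$ by move~(T) (\cref{lemma:movet}), we may remove these crossings one at a time and make $H$ disjoint from every other arc; the fact that this cleaning process never creates a monogon or a parallel bigon is where move-reducedness and \cref{prop:propertiesofmoveredtcd} (lifts of strands are embedded) are used. Once $H$ is clean, $a$ is $C$-boundary-parallel, and move~(P) (\cref{lemma:movep}) pushes $a$ past $C$, removing its two endpoints on $C$ and decreasing $n$ by $2$; since this is realized by moves and isotopy in $\T$ and does not increase the number of intersections of any other strand with $C$, the resulting diagram is again move-reduced. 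When invoking moves~(P) and~(T) near $C$ one first cuts $\A$ open along a $\u$--$\d$ arc disjoint from $\beta$, so that $\beta$ becomes part of the $\l$- or $\r$-side of a fundamental rectangle and \cref{lemma:movep,lemma:movet} apply verbatim.

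It remains to identify the stable values of $n$. If the above procedure cannot be applied, then no strand meets $C$ non-minimally, so $n=\sum_\sa|i|$, unless the obstruction is that $\sum_\sa|i|<2$ while $D$ still meets $C$; in the latter case all strands have $[\sa]=0$, and a move-reduced diagram of this kind consists of a single \trvl loop which is essential in $\A$ (so cannot be pushed off $C$) and meets $C$ in exactly two points, giving $n=2$. This is exactly case~\ref{intro:Gamma_monogon} of \cref{thm:intro:vertical}, while the case $n=\sum_\sa|i|$ feeds into case~\ref{intro:Gamma_vertical}. I expect the main obstacle to be the cleaning argument in the second paragraph: one must show that whenever $n>\max\{\sum_\sa|i|,2\}$ the innermost half-disk $H$ can always be emptied of arcs using only the crossing swaps of \cref{lemma:gk,lemma:cross} together with moves~(T) and~(P), and never gets stuck with $H$ containing a configuration forbidden by move-reducedness — this is precisely the point at which \cref{prop:propertiesofmoveredtcd} and the absence of monogons and parallel bigons are indispensable.
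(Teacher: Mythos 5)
Your overall plan (find an innermost U-turn arc of the $\l-\r$ curve, empty the region it cuts off, then push it through with move (P)) matches the paper's strategy in outline, but there are two genuine gaps. The first is a circularity: you invoke \cref{prop:propertiesofmoveredtcd}, but its proof depends on \cref{lemma:tcdtoper}, which depends on \cref{thm:intro:vertical}, which depends on the present lemma. At this stage none of the structural facts about move-reduced diagrams on $\T$ (embedded lifts, no closed loops in the preimage) are available; the paper's argument uses only the disk-level results of \cref{sec:tcddisk} inside a fundamental rectangle, via moves (P) and (T).

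The second gap is the central one: your half-disk $H$ need not exist. If the lift of the U-turn arc $a$ to the strip $\Strip$ runs from $A_i$ to $A_{\ovl{j}}$ with $\dist(A_i,A_{\ovl{j}})$ exceeding one period, then $a$ winds around $\A$ in the $\u-\d$ direction; two consecutive deck-translates of its lift then have interleaved endpoints on $\partial\Strip$ and must cross, so $a$ self-intersects in $\A$ and bounds no disk with any boundary interval $\beta$. For the same reason such an $a$ is not contained in any fundamental rectangle, so your final step --- cutting along a $\u-\d$ arc disjoint from $\beta$ and applying \cref{lemma:movep} ``verbatim'' --- is unavailable in exactly the hard case. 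This winding is the difficulty the paper's proof is built around: it measures $\dist(A_i,A_{\ovl{j}})$ in the universal cover of $\A$, chooses the U-turn minimizing it, observes that minimality forces the strand leaving $A_{i+1}$ to cross $\sa$, and then applies \cref{lemma:cross} (which is engineered, via the words $xy^kz$ of \cref{rmk:strand_word}, to apply move (T) level by level at the $\u-\d$ side) to shrink $\dist(A_i,A_{\ovl{j}})$ until the arc fits in a single rectangle; only then is move (P) invoked. Your cleaning step removes other arcs from $H$ but never shortens $a$ itself, so it cannot substitute for this. Two smaller points: an interval $\beta$ that is inclusion-minimal among U-turn intervals can still contain endpoints of arcs exiting through $a$, so the claim that it ``contains no endpoints in its interior'' needs the inductive treatment of \cref{lemma:movep}; and your identification of the terminal value $n=2$ with a single simple \trvl loop is asserted rather than proved --- the paper derives it from the case analysis of affine matchings with $n=1$ that follows the lemma.
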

	
	\begin{proof}
		Consider the affine matching $\amD$ and consider a strand $\sa$ from $A_i$ to $A_{\ovl{j}}$ with the smallest value of $\dist(A_i,A_{\ovl j})$. Assume $i<j$. By minimality of $\dist(A_i,A_{\ovl j})$, the strand $\sb$ emanating from $A_{i+1}$ must cross the strand $\sa$. By Lemma \ref{lemma:cross}, we can swap the endpoints $A_i$ and $A_{i+1}$, decreasing $\dist(A_i,A_{\ovl j})$. Eventually, we force $\dist(A_i,A_{\ovl j})$ to be less than the height of $\rectangle$, in which case we apply move (P) and decrease $n$. We proceed until either $n=1$ or when there are no arcs from $A_i$ to $A_{\ovl j}$, in which case there will be also no arcs from $B_{\ovl i}$ to $B_j$ (for any $i,j\in\Z$).
	\end{proof}
	
	We now study the case when $n=1$. Let $\am$ be an affine matching with $n=1$ such that the total signed number of crossings through any horizontal line is equal to $0$. Then, either:
	\begin{enumerate}
		\item\label{am1} $\am(A_1)=B_{{k+1}}$ and $\am(B_{\ovl1})=A_{\ovl{-k+1}}$ for some $k \in \Z$; or
		\item\label{am2} $\am(A_1)=A_{\ovl{{k+1}}}$ and $\am(B_{\ovl{1}})=B_{-k+1}$ for some $k \in \Z$.
	\end{enumerate}
	Therefore, if $D$ is a triple-crossing diagram move-reduced in $\D$ such that $\amD$ satisfies~\eqref{am1}, then the number of intersections of strands with the sides of $\A$ is minimal and equal to $2$. Suppose that $\amD$ satisfies~\eqref{am2}. If $k=0$, we can use move (P) to remove the two intersection points, so the number of intersections of strands with sides of $\A$ is minimal and equal to $0$. However, if $k \neq 0$, the number of intersections of strands with the sides of $\A$ is not minimal (since we have $\sum_{\sa\in\SD} |i|=0$), and we call such a triple-crossing diagram \textit{exceptional}. In this case, $D$ consists of a single strand $\sa$ in $\T$ that is a \trvl loop (see~\figref{fig:parallelbigons}(a) for the associated bipartite graph when $k=1$). It is not hard to see that the strand $\sa$ is simple, that is, lifts to a non self-intersecting closed curve in $\R^2$. This is case~\ref{intro:Gamma_monogon} of \cref{thm:intro:vertical}. In order to complete the proof, we need to show that in this case, the associated bipartite graph $\Gamma$ has no perfect matchings.

	\begin{proposition}\label{prop:monogon}
		Let  $\Gamma$ be a move-reduced bipartite graph in $\T$. Suppose that $\Gamma$ has a single strand which is a simple \trvl loop. Then $\Gamma$ has a different number of black and white vertices, and, in particular, has no perfect matchings.
	\end{proposition}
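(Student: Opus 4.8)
The goal reduces to showing $|B|\neq|W|$, since a bipartite graph with unequal color classes has no perfect matching. Note first that a move-reduced $\Gamma$ has no leaves and no isolated edge (these are reduction moves (R2),(R3)), and in the situation at hand $\Gamma$ has no isolated vertices; since each connected component carrying an edge contributes at least one strand and distinct components contribute distinct strands, having a single strand $\sa$ forces $\Gamma$ to be connected. Also, being the only strand, $\sa$ automatically has $[\sa]=0$.

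The first step is to pin down the combinatorics: I claim $\Gamma$ has exactly one face and $|E|=|B|+|W|+1$. Each strand, drawn as a closed curve running alongside the edges and disjoint from $\Gamma$, is connected, hence lies inside one face; and the strands together pass through every ``corner'' (pair of edges consecutive in the rotation at a vertex) exactly once, so with a single strand \emph{all} $\sum_v\deg(v)=2|E|$ corners are corners of one face $f_0$. Since $\sum_f(\#\text{corners of }f)=2|E|$ with nonnegative summands, $f_0$ is the only face. For the edge count, write $V=|B|+|W|$: because $\Gamma$ is connected, has no leaves, and is not a single cycle (a cycle has two strands), its first Betti number $|E|-V+1$ is $\ge 2$, so $|E|-V\ge 1$; on the other hand $f_0=\T\setminus\Gamma$ is a connected open subsurface of the torus, hence the interior of a compact surface with nonempty boundary, so $\chi(f_0)\le 1$, and $0=\chi(\T)=(V-|E|)+\chi(f_0)$ gives $|E|-V\le 1$. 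Hence $|E|=V+1$ and the embedding is cellular. (Equivalently, using the hypothesis directly: $\sa$ bounds a disk $D^+$, $\Gamma$ lies in the once-punctured torus $\T\setminus\overline{D^+}$ and is a spine of it.)

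Now suppose for contradiction that $|B|=|W|=:m$, so $|E|=2m+1$. Since every vertex has degree $\ge 2$, $\sum_{v\in B}(\deg v-2)=|E|-2m=1$, and as the summands are nonnegative integers, exactly one black vertex has degree $3$ and the other $m-1$ have degree $2$; likewise for white vertices. Suppressing all degree-$2$ vertices changes neither the faces of the embedding nor the number of strands (a strand passes straight through a degree-$2$ vertex), and produces a connected graph $\Gamma'$ on two vertices — one black, one white, each of degree $3$ — with $3$ edges, still embedded in $\T$ with one face. The only connected multigraphs on two vertices with three edges and both degrees equal to $3$ are the theta graph and the dumbbell (a loop at each vertex plus a connecting edge); the dumbbell has maximum genus $0$ and so admits no one-face embedding in $\T$, so $\Gamma'$ must be the theta graph. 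But a one-face embedding of the theta graph in the torus forces the rotations at its two vertices to induce the same cyclic order on the three edges, and a direct check shows that any such embedding has \emph{three} strands (each of length two). Since strands are preserved under suppression, $\Gamma$ would also have three strands — contradicting the hypothesis. Hence $|B|\neq|W|$, and $\Gamma$ has no perfect matching.

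I expect the main obstacle to be the last two moves — recognizing that the ``balanced'' case is forced, after suppressing degree-$2$ vertices, to be a theta graph, and then checking that the torus embedding of a theta graph is incompatible with having a single strand; the excision of the dumbbell (via its maximum genus, or by a direct enumeration of its rotation systems) and the mild point in Step~2 that the face $f_0$ is genuinely a disk rather than a larger subsurface are the other places where care is needed.
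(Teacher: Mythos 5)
Your Step 2 (the theta/dumbbell analysis) is fine as conditional reasoning, but the proof has a genuine gap at the very first claim of Step 1, on which everything else rests: the assertion that a strand can be ``drawn as a closed curve running alongside the edges and disjoint from $\Gamma$,'' hence lies in a single face, hence all $2|E|$ corners belong to one face. This is false. A strand cannot be isotoped off of $\Gamma$: if it traverses an edge $e$ from its black endpoint $b$ to its white endpoint $w$, the corner it cuts at $b$ (turning maximally right) and the corner it cuts at $w$ (turning maximally left) lie on \emph{opposite} sides of $e$, so any curve realizing the strand must cross $e$ transversally --- this is exactly the standard ``wiggly'' picture in which each strand crosses every edge it uses. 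Consequently a single strand in general visits corners of many distinct faces (in the honeycomb lattice on $\T$, every strand does), and ``one strand $\Rightarrow$ one face'' does not follow. The parenthetical alternative is also unsound: $\sa$ is a closed walk \emph{in} $\Gamma$, not a curve disjoint from it, and it need not be embedded in $\T$ (only its lift to $\R^2$ is assumed simple; the strand typically self-intersects in $\T$, e.g.\ it traverses edges twice in the basic example of \cref{fig:parallelbigons}), so it does not bound a disk with $\Gamma$ in the complement. While the conclusion ``exactly one face'' does hold for the smallest exceptional graph, you give no valid argument for it in general (the exceptional graphs form an infinite family, indexed by $k$ in \cref{sec:cylinder}), and without it the identity $|E|=|B|+|W|+1$ and the reduction to a two-vertex graph collapse.

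For comparison, the paper's proof avoids any face count: it lifts $\Gamma$ and the strand to $\R^2$, observes that each lift of $\sa$ is a simple closed curve of winding number $\pm1$, sums the winding numbers of the $\Omega(N^2)$ translates inside $[0,N]^2$, and resolves all crossings by the skein relation \cref{eq:skein}; the resulting simple curves each enclose one vertex, oriented according to its color, forcing $|B|-|W|\neq 0$. If you want to salvage your route, the missing ingredient is a genuine proof that a move-reduced graph with a single strand that is a simple \trvl loop is cellularly embedded with exactly one face; as it stands, that step is unsupported.
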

	\begin{proof}
		Given a closed immersed curve $\curve:S^1\to\R^2$ with non-vanishing differential, we let $\wind\curve\in\Z$ denote its \emph{winding number}, which is the number counterclockwise turns made by the tangent vector of $\curve$. For a collection $\curves$ of such curves, we let $\wind\curves:=\sum_{\curve\in\curves} \wind\curve$ denote their total winding number. One can check that the total winding number $\wind\curves$ is invariant under the skein relation 
		\begin{equation}\label{eq:skein}
			\begin{tikzpicture}[scale=0.4,baseline=(Z.base)]
				\coordinate(Z) at (0,0);
				\def\lw{1pt}
				\def\rad{1.414}
				\fill[black!5] (0,0) circle (\rad);
				\draw[line width=\lw,->,>={latex}] (-1,-1)--(1,1);
				\draw[line width=\lw,->,>={latex}] (1,-1)--(-1,1);
				\node[scale=1] (A) at (2.5,0) {$\to$};
				\begin{scope}[xshift=5cm]
					\fill[black!5] (0,0) circle (\rad);
					\def\rc{5}
					\draw[line width=\lw,->,>={latex},rounded corners=\rc] (-1,-1)--(0,0)--(-1,1);
					\draw[line width=\lw,->,>={latex},rounded corners=\rc] (1,-1)--(0,0)--(1,1);
				\end{scope}
			\end{tikzpicture}.
		\end{equation}
		Let $\Gammat$ be the lift of $\Gamma$ to the universal cover $\R^2$ of $\T=\R^2/\Z^2$.  Let $\sa$ be the unique strand of $\Gamma$, and let $\sat$ be some lift of $\sa$ to $\R^2$. Thus, $\sat$ is a simple closed curve, and therefore $\wind\sat=\pm1$, depending on whether $\sat$ is oriented counterclockwise or clockwise. Any two lifts of $\sa$ differ by a shift in $\Z^2$. Let $N\gg1$ be a large positive integer, and let $\curves$ be the collection of all $\Z^2$-shifts of $\sat$ that are contained inside the square $[0,N]^2\subset\R^2$. There are $\Omega(N^2)$ such shifts, and therefore $\wind\curves=\pm\Omega(N^2)$. On the other hand, resolving all crossings in $\curves$ using the skein relation~\eqref{eq:skein}, we obtain a collection $\curves'$ of simple closed curves satisfying $\wind\curves=\wind{\curves'}$. Each of these curves will contain a single vertex of $\Gammat$ inside of it. Moreover, if the vertex of $\Gammat$ inside $\curve'\in\curves'$ is black (resp., white), then $\curve'$ is oriented counterclockwise (resp., clockwise). Therefore, the difference between the numbers of black and white vertices of $\Gammat$ contained inside $[0,N]^2$ is of size $\Omega(N^2)$. This implies that $\Gamma$ must have a different number of black and white vertices. 
	\end{proof}

	\section{Affine permutations, cycles, and slopes}\label{sec:affine-perm-cycl}
	As we will explain in \cref{sec:affine_plabic_fence}, \cref{thm:intro:vertical} allows one to recast bipartite graphs embedded in $\T$ and their moves into certain conjugation moves on pairs of affine permutations. In this and the next section, we develop the properties of affine permutations needed to complete the proofs of our main results. 
	
	Our proof strategy is inspired by that of~\cite{Mar}. The reader familiar with the theory of affine Coxeter groups and their reflection representations is encouraged to consult Remarks~\ref{rmk:alcove} and~\ref{rmk:Mar}.%
	
	\subsection{Background and notation}\label{sec:aff_perm_backgr}
	Let $n\geq1$ and recall that $[n]:=\{1,2,\dots,n\}$. 
	An \emph{affine permutation} is a bijection $f:\Z\to\Z$ satisfying $f(i+n)=f(i)+n$ for all $i\in\Z$. The group of affine permutations is denoted $\Saffn$ (where the group operation is given by composition of maps $\Z\to\Z$). For $f\in\Saffn$, set 
	\begin{equation*}%
		\nop(f):=n,\quad \kop(f):=\frac1n\sum_{i\in[n]} (f(i)-i), \quad\text{and}\quad \dop(f):=\gcd(\kop(f),\nop(f)).
	\end{equation*}
	It is known (see \cref{rmk:kopf} below) that $\kop(f)$ is always an integer. We have $\Saffn=\bigsqcup_{k\in\Z} \Saffkn$, where $\Saffkn:=\{f\in\Saffn\mid \kop(f)=k\}$. For $f\in\Saffn$, let $\fb\in\Sn$ be the unique permutation (i.e., bijection $[n]\to[n]$) satisfying $\fb(i)\equiv f(i)\pmod n$ for all $i\in[n]$. For $k\in\Z$, we denote by $\fkn\in\Saffkn$ the affine permutation sending $i\mapsto i+k$ for all $i\in\Z$. The affine permutation $f$ can be written in \emph{window notation} as $[f(1),f(2),\dots,f(n)]$, which completely determines $f(i)$ for all $i\in\Z$.

	\begin{figure}
		\def\twd{0.45\textwidth}
		\begin{tabular}{ccc}
			\includegraphics[width=\twd]{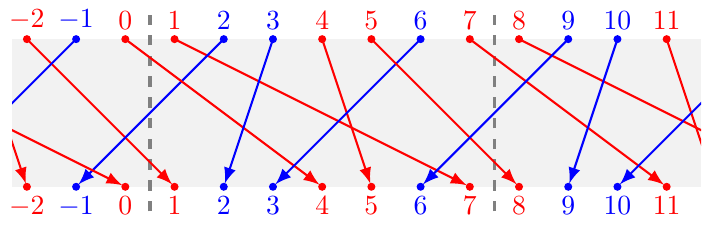}
			& \qquad &
			\includegraphics[width=\twd]{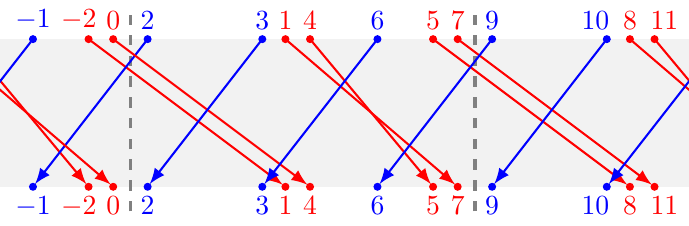}
			\\
			(a) Standard arrow diagram of $f$. && (b) Arrow diagram $\Diag(x)$.
		\end{tabular}
		\caption{\label{fig:arr_diag}Arrow diagrams of affine permutations; see \cref{sec:aff_perm_backgr,sec:arrow-diagrams}.}
	\end{figure}

	The group $\Saffon$ is a Coxeter group with generators $\CoxGens:=\{s_i\mid i\in[n]\}$, where the affine permutation $s_i:\Z\to\Z$ sends $i\mapsto i+1$, $i+1\mapsto i$, and $j\mapsto j$ for $j\not\equiv i,i+1 \pmod n$. For $i\in\Z$, we let $s_i:=s_{\bar i}$ where $\bar i\in[n]$ satisfies $\bar i\equiv i\pmod n$.
	
	The group $\Saffon$ is also known as the \emph{affine Weyl group of type $\At_{n-1}$}. Let $\Lan:=f_{1,n}\in\Saffxx_n^1$. Thus, $\Saffn=\Saffon\rtimes\<\Lan\>$. We will also be interested in the quotient group $\Sextn:=\Saffn/\<\Lan^n=\id\>$, known as the \emph{extended affine Weyl group of type $\At_{n-1}$}.
	The group $\Saffon$ is a subgroup of both $\Saffn$ and $\Sextn$. %
	We denote by $\rotsig:\Saffn\to\Saffn$ the \emph{rotation operator} given by $\rotsig(f):= \Lan f\Lan^{-1}$.
	
	Let $f\in\Saffn$. Define
	\begin{align*}%
		\Inv(f)&:=\{(i,j)\in\Z\times\Z\mid \text{$i<j$ and $f(i)>f(j)$}\},\\
		\ell(f)&:=\#\{(i,j)\in\Z\times\Z\mid \text{$i<j$, $f(i)>f(j)$, and $i\in[n]$}\}.
	\end{align*}
	The \emph{standard arrow diagram} of $f$ is obtained by drawing an arrow $(i,1)\to(f(i),0)$ for all $i\in\Z$; see \figref{fig:arr_diag}(a) for an example when $f=[7,-1,2,5,8,3,11]$ in window notation. The set $\Inv(f)$ consists of pairs of crossing arrows, and $\ell(f)$ counts the number of crossing arrows modulo the equivalence relation generated by $(i,j)\sim(i+n,j+n)$ for all $i,j\in\Z$. Alternatively, $\ell(f)$ is the minimal integer $l$ such that $f$ can be written as a product $f=s_{i_1}s_{i_2}\cdots s_{i_l} \Lan^k$ for some indices $i_1,i_2,\dots,i_l,k$; in this case, $s_{i_1}s_{i_2}\cdots s_{i_l} \Lan^k$ is called a \emph{reduced expression} for $f$.
	For the example in \figref{fig:arr_diag}(a), we have %
	\begin{equation*}%
		\kop(f)=1,\quad \ell(f)=11, \quad\text{and}\quad f=s_{3} s_{4} s_{6} s_{7} s_{2} s_{5} s_{6} s_{1} s_{4} s_{3} s_{2} \Lan.
	\end{equation*}
	\begin{remark}\label{rmk:kopf}
		In general, the integer $\kop(f)$ is equal to the signed number of intersections of the arrows with one of the dashed vertical lines.
	\end{remark}

	\begin{figure}
		\def\twd{0.5\textwidth}
		\def\hsp{\hspace{-0.25in}}
		\begin{tabular}{cc}
			\includegraphics[width=\twd]{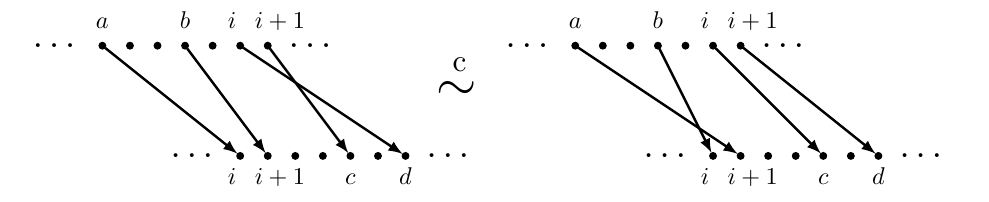}
			& \hsp
			\includegraphics[width=\twd]{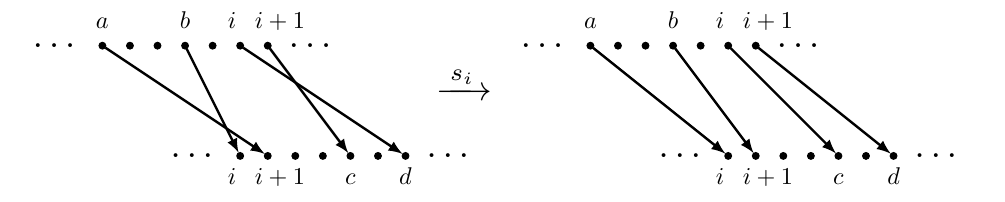}
		\end{tabular}
		\caption{\label{fig:c_equiv} The two affine permutations on the left are c-equivalent. The affine permutations $f,f'$ on the right satisfy $f\rasi f'$ but are not c-equivalent. See \cref{dfn:c_equiv}. Figure reproduced from~\cite[Figure~5]{GL_cat_combin}.}
	\end{figure}
	
	Following~\cite{GP93,GKP00,He07,He10,HN,Mar}, for $f,f'\in\Saffn$, we write $f\rasi f'$ if $f'=s_ifs_i$ and $\ell(f')\leq \ell(f)$. We write $f\to f'$ if there exists a sequence $f=f_0,f_1,\dots,f_m=f'$ of affine permutations such that for each $j\in[m]$, we have $f_{j-1}\rasi f_j$ for some $i\in[n]$.
	\begin{definition}\label{dfn:c_equiv}
		We say that $f,f'\in\Saffn$ are \emph{c-equivalent} if $f\to f'$ and $f'\to f$. In this case, we write $f\approx f'$.
	\end{definition}
	\noindent This terminology is borrowed from~\cite{GL_cat_combin}. 
	
	When talking about conjugacy classes, we always mean $\Saffon$-conjugacy classes, which we will usually denote by $\O$. Given a conjugacy class $\O$, let $\Omin$ be the set of elements of $\O$ of minimal length. We have the following important result.
	\begin{theorem}[{\cite[Theorem~2.9]{HN}}]\label{thm:HN}
		Let $f\in\Saffn$ and let $\O$ be the $\Saffon$-conjugacy class containing~$f$. 
		Then there exists $f'\in\Omin$ such that $f\to f'$.
	\end{theorem}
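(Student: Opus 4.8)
The plan is to prove this by induction on $\ell(f)$, adapting the Geck--Pfeiffer strategy for finite Weyl groups to the affine setting. If $f$ is already of minimal length in $\O$, there is nothing to prove. Otherwise, let $\Ccal_f$ denote the \emph{cyclic-shift class} of $f$: the set of $g\in\Saffn$ reachable from $f$ by a chain of \emph{length-preserving} elementary conjugations $g'\mapsto s_ig's_i$ (these are special instances of the moves $\rasi$ underlying \cref{dfn:c_equiv}, applied in both directions). If some $g\in\Ccal_f$ and some $i\in[n]$ satisfy $\ell(s_igs_i)<\ell(g)$, then $f\to s_igs_i$, which lies in $\O$ and has length $<\ell(f)$, so we are done by the inductive hypothesis. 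Hence it suffices to handle the case where no element of $\Ccal_f$ admits such a strict descent; call such an $f$ \emph{cyclically reduced}. The theorem then reduces to showing that a cyclically reduced affine permutation is of minimal length in its conjugacy class (a cyclically reduced $f$ that were not minimal would contradict this, so in the inductive step one always either descends or has already reached a minimum).

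The key invariants are the \emph{translation length} $\bar\ell(f):=\lim_{m\to\infty}\ell(f^m)/m$ --- which exists by subadditivity $\ell(f^{m_1+m_2})\le\ell(f^{m_1})+\ell(f^{m_2})$, is a nonnegative integer, depends only on $\O$ (conjugation changes $\ell(f^m)$ by a bounded amount), and satisfies $\bar\ell(f)\le\ell(g)$ for all $g\in\O$ --- and the \emph{Newton point} $\nu(f)$, the limit of $\tfrac1m$ times the translation part of $f^m$, a conjugacy invariant up to the $\Sn$-action. Call $f$ \emph{straight} if $\ell(f^m)=m\,\ell(f)$ for all $m\ge0$; then $\bar\ell(f)=\ell(f)$, and the inequality above forces $f$ to have minimal length in $\O$. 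So the heart of the matter is the implication \emph{cyclically reduced $\Rightarrow$ straight}. To prove it, split into cases according to whether $\Ccal_f$ meets a proper standard parabolic subgroup $\langle s_i:i\in J\rangle$ with $J\subsetneq[n]$. If it does, we may assume $f$ lies in such a $W_J$, which is a finite product of symmetric groups; a cyclically reduced element of $W_J$ is of minimal length in its $W_J$-conjugacy class by Geck--Pfeiffer, and combining this with He's partial-conjugation method (comparing $\Saffon$-conjugacy with $W_J$-conjugacy for elements supported on $J$) shows $f$ has minimal length in $\O$ (and is straight, being of finite order). If $\Ccal_f$ meets no proper parabolic, then $f$ is \emph{elliptic}: here one argues with alcove walks, showing that were $f$ not straight, its action on the apartment would cross some affine hyperplane ``out and then back,'' and that after a length-preserving cyclic shift this exposes an index $i$ with $\ell(s_ifs_i)<\ell(f)$, contradicting cyclic-reducedness. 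Equivalently, a cyclically reduced elliptic element is $\nu$-fundamental, and $\nu$-fundamental elements are straight.

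The main obstacle is precisely the elliptic case of this last implication: converting ``no length-preserving cyclic shift exposes a descent'' into a workable geometric statement about alcove walls, and ruling out that braid relations among the reduced words for $f$ conspire to hide a cancellation. This is where He--Nie invoke the classification of minimal-length elements in elliptic conjugacy classes, with explicit case analysis for the exceptional obstructions. In type $A_{n-1}$, which is all that is needed in the present paper, the combinatorics is considerably more transparent: an elliptic $\Saffon$-conjugacy class is determined by a single datum of a cycle type together with a compatible slope, and its minimal-length representatives are the ``evenly spread'' affine permutations, which are manifestly straight; this is the route taken in \cref{sec:affine-perm-cycl,sec:c-equiv-structure}.
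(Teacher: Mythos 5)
The paper does not prove this statement---it is imported verbatim from \cite[Theorem~2.9]{HN}---so there is no in-paper proof to compare against; I am assessing your sketch on its own terms. Your outer reduction (induct on $\ell(f)$, pass to the cyclic-shift class, and reduce to showing that a cyclically reduced element has minimal length in $\O$) is the standard Geck--Pfeiffer/He--Nie skeleton and is fine. The genuine gap is that you then route everything through the implication ``cyclically reduced $\Rightarrow$ straight,'' and this implication is false---in fact it fails exactly in the regime this paper is about. First, your parenthetical ``(and is straight, being of finite order)'' has the logic backwards: if $f$ has finite order then $\ell(f^m)$ is periodic in $m$, so $\ell(f^m)=m\,\ell(f)$ fails for large $m$ unless $\ell(f)=0$; finite order obstructs straightness rather than implying it. Second, the elliptic case also fails: take $f\in\Saffkn$ with $\fb$ a single $n$-cycle and $d:=\gcd(k,n)>1$, $k\neq 0$. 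Such an $f$ of minimal length is cyclically reduced with $\ell(f)=d-1>0$ (\cref{cor:c_red_charact}), and it is elliptic in your sense (every proper standard parabolic consists of finite-order elements with $\kop=0$, so its class meets none of them); yet $f^{n}=\Lan^{kn}$ has length $0$, whence $\lim_m\ell(f^m)/m=0\neq\ell(f)$ and $f$ is not straight. More generally, for c-reduced $f$ one has $\ell(f)=\Area(\Zonf)+\excess{\bflaf}$ while the translation length is only $\Area(\Zonf)$, so minimal-length elements are straight only when $\excess{\bflaf}=0$---i.e.\ precisely in the ``minimal graph'' case of \cite{GK13}, not in the move-reduced case that motivates this paper. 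You also appear to conflate He--Nie's notion of \emph{straight} with the paper's \emph{$\eps$-straight} arrow diagrams; the latter is a geometric condition on a point of $\Ln$ and happily realizes non-straight minimal elements.

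The correct argument (in \cite{HN}, in \cite{Mar}, and in the type-$A$ machinery this paper builds for related purposes) does not prove straightness; it walks the base alcove via weakly length-decreasing conjugations into a ``good position'' relative to the Newton point---the set $\Stre(f)$ of \cref{sec:eps_straight}, i.e.\ Marquis's $\Min(f)$---and then verifies separately that elements based there attain the lower bound $\Area(\Zonf)+\excess{\bflaf}$ of \cref{lemma:c_red_ell}. That is exactly what \cref{prop:ODE} accomplishes with the ODE deformation, and your final paragraph gestures at this, but the bridge you propose (straightness of the ``evenly spread'' representatives) is not available. To repair the proposal you would need to replace the straightness criterion for minimality with the crossing-number lower bound of \cref{lemma:c_red_ell} (or an alcove-geometric equivalent) throughout.
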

	\begin{definition}
		We say that $f\in\Saffn$ is \emph{c-reduced} if for all $f'\in\Saffn$ such that $f\to f'$, we have $\ell(f)=\ell(f')$ (or equivalently, $f'\to f$).
	\end{definition}\label{cor:HN}

	\noindent The following result follows immediately from \cref{thm:HN}.
	\begin{corollary} \label{cor:HN}
		An affine permutation $f\in\Saffn$ is c-reduced if and only if it has minimal length in its conjugacy class $\O$ (i.e., $f\in\Omin$).%
	\end{corollary}
	It is clear that $\approx$ yields an equivalence relation on the set of c-reduced elements in $\Saffn$. The goal of \cref{sec:affine-perm-cycl,sec:c-equiv-structure} is to give a solution to the following problem.
	\begin{problem}\label{prob:c_eq}
		Determine the structure of c-equivalence classes of c-reduced elements in $\Saffn$.
	\end{problem}
	\subsection{Cycles and slopes}\label{sec:cycles_and_slopes}
	A set $\Cycle\subset\Z$ is called \emph{$n$-periodic} if for all $i\in\Z$, we have $i\in \Cycle$ if and only if $i+n\in \Cycle$.
	\begin{definition}
		Let $f\in\Saffn$. A set $\Cycle\subset\Z$ is called \emph{$f$-closed} if it is nonempty, $n$-periodic, and for all $i\in\Z$, we have $i\in \Cycle$ if and only if $f(i)\in \Cycle$. 
	\end{definition}

	\begin{definition}\label{dfn:restrict}
		Let $\Cycle$ be an $f$-closed set. Because it is $n$-periodic, the set $\Cycle\cap[n]$ is nonempty. Let $\nopf(\Cycle):=\#(\Cycle\cap[n])$.  There exists a unique order-preserving bijection $\restrictC: \Cycle\to\Z$ sending $\min(\Cycle\cap[n])$ to $1\in\Z$. The \emph{restriction $f|_\Cycle\in\Saffx_{\nopf(\Cycle)}$} is an affine permutation defined by
		\begin{equation}\label{eq:restrict}
			f|_\Cycle:=\restrictC\circ f \circ\restrictC^{-1}.
		\end{equation}	
	\end{definition}
	Given an $f$-closed set $\Cycle$, we let
	\begin{equation*}%
		\nopf(\Cycle)=\nop(f|_\Cycle),\quad \kopf(\Cycle):=\kop(f|_\Cycle),\quad \dopf(\Cycle):=\dop(f|_\Cycle), \quad\text{and}\quad
		\slpfC:=\frac{\kopf(\Cycle)}{\nopf(\Cycle)}.
	\end{equation*}
	The rational number $\slpfC$ is called the \emph{slope} of $\Cycle$. Thus, we have $f|_\Cycle\in\Saffxx_{n'}^{k'}$ for $n'=\nopf(\Cycle)$ and $k'=\kopf(\Cycle)$. 

	\begin{definition}
		A \emph{cycle of $f$} is a minimal by inclusion $f$-closed set $\Cycle$. The set of cycles of $f$ is denoted $\CLF$.
	\end{definition}
	\noindent Thus, the cycles of $f$ are in bijection with the cycles of $\fb$, and a nonempty subset of $\Z$ is $f$-closed if and only if it is a disjoint union of cycles of $f$. For $i\in \Z$, we write $\slpfi:=\slpfC$, where $\Cycle$ is the cycle of $f$ containing $i$. 
	
	\begin{example}
		Let $f=[7,-1,2,5,8,3,11]$ in window notation be the affine permutation in \figref{fig:arr_diag}(a). Then $f$ has two cycles: $\Cred$ (resp., $\Cblue$) consists of all $i\in\Z$ congruent to $1,4,5,7$ (resp., to $2,3,6$) modulo $n=7$. We have
		\begin{align*}
			\nopf(\Cred)&=4,  &\kopf(\Cred)&=2, &\dopf(\Cred)&=2, &\slpfx(\Cred)&=1/2,\\
			\nopf(\Cblue)&=3,  &\kopf(\Cblue)&=-1, &\dopf(\Cblue)&=1, &\slpfx(\Cblue)&=-1/3.
		\end{align*}
	\end{example}
	
	Given $f\in\Saffn$ and $\slp\in\Q$, we set
	\begin{equation*}%
		\begin{split}
			\CLFS&:=\{\Cycle\in\CLF\mid\slpfC=\slp\}, 
			\qquad
			\slupp:=\bigsqcup_{\Cycle\in\CLFS} \Cycle, \\ \nopf(\slp)&:=\sum_{\Cycle\in\CLFS} \nopf(\Cycle),  \quad \kopf(\slp):=\sum_{\Cycle\in\CLFS}\kopf(\Cycle),\quad \dopf(\slp):=\sum_{\Cycle\in\CLFS} \dopf(\Cycle).
		\end{split}
	\end{equation*}

	For $f\in\Saffn$, we set
	\begin{equation*}%
		\SLF:=\{\slp\in\Q\mid\text{ $\slupp$ is nonempty}\}; \quad\text{therefore,}\quad \bigsqcup_{\slp\in\SLF} \slupp=\Z.
	\end{equation*}
	For $\slp\in\SLF$, we have $\slp=\kopf(\slp)/\nopf(\slp)$ and $\gcd(\kopf(\slp),\nopf(\slp))=\dopf(\slp)$. 
	\begin{definition}\label{dfn:bflaf}
		Let $\bflaf:=(\lafs)_{\slp\in\SLF}$, where $\lafs$ is the integer partition of $\dopf(\slp)$ induced by $(\dopf(\Cycle))_{\Cycle\in\CLFS}$.
	\end{definition}

	\subsection{Arrow diagrams}\label{sec:arrow-diagrams}
	
	Let $\Ln$ be the quotient of the space
\begin{equation*}%
  \{x:\Z\to\R\mid \xx(i+n)=\xx(i)+1\text{ for all $i\in\Z$}\}
\end{equation*}
        by the space of constant maps $\Z\to\R$. 
	It is an affine space of dimension $n-1$. For any $g\in \Saffn$, we have a point $\frac1n g\in\Ln$ sending $i\mapsto \frac1n g(i)$. To a point $x\in\Ln$, we associate a \emph{labeled point configuration} $\LPConf(x)$, that is, a collection of labeled points on the real line: a point labeled $i\in\Z$ is located at coordinate $\xx(i)$. We denote $\Im(x):=\{\xx(i)\mid i\in\Z\}\subset\R$.
	
	Recall the notion of a standard arrow diagram from \cref{sec:aff_perm_backgr}. More generally, to each $f\in\Saffn$ and $x\in\Ln$ one can associate an \emph{arrow diagram} $\Diag(x)$ obtained by drawing an arrow $(\xx(i),1)\to (\xx({f(i)}),0)$ for each $i\in\Z$. For example, the standard arrow diagram of $f$ is just $\Diag(x)$ for $x=\frac1n\id$, where $\id\in\Saffon$ is the identity map.
	
	We say that $x\in\Ln$ is \emph{generic} if $\xx(i)\neq \xx(j)$ for all $i\neq j\in\Z$. We denote by $\Lng$ the set of generic elements of $\Ln$. The \emph{cutoff point} for $x\in\Lng$ is the midpoint of the interval of all $c\in\R\setminus\Im(x)$ such that
	\begin{equation}\label{eq:cutoff_dfn}
		\#\{i\leq0\mid x_i>c\}=\#\{i\geq1\mid x_i<c\}.
	\end{equation}
	For $x\in\Lng$, we let $g_x$ be the affine permutation in $\Saffon$ such that for all $i,j\in\Z$, $g_x(i)<g_x(j)$ if and only if $\xx(i)<\xx(j)$. Explicitly, if $c\in\R$ is the cutoff point for $x$ and $i_1,i_2,\dots,i_n\in\Z$ are such that $c<x_{i_1}<x_{i_2}<\cdots<x_{i_n}<c+1$ then we have $g_x^{-1}=[i_1,i_2,\dots,i_n]$ in window notation; cf. \cref{ex:Diag_f} below.

	For $f\in\Saffn$ and $x\in\Lng$, the arrow diagram $\Diag(x)$ is topologically equivalent to the standard arrow diagram of $g_xfg_x^{-1}$. That is, we have an order-preserving bijection $\phi_x:=x\circ g_x^{-1}: \Z\to\Im(x)$ such that $(i,j)\in\Inv(g_xfg_x^{-1})$ if and only if the arrows starting at $(\phi_x(i),1)$ and $(\phi_x(j),1)$ cross in $\Diag(x)$.
	
	\begin{figure}
		\def\twd{0.45\textwidth}
		\begin{tabular}{ccc}
			\includegraphics[width=\twd]{figures/arrow_diagram2}
			&
			\qquad &
			\includegraphics[width=\twd]{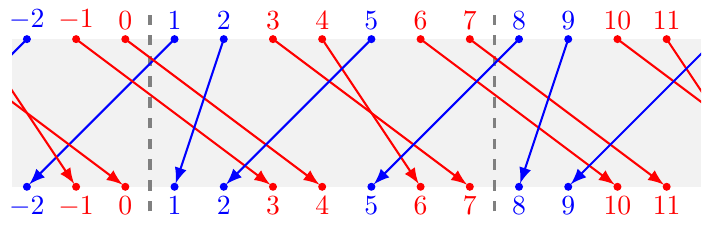}
			\\
			(a) Arrow diagram $\Diag(x)$.& & (b) Standard arrow diagram of $g_xfg_x^{-1}$. 
		\end{tabular}
		\caption{\label{fig:arr_diag2} The arrow diagram $\Diag(x)$ is topologically equivalent to the standard arrow diagram of $g_xfg_x^{-1}$; see \cref{ex:Diag_f}.}
	\end{figure}

	\begin{example}\label{ex:Diag_f}
		Let $f=[7,-1,2,5,8,3,11]$ be the affine permutation shown in \figref{fig:arr_diag}(a). An example of the arrow diagram $\Diag(x)$ for some $x\in\Lng$ is shown in \figref{fig:arr_diag}(b). We have $g_x^{-1}=[2,3,1,4,6,5,7]$ in window notation, which is obtained by reading off the labels between the two vertical dashed lines. These dashed lines are located at positions $c$ and $c+1$, where $c$ is the cutoff point of $x$. We find $g_x=s_5s_2s_1\in\Saffon$, and thus $g_xfg_x^{-1}=[-2, 1, 7, 6, 2, 10,11]$ in window notation. The standard arrow diagram of $g_xfg_x^{-1}$ is shown in \figref{fig:arr_diag2}(b). Comparing it with $\Diag(x)$ shown in \figref{fig:arr_diag2}(a), we see that indeed the two arrow diagrams are topologically equivalent (modulo a relabeling of the points given by the map $\phi_x$).
	\end{example}

	We think of an arrow diagram $\Diag(x)$ for $(f,x)\in\Saffn\times\Lng$ as a ``geometric realization'' of the affine permutation $g_xfg_x^{-1}$, and extend our definitions and notation to this case. For example, we denote by $\ellf(x):=\ell(g_xfg_x^{-1})$ the number of crossings in $\Diag(x)$ modulo the shift $(i,j)\mapsto (i+n,j+n)$. For $f\in\Saffn$ and $x,x'\in\Lng$, write $\Diag(x)\to\Diag(x')$ if $g_xfg_x^{-1}\to g_{x'}fg_{x'}^{-1}$, and $\Diag(x)\approx \Diag(x')$ if $g_xfg_x^{-1}\approx g_{x'}fg_{x'}^{-1}$. We say that $\Diag(x)$ is \emph{c-reduced} if so is $g_xfg_x^{-1}$.

	We say that $x$ is \emph{almost generic} if there exist $(i_0,j_0)\in\Z^2$ such that  for all $i\neq j$, we have $\xx(i)\neq \xx(j)$ unless $\{i,j\}=\{i_0+dn,j_0+dn\}$ for some $d\in\Z$. Thus, $\Diag(x)\to\Diag(x')$ if there exists a continuous curve $\xt\in\Ln$, $t\in[0,1]$, such that $\xtt_0=x$, $\xtt_1=x'$, $\xt$ is almost generic for $t$ in some finite set $\AGSET$ and generic for $t\in[0,1]\setminus \AGSET$, 
	and $\ellf(\xt)$ is a weakly decreasing function on $[0,1]\setminus \AGSET$.

	\subsection{\texorpdfstring{$\eps$}{Epsilon}-straight arrow diagrams}\label{sec:eps_straight}
	Fix $f\in\Saffn$. Recall that for $\Cycle\in\CLF$ and $i\in\Cycle$, we write $\slpfi:=\slpfC$. 
	For $\eps>0$ and $x\in\Ln$, we say that the arrow diagram $\Diag(x)$ is \emph{$\eps$-straight} if for all $i\in\Z$, $\xx({f(i)})$ is $\eps$-close to $\xx(i)+\slpfi$. For example, the arrow diagram $\Diag(x)$ shown in \figref{fig:arr_diag}(b) is $\eps$-straight for some $0<\eps<0.15$.
	
	Denote by $\Stre(f)$ the set of $\eps$-straight elements in $\Ln$:
	\begin{equation*}%
		\Stre(f):=\{x\in\Ln: |\xx({f(i)})-(\xx(i)+\slpfi)|\leq\eps\text{ for all $i\in\Z$}\}.
	\end{equation*}
	We set $\Streg(f):=\Stre(f)\cap \Lng$.

	The following result is an analog of~\cite[Lemma~6.8(1)]{Mar}; see also~\cite[Lemma~5.4]{Mar18} and~\cite[Proposition~3.4]{Mar14}. See \cref{rmk:alcove} below for the relation between our results and those of Marquis.
	\begin{proposition}\label{prop:ODE}
		For any $f\in\Saffn$, $x\in\Lng$, and $\eps>0$, there exists $y\in\Streg(f)$ such that $\Diag(x)\to\Diag(y)$.
	\end{proposition}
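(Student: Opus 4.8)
The plan is to construct $y$ by flowing $x$ along a carefully chosen vector field in $\Ln$ that pushes each point toward its ``$\eps$-straight'' target while never increasing the crossing count $\ellf(\xt)$ on the generic locus. Concretely, for a generic configuration the arrow diagram $\Diag(x)$ records the element $g_xfg_x^{-1}$, and the slope data $\slpfi$ is constant on each cycle of $f$; so the condition $x\in\Stre(f)$ amounts to asking that, after fixing the induced ordering, each arrow is nearly ``diagonal'' of the prescribed slope. I would first observe that it suffices to realize the flow piecewise: away from the finite set $\AGSET$ where $x(t)$ is only almost generic, $g_{\xt}f g_{\xt}^{-1}$ is locally constant, so I only need to check that each time the flow passes through an almost-generic configuration (a single pair of points $\xx(i), \xx(j)$ colliding) the length does not jump up. The collision swaps the order of $i$ and $j$; whether $\ell$ goes up or down is governed by whether the corresponding arrows were crossing before the swap, i.e.\ by the sign convention built into the definition of $f\rasi f'$. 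So the real content is: \emph{design the flow so that every order-swap it performs is length-nonincreasing}, and so that it terminates inside $\Streg(f)$.

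The key steps, in order: (1) Reduce to a single cycle. Since $\Z=\bigsqcup_\slp \slupp$ is a disjoint union of $f$-closed sets and the slope targets are assigned cycle-by-cycle, I would handle the points of one cycle $\Cycle$ at a time, holding the others fixed (or moving them only to avoid spurious collisions), using $n$-periodicity to keep everything in $\Ln$. Within a cycle of slope $\slp=k'/n'$, the restriction $f|_\Cycle\in\Saffxx_{n'}^{k'}$ acts like ``shift by $k'$ among $n'$ periodic slots,'' so the straight target is literally the arithmetic-progression configuration $x(i)=\frac{1}{n'}g(i)$ translated. (2) Define the flow. Take the straight-line homotopy in $\Ln$ from $x$ (restricted to $\Cycle$) toward a chosen straight configuration $y_\Cycle$; perturb it generically so that collisions happen one pair at a time and transversally. (3) Control the length. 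This is the crucial computation: show that along this homotopy, whenever two labels $i<j$ (in the current order) collide and swap, the arrows from $i$ and $j$ in $\Diag(\xt)$ are crossing at the moment just before the swap, hence the swap is a $\rasi$-move with $\ell$ nonincreasing. I expect this to follow from convexity/monotonicity of the straight-line homotopy together with the fact that $f|_\Cycle$ is (conjugate to) a pure shift, so the ``target order'' of $i$ and $j$ is determined in a way compatible with removing the crossing; this is the analogue of the argument behind \cite[Lemma~6.8(1)]{Mar}. (4) Assemble: concatenate the flows over all cycles (reparametrizing time), check that at the end $|\xx(f(i))-(\xx(i)+\slpfi)|\le\eps$ for every $i$ — which holds because each cycle has been driven to within $\eps$ of its straight configuration and the straight configuration is exactly $\eps$-straight for $\eps=0$ — and that the terminal point lies in $\Lng$ (achieved by a final generic perturbation, which only decreases or preserves $\ell$ if done through crossings, or is free if no new collisions occur). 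Then $\Diag(x)\to\Diag(y)$ by the definition of ``$\to$'' for arrow diagrams via a curve in $\Ln$.

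The main obstacle is step (3): verifying that the naive straight-line homotopy only ever performs length-decreasing swaps. A priori a swap could be length-increasing if the two arrows were \emph{not} crossing before the collision, and it is conceivable that some clumsy choice of target configuration $y_\Cycle$ forces such a bad swap. I would handle this by exploiting the freedom in choosing $y_\Cycle$: since the straight configurations of slope $\slp$ form a connected family (all cyclic rotations of one arithmetic progression), and since $f\to$ preserves the conjugacy class (\cref{cor:HN}) while $\ellf$ is bounded below, one can first run \emph{any} length-nonincreasing flow toward the straight locus using \cref{thm:HN} / the existence of $\to$-paths to minimal-length elements, and only then do a final adjustment within the straight locus. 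Alternatively — and this is probably cleaner — order the points of the cycle and move them one at a time (the point destined for the smallest target coordinate first, sliding it monotonically into place), so that each move only passes through points that are currently ``out of order relative to the final arrangement,'' making every swap manifestly a crossing removal. Either way, the heart of the argument is a local, combinatorial check that a single transversal collision under a monotone flow corresponds to a $\rasi$-move; once that is in hand, the global statement is a routine concatenation, and the $\eps$ in the conclusion is just slack that lets us stop the flow before it reaches the (possibly non-generic) exact straight configuration.
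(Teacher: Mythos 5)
Your high-level strategy --- deform $x$ continuously toward the $\eps$-straight locus and verify that every collision of two points is a length-nonincreasing conjugation --- matches the paper's, but the single idea that makes the argument work is missing, and the two fallbacks you offer for your self-identified ``main obstacle'' in step (3) do not close the gap. The paper does not use a straight-line homotopy toward a chosen target configuration; it flows along the specific vector field $\partial \xtx(i)/\partial t=\xtx({f(i)})-\xtx(i)$. This choice does two things simultaneously. First, on each cycle the system has matrix $P_w-I_m$, whose eigenvalues $e^{2\pi i r/m}-1$ all have nonpositive real part with a simple zero eigenvalue, so the solution is a linear drift of slope $\slpfC$ plus exponentially decaying terms and therefore enters $\Stre(f)$ for large $t$. (Note that for $\dopf(\Cycle)>1$ there is no generic exactly-straight configuration: the points of the cycle must cluster into groups of size $\dopf(\Cycle)$, so ``drive the cycle to its arithmetic-progression target'' is not a well-posed instruction; the arithmetic-progression picture only applies when $\dopf(\Cycle)=1$.) Second, and crucially, at a collision $\xtzx(i)=\xtzx(j)$ the relative velocity equals $\xtzx({f(i)})-\xtzx({f(j)})$: the point whose image is currently ahead is the one that overtakes, so the outgoing pair of arrows necessarily passes from crossing to non-crossing and the net change of $\ellf$ is $\leq 0$. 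This sign identity is exactly what you would need to prove for your flow, and for a straight-line homotopy toward a fixed target it is false in general.

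Neither of your proposed repairs works. Invoking \cref{thm:HN} conflates two independent targets: that theorem produces a $\to$-path to a minimal-length element of the conjugacy class, whereas \cref{prop:ODE} asks for an $\eps$-straight configuration. These conditions are genuinely different (an $\eps$-straight diagram need not be c-reduced --- e.g.\ a single slope-$0$ cycle whose clustered points realize a non-minimal inversion count --- and the standard arrow diagram of a minimal-length element need not be $\eps$-straight). In the paper's application of \cref{prop:ODE}, the starting element is already of minimal length, so the ``final adjustment within the straight locus'' that you leave unexplained is the entire content of the statement. The one-point-at-a-time sorting heuristic is also incorrect: whether sliding $x_i$ past $x_j$ removes a crossing is governed by the current order of the images $\xx({f(i)}),\xx({f(j)})$ (and of the preimages), not by whether $i,j$ are out of order relative to the final arrangement; since the images are themselves being moved by the same process, a swap toward the target can perfectly well create a crossing. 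Without the ODE, or some equivalent mechanism tying the velocity of $x_i$ to the current position of $x_{f(i)}$, step (3) remains unproved.
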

	\begin{example}
		The diagram in \figref{fig:arr_diag}(a) can be continuously deformed into the diagram in \figref{fig:arr_diag}(b). During the deformation, the point labeled $1$ passes to the right through the points labeled $2,3$ while the point labeled $5$ passes to the right through the point labeled~$6$. The resulting sequence of swaps is recorded in the reduced word for $g_x=s_5s_2s_1$; cf. \cref{ex:Diag_f}.
	\end{example}
	\begin{proof}
		We will find a smooth curve $\xt,t\in\R_{\geq0}$ in $\Ln$ such that $\xtt_0=x$ and such that we can take $y:=\xt$ for $t$ sufficiently large. The curve will be defined via the following linear system of first order ordinary differential equations (ODEs): %
		\begin{equation}\label{eq:ODE}
			\partial \xtx(i)/\partial t=\xtx({f(i)})-\xtx(i),\quad\text{for all $i\in\Z$.}
		\end{equation}
		Rewriting each $\xtx(i)$ in terms of $(\xtx(j))_{j\in[n]}$, we obtain an $n\times n$ inhomogeneous linear system of ODEs. It splits into independent systems for each cycle of $f$.
		
		Fix a single cycle $\Cycle$ of $f$, and let $m:=\nopf(\Cycle)$. We have an $m\times m$ system of ODEs of the form $\partial z(t)/\partial t=Az(t)+b$, for a constant $m\times m$ matrix $A$ and a constant vector $b\in\R^m$. Let $w:=\fb|_\Cycle\in S_m$ be the permutation obtained by taking $f|_\Cycle$ modulo $m$. The permutation matrix $P_w$ of $w$ has eigenvalues $e^{2\pi i r/m}$ for $r=0,1,\dots, m-1$. We have $A=P_w-I_m$, where $I_m$ is an $m\times m$ identity matrix. Thus, the eigenvalues of $A$ are
		$\la_r:=e^{2\pi i r/m}-1$ for $r=0,1,\dots,m-1$. (In particular, they are all distinct and have nonpositive real part.) A general solution to the homogeneous system $\partial z(t)/\partial t=Az(t)$ is then a linear combination of vector-valued functions of the form $\exp(\la_r t)z_r$, where $z_r$ is the  eigenvector of $A$ corresponding to~$\la_r$.
		
		One eigenvalue of $A$ is $\la_0=0$, and the corresponding eigenvector is $z_0:=(1,1,\dots,1)^T$. The vector $b$ is a $0,1$-vector with $1$'s in positions corresponding to $i\in[n]\cap \Cycle$ such that $f(i)>n$. In particular, the sum of coordinates of $b$ is $\kopf(\Cycle)$, and thus $\slpfC z_0-b$ belongs to the image of $A$. Letting $\tilde z_0$ be one of its preimages under $A$, we see that $z(t)=\slpfC t z_0 -\tilde z_0$ is a solution to the inhomogeneous system $\partial z(t)/\partial t=Az(t)+b$. Thus, an arbitrary solution differs from it by a linear combination of the functions $\exp(\la_r t)z_r$, each of which is constant (for $r=0$) or decays exponentially (for $r\neq0$).

		It follows that for $t$ large enough and $i\in\Z$, we have $\xtx(i)=\slpfi t+o(t)$, and $\partial \xtx(i)/\partial t=\slpfi+o(1)$. By~\eqref{eq:ODE}, we get $\xt\in\Stre(f)$ for all $t$ sufficiently large. It is also clear that for $t$ outside a discrete set, we have $\xt\in\Streg(f)$.
		
		Since $x=\xtt_0$ was generic, we can change it slightly so that each point $\xt$ is almost generic for $t$ in some discrete set $\AGSET$ and generic for $t\in[0,\infty)\setminus \AGSET$. We claim that $\ellf(\xt)$ is weakly decreasing for $t\in[0,\infty)\setminus\AGSET$. Indeed, let $t_0\in\AGSET$ be such that $\xtzx(i)=\xtzx(j)$ for some $i,j\in\Z$. Since $\xtz$ is almost generic, we have $\xtzx({f(i)})\neq \xtzx({f(j)})$.\footnote{This statement is true unless $f(i)=i+kn$ and $f(j)=j+kn$ for some $k$. But in that case, we have $\xx(i)\neq \xx(j)$ (because $x$ was generic) and $\xtzx(i)=i+kt_0$, $\xtzx(j)=j+kt_0$, so $\xtzx(i)\neq \xtzx(j)$ for all $t_0\geq0$.} Thus, $\partial \xtx(i)/\partial t\neq \partial \xtx(j)/\partial t$ at $t=t_0$. Suppose that $\partial \xtx(i)/\partial t> \partial \xtx(j)/\partial t$ at $t=t_0$, so $\xtzx({f(i)})>\xtzx({f(j)})$. Then $\xtxt_{t^-}(i)<\xtxt_{t^-}(j)$ and $\xtxt_{t^+}(i)>\xtxt_{t^+}(j)$ for some $t^-<t_0<t^+$ very close to $t_0$. We still have $\xtxt_{t^-}({f(i)})>\xtxt_{t^-}({f(j)})$. Thus, the arrows starting at $\xtxt_{t^-}(i)$ and $\xtxt_{t^-}(j)$ form a crossing in $\Diag(\xtt_{t^-})$ but do not form a crossing in $\Diag(\xtt_{t^+})$. Therefore $\ellf(\xtt_{t^-})\geq \ellf(\xtt_{t^+})$.
	\end{proof}

	\begin{remark}\label{rmk:alcove}
		Our constructions can be translated into the well-studied geometric setup as we now explain. The group $\Saffon$ acts simply transitively on the set $\Chrs$ of chambers of an infinite hyperplane arrangement $\{x_i=x_j+k\mid i\neq j\in [n], k\in\Z\}$ in $\R^{n}/\<(1,1,\dots,1)\>$. Choosing a distinguished fundamental chamber $C_0$, the map $g\mapsto gC_0$ yields a bijection $\Saffon\xrasim \Chrs$. Identifying $\Ln\xrasim \R^{n}/\<(1,1,\dots,1)\>$ by a linear isomorphism sending $x\mapsto (\xx(1),\dots,\xx(n))$, the $\Saffn$-action on $\Ln$ coincides with its action on $\R^{n}/\<(1,1,\dots,1)\>$. For $g\in\Saffon$, the point $\frac1n g$ gets mapped to the center of the corresponding chamber $gC_0$. An element $x\in\Ln$ is generic if and only if it belongs to the interior of a chamber, and almost generic if and only if it belongs to the interior of a facet of a chamber. The set $\Stre(f)$ for $\eps=0$ equals the set denoted $\Min(f)$  in~\cite{Mar}. The map sending $f$ to the tuple $(f|_{\slp})_{\slp\in\SLF}$ of its restrictions is the map denoted $\pi_{\Sigma^\eta}$ in~\cite{Mar} (whose image is an element of finite order; cf. \cref{lemma:fin}). Our proof strategy may be considered an adaptation of~\cite[Proof of Proposition~6.20]{Mar}: given an arbitrary chamber $C$, construct a walk from $C$ to a chamber intersecting $\Min(f)$, and then use the projection $\pi_{\Sigma^\eta}$ to obtain an element of finite order. The notion of a modular invariant was inspired by~\cite[Part $(A_\ell^{(1)})$ of Theorem~10.12]{Mar}. 
	\end{remark}

	\begin{remark}
		One key point that allows for a significant simplification in our approach in type A (compared to the approach of~\cite{Mar} for arbitrary Coxeter groups) is a new proof of \cref{prop:ODE} using ODEs. We hope that this argument can be of independent interest. It appears to generalize to affine Weyl groups but not to arbitrary Coxeter groups.
	\end{remark}

	\subsection{Vector configurations and conjugacy} We return to \cref{prob:c_eq}. Our first goal is to describe $\Saffon$-conjugacy classes in $\Saffn$. 
	
	Let $f\in\Saffn$. For $\slp\in\SLF$ and $\Cycle\in\CLF$, set
	\begin{equation*}%
		\ef(\slp):=(\nopf(\slp),\kopf(\slp)) \quad\text{and}\quad \ef(\Cycle):=(\nopf(\Cycle),\kopf(\Cycle)).
	\end{equation*}
	Clearly, $\ef(\slp)=\sum_{\Cycle\in\CLFS} \ef(\Cycle)$ is a sum of collinear vectors, and their integer lengths are given by $\ilen|\ef(\Cycle)|=\dopf(\Cycle)$, so $\ilen|\ef(\slp)|=\dopf(\slp)$. 
	We let $\VCF:=(\ef(\slp))_{\slp\in\SLF}$ be the \emph{vector configuration} associated to $f$. By analogy with \cref{dfn:decor}, we call $\VCFD:=(\VCF,\bflaf)$ the \emph{weakly decorated vector configuration} associated to $f$, where $\bflaf$ was introduced in \cref{dfn:bflaf}.
	
	\begin{proposition}\label{prop:conjugate_VCFD}
		Let $f,f'\in\Saffn$. Then $f$ is $\Saffon$-conjugate to $f'$ if and only if $\VCFD=\VCFPD$.
	\end{proposition}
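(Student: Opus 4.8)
The forward implication is routine. Suppose $f'=gfg^{-1}$ with $g\in\Saffon$. Since $g$ is an affine permutation, $\Cycle\mapsto g(\Cycle)$ is a bijection from the cycles of $f$ to the cycles of $f'$: the set $g(\Cycle)$ is $n$-periodic, and $i\in\Cycle\iff f(i)\in\Cycle\iff f'(g(i))=g(f(i))\in g(\Cycle)$. Setting $m:=\nopf(\Cycle)$ and $h:=\restrict_{g(\Cycle)}\circ\,g\circ\restrictC^{-1}$, one checks that $h\in\Saff_m$ and $f'|_{g(\Cycle)}=h\,(f|_\Cycle)\,h^{-1}$. Conjugation in $\Saff_m$ preserves $\nop$ trivially, and it preserves $\kop$ because $\kop\colon\Saff_m\to\Z$ is a group homomorphism, hence a class function. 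Thus the invariants $\nopf(\Cycle),\kopf(\Cycle),\dopf(\Cycle)$ agree for paired cycles, and assembling them by slope gives $\VCFD=\VCFPD$.

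For the converse the plan is, first, to reduce to $\eps$-straight arrow diagrams, and second, to prove that two $\eps$-straight elements with the same $\VCFD$ are $\Saffon$-conjugate. For the reduction: by \cref{prop:ODE}, applied with $x=\frac1n\id$, there is a generic $y\in\Streg(f)$ with $\Diag(\frac1n\id)\to\Diag(y)$; since each step $\rasi$ in this relation is conjugation by a simple reflection, $f$ is $\Saffon$-conjugate to $g_yfg_y^{-1}$, whose standard arrow diagram is (combinatorially) $\eps$-straight. Doing the same for $f'$, it suffices to treat $\eps$-straight elements. In an $\eps$-straight diagram the arrows split by slope, and those of a fixed slope $\slp$ straighten to $\dopf(\slp)$ near-parallel threads whose grouping into cycles is recorded exactly by the vector $\ef(\slp)$ together with the partition $\lafs$; in other words, the combinatorial type of the collection of arrows of slope $\slp$ is precisely the $\slp$-component of $\VCFD$. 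After pinning a basepoint via the cutoff-point normalization of \cref{sec:arrow-diagrams}, the only remaining degree of freedom in such a diagram is an admissible relabeling of the arrows — again an element of $\Saffon$ — which carries any two $\eps$-straight diagrams with the same $\VCFD$ to one another. Chaining the conjugations completes the argument.

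The crux, and the step I expect to be hardest, is this last rigidity statement: that an $\eps$-straight arrow diagram is determined up to $\Saffon$-conjugacy by its weakly decorated vector configuration, i.e.\ that there is no residual winding or offset invariant separating two threads of the same slope. I would establish it using the finite-order statement \cref{lemma:fin}, which controls the tuple of restrictions $(\fs)_{\slp\in\SLF}$, together with the exponential-decay estimate in the proof of \cref{prop:ODE}: the point is that straightening is insensitive to, and therefore erases, any such putative invariant. (The genuine residual datum inside a single conjugacy class — the modular invariant $\minv$ of \cref{sec:intro:minv} — records move-equivalence rather than $\Saffon$-conjugacy, and so plays no role here.)
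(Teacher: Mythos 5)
Your ``only if'' direction is fine and in fact more careful than the paper's (the paper just observes that $\VCFD$ depends only on the cycles and their slopes, hence is conjugation-invariant); your verification that $\kop$ is a class function on each $\Saff_m$ is a valid way to package this.

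The converse, however, has a genuine gap, and you have located it yourself: the ``rigidity'' claim that an $\eps$-straight arrow diagram is determined up to $\Saffon$-conjugacy by $\VCFD$ is exactly the content of the proposition restricted to $\eps$-straight representatives, and you assert it rather than prove it. Two concrete points where the sketch fails. First, two $\eps$-straight diagrams with the same \emph{weakly} decorated configuration need not look alike: the cyclic order of same-slope threads (the datum $\ccfs$ of the \emph{strongly} decorated configuration) can differ while the partition $\lafs$ is the same, so there is no order-compatible ``admissible relabeling'' carrying one diagram to the other; one must genuinely permute cycles and then correct winding within each cycle. Second, even for a single cycle one must show that any two single-cycle elements of $\Saffxx_m^k$ are conjugate, and neither \cref{lemma:fin} (which only identifies which elements are of finite order in $\Sextn$) nor the exponential-decay estimate in \cref{prop:ODE} (which concerns convergence of the flow, not uniqueness of its limit up to conjugacy) addresses this. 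One would also need to check that the resulting conjugator lies in $\Saffon$ rather than merely in $\Saffn$. The paper's proof avoids all of this machinery: since $\VCFD=\VCFPD$ forces $\fb$ and $\fb'$ to have the same cycle type, one conjugates by an element of $\Sn\subset\Saffon$ to match the cycles together with their $\nop$ and $\kop$ data, and then eliminates the residual winding discrepancies within each cycle by conjugating with the translation elements $t_{e_i-e_j}\in\Saffon$ for $i,j$ in the same cycle. If you want to salvage your approach, the missing step is precisely this last computation, and once you have it the detour through $\eps$-straightening becomes unnecessary.
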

	\begin{proof}
		Since $\VCFD$ depends only on the cycles of $f$ and their slopes, it is clearly invariant under conjugation, which shows the ``only if'' direction. Suppose now that $f,f'\in\Saffn$ are such that $\VCFD=\VCFPD$. Because the permutations $\fb,\fb'\in\Sn$ have the same cycle type, they are conjugate in $\Sn$. We may therefore apply $\Sn$-conjugation to $f'$ (permuting the cycles along the way) to obtain an element $f''$ such that $\fb=\fb''$ (in particular, $f$ and $f''$ have the same sets of cycles), and such that for each cycle $\Cycle$ of $f$, we have $\nopf(\Cycle)=\nopx_{f''}(\Cycle)$ and $\kopf(\Cycle)=\kopx_{f''}(\Cycle)$. Let $t_{e_i}\in\Saffn$ be the affine permutation sending $i\mapsto i+n$ and $j\mapsto j$ for $j\not\equiv i\pmod n$, called a \emph{translation element}. Thus, $t_{e_i-e_j}:=t_{e_i}t_{e_j}^{-1}$ belongs to $\Saffon$, and we see that $f$ can be obtained from $f''$ via conjugations by such elements $t_{e_i-e_j}$ for $i,j$ belonging to the same cycle of $f$.
	\end{proof}

	\subsection{A characterization of minimal-length elements}	
	Our next goal is to give an explicit characterization of c-reduced affine permutations; see \cref{cor:c_red_charact}.

	Given two subsets $A,B\subset\R^2$, define their \emph{Minkowski sum} by $A+B:=\{a+b\mid a\in A,\ b\in B\}$. Given a vector configuration $\VC=\{\edge_1,\edge_2,\dots,\edge_m\}\subset\Z^2$, the associated \emph{zonotope} $\Zon(\VC)$ is the convex polygon in $\R^2$ obtained as the Minkowski sum of line segments
	\begin{equation*}%
		\Zon(\VC):=[0,\edge_1]+[0,\edge_2]+\cdots+[0,\edge_m].
	\end{equation*}
	For $\edge_1,\edge_2\in\R^2$, recall that $\det(\edge_1,\edge_2)$ is the determinant of the $2\times2$ matrix with columns $\edge_1,\edge_2$. The following formula for the area of $\Zon(\VC)$ is well known~\cite{McMullen}:
	\begin{equation*}%
		\Area(\Zon(\VC))=\sum_{1\leq i<j\leq m} |\det(\edge_i,\edge_j)|.
	\end{equation*}
	
	Recall the notion of $\excess{\bfla}$ from \cref{dfn:exc}.

	\begin{lemma}\label{lemma:c_red_ell}
		Let $f\in\Saffn$. Then $f$ is c-reduced if and only if
		\begin{equation}\label{eq:c_red_ell}
			\ell(f)=\Area(\Zon(\VC))+\excess{\bfla}, \quad\text{where $\VCFD=(\VC,\bfla)$.}
		\end{equation}
	\end{lemma}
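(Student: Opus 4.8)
The plan is to reduce \cref{lemma:c_red_ell} to an extremal statement about a whole conjugacy class, and then prove that by a geometric (intersection-number) argument on a torus, using \cref{prop:ODE} to put the arrow diagram into $\eps$-straight form. Let $\O$ denote the $\Saffon$-conjugacy class of $f$. By \cref{cor:HN}, $f$ is c-reduced if and only if $\ell(f)=\min_{g\in\O}\ell(g)$, and by \cref{prop:conjugate_VCFD} the weakly decorated vector configuration, hence the number $\Area(\Zon(\VCF))+\excess{\bflaf}$, is constant on $\O$. So it suffices to prove the identity $\min_{g\in\O}\ell(g)=\Area(\Zon(\VCF))+\excess{\bflaf}$, which we establish as a lower bound valid for every element together with an explicit element attaining it.

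\emph{Lower bound.} Given any $h\in\Saffn$, apply \cref{prop:ODE} to its standard arrow diagram to obtain $y\in\Streg(h)$ with $\Diag[h](y)$ reachable from it; setting $h':=g_yhg_y^{-1}$ we get $h'\in\O_h$ and $\ell(h')=\ell_h(y)\le\ell(h)$, so it is enough to bound $\ell_h(y)$ from below for $y\in\Streg(h)$. For this I would glue the top and bottom edges of the infinite strip carrying $\Diag[h](y)$ by the identity on the $x$-coordinate (so that each arrow $(\xx(i),1)\to(\xx(hi),0)$ continues into the arrow out of $\xx(hi)$) and quotient by the horizontal period, obtaining a torus $\mathbb T_0$; the crossings counted by $\ell_h(y)$ become exactly the self- and mutual intersections on $\mathbb T_0$ of the closed loops obtained by concatenating arrows along $h$-orbits. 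In an $\eps$-straight diagram, the arrows of a slope class $\slp=p/q$ (in lowest terms) coming from a cycle $\Cycle\in\CLFS$ form precisely $\dopf(\Cycle)$ loops, each of homology class $\pm(p,-q)$; summing over $\CLFS$ their total homology class equals a fixed $90^\circ$ rotation of $\ef(\slp)$, so that the slope-$\slp$ and slope-$\slp'$ loop systems have algebraic intersection number $\det(\ef(\slp),\ef(\slp'))$ and hence at least $|\det(\ef(\slp),\ef(\slp'))|$ geometric intersections. Within a single slope class, loops from distinct cycles are homologically parallel, while the $\dopf(\Cycle)$ loops produced by one cycle form a ``cable'' forcing at least $\dopf(\Cycle)-1$ self-intersections among themselves. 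Adding these contributions gives
\[
\ell_h(y)\ \ge\ \sum_{\slp<\slp'}|\det(\ef(\slp),\ef(\slp'))|\ +\ \sum_{\Cycle\in\CLF}(\dopf(\Cycle)-1)\ =\ \Area(\Zon(\VCF))+\excess{\bflaf},
\]
where the last equality is the area formula for $\Zon(\VCF)$ recalled above together with $\excess{\bflaf}=\sum_\slp\big(\dopf(\slp)-\#\CLFS\big)=\sum_{\Cycle}(\dopf(\Cycle)-1)$ from \cref{dfn:exc} and \cref{dfn:bflaf}. Thus $\ell(h)\ge\Area(\Zon(\VCF))+\excess{\bflaf}$ for all $h$ in the class (note that the right-hand side is $\O$-invariant).

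\emph{Attainment.} To see the bound is sharp, I would build an explicit representative of $\O$ of length $\Area(\Zon(\VCF))+\excess{\bflaf}$: for each slope $\slp=p/q$ and each cycle with $\dopf=d$, use the ``standard'' single-cycle element of length $d-1$ (obtained from $s_1s_2\cdots s_{d-1}$ in the relevant restricted symmetric group, twisted by the appropriate central translation), and assemble these across cycles and slopes so that on $\mathbb T_0$ the corresponding loops are in minimal position; concretely, place the tightly-clustered point configurations of the cycles along the boundary of $\Zon(\VCF)$ so that each pair of distinct-slope bundles crosses exactly $|\det(\ef(\slp),\ef(\slp'))|$ times, and a direct count of inversions yields the claimed length. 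Together with the lower bound this gives $\min_{g\in\O}\ell(g)=\Area(\Zon(\VCF))+\excess{\bflaf}$, and combined with \cref{cor:HN} and \cref{prop:conjugate_VCFD} completes the proof.

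\emph{Main obstacle.} The technical heart is making the torus picture precise: verifying that in an $\eps$-straight diagram the concatenated arrows really close up into $\dopf(\Cycle)$ loops of homology class $\pm(p,-q)$, that the cabling lower bound $\dopf(\Cycle)-1$ holds for the loops of a single cycle, and that geometric intersection numbers coincide with algebraic ones in this ``taut'' configuration; and on the attainment side, controlling the explicit clustered construction so that it hits $\Area(\Zon(\VCF))+\excess{\bflaf}$ exactly rather than only up to lower-order terms. A purely combinatorial alternative tracking the cutoff point and inversions directly is possible but requires comparable bookkeeping.
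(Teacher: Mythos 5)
Your overall architecture coincides with the paper's: reduce via \cref{cor:HN} and \cref{prop:conjugate_VCFD} to showing that $\Area(\Zon(\VC))+\excess{\bfla}$ is both a lower bound for $\ell$ on the whole conjugacy class and is attained, prove the inter-slope part of the lower bound by toric intersection numbers, and attain the bound by the explicit $\eps$-straight construction of \cref{sec:eps_straight_construct}. The paper's lower bound works directly on $f$ (associating to each cycle $\Cycle$ the curve $\Path(\Cycle)$ on $\T$ with homology $\ef(\Cycle)$, and splitting $\ell(f)=\sum\xing(\Cycle,\Cycle')$), so your detour through \cref{prop:ODE} is unnecessary, though harmless since $\ell$ weakly decreases along $\to$ and the right-hand side is conjugation-invariant.

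There is, however, one genuine flaw in your treatment of the diagonal terms. You assert that the arrows of a single cycle $\Cycle$ close up on the glued torus into $\dopf(\Cycle)$ loops of primitive homology class, and that this ``cable'' forces $\dopf(\Cycle)-1$ intersections. Both halves of this are problematic: since $\ovl{f}|_\Cycle$ is a single $\nopf(\Cycle)$-cycle, the concatenated arrows form exactly \emph{one} closed loop, of homology class $\dopf(\Cycle)$ times a primitive class; and if they really did form $\dopf(\Cycle)$ disjoint parallel primitive loops, the forced intersection number would be $0$, not $\dopf(\Cycle)-1$, so your stated mechanism does not yield the bound you need. The correct statement (a single closed curve of class $d\gamma$, $\gamma$ primitive, has at least $d-1$ self-intersections) would repair this, but it requires proof; the paper sidesteps the topology entirely with a short combinatorial argument: $\Lan^k$ has $\gcd(k,n)$ cycles and multiplying by any $s_i$ changes the number of cycles by exactly one, whence any single-cycle $g\in\Saffkn$ satisfies $\ell(g)\geq\dop(g)-1$, giving $\xing(\Cycle,\Cycle)\geq\dopf(\Cycle)-1$. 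You should substitute an argument of this kind (or a genuine proof of the self-intersection bound for non-primitive classes) for the cabling claim; the rest of your outline then goes through as in the paper.
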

	In the proof of the lemma, we will count inversions $(j,j')\in\Inv(f)$ according to the cycles containing $j$ and $j'$.
	
	\begin{definition}
		Given two cycles $\Cycle,\Cycle'\in\CLF$, their \emph{ordered crossing number} is defined as
		\begin{equation*}%
			\xing(\Cycle,\Cycle'):=\#\{(j,j')\in \Inv(f)\mid j\in[n]\cap\Cycle\text{ and }j'\in\Cycle'\}.
		\end{equation*}
	\end{definition}
	\noindent Thus, we have $\sum_{\Cycle,\Cycle'\in\CLF} \xing(\Cycle,\Cycle') = \ell(f)$.
	
	\begin{proof}[Proof of \cref{lemma:c_red_ell}]
		Denote the right-hand side of~\eqref{eq:c_red_ell} by $\ell(\VCFD)$. First, we show that for any $f\in\Saffn$, we have $\ell(f)\geq\ell(\VCFD)$. Observe that if $g\in\Saffkn$ has a single cycle then $\ell(g)\geq \dop(g)-1$ (where $\dop(g)=\gcd(k,n)$), because the map $\fkn=\Lan^k$ has $\gcd(k,n)$ cycles, and for each $i\in[n]$, $s_ig$ has either one more or one less cycle than $g$. Thus, each cycle $\Cycle$ of $f$ contributes at least $\dopf(\Cycle)-1$ to $\ell(f)$:
		\begin{equation}\label{eq:xing_geq_CC}
			\xing(\Cycle,\Cycle)\geq \dopf(\Cycle)-1.
		\end{equation}
		It follows that for each $\slp\in\SLF$, we have
		\begin{equation*}%
			\sum_{\Cycle\in\CLFS} \xing(\Cycle,\Cycle) \geq \excess{\lafs}.
		\end{equation*}
		
		To each cycle $\Cycle$ we can associate a piecewise-linear curve $\Path(\Cycle)$ in $\R^2$ obtained by choosing some $i\in\Cycle$ and joining the points $p_d:=\left(d,\frac1n f^d(i)\right)$ for $d=0,1,\dots,\nopf(\Cycle)$; cf.~\cite[Section~4]{GL_cat_combin}. We have $p_0=(0,\frac in)$ and $p_{\nopf(\Cycle)}=(\nopf(\Cycle),\frac in+\kopf(\Cycle))$, thus $\Path(\Cycle)$ gives rise to a closed curve on $\T=\R^2/\Z^2$ with homology $\ef(\Cycle)=(\nopf(\Cycle),\kopf(\Cycle))$. It is well known that given integers $n',k',n'',k''$ with $k'/n'>k''/n''$, a curve in $\T$ with homology $(n',k')$ intersects a curve with homology $(n'',k'')$ from below at least 
		$\left|\det \begin{pmatrix}
			n' & k'\\ n'' & k''
		\end{pmatrix}\right|$ times.  Thus, given cycles $\Cycle\neq\Cycle'$, we have
		\begin{equation}\label{eq:xing_geq_CC'}
			\xing(\Cycle,\Cycle')\geq 
			\begin{cases}
				0, &\text{if $\slpfC\leq \slpfx(\Cycle')$;}\\
				|\det(\ef(\Cycle),\ef(\Cycle'))|,&\text{otherwise.}
			\end{cases}
		\end{equation}
		We have shown that $\ell(f)\geq\ell(\VCFD)$.
		Conversely, consider a weakly decorated vector configuration $\VCD=(\VC,\bfla)$.  By \cref{prop:conjugate_VCFD}, $\O:=\{f\in\Saffn\mid \VCFD=\VCD\}$ is an $\Saffon$-conjugacy class. 
		By \cref{cor:HN}, $f\in\O$ is c-reduced if and only if $f\in\Omin$. We have shown above that for any $f\in\O$, $\ell(f)\geq \ell(\VCD)$. It remains to construct $g\in\O$ such that $\ell(g)=\ell(\VCD)$.  Such an affine permutation will be constructed in \cref{sec:eps_straight_construct}.
	\end{proof}

	\begin{corollary}\label{cor:c_red_charact}
		Let $f\in\Saffn$. Then $f$ is c-reduced if and only if all of the following conditions are satisfied.
		\begin{enumerate}
			\item For each $\Cycle\in\CLF$, $\xing(\Cycle,\Cycle)=\dopf(\Cycle)-1$.
			\item For each $\Cycle\neq\Cycle'$ in $\CLF$, we have
			\begin{equation*}%
				\xing(\Cycle,\Cycle')=
				\begin{cases}
					0, &\text{if $\slpfC\leq \slpfx(\Cycle')$;}\\
					|\det(\ef(\Cycle),\ef(\Cycle'))|,&\text{otherwise.}
				\end{cases}
			\end{equation*}
		\end{enumerate}
	\end{corollary}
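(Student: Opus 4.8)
The plan is to read the corollary off from \cref{lemma:c_red_ell} by tracking exactly when the chain of inequalities used to prove it is tight. By \cref{lemma:c_red_ell}, $f$ is c-reduced precisely when $\ell(f)=\ell(\VCFD):=\Area(\Zon(\VC))+\excess{\bfla}$, where $\VCFD=(\VC,\bfla)$. The proof of that lemma establishes $\ell(f)\geq\ell(\VCFD)$ by feeding the per-cycle bound~\eqref{eq:xing_geq_CC} and the per-pair bound~\eqref{eq:xing_geq_CC'} into the identity $\ell(f)=\sum_{\Cycle,\Cycle'\in\CLF}\xing(\Cycle,\Cycle')$. So it suffices to check that $\ell(\VCFD)$ is \emph{exactly} the sum of the right-hand sides of~\eqref{eq:xing_geq_CC} (over all cycles) and~\eqref{eq:xing_geq_CC'} (over all ordered pairs of distinct cycles); granting this, $\ell(f)=\ell(\VCFD)$ holds if and only if each of those bounds is an equality, which will be precisely conditions~(1)--(2).

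The term-matching goes as follows. For the diagonal contributions, summing~\eqref{eq:xing_geq_CC} over $\Cycle\in\CLFS$ gives $\sum_{\Cycle\in\CLFS}(\dopf(\Cycle)-1)=\dopf(\slp)-\#\CLFS=\excess{\lafs}$, because $\lafs$ is the partition of $\dopf(\slp)$ with parts $(\dopf(\Cycle))_{\Cycle\in\CLFS}$; summing over $\slp\in\SLF$ yields $\excess{\bfla}$. For the off-diagonal contributions, I would sort ordered pairs of distinct cycles by their pair of slopes. A pair of cycles of equal slope contributes $0$ to both sides: on the left because $\slpfC\le\slpfx(\Cycle')$ puts~\eqref{eq:xing_geq_CC'} in its first case, and on the right because $\VC$ contains a single vector $\ef(\slp)$ for each slope, so same-slope pairs never enter $\Area(\Zon(\VC))$. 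For a pair of distinct slopes $\slp>\slp'$ and cycles $\Cycle\in\CLFS$, $\Cycle'\in\CLFSP$, the ordered pair $(\Cycle,\Cycle')$ contributes $|\det(\ef(\Cycle),\ef(\Cycle'))|$ and $(\Cycle',\Cycle)$ contributes $0$; since $\ef(\Cycle)=\nopf(\Cycle)(1,\slp)$ is a positive multiple of $(1,\slp)$ and likewise $\ef(\Cycle')$ of $(1,\slp')$, all the determinants $\det(\ef(\Cycle),\ef(\Cycle'))$ share a common sign, so $\sum_{\Cycle\in\CLFS,\Cycle'\in\CLFSP}|\det(\ef(\Cycle),\ef(\Cycle'))|=|\det(\ef(\slp),\ef(\slp'))|$. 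Summing over unordered pairs of distinct slopes recovers $\Area(\Zon(\VC))$ by the zonotope area formula. Hence $\ell(\VCFD)=\sum_{\Cycle}(\dopf(\Cycle)-1)+\sum_{\{\slp\neq\slp'\}}|\det(\ef(\slp),\ef(\slp'))|$ is indeed the total of the pairwise lower bounds.

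It remains to spell out the equality case. Since $\ell(f)$ is a sum of the quantities $\xing(\Cycle,\Cycle')$, each of which is bounded below by the corresponding term of $\ell(\VCFD)$, we have $\ell(f)=\ell(\VCFD)$ if and only if~\eqref{eq:xing_geq_CC} is an equality for every $\Cycle$ (this is condition~(1)) and~\eqref{eq:xing_geq_CC'} is an equality for every ordered pair $\Cycle\neq\Cycle'$, i.e.\ $\xing(\Cycle,\Cycle')=0$ whenever $\slpfC\le\slpfx(\Cycle')$ and $\xing(\Cycle,\Cycle')=|\det(\ef(\Cycle),\ef(\Cycle'))|$ whenever $\slpfC>\slpfx(\Cycle')$ (this is condition~(2)). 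Combining with \cref{lemma:c_red_ell} finishes the proof. I do not expect a real obstacle here; the only point requiring care is the sign bookkeeping in the additivity $\sum|\det(\ef(\Cycle),\ef(\Cycle'))|=|\det(\ef(\slp),\ef(\slp'))|$ and in lining up the two cases of condition~(2) with which ordering of a distinct pair of slopes carries the nonzero bound, and both become routine once one fixes the convention that $\Area(\Zon(\VC))$ counts each unordered pair of slopes once while $\ell(f)$ is summed over ordered pairs of cycles.
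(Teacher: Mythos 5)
Your proposal is correct and follows essentially the same route as the paper: the paper's proof of \cref{cor:c_red_charact} is a two-line reference back to the proof of \cref{lemma:c_red_ell}, observing that $\ell(f)\geq\ell(\VCFD)$ was obtained there by summing the lower bounds \eqref{eq:xing_geq_CC}--\eqref{eq:xing_geq_CC'} over the decomposition $\ell(f)=\sum_{\Cycle,\Cycle'}\xing(\Cycle,\Cycle')$, so that c-reducedness is equivalent to all of these bounds being tight. Your term-matching (diagonal terms summing to $\excess{\bfla}$, off-diagonal determinants adding by collinearity to $\Area(\Zon(\VC))$ via the zonotope formula) just makes explicit the bookkeeping the paper leaves implicit in the proof of \cref{lemma:c_red_ell}.
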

	\begin{proof}
		We have lower bounds on $\xing(\Cycle,\Cycle)$ and $\xing(\Cycle,\Cycle')$ given by~\eqref{eq:xing_geq_CC}--\eqref{eq:xing_geq_CC'}. Moreover, we showed in \cref{lemma:c_red_ell} that $f$ is c-reduced if and only if all of these inequalities are equalities. 
	\end{proof}
	\begin{remark}
		\cref{cor:c_red_charact} was obtained jointly with Thomas Lam during the development of~\cite{GL_cat_combin}.
	\end{remark}

	\begin{corollary}\label{cor:restr_c_reduced}
		If $f\in\Saffn$ is c-reduced and $\Cycle\subset\Z$ is $f$-closed then $f|_\Cycle$ is c-reduced.
	\end{corollary}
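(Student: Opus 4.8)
The plan is to deduce this directly from the explicit characterization of c-reduced affine permutations in \cref{cor:c_red_charact}. Write $g:=f|_\Cycle\in\Saffm$ with $m:=\nopf(\Cycle)$. Since $\Cycle$ is $f$-closed, it is a disjoint union of cycles of $f$, and the order-preserving bijection $\restrictC\colon\Cycle\to\Z$ of \cref{dfn:restrict} restricts to a bijection between $\{\Cycle'\in\CLF\mid \Cycle'\subseteq\Cycle\}$ and the set $\Cycles_g$ of cycles of $g$, sending $\Cycle'\mapsto \restrictC(\Cycle')$. So to apply \cref{cor:c_red_charact} to $g$ it suffices to show that this correspondence preserves all the numerical data that appears there.

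The key observation is that $\restrictC$ intertwines the shift $i\mapsto i+n$ on $\Cycle$ with the shift $k\mapsto k+m$ on $\Z$, i.e.\ $\restrictC(i+n)=\restrictC(i)+m$ for all $i\in\Cycle$; indeed, any half-open window of length $n$ contains exactly $m=\nopf(\Cycle)$ elements of $\Cycle$ by $n$-periodicity, so the next element of $\Cycle$ above $i$ by $n$ is exactly the $m$-th one above $i$. Combining this with $g=\restrictC\circ f\circ\restrictC^{-1}$, one gets, for a cycle $\Cycle'\subseteq\Cycle$ of $f$, that $\nop_g(\restrictC(\Cycle'))=\nopf(\Cycle')$ and $\kop_g(\restrictC(\Cycle'))=\kopf(\Cycle')$ (equivalently, restriction is transitive in the sense that $g|_{\restrictC(\Cycle')}$ agrees with $f|_{\Cycle'}$ up to conjugation by a power of $\Lan$); hence $e_g(\restrictC(\Cycle'))=\ef(\Cycle')$, and consequently the slope $\slpfx(\Cycle')$ and the integer $\dopf(\Cycle')$ are unchanged. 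Moreover, since $\restrictC$ is order-preserving, for $i,j\in\Cycle$ we have $(i,j)\in\Inv(f)$ if and only if $(\restrictC(i),\restrictC(j))\in\Inv(g)$; using the $n$-periodicity of $\Inv(f)$ to replace the window $[n]$ in the definition of the ordered crossing number by any length-$n$ window, this yields $\xing(\restrictC(\Cycle'),\restrictC(\Cycle''))=\xing(\Cycle',\Cycle'')$ for all cycles $\Cycle',\Cycle''\subseteq\Cycle$ of $f$, where the crossing numbers on the left are computed for $g$.

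With these identifications, both conditions of \cref{cor:c_red_charact} for $g$ become, term by term, instances of the same conditions for $f$: for each cycle $\Cycle'\subseteq\Cycle$ the equality $\xing(\restrictC(\Cycle'),\restrictC(\Cycle'))=\dop_g(\restrictC(\Cycle'))-1$ is exactly $\xing(\Cycle',\Cycle')=\dopf(\Cycle')-1$, and for distinct $\Cycle',\Cycle''\subseteq\Cycle$ the value of $\xing(\restrictC(\Cycle'),\restrictC(\Cycle''))$ prescribed by \cref{cor:c_red_charact} (namely $0$ if $\slpfx(\Cycle')\le\slpfx(\Cycle'')$ and $|\det(\ef(\Cycle'),\ef(\Cycle''))|$ otherwise) coincides with the prescribed value of $\xing(\Cycle',\Cycle'')$. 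Since $f$ is c-reduced, \cref{cor:c_red_charact} supplies all of these equalities, so $g$ is c-reduced. Alternatively, one can run the same computation through \cref{lemma:c_red_ell}: the lower bound in \eqref{eq:c_red_ell} holds for every affine permutation, while $\ell(g)=\sum_{\Cycle',\Cycle''\subseteq\Cycle}\xing(\Cycle',\Cycle'')$, and summing the equalities \eqref{eq:xing_geq_CC}--\eqref{eq:xing_geq_CC'} (valid for $f$) over the cycles contained in $\Cycle$ shows that this equals the right-hand side of \eqref{eq:c_red_ell} for $g$.

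The argument is essentially bookkeeping, so there is no substantial obstacle; the only points that require a little care are the shift-equivariance $\restrictC(i+n)=\restrictC(i)+m$ and the attendant replacement of the window $[n]$ by a translate in the definition of $\xing$ — these are precisely what is needed to transport the vectors $\ef(\Cycle')$ and the crossing numbers $\xing(\Cycle',\Cycle'')$ correctly from $f$ to $g$.
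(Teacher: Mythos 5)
Your proof is correct and follows exactly the route the paper intends: the corollary is stated immediately after \cref{cor:c_red_charact} with no written proof, the implicit argument being that the cycle data and crossing numbers of $f|_\Cycle$ are inherited from those of $f$, so the characterization transfers. Your careful verification of the shift-equivariance $\restrictC(i+n)=\restrictC(i)+\nopf(\Cycle)$ and the window-translation step for $\xing$ supplies precisely the bookkeeping the paper leaves to the reader.
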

	
	\section{The structure of c-equivalence classes}\label{sec:c-equiv-structure}
	The goal of this section is to give a complete description of c-equivalence classes of c-reduced affine permutations; see \cref{thm:c-equivalent}.
	\subsection{Cyclic compositions}
	Let $f\in\Saffn$ be c-reduced. Fix a slope $\slp\in\SLF$. By \cref{cor:c_red_charact}, we have $\xing(\Cycle,\Cycle')=0$ for all $\Cycle\neq\Cycle'$ in $\CLFS$. We thus get a natural cyclic order on the set $\CLFS$ induced by the cyclic order on $[n]\cong\Z/n\Z$. Recall that $\sum_{\Cycle\in\CLFS} \dopf(\Cycle)=\dopf(\slp)$. In other words, the cyclic order on $\CLFS$ yields a cyclic composition $\ccfs$ of $\dopf(\slp)$.
	Letting $\bfccf:=(\ccfs)_{\slp\in\SLF}$, we consider the \emph{strongly decorated vector configuration} $\VCFDD:=(\VCF,\bfccf)$.
	
	\begin{lemma}\label{lemma:c-red=>VCF}
		Let $f,f'\in\Saffn$ be c-reduced. If $f\approx f'$ then $\VCFDD=\VCFPDD$.
	\end{lemma}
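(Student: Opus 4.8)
The plan is to reduce to a single elementary conjugation step and then analyze its effect on cycles of a fixed slope. Since $f\approx f'$ gives $f\to f'$, and every step $\rasi$ is conjugation by a simple reflection, $f$ and $f'$ are $\Saffon$-conjugate, so $\VCFD=\VCFPD$ by \cref{prop:conjugate_VCFD}; in particular $\VCF=\VCFP$, $\SLF=\SLFP$, and for each slope the underlying partition of $\ccfs$ equals that of $\ccfps$. Hence it remains only to match cyclic orders, i.e.\ to prove $\ccfs=\ccfps$ for each $\slp$. Fix a chain $f=f_0\rasi f_1\rasi\cdots\rasi f_m=f'$ realizing $f\to f'$: the lengths $\ell(f_j)$ decrease weakly along it, while $\ell(f)=\ell(f')$ since by \cref{cor:HN} both are of minimal length in the common conjugacy class, so every $f_j$ is c-reduced and every step preserves length. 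Thus it suffices to take $f'=s_ifs_i$ with $f$ (hence $f'$) c-reduced and $\ell(f')=\ell(f)$.

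For such a step I would look at the standard arrow diagram of $f$ near positions $i,i{+}1$. Conjugation by $s_i$ transposes the labels $i\leftrightarrow i{+}1$ on the top row and, independently, on the bottom row. When $f$ neither fixes nor swaps $i$ and $i{+}1$, the four arrows with an endpoint at $i$ or $i{+}1$ are distinct, and combining the standard length formulas $\ell(fs_i)-\ell(f)=-\sgn(f(i)-f(i{+}1))$ and $\ell(s_ig)-\ell(g)=-\sgn(g^{-1}(i)-g^{-1}(i{+}1))$ (with $g=fs_i$) gives
\begin{equation*}
  \ell(s_ifs_i)-\ell(f)=-\sgn\!\bigl(f^{-1}(i)-f^{-1}(i{+}1)\bigr)-\sgn\!\bigl(f(i)-f(i{+}1)\bigr).
\end{equation*}
The crucial point is that \emph{$i$ and $i{+}1$ cannot lie in two distinct same-slope cycles}: if $i\in\Cycle$, $i{+}1\in\Cycle'$ with $\Cycle\neq\Cycle'$ and $\slpfC=\slpfx(\Cycle')$, then \cref{cor:c_red_charact} forces $\xing(\Cycle,\Cycle')=\xing(\Cycle',\Cycle)=0$, hence $f(i)<f(i{+}1)$ and $f^{-1}(i)<f^{-1}(i{+}1)$, so the right-hand side is $+2$, contradicting length-preservation. (The degenerate cases are quick: if $f$ swaps $i,i{+}1$ then $f'=f$; if $f$ fixes one of them, the same $+2$ bound reappears, now because a same-slope neighbour of the singleton cycle would create an interleaving forbidden in a c-reduced element.) Therefore $i$ and $i{+}1$ lie either in a common cycle of $f$ or in cycles of different slopes.

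Finally I would dispatch the two remaining cases for a fixed slope $\slp$. If $i,i{+}1$ are in the same cycle, then $s_i$ fixes every cycle of $f$ setwise, so $f'$ has exactly the same cycles with the same slopes, and $\ccfs=\ccfps$. If $i\in\Cycle$, $i{+}1\in\Cycle'$ with $\slpfC\neq\slpfx(\Cycle')$, then $s_i$ replaces $\Cycle$ by $(\Cycle\setminus(i+n\Z))\cup((i{+}1)+n\Z)$, replaces $\Cycle'$ symmetrically, and fixes all other cycles; since $s_i|_\Cycle$ and $s_i|_{\Cycle'}$ are order-preserving, $f'|_{s_i(\Cycle)}=f|_\Cycle$ and $f'|_{s_i(\Cycle')}=f|_{\Cycle'}$, so all $\dop$-invariants are unchanged. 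If $\slp\notin\{\slpfC,\slpfx(\Cycle')\}$ the slope-$\slp$ cycles are untouched. If $\slp=\slpfC$, then among the positions covered by slope-$\slp$ cycles the residue class $i+n\Z$ (lying on $\Cycle$) is replaced by the neighbouring class $(i{+}1)+n\Z$, which previously lay on a cycle of another slope and so was not among them; as $f'$ is again c-reduced, the slope-$\slp$ cycles are again non-interleaved, and this shift of $\Cycle$'s footprint into a vacant adjacent slot leaves the cyclic order on $\CLFS$ unchanged. The case $\slp=\slpfx(\Cycle')$ is symmetric, and $\ccfs=\ccfps$ follows.

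The real obstacle, I expect, is not the length identity but this last step: making the cyclic order on a set of same-slope cycles precise (through the non-interleaving/block structure it induces on $\Z/n\Z$) and verifying cleanly that the one admissible nontrivial move — transferring a single residue class to an adjacent vacant position — preserves it, together with the somewhat fiddly bookkeeping for the degenerate steps involving fixed points and $2$-cycles.
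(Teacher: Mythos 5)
Your proposal is correct and follows essentially the same route as the paper: reduce to a single length-preserving conjugation $f'=s_ifs_i$ between c-reduced elements, get $\VCFD=\VCFPD$ from conjugacy invariance, and use the vanishing of crossings between distinct same-slope cycles (\cref{cor:c_red_charact}) to see the cyclic order on each $\CLFS$ is unchanged. The paper dismisses this last point as ``clear''; your length-formula argument ruling out $i,i+1$ lying in distinct same-slope cycles, and the vacant-adjacent-slot analysis, are exactly the details being elided.
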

	\begin{proof}
		By assumption, $\ell(f)=\ell(f')$. It suffices to consider the case $f'=s_ifs_i$ for some $i\in[n]$. By \cref{lemma:c_red_ell}, we only need to check that the relative order on $\CLFS$ is preserved for each slope $\slp\in\SLF$. This is clear since $f,f'$ have no crossings between different cycles of the same slope by \cref{cor:c_red_charact}.
	\end{proof}

	To give the converse to \cref{lemma:c-red=>VCF}, we need to consider \emph{modular invariants} discussed in \cref{sec:intro:minv}. Recall from \cref{dfn:clicks} that for a cyclic composition $\cc$, we have the rotation number $\rot(\cc)$, and for a family $\bfcc$ of cyclic compositions, $\clicks(\bfcc)$ is the greatest common divisor of their rotation numbers.
	Given a conjugacy class $\O$ and a strongly decorated vector configuration $\VCDDe$, let
	\begin{equation*}%
		\Omin[\VCDDe]:=\{f\in\Omin\mid \VCFDD=\VCDDe\}.
	\end{equation*}
	
	The goal of this section is to prove the following result.
	\begin{theorem}\label{thm:c-equivalent}
		Let $f\in\Saffn$ be c-reduced. Let $\O$ be the $\Saffon$-conjugacy class of $f$. Then $\Omin[\VCFDD]$ is a union of $\clicks(\bfccf)$-many c-equivalence classes. Moreover, for any two c-reduced $f,f'\in\Omin$, we have %
		\begin{equation}\label{eq:c-equivalent}
			f\approx f' \quad\Longleftrightarrow \quad
			(\VCFDD,\minv(f))=(\VCFPDD,\minv(f')),
		\end{equation}
		where $\minv(f)\in \Z/\clicks(\bfccf)\Z$ is the \emph{modular invariant} defined in~\eqref{eq:minv_f_dfn}.
	\end{theorem}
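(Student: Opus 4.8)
The plan is to reduce the statement to the $\eps$-straight normal form supplied by \cref{prop:ODE} and then to analyze the resulting near-straight picture. Throughout, a c-reduced $f\in\Omin$ together with a generic $x\in\Lng$ realizes the conjugate $g_xfg_x^{-1}$, and $\Diag(x)\approx\Diag(x')$ holds precisely when the corresponding conjugates are c-equivalent. Recall that if $f$ is c-reduced and $f\to f'$, then $\ell(f)=\ell(f')$ and $f'\to f$, so $f\approx f'$; in particular, any chamber-path between c-reduced realizations along which $\ell$ stays constant produces an $\approx$.

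I would first dispose of the ``only if'' direction of~\eqref{eq:c-equivalent}. The equality $\VCFDD=\VCFPDD$ is \cref{lemma:c-red=>VCF}. For $\minv$ it suffices to treat a single move $f'=s_ifs_i$ with $\ell(f)=\ell(f')$: such a move is supported in a small disk of the arrow diagram, where it swaps two arrows across a crossing, and hence, for each slope $\slp\in\SLF$, it induces a bijection of the slope-$\slp$ regions before and after the move which is the identity outside that disk. Just as in the graph computation of \cref{sec:intro:minv}, this bijection carries the lex-maximal region labelings to one another, so the summed label and the value $\minv(f)$ from~\eqref{eq:minv_f_dfn} are unchanged; along the way one records, as in \cref{sec:intro:minv}, that the lex-maximal labelings exist and that $\minv(f)$ is a well-defined element of $\Z/\clicks(\bfccf)\Z$.

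Next I would pass to the $\eps$-straight locus. Fix $\eps>0$. Applying \cref{prop:ODE} to the standard arrow diagram of $f\in\Omin$ produces $y\in\Streg(f)$ with $\ellf(y)\le\ell(f)$; minimality of $\ell$ on the conjugacy class $\O$ forces equality, hence $f\approx g_yfg_y^{-1}$. Applying it to a realization $\Diag(x)$ of $f'$ (with $g_xfg_x^{-1}=f'$) likewise gives $f'\approx g_{y'}fg_{y'}^{-1}$ with $y'\in\Streg(f)$. So the problem reduces to deciding when two $\eps$-straight diagrams $\Diag(y),\Diag(y')$, $y,y'\in\Streg(f)$, are c-equivalent. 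One also checks, using \cref{cor:c_red_charact}, that each such $\Diag(y)$ is itself c-reduced: for small $\eps$, strands of distinct slopes cross the minimal $|\det(\ef(\Cycle),\ef(\Cycle'))|$ number of times, parallel strands of a common slope do not cross, and the strand through a cycle $\Cycle$ self-crosses $\dopf(\Cycle)-1$ times.

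The heart of the proof is then the analysis of the $\eps$-straight locus. The set $\Stre(f)$ is a small tube around the affine subspace $\{x\in\Ln: \xx({f(i)})=\xx(i)+\slpfi\text{ for all }i\}$, and $\Streg(f)$ is obtained from it by deleting the walls $\xx(i)=\xx(j)$. A path through the minimal-length locus (the region of $x\in\Lng$ with $g_xfg_x^{-1}\in\Omin$) that crosses one wall at a time realizes a sequence of $\approx$-moves, so $\Diag(y)\approx\Diag(y')$ whenever $y,y'$ lie in the same component of that locus; it then remains to understand which components get identified. I would do this slope by slope: each component records the (fixed) cyclic order $\ccfs$ on $\CLFS$ together with a ``phase'' in $\Z/\rot(\ccfs)\Z$ measuring how the self-crossings of the slope-$\slp$ strands sit relative to the lex-maximal labeling, and the length-preserving deformations that change one slope's phase by one -- the arrow-diagram analog of sliding a self-intersecting strand once around its homology direction -- provide all the identifications. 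This would put the set of c-equivalence classes in $\Omin[\VCFDD]$ in bijection with $\prod_{\slp\in\SLF}\Z/\rot(\ccfs)\Z$ modulo the simultaneous shift, a set of cardinality $\gcd_{\slp\in\SLF}\rot(\ccfs)=\clicks(\bfccf)$, and under this bijection the coordinate of a class is exactly $\minv(f)$. Surjectivity of $\minv$ onto $\Z/\clicks(\bfccf)\Z$ then follows by conjugating a fixed representative by the translation elements $t_{e_i-e_j}\in\Saffon$ with $i,j$ in one cycle, which shift a single slope's phase; as $\clicks(\bfccf)=\gcd_\slp\rot(\ccfs)$, a B\'ezout combination realizes every value. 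The main obstacle is exactly this core step: producing the explicit length-preserving deformations that realize the phase shifts and showing that no further identifications occur -- which is where the combinatorics of $\rot$ and of the lex-maximal labelings must be controlled; the single-slope case, in which the classes number $\rot(\ccfs)$, is the base on which the general gluing rests.
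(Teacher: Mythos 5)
Your overall strategy coincides with the paper's: push both representatives into $\Streg(f)$ via \cref{prop:ODE}, observe that the problem becomes one of comparing, slope by slope, the cyclic-interval data of the restrictions, and count the resulting classes by a B\'ezout/gcd argument. However, the proposal defers precisely the steps that constitute the proof, and two of these are genuine gaps rather than routine verifications. First, the constant-slope base case --- that for $f,f'$ c-reduced of constant slope one has $f\approx f'$ iff $\Ibm_f=\Ibm_{f'}$ --- is a substantive theorem which the paper imports as \cref{prop:Mar_A} from~\cite{Mar}; you neither prove it nor cite it, and without it even your claim that the single-slope classes number $\rot(\ccfs)$ is unsupported. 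Second, your description of the identifications is off: there is no length-preserving deformation that ``changes one slope's phase by one'' in isolation (if there were, all phases could be normalized independently and there would be a single class). The realizable elementary move is the paired shift of \cref{eq:rot_pm}, rotating $\Ibm_{h|_\slp}$ by $+1$ and $\Ibm_{h|_{\slp'}}$ by $-1$ for two \emph{distinct} slopes; producing it requires the paper's block-shifting construction together with the cutoff-point analysis that decides when a swap is conjugation by $s_n$ rather than by $s_i$, $i\in[n-1]$. A single-slope rotation is then only available in multiples of $d=\clicks(\bfccf)$, via B\'ezout (\cref{eq:rot_d}). Relatedly, the quotient of $\prod_\slp\Z/\rot(\ccfs)\Z$ by ``the simultaneous shift'' does not have cardinality $\gcd_\slp\rot(\ccfs)$ once there are three or more slopes; the correct quotient is by the sum-zero moves generated by \cref{eq:rot_pm}, and it is detected exactly by the sum defining $\minv$ in \cref{eq:minv_f_dfn}.

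On the forward direction, your local-disk argument for the invariance of $\minv$ is phrased in the region-labeling language of \cref{sec:intro:minv}, whereas $\minv(f)$ for affine permutations is defined via the partitions $\Ibm_{f|_\slp}$; the only nontrivial case is the move $s_n$ when $\slpfx(0)\neq\slpfx(1)$, which rotates one restriction by $\rotsig$ and another by $\rotsig^{-1}$ so that the sum in \cref{eq:minv_f_dfn} is preserved --- this should be said explicitly (it is \cref{lemma:c-red=>minv}). Finally, your surjectivity argument via conjugation by translation elements $t_{e_i-e_j}$ needs care: arbitrary conjugation is not c-equivalence, so one must check that such a conjugation lands in $\Omin[\VCFDD]$ and compute its effect on $\minv$; the paper instead realizes all values of $\minv$ by replacing a representative with $\rotsig^r(f)$ for suitable $r$.
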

	\begin{remark}\label{rmk:Mar}
		Alternatively, \cref{thm:c-equivalent} may be deduced from the recently updated version of \cite[Theorem~B]{Mar}.
	\end{remark}

	\subsection{Constructing \texorpdfstring{$\eps$}{epsilon}-straight diagrams explicitly}\label{sec:eps_straight_construct}
	
	\begin{figure}
		\begin{tabular}{cc}
			\includegraphics[width=0.15\textwidth]{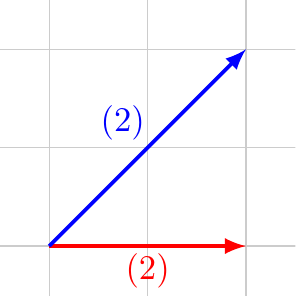}
			&
			\includegraphics[width=0.4\textwidth]{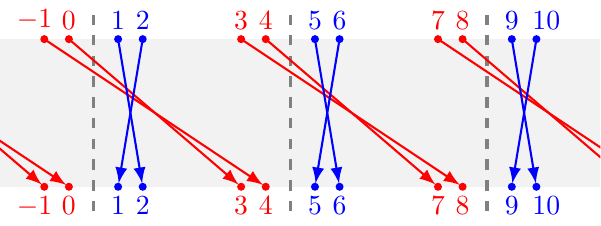}
			\\
			(a) $\VCDDe$. &(b) $\DP(\VCDDe)$. 
			\\
			\includegraphics[width=0.22\textwidth]{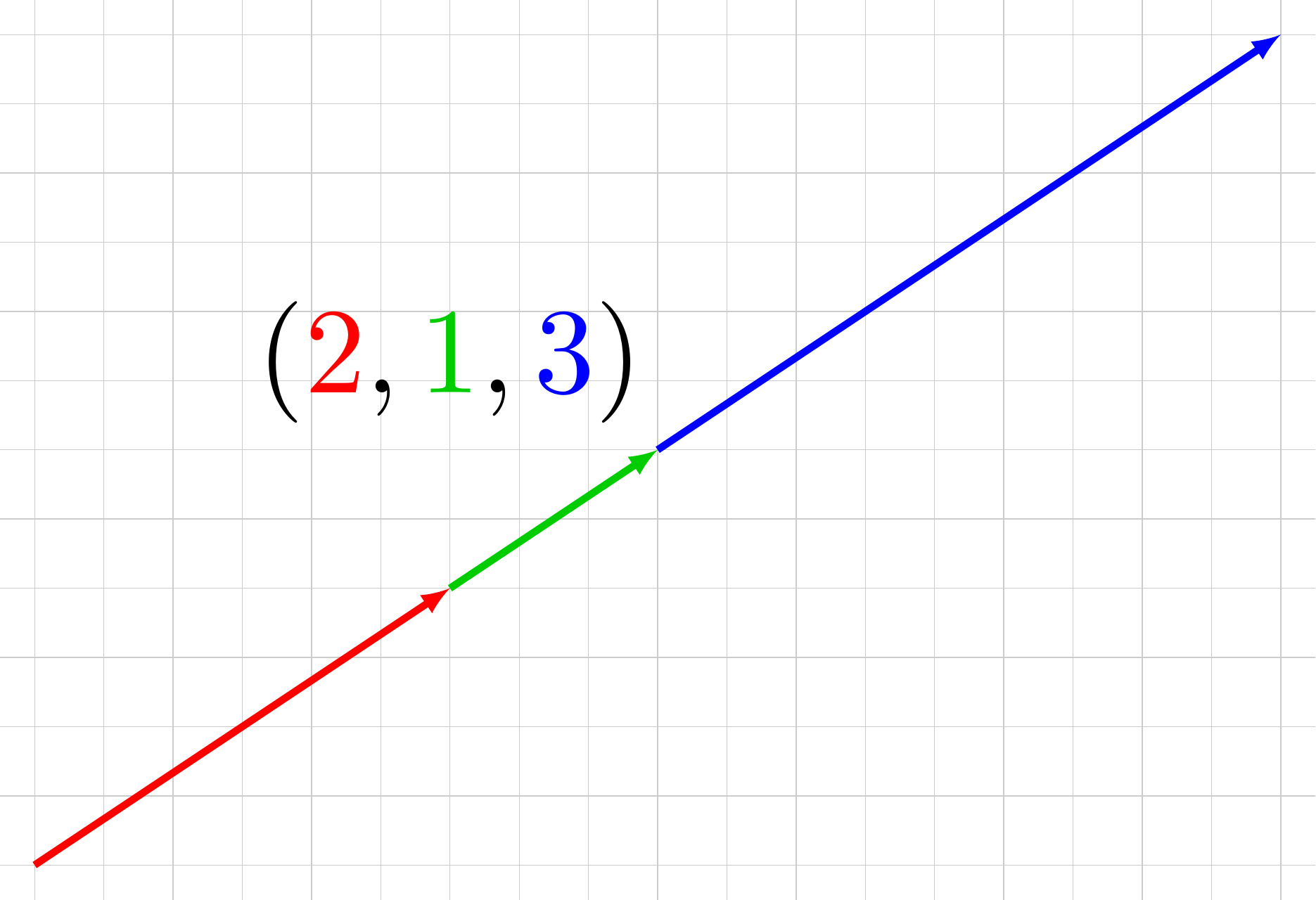}
			&
			\includegraphics[width=0.6\textwidth]{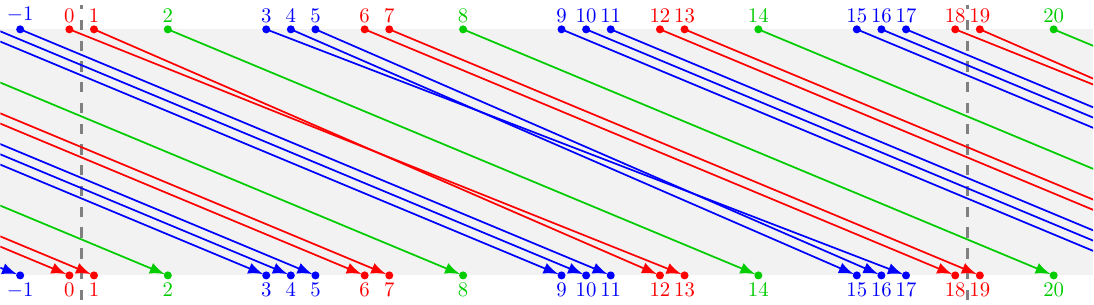}
			\\
			(c) $\VCDDe$. &(d) $\DP(\VCDDe)$. 
		\end{tabular}
		\caption{\label{fig:arrowdiagram} A strongly decorated vector configuration (left) and an $\eps$-straight arrow diagram (right); see \cref{sec:eps_straight_construct}.}
	\end{figure}
	
	Let $\VCDDe=(\VC,\bfcc)$ be a strongly decorated vector configuration and fix a small $\epsilon>0$. Our goal is to construct an $\eps$-straight arrow diagram $\DP(\VCDDe)=\Diagx_g(x)$ for some $x\in\Lng$ and c-reduced $g\in\Saffn$ with $\VCGDD=\VCDDe$. We start with an example and then proceed with a formal description.

	\begin{example}
		Let $\VCDDe=(\VC,\bfcc)$ denote the strongly decorated vector configuration shown in \figref{fig:arrowdiagram}(a). Thus, the vectors in $\VC$ are $\eer = (2,0)$, $\eeb=(2,2)$, and $\cc^{\eer}=\cc^{\eeb}=(2)$. An $\eps$-straight arrow diagram $\DP(\VCDDe)$ is shown in \figref{fig:arrowdiagram}(b).
		On the other hand, if $\VCDDe=(\VC,\bfcc)$ is the strongly decorated vector configuration shown in \figref{fig:arrowdiagram}(c), then $\VC$ consists of a single vector $\e=(18,12)$ decorated by a cyclic composition $\cc^\e=(2,1,3)$. The associated $\eps$-straight arrow diagram $\DP(\VCDDe)$ is constructed in \figref{fig:arrowdiagram}(d).
	\end{example}

	For a vector $\e=(a,b)\in\Z^2$, we denote $\nop(\e):=a$ and $\kop(\e):=b$. For $\e \in \VC$, let $\slp(\e)=\kop(\e)/\nop(\e)$ denote its slope. Assume that $\nop(\e) > 0$ for all $\e \in \VC$. Let $\bfcc=(\cc^\e)_{\e \in \VC}$ and $\cc^\e=(\cc^\e_1,\dots,\cc^\e_{m_\e})$. Consider the circle $\R/\Z$ and choose a collection of \emph{starting points} $\seedpts=(\seedpt^\e_i)_{\e \in E, i \in [m_\e]}$, where $\seedpt^\e_i \in \R/\Z$. Let $\Seedpt^\e_i:=\{\seedpt^\e_i+r\slp(\e) \mid r\in\Z\}\subset\R/\Z$ be the set containing $\seedpt^\e_i$ and consisting of $\nop(\e)/\ilen|\e|$ equally spaced points on a circle. We choose $\seedpts$ so that we additionally have:
	\begin{enumerate}
		\item $\dist_{\R/\Z}(\Seedpt^\e_i,\Seedpt^{\e'}_{i'})>\eps$ for all $(\e,i) \neq (\e',i')$; and
		\item the points $(\seedpt^\e_1,\seedpt^\e_2,\dots,\seedpt^\e_{m_\e})$ are cyclically ordered in $\R/\Z$.
	\end{enumerate}
	Now, for each fixed $\e \in \VC$ and $i \in [m_e]$, we construct an arrow diagram $\DiagOp^\e_i$. Let $\Seedptl^\e_i\subset\R$ be the preimage of $\Seedpt^\e_i$ under the projection $\R\to\R/\Z$, and choose $\seedptp\in\Seedptl^\e_i$. Set $d:=\alpha^\e_i$. For each $r \in [d]$, set $\seedptp_r:=\seedptp+\frac {r\eps}{d}$. We refer to the points $(\seedptp_r)_{r\in[d]}$ as the \emph{block} associated to $\seedptp$, and denote by $\Seedptl'_{\e,i}:=\Seedptl^\e_i+\frac{\eps}d[d]$ the set of points in all such blocks. Let $\barseedptp\in\Seedpt^\e_i$ be the image of $\seedptp$ in $\R/\Z$. If $\barseedptp\not\equiv \seedpt^\e_i\pmod{\Z}$ then we draw an arrow $(\seedptp_r,1)\to(\seedptp_r+\slp(\e),0)$ for each $r\in[d]$. Otherwise, we draw an arrow $(\seedptp_r,1)\to(\seedptp_{\sigma(r)}+\slp(\e),0)$ for each $r\in[d]$, where $\sigma=(1\,2\,\dots\,d)\in S_d$ is a $d$-cycle. The resulting arrow diagram is denoted $\DiagOp^\e_i$.
	
	Let $\SP:=\bigsqcup_{\e \in \VC, i \in [m_e]} \Seedptl'_{\e,i}\subset\R$ be the resulting set of points, and let $\DP(\VCDDe):=\bigcup^\e_{i \in [m_e]} \DiagOp^\e_i$ be the corresponding arrow diagram. 
	Let $x:\Z\to\SP$ be an order-preserving map. Then there exists a unique affine permutation $g\in\Saffn$ such that $\DP(\VCDDe)=\Diagx_g(x)$. By construction, $\VCGDD{}=\VCDDe$ and $\ell(g)=\ell(\VCDDe)$, which completes the proof of \cref{lemma:c_red_ell}. By~\cref{lemma:c_red_ell}, $g$ is c-reduced.

	\subsection{Affine permutations of constant slope}
	\begin{definition}
		Let $f\in\Saffn$ and $\slp\in\Q$. We say that $f$ is \emph{of constant slope $\slp$} if $\SLF=\{\slp\}$. (That is, if all cycles of $f$ are of the same slope $\slp$.)
	\end{definition}
	\noindent It is clear that if $f\in\Saffkn$ is of constant slope $\slp$ then we must have $\slp=k/n$.
	
	Recall that $\Sextn$ is a quotient of $\Saffn$ by $\Lan^n$. We denote the quotient map $\Saffn\to\Sextn$ by $f\mapsto \fhat$.
	\begin{lemma}\label{lemma:fin}
		Let $f\in\Saffn$. Then $\fhat\in\Sextn$ has finite order if and only if $f$ is of constant slope.
	\end{lemma}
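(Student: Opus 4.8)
The plan is to reduce everything to a cycle-by-cycle computation. I would decompose $f$ into its cycles $\CLF=\{\Cycle_1,\dots,\Cycle_s\}$, write $n_j:=\nopf(\Cycle_j)$, $k_j:=\kopf(\Cycle_j)$, and $g_j:=f|_{\Cycle_j}\in\Saffx_{n_j}$, so that $f$ is recovered from the tuple $(g_j)_j$. Recall that $\Lan^n\in\Saffn$ is the translation $i\mapsto i+n$, and that under the order-preserving bijection $\restrict_{\Cycle_j}\colon\Cycle_j\to\Z$ of \cref{dfn:restrict} (which shifts by $n_j$ whenever $\Cycle_j$ is shifted by $n$) the restriction of $\Lan^{n\ell}$ to $\Cycle_j$ becomes the translation $i\mapsto i+n_j\ell$ in $\Saffx_{n_j}$. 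Since $(f|_{\Cycle_j})^m=(f^m)|_{\Cycle_j}$, this means that $\fhat$ has finite order in $\Sextn$ if and only if there exist $m\ge1$ and $\ell\in\Z$ such that, for every $j$, $g_j^m$ is the translation $i\mapsto i+n_j\ell$.

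The technical heart is the identity: $g_j^{\,n_j}$ is the translation $i\mapsto i+n_jk_j$. I would prove it as follows. Because $\overline{g_j}\in S_{n_j}$ is an $n_j$-cycle, $\overline{g_j}^{\,n_j}=\id$, so $g_j^{\,n_j}(i)=i+n_jc_i$ for an $n_j$-periodic integer sequence $(c_i)$; applying $g_j$ to this identity and using $n_j$-equivariance of $g_j$ gives $c_i=c_{g_j(i)}$; and since the $g_j$-orbit of $i$ meets every residue class modulo $n_j$ (again because $\overline{g_j}$ is an $n_j$-cycle), the sequence $(c_i)$ is constant. Its value is pinned to $k_j$ by comparing $\kop$ of both sides: $\kop(g_j^{\,n_j})=n_j\kop(g_j)=n_jk_j$. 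I expect this step to be the main obstacle — not because it is deep, but because it requires care in keeping straight the three relevant moduli ($n$, the cycle length $n_j$, and the identification $\restrict_{\Cycle_j}$); everything else is bookkeeping around it.

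Granting the identity, the two implications are short. Suppose first that $f$ has constant slope $\slp$, so $k_j/n_j=\slp=\kop(f)/n$ for all $j$. Choosing $m$ to be a common multiple of $n,n_1,\dots,n_s$ and setting $\ell:=m\slp=(m/n)\kop(f)\in\Z$, I get that $g_j^m=(g_j^{\,n_j})^{m/n_j}$ is the translation by $mk_j=m\slp\,n_j=n_j\ell$ for every $j$; hence $f^m=\Lan^{n\ell}$ and $\fhat$ has finite order. Conversely, suppose $f^m=\Lan^{n\ell}$. Then $\overline{g_j}^{\,m}=\id$ forces $n_j\mid m$, so by the identity $g_j^m=(g_j^{\,n_j})^{m/n_j}$ is the translation by $mk_j$; equating this with the translation by $n_j\ell$ gives $mk_j=n_j\ell$, i.e.\ $k_j/n_j=\ell/m$ for all $j$, so $f$ is of constant slope.

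One could instead argue via the action of $\Saffn$ on the affine space $\Ln$ of \cref{sec:arrow-diagrams}: an affine automorphism has finite order iff it fixes a point, a fixed point $x$ of $f$ must satisfy $x(f(i))-x(i)=t$ for a constant $t$ equal to the slope of the cycle containing $i$, and such an $x$ exists precisely when all cycle slopes agree. The direct computation with powers above seems cleanest to write out.
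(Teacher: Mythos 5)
Your proof is correct and follows essentially the same route as the paper: the paper takes $N$ to be the least common multiple of the cycle lengths and observes that $f^N$ is a translation element shifting each $i\in\Cycle$ by $N\slpfC\, n$, which is exactly your per-cycle identity $g_j^{\,n_j}=(\text{translation by } n_jk_j)$ after unwinding the bijection $\restrictC$. You simply supply the details (constancy of the $c_i$ along a cycle, the $\kop$-computation, and both implications) that the paper compresses into ``This implies the result.''
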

	\begin{proof}
		Let $N$ be the least common multiple of $\nopf(\Cycle)$ for all $\Cycle\in\CLF$. Then $f^N$ is a translation element; that is, $f^N(i)=i+d_in$ for all $i\in\Z$, where $(d_i)_{i\in\Z}$ is some sequence of integers. Explicitly, if $i\in\Cycle$ then $d_i=N\slpfC\in\Z$. This implies the result.
	\end{proof}

	Let $f\in\Saffkn$ be c-reduced and of constant slope, and set $d:=\gcd(k,n)$. By \cref{cor:c_red_charact}, the arrows between different cycles of $f$ do not cross. Therefore, for each cycle $\Cycle\in\CLF$, we have $\Cycle=\Cycle+d$ as subsets of $\Z$. Denoting by $I_\Cycle\subset\Z/d\Z$ the image of $\Cycle$ under the map $\Z\mapsto \Z/d\Z$, we get a partition $\Ibm_f=\{I_\Cycle\mid \Cycle\in\CLF\}$ of $\Z/d\Z$ into cyclic intervals.\footnote{The case where $f$ is a single cycle requires special care. As mentioned after \cref{dfn:clicks}, we distinguish between different cyclic intervals $[j,j+d-1]$ of $\Z/d\Z$. Topologically, the standard arrow diagram of $f$ (viewed as a union of arrows) will be disconnected, and we choose $\Ibm_f:=\{[j,j+d-1]\}$ for $j\in\Z/d\Z$ such that the points $(j,1)$ and $(j-1,1)$ belong to different connected components.} It is clear that $\Ibm_f$ is invariant under c-equivalence.
	\begin{proposition}[{\cite[Proposition~A]{Mar}}]\label{prop:Mar_A}
		Let $f,f'\in\Saffkn$ be c-reduced and of constant slope. Then %
		\begin{equation*}%
			f\approx f' \quad\text{if and only if} \quad \Ibm_f=\Ibm_{f'}.
		\end{equation*}
	\end{proposition}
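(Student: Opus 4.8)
The ``only if'' direction is immediate: it was observed just before the statement that $\Ibm_f$ is invariant under c-equivalence, so $f\approx f'$ forces $\Ibm_f=\Ibm_{f'}$. For the converse the plan is to bring both $f$ and $f'$ to a common normal form given by an explicit $\eps$-straight arrow diagram, using \cref{prop:ODE} to reach such diagrams and \cref{cor:c_red_charact} to control the moves in between.

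First I would record that $\Ibm_f=\Ibm_{f'}$ already forces $\VCFDD=\VCFPDD$: since $f,f'\in\Saffkn$ are of constant slope $k/n$, in each case $\VCF$ is the single vector $(n,k)$, and $\Ibm_f$ recovers the cyclic composition $\bfccf$ recording $(\dopf(\Cycle))_{\Cycle\in\CLF}=(|I_\Cycle|)_{\Cycle}$ in cyclic order. By \cref{prop:conjugate_VCFD} (forgetting the cyclic order) and \cref{cor:HN}, $f$ and $f'$ then lie in the same set $\Omin$. Next I apply \cref{prop:ODE} with $x=\tfrac1n\id$: one gets $y\in\Streg(f)$ with $f=g_xfg_x^{-1}\to g_yfg_y^{-1}$, and since $f$ is c-reduced the length is constant along the deformation, so $f\approx g_yfg_y^{-1}$, the latter being represented (up to topological equivalence) by the generic $\eps$-straight diagram $\Diag(y)$. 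Likewise $f'\approx g_{y'}f'g_{y'}^{-1}$ with $\Diag(y')$ generic and $\eps$-straight. Because $\Ibm$ is a c-equivalence invariant, the two resulting $\eps$-straight diagrams are c-reduced, of the same constant slope $k/n$, and carry the same cyclic-interval partition $\Ibm$ of $\Z/d\Z$ with $d=\gcd(k,n)$; so it suffices to show that any two such diagrams represent c-equivalent affine permutations.

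For this I would compare each with the explicit model of \cref{sec:eps_straight_construct}: let $g$ be the c-reduced affine permutation with $\DP(\VCDDe)=\Diagx_g(x)$ built from $\VCDDe:=\VCFDD$, choosing the starting points $\seedpts$ (by rotating all of them simultaneously on $\R/\Z$, which is allowed since only their cyclic order is constrained) so that $\Ibm_g$ equals the prescribed $\Ibm$ --- possible because the rotations of a cyclic-interval partition with composition $\bfccf$ realize exactly the $\clicks(\bfccf)$ partitions with that composition. It then remains to connect an arbitrary generic $\eps$-straight realization of a c-reduced constant-slope element with partition $\Ibm$ to $\DP(\VCDDe)$. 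The key structural facts are: (i) in an $\eps$-straight diagram of constant slope every arrow has slope within $\eps$ of $k/n$, so the relative order along the line of two points more than $\eps$ apart on $\R/\Z$ is forced; (ii) the remaining ambiguity is confined to the $\eps$-``blocks'', each consisting of points of a single cycle $\Cycle$ and cyclically permuted by a single $\dopf(\Cycle)$-cycle by \cref{cor:c_red_charact}; and (iii) $\Ibm$ records precisely which cyclic interval of $\Z/d\Z$ each block occupies, hence determines the crossing pattern up to reorderings internal to blocks. Finally, a reordering internal to one block is a conjugation of a $\dopf(\Cycle)$-cycle by an adjacent transposition; it is realized by moving the configuration continuously within $\Streg$ across a single facet, and it preserves $\ell$ because $\ell$ is already minimal, equal to $\ell(\VCDDe)$ by \cref{lemma:c_red_ell}, with $\xing(\Cycle,\Cycle)=\dopf(\Cycle)-1$ forced throughout by \cref{cor:c_red_charact}. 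Chaining finitely many such moves gives $f\approx g\approx f'$.

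The main obstacle is making (i)--(iii) precise, i.e.\ the rigidity-and-connectivity claim: with $\Ibm$ fixed, the space of generic $\eps$-straight configurations of a given constant slope is connected by facet crossings that are all block-internal and length-preserving, and in particular one never has to leave $\Streg$ or to pass through an element of strictly larger length. The point making this work is that advancing by one ``round'' of the near-rotation by $k/n$ translates every point by the same amount $k/n\pm\eps$; hence within $\Streg$ neither the cyclic order of the $n/d$ rounds nor the assignment of blocks to cyclic intervals of $\Z/d\Z$ can change, and only the within-round orderings inside blocks are free. Carrying out this bookkeeping amounts to re-deriving the type-$A$ case of \cite[Proposition~A]{Mar} from \cref{prop:ODE} and \cref{cor:c_red_charact}; alternatively, one may simply invoke \cite[Proposition~A]{Mar} directly.
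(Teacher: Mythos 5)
The paper does not actually prove this statement: it is quoted wholesale from \cite[Proposition~A]{Mar} (the bracketed attribution in the theorem header is the entire ``proof''), so your closing fallback --- ``one may simply invoke \cite[Proposition~A]{Mar} directly'' --- coincides exactly with the paper's treatment, and on that reading your proposal matches the paper.

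Your attempted independent derivation, however, should not be mistaken for a complete proof. The reduction via \cref{prop:ODE} to comparing two generic $\eps$-straight diagrams of the same constant slope with $\Ibm_f=\Ibm_{f'}$ is sound (modulo choosing $\eps$ small enough that the accumulated per-arrow errors, which can grow like $\nopf(\Cycle)\eps$ along an orbit, still produce well-separated blocks). The gap is the ``rigidity-and-connectivity'' claim at the end. First, a block-internal facet crossing swaps two points that are simultaneously sources and targets of arrows, so it flips the crossing status of two pairs of arrows and changes $\ell$ by $0$ or $\pm 2$; it is not automatic that the swaps you need are the length-preserving ones, i.e.\ that one can rearrange the interior of a block while keeping $\xing(\Cycle,\Cycle)=\dopf(\Cycle)-1$ throughout. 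Second, even granting that, the assertion that such length-preserving block-internal swaps connect \emph{any} two orderings realizing the same $\Ibm$ is precisely the content of \cite[Proposition~A]{Mar} in type $A$, so the sketch is circular unless that combinatorial connectivity is carried out in full. (The single-cycle case, where $\Ibm_f$ is pinned down only by the connected-component convention in the paper's footnote, would also need separate handling, since there your item (iii) imposes no constraint.) You flag this yourself, which is the right instinct; but as written the sketch establishes the easy direction and the setup, not the theorem.
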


	We say that a cyclic composition $\cc=(\cc_1,\cc_2,\dots,\cc_m)$ is written in \emph{normal form} if the sequence $(\cc_1,\cc_2,\dots,\cc_m)$ is lexicographically maximal out of all sequences obtained by rotating $\cc$, i.e., $(\cc_r,\cc_{r+1},\dots,\cc_m,\cc_1,\dots,\cc_{r-1})$ for $r\in[m]$. As in \cref{dfn:clicks}, we associate to $\cc$ a partition $\Icc=(I_1,I_2,\dots,I_m)$ of $\Z/d\Z$ (where $d=\cc_1+\cc_2+\cdots+\cc_m$) into cyclic intervals given by $I_1=[1,\cc_1]$, $I_2=[\cc_1+1,\cc_1+\cc_2]$, etc.
	
	Note that if $\cc=\ccfs$ then we have $d=\cc_1+\cc_2+\cdots+\cc_m=\gcd(k,n)$, and therefore we have two partitions $\Icc$ and $\Ibm_f$ of $\Z/d\Z$ into cyclic intervals. These partitions are related by a rotation $\rotsig^r$ of $\Z/d\Z$ for some $r$; however, this rotation is only defined up to a symmetry of $\Icc$, i.e., up to $\rotsig^{\rot(\cc)}$. (Here, $\rot(\cc)$ divides $d$.) %
	\begin{definition}\label{dfn:minv_const_slp}
		Let $f\in\Saffkn$ be c-reduced of constant slope $\slp=k/n$, and let $\cc:=\ccfs$ be written in normal form. The \emph{modular invariant} $\minv(f)\in\Z/\rot(\cc)\Z$ is the unique element such that $\rotsig^{\minv(f)}(\Icc)=\Ibm_f$.
	\end{definition}
	
	\begin{corollary}\label{cor:c_red_const_slp}
		Let $f,f'\in\Saffkn$ be c-reduced and of constant slope $\slp=k/n$. Then %
		\begin{equation*}%
			f\approx f' \quad\text{if and only if} \quad (\ccfs,\minv(f)) = (\ccfps,\minv(f')).
		\end{equation*}
	\end{corollary}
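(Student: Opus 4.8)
The plan is to deduce the corollary directly from \cref{prop:Mar_A}, \cref{lemma:c-red=>VCF}, and the definition of the modular invariant in \cref{dfn:minv_const_slp}. Since $f$ (and likewise $f'$) has constant slope $\slp=k/n$, we have $\SLF=\{\slp\}$, the vector configuration $\VCF$ consists of the single vector $(n,k)$, and $\bfccf=(\ccfs)$ is a single cyclic composition. In particular $d:=\gcd(k,n)=\ilen|(n,k)|$, and for $\cc:=\ccfs$ written in normal form the cyclic-interval partition $\Icc$ of $\Z/d\Z$ and the rotation number $\rot(\cc)$ (which divides $d$) are well-defined, as in \cref{dfn:clicks,dfn:minv_const_slp}.

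For the forward implication, assume $f\approx f'$. By \cref{lemma:c-red=>VCF} we have $\VCFDD=\VCFPDD$; reading off the single cyclic composition gives $\ccfs=\ccfps=:\cc$. By \cref{prop:Mar_A}, $f\approx f'$ also yields $\Ibm_f=\Ibm_{f'}$, so by \cref{dfn:minv_const_slp},
\[
\rotsig^{\minv(f)}(\Icc)=\Ibm_f=\Ibm_{f'}=\rotsig^{\minv(f')}(\Icc),
\]
hence $\rotsig^{\minv(f)-\minv(f')}(\Icc)=\Icc$. The set of $r\in\Z$ with $\rotsig^r(\Icc)=\Icc$ is a subgroup of $\Z$ whose least positive element is, by definition, $\rot(\cc)$; thus it equals $\rot(\cc)\Z$. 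Since $\minv(f)$ and $\minv(f')$ are elements of $\Z/\rot(\cc)\Z$, we conclude $\minv(f)=\minv(f')$, and therefore $(\ccfs,\minv(f))=(\ccfps,\minv(f'))$.

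For the converse, assume $(\ccfs,\minv(f))=(\ccfps,\minv(f'))$ and write $\cc$ for the common cyclic composition in normal form, so that $\Icc$ is the same partition of $\Z/d\Z$ in both cases. By \cref{dfn:minv_const_slp},
\[
\Ibm_f=\rotsig^{\minv(f)}(\Icc)=\rotsig^{\minv(f')}(\Icc)=\Ibm_{f'},
\]
and \cref{prop:Mar_A} gives $f\approx f'$. The only delicate point, anticipated by the footnote to \cref{dfn:minv_const_slp}, is the single-cycle case: there $\cc=(d)$, $\rot(\cc)=d$, and one must invoke the stated convention to make $\Ibm_f$ a well-defined c-equivalence invariant; with that in place the argument above applies verbatim. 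I expect this bookkeeping to be the only mildly subtle step — everything else is a direct substitution into the cited results.
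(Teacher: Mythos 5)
Your proof is correct and follows essentially the same route as the paper: the forward direction is the invariance of $\ccfs$ and $\Ibm_f$ (hence $\minv$) under c-equivalence, and the converse reconstructs $\Ibm_f=\Ibm_{f'}$ from the equality of cyclic compositions and modular invariants and then applies \cref{prop:Mar_A}. Your forward direction is slightly more explicit than the paper's (which simply asserts invariance), but the underlying argument is the same.
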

	\begin{proof}
		The $\Longrightarrow$ direction is clear since both $\ccfs$ and $\minv(f)$ are invariant under c-equivalence. Conversely, having $\ccfs=\ccfps$ implies that $\Ibm_f$ and $\Ibm_{f'}$ coincide up to cyclic shift, and $\minv(f)=\minv(f')$ guarantees that $\Ibm_f=\Ibm_{f'}$. The result then follows from \cref{prop:Mar_A}.
	\end{proof}

	\subsection{Finishing the proof}
	For $f\in\Saffn$ and $\slp\in\SLF$, let $\fs:=f|_{\slupp}$. Thus, $\fs$ has constant slope $\slp$. If in addition $f$ is c-reduced then by \cref{cor:restr_c_reduced}, so is $\fs$. In this case, recall from \cref{dfn:minv_const_slp} that the modular invariant $\minv(\fs)$ is an element of $\Z/\rot(\ccfs)\Z$. By~\eqref{eq:clicks_dfn}, $\clicks(\bfccf)$ is defined as the greatest common divisor of the numbers $\clicks(\ccfs)$ over all $\slp\in\SLF$.
	
	\begin{definition}
		For c-reduced $f\in\Saffn$, define the \emph{modular invariant} $\minv(f)\in \Z/\clicks(\bfccf)\Z$ by
		\begin{equation}\label{eq:minv_f_dfn}
			\minv(f):=\sum_{\slp\in\SLF} \minv(\fs) \quad \mod\ \clicks(\bfccf).
		\end{equation}
	\end{definition}

	\begin{lemma}\label{lemma:c-red=>minv}
		Let $f,f'\in\Saffn$ be c-reduced. If $f\approx f'$ then $\minv(f)=\minv(f')$.
	\end{lemma}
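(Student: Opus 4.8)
The plan is to reduce to a single elementary conjugation and then track how the constant-slope restrictions $\fs$ are affected. Since $f\approx f'$, there is a chain $f=f_0\rasi f_1\rasi\cdots\rasi f_m=f'$; length is weakly decreasing along $\rasi$, and $\ell(f_0)=\ell(f_m)$ because $f,f'$ are c-reduced (\cref{cor:HN}), so $\ell(f_j)=\ell(f)$ for all $j$ and every $f_j$ is c-reduced. Hence it suffices to treat the case $f'=s_ifs_i$ with $i\in[n]$, $\ell(f')=\ell(f)$, and $f,f'$ both c-reduced; here $f\approx f'$, so \cref{lemma:c-red=>VCF} gives $\VCFDD=\VCFPDD$, hence $\SLF=\SLFP$, $\bfccf=\bfcc_{f'}$, and $d:=\clicks(\bfccf)=\clicks(\bfcc_{f'})$. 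By~\eqref{eq:minv_f_dfn} the goal becomes $\sum_{\slp\in\SLF}\minv(\fs)\equiv\sum_{\slp\in\SLF}\minv(f'|_\slp)\pmod d$.

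The key structural point is that conjugation by $s_i$ acts on the slope-blocks $\slupp$ (the union of the cycles of $f$ of slope $\slp$) by relabeling the positions $i\leftrightarrow i+1$: one has $C_{f',\slp}=s_i(\slupp)$, and the slope of every cycle is preserved because $f|_\Cycle$ and $(s_ifs_i)|_{s_i\Cycle}$ are conjugate in $\Saff_{\nopf(\Cycle)}$. Three cases arise. If $i,i+1\notin\slupp$, then $s_i$ fixes $\slupp$ pointwise and $f'|_\slp=\fs$. If $\slpfx(i)=\slpfx(i+1)=\slp$, then $i,i+1\in\slupp$, so $s_i$ preserves the set $\slupp$ and, under the normalized order-preserving identification $\slupp\xrasim\Z$, becomes a simple reflection $s_{i'}$; thus $f'|_\slp=s_{i'}\,\fs\,s_{i'}$, and $f'|_{\slp'}=f|_{\slp'}$ for all $\slp'\ne\slp$. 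Since $\ell(f'|_\slp)=\ell(\fs)$ (both are c-reduced in the same $\Saff_{\nopf(\slp)}$-conjugacy class, by \cref{cor:restr_c_reduced} and \cref{cor:HN}), we get $\fs\approx f'|_\slp$, hence $\minv(\fs)=\minv(f'|_\slp)$ by \cref{cor:c_red_const_slp}, and the sum is unchanged.

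The remaining, crucial case is $\slp_0:=\slpfx(i)\ne\slp_1:=\slpfx(i+1)$. Then $f'|_{\slp'}=f|_{\slp'}$ for $\slp'\notin\{\slp_0,\slp_1\}$, while $f'|_{\slp_0}=\rotsig^{a_0}(f|_{\slp_0})$ and $f'|_{\slp_1}=\rotsig^{a_1}(f|_{\slp_1})$ in the respective groups $\Saff_{\nopf(\slp_0)}$, $\Saff_{\nopf(\slp_1)}$ for some integers $a_0,a_1$: the normalized identification $C_{f',\slp}\xrasim\Z$ agrees with that of $\slupp$ unless the relabeled residue crosses the boundary of the fundamental window $[n]$ --- i.e.\ unless $i\equiv0\pmod n$ --- in which case the identification of one of the two blocks is shifted by $+1$ and that of the other by $-1$, so $a_0+a_1=0$ (when $i\not\equiv0\pmod n$ one has $a_0=a_1=0$ and every $\fs$ is unchanged). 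Combined with the rotation formula $\minv(\rotsig^{a}(g))=\minv(g)+a$ for c-reduced $g$ of constant slope --- which holds because $\rotsig$ carries each cycle $\Cycle$ to $\Cycle+1$, hence rotates the cyclic-interval partition of \cref{dfn:minv_const_slp} while fixing the normal form of the underlying cyclic composition --- this yields $\sum_\slp\minv(f'|_\slp)=\sum_\slp\minv(\fs)+a_0+a_1\equiv\sum_\slp\minv(\fs)\pmod d$, as required.

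I expect the boundary bookkeeping in the last case --- checking that when $i\equiv0\pmod n$ the shifts $a_0,a_1$ are exactly $+1$ and $-1$, and $0$ otherwise --- to be the only delicate step; everything else follows mechanically once the action of $s_i$ on slope-blocks is in place. It therefore seems worthwhile to first record as separate lemmas (a) that conjugation preserves the slope of each cycle (a special case of \cref{prop:conjugate_VCFD}), and (b) the rotation formula $\minv\circ\rotsig^a=\minv+a$ for c-reduced elements of constant slope.
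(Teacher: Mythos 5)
Your proposal is correct and follows essentially the same route as the paper: reduce to a single length-preserving conjugation $f'=s_ifs_i$, observe that all constant-slope restrictions remain c-equivalent (hence have equal modular invariants) except when $i\equiv 0\pmod n$ and $\slpfx(0)\neq\slpfx(1)$, and in that exceptional case verify that $f'|_{\slp_0}=\rotsig(f|_{\slp_0})$ and $f'|_{\slp_1}=\rotsig^{-1}(f|_{\slp_1})$, so the two shifts $+1$ and $-1$ cancel in the sum~\eqref{eq:minv_f_dfn}. Your write-up simply makes explicit the case analysis and the boundary bookkeeping that the paper leaves implicit.
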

	\begin{proof}
		Suppose that $f\xrightarrow{s_i} f'$ for some $i\in[n]$. The restrictions $f|_\slp$ and $f'|_\slp$ are c-equivalent for all $\slp\in\SLF$ (which implies the result by \cref{cor:c_red_const_slp}) unless $i=n$ and $\slpfx(0)\neq \slpfx(1)$. Suppose that we are in that case and let $\slp_0:=\slpfx(0)$, $\slp_1:=\slpfx(1)$. 
		Since $\slp_0\neq\slp_1$, by the definition of $f|_{\slp_0}$ in~\eqref{eq:restrict}, we see that $f'|_{\slp_0}=\rotsig(f|_{\slp_0})$ and $f'|_{\slp_1}=\rotsig^{-1} (f|_{\slp_1})$. Here, $\rotsig(g)=\Lan g \Lan^{-1}$ is the rotation operator introduced in \cref{sec:aff_perm_backgr}. Thus, $\minv(f'|_{\slp_0})=\minv(f|_{\slp_0})+1$ and $\minv(f'|_{\slp_1})=\minv(f_{\slp_1})-1$, and the sum in~\eqref{eq:minv_f_dfn} remains the same.
	\end{proof}

	We will need one more tool for working with $\eps$-straight diagrams from \cref{sec:eps_straight}. Fix c-reduced $f\in\Saffn$ and small $\eps>0$. For $x\in\Streg(f)$ such that $\Diag(x)$ is c-reduced, recall from \cref{cor:c_red_charact} that $\Diag(x)$ contains no crossings between distinct cycles of the same slope.
	
	\begin{definition}
		Let $x\in\Streg(f)$ be  c-reduced and let $\abf:=(a_\Cycle)_{\Cycle\in\CLF}$ be a family of real numbers associated to the cycles of $f$. Consider a curve $\xt$, $t\geq0$, given for each $i\in\Z$ by $\xtx(i)=\xx(i)+ta_{\Cycle}$, where $\Cycle$ is the cycle containing $i$. %
		Let $T>0$ be such that for $t\in[0,T]$, $\xtx(i)\neq \xtx(j)$ for any $i\neq j$ such that $\slpfi=\slpfj$. In this case, we say that $x':=\xtt_T$ is obtained from $x=\xtt_0$ by \emph{block-shifting}.
	\end{definition}
	
	In other words, block-shifting allows us to move the collections of points $(\xx(i))_{i\in\Cycle}$ independently for each cycle $\Cycle$, subject to the condition that two cycles of the same slope never collide. It is clear that for $\eps$ sufficiently small, if $x\in\Streg(f)$ is c-reduced and $x'\in\Streg(f)$ is obtained from $x$ by block-shifting then $x'$ is c-reduced and $\Diag(x)\to\Diag(x')$.

	\begin{proof}[Proof of \cref{thm:c-equivalent}]
		The $\Longrightarrow$ direction follows from \cref{lemma:c-red=>VCF,lemma:c-red=>minv}. 
		
		For the $\Longleftarrow$ direction, let $f,f'\in\Omin$. Thus, $f'=gfg^{-1}$ for some $g\in\Saffon$. Let $x,x'\in\Streg(f)$ be obtained from $\frac1n \id, \frac1n g\in\Lng$ via \cref{prop:ODE} so that $\Diag(\frac1n\id)\to\Diag(x)$ and $\Diagx_{f'}(\frac1n\id)=\Diag(\frac1n g) \to \Diag(x')$.

		Set $h:=g_xfg_x^{-1}$ and $h':=g_{x'}fg_{x'}^{-1}$. We have $f\approx h$ and $f'\approx h'$. Since $f,f'$ are c-reduced, so are $\Diag(x),\Diag(x')$ and $h,h'$. Since $\VCDD_f=\VCDD_{f'}$, and thus $\VCDD_h=\VCDD_{h'}$, we see that the 
		partitions $\Ibm_{h|_\slp}$ and $\Ibm_{h'|_\slp}$ of $\Z/\dopf(\slp)\Z$ into cyclic intervals
		differ by rotation for all $\slp\in\SLF$. Our goal is to apply block-shifting to $\Diag(x)$ with the aim of achieving $\Ibm_{h|_\slp}=\Ibm_{h'|_\slp}$ for all $\slp\in\SLF$. To do so, consider the following operation on the partitions $(\Ibm_{h|_\slp})_{\slp\in\SLF}$:
		\begin{equation}\label{eq:rot_pm}
			\text{for some $\slp\neq\slp'$ in $\SLF$, replace $\Ibm_{h|_\slp}\mapsto \rotsig(\Ibm_{h|_\slp})$ and $\Ibm_{h|_{\slp'}}\mapsto\rotsig^{-1}(\Ibm_{h|_{\slp'}})$.}
		\end{equation}
		We first explain how to obtain~\eqref{eq:rot_pm} via block-shifting.

		Applying block-shifting to $\Diag(x)$ corresponds to applying a sequence ${h\xrightarrow{s_{i_1}} h_1\xrightarrow{s_{i_2}} \cdots \xrightarrow{s_{i_l}} h'}$ of c-equivalences. In order to control how each restriction $h|_\slp$ changes under such operations, we need to distinguish between the cases $i_j=n$ and $i_j\neq n$ as we did in the proof of \cref{lemma:c-red=>minv}. 

		Recall the notion of the cutoff point from~\eqref{eq:cutoff_dfn}. Suppose that applying block-shifting to $x$ switches the positions of adjacent points $x_j$ and $x_k$ for some $j,k$. If the cutoff point of $x$ is between $x_j+d$ and $x_k+d$ for some $d\in\Z$ then the corresponding c-equivalence corresponds to $s_n$, otherwise it corresponds to $s_i$ for $i\in [n-1]$.

		Consider slopes $\slp\neq\slp'$ in $\SLF$. We may apply block-shifting to move $\slupp$ (resp., $\sluppp$) to the right (resp., left) so that no point in $\Im(x)$ passes through the cutoff point $c$ of $x$, until $c$ is located in an interval of $\R\setminus\Im(x)$ between a point of $\slupp$ and a point of $\sluppp$. We may then shift $\slupp$ (resp., $\sluppp$) further to the right (resp., left) until these two points swap places. This corresponds to replacing $h|_\slp$ with $\rotsig(h|_\slp)$ and $h|_{\slp'}$ with $\rotsig^{-1}(h|_{\slp'})$, which results in applying~\eqref{eq:rot_pm} to $\Ibm_{h|_\slp}$ and $\Ibm_{h|_{\slp'}}$.

		Recall that for $\slp\in\SLF$, by the definition of $\rot(\ccfs)$, we have $\rotsig^{\rot(\ccfs)}(\Ibm_{h|_\slp})=\Ibm_{h|_\slp}$. 
		Let $d:=\clicks(\bfccf)=\gcd\{\ccfs\mid\slp\in\SLF\}$. Write $d=\sum_{\slp'\in\SLF} a_{\slp'} \rot(\ccfsp)$ for some integers $a_{\slp'}$. Then, for each fixed $\slp\in\SLF$, we have $(a_{\slp}\rot(\ccfs)-d)+ \sum_{\slp'\neq\slp} a_{\slp'} \rot(\ccfsp)=0$, and therefore we can use~\eqref{eq:rot_pm} to rotate each $\Ibm_{h|_{\slp'}}$ by the corresponding coefficient. The result of this operation is
		\begin{equation}\label{eq:rot_d}
			\text{replace $\Ibm_{h|_{\slp}}\mapsto \rotsig^{-d}(\Ibm_{h|_{\slp}})$ and preserve $\Ibm_{h|_{\slp'}}$ for all $\slp'\neq \slp$.}
		\end{equation}
		
		Fix $\slp\in\SLF$. Applying~\eqref{eq:rot_pm}, we can achieve $\Ibm_{h|_{\slp'}}=\Ibm_{h'|_{\slp'}}$ for all $\slp'\neq\slp$. Since $\minv(h)=\minv(h')$, we see that $\Ibm_{h|_{\slp}}$ and $\Ibm_{h'|_{\slp}}$ differ by rotation by a multiple of $d$, so applying~\eqref{eq:rot_d}, we achieve $\Ibm_{h|_{\slp}}=\Ibm_{h'|_{\slp}}$.

		By \cref{prop:Mar_A}, we have $h|_{\slp} \approx h'|_{\slp}$ for all $\slp\in\SLF$. Since $h|_{\slp}$ and $h'|_{\slp}$ are c-reduced, they have no crossings between different cycles. Thus, each c-equivalence in $h|_{\slp} \approx h'|_{\slp}$ swaps points from the same cycle. Such points are close together in $x$, and we therefore can lift these c-equivalences to $h$ so that we get $h|_{\slp} = h'|_{\slp}$ for all $\slp\in\SLF$. Replacing $h,h'$ with $\rotsig^r(h),\rotsig^r(h')$ for some $r$, we may assume that the cutoff points of $x$ and $x'$ are not $\eps$-close to any point in $\Im(x)\cup\Im(x')$. In this case, we still have $h|_{\slp} = h'|_{\slp}$ for all $\slp\in\SLF$. 
		Applying block-shifting to $x$ so that for $\slp\neq\slp'$, no point in $\slupp$ is $\eps$-close to a point in $\sluppp$, we find that $h$ and $h'$ are c-equivalent.
	\end{proof}

	\section{Relating affine permutations to bipartite graphs on a torus}\label{sec:aff_to_bip}
	The goal of this section is to apply the results of \cref{sec:affine-perm-cycl,sec:c-equiv-structure} to bipartite graphs embedded in $\T$ and to finish the proof of our main results, \cref{thm:intro:move_red,thm:intro:move_eq}.
	
	\subsection{The double affine symmetric group}
	The \textit{double affine symmetric group} $\dSaff$ is generated by
	$S \sqcup \bar{S} \sqcup \{\Lan\}$, where $S:=\{s_i \mid i \in \Z/n\Z \}$ and $\bar S := \{ s_{\overline{i}} \mid i \in \Z/n\Z \}$, subject to the relations
	\begin{align}
		s_i s_{i+1} s_{i}&=s_{i+1} s_i s_{i+1},           &      \Lan s_{i+1} &= s_{i}\Lan,          &  s_i^2&=1, & s_i s_j &=s_j s_i \quad\text{if }|i-j|>1, \nonumber\\
		s_{\overline{i}} s_{\overline{i+1}} s_{\overline{i}} &=s_{\overline{i+1}} s_{\overline{i}} s_{\overline{i+1}},       &  
		\Lan s_{\overline{i+1}} &= s_{\overline{i}}\Lan,   & 
		s_{\overline{i}}^2&=1, & s_{\overline{i}} s_{\overline{j}} &=s_{\overline{j}} s_{\overline{i}} \quad\text{if }|i-j|>1, \label{eq:dsaffnrelations}\\
		\Lan^n &=1,   &  s_i s_{\overline{j}}&=s_{\overline{j}} s_i.          &  &\nonumber
	\end{align}
	In other words, we have an isomorphism $\dSaff\cong\dSaffHuge$, where $\Lan$ acts on each copy of $\Saffon$ by conjugation. 	%
	Any element $\daffper \in \dSaff$ can be written as a product $\daffper=s_{i_1} s_{i_2} \cdots s_{i_l} \Lan^k s_{\overline{j_m}}s_{\overline{j_{m-1}}} \cdots s_{\overline{j_1}} $ for some $k \in \{0,1,\dots,n-1\}$ and $l,m \geq 0$. If $l+m$ is minimal among all such ways of writing $\daffper$ as a product, then $s_{i_1} s_{i_2} \cdots s_{i_l} \Lan^k s_{\overline{j_m}}s_{\overline{j_{m-1}}} \cdots s_{\overline{j_1}}$ is called a \textit{reduced expression} for $\daffper$, and $l+m$ is called the \textit{length} of $\daffper$ and denoted $\ell(\daffper)$. Note that 
	\begin{equation*}%
		\pera:=s_{i_1} s_{i_2} \cdots s_{i_l} \Lan^k \quad\text{and}\quad \perb:= s_{{n-j_1+1}}s_{{n-j_2+1}} \cdots s_{{n-j_m+1}} \Lan^{k}
	\end{equation*}
	are then reduced expressions for affine permutations $\pera,\perb\in\Saffn$. %
	We denote $\perab(\daffper):=(\pera,\perb)$ and call $(\pera,\perb)$ the \textit{pair of affine permutations} associated to $\daffper$. We have $\ell(\daffper)=\ell(\pera)+\ell(\perb)$. We explain the reasoning behind the formula for $\perb$ 
	in~\cref{rem:barsmap}. 
	\begin{remark}\label{rmk:perab_invertible}
		For any $k\in\Z$ and  $\pera,\perb\in\Saffkn$, there exists $\daffper\in\dSaff$ satisfying $\perab(\daffper)=(\pera,\perb)$.
	\end{remark}

	\subsection{Relating triple-crossing diagrams in $\A$ to double affine permutations}\label{sec:affine_plabic_fence}

	\begin{figure}
		\centering
		\def\phantomwhite(#1){\draw[black!5] (0.3,0.3)--(0.3,0.3)}
		\begin{tikzpicture}[yscale=0.4,xscale=0.45]
			\def\labsclG{1.0}
			\def\labsclD{0.8}
			
			\def\shifta{5.5}
			\def\shiftb{6.0}
			\def\vdotscl{0.8}
			\def\newlabels{
				\node[scale=\labsclG] (no) at (-1,0.5) {$i+1$}; 
				\node[scale=\labsclG] (no) at (-1,-1.5) {$i$}; 
				\node[scale=\labsclG] (no) at (-1,-3.5) {$i-1$}; 	
				\node[scale=\vdotscl] (no) at (-1,-4.25) {$\vdots$}; 
				\node[scale=\labsclG] (no) at (-1,-5.5) {$1$};
				\node[scale=\labsclG] (no) at (-1,2.5) {$i+2$}; 
				\node[scale=\vdotscl] (no) at (-1,4.25) {$\vdots$}; 
				\node[scale=\labsclG] (no) at (-1,5.5) {$n$};
			}
			
			\node[] (no) at (4,-8) {$D(s_i)$};
			\node[] (no) at (\shifta+\shiftb+4,-8) {$D(s_{\bar i})$};
			\node[] (no) at (2*\shifta+2*\shiftb+4,-8) {$D(\Lan)$};
			\draw[-] (9.25,6.5) -- (9.25,-8.5);
			\draw[-] (\shifta+\shiftb+9.25,6.5) -- (\shifta+\shiftb+9.25,-8.5);
			\begin{scope}[shift={(0,0)}] %
				\def\r{2};
				
				\fill[black!5]  (0,-6) rectangle (3,6);
				
				\draw[dashed,\dashcolor,-] (0,-6) rectangle (3,6);
				
				\draw[] (0,-5.5) -- (3,-5.5);
				\phantomwhite(1.5,-5.5);
				\draw[] (0,-3.5) -- (3,-3.5);
				\phantomwhite(1.5,-3.5);
				
				\draw[] (0,-1.5) -- (3,-1.5);
				\coordinate[wvert] (w) at (1.5,-1.5);
				
				\draw[] (0,.5) -- (3,.5);
				\coordinate[bvert] (b) at (1.5,.5);
				\phantomwhite(0,.5);
				\phantomwhite(3,.5);
				\draw (b)--(w);

				\draw[] (0,2.5) -- (3,2.5);
				\phantomwhite(1.5,2.5);
				
				\draw[] (0,5.5) -- (3,5.5);
				\phantomwhite(1.5,5.5);
				
				\node[] (no) at (1.5,-4.25) {$\vdots$}; 
				\node[] (no) at (1.5,4.25) {$\vdots$}; 
				
				\newlabels
				
			\end{scope}
			
			\begin{scope}[shift={(\shifta+\shiftb,0)}] %
				\def\r{2};
				
				\fill[black!5]  (0,-6) rectangle (3,6);
				
				\draw[dashed,\dashcolor,-] (0,-6) rectangle (3,6);
				
				\draw[] (0,-5.5) -- (3,-5.5);
				\phantomwhite(1.5,-5.5);
				\begin{scope}[yshift=-1cm]
					
					\draw[] (0,-2.5) -- (3,-2.5);
					\phantomwhite(1.5,-2.5);
					
					\draw[] (0,1.5) -- (3,1.5);
					\coordinate[wvert] (w) at (1.5,1.5);
					
					\draw[] (0,-.5) -- (3,-.5);
					\coordinate[bvert] (b) at (1.5,-.5);
					\phantomwhite(0,-.5);
					\phantomwhite(3,-.5);
					\draw (b)--(w);
					
					\draw[] (0,3.5) -- (3,3.5);
					\phantomwhite(1.5,3.5);

				\end{scope}
				
				\draw[] (0,5.5) -- (3,5.5);
				\phantomwhite(1.5,5.5);

				\node[] (no) at (1.5,-4.25) {$\vdots$}; 
				\node[] (no) at (1.5,4.25) {$\vdots$}; 
				\newlabels
				
			\end{scope}
			
			\begin{scope}[shift={(2*\shifta+2*\shiftb,0)}]%
				\def\r{2};
				
				\fill[black!5]  (0,-6) rectangle (3,6);
				
				\draw[dashed,\dashcolor,-] (0,-6) rectangle (3,6);

				\node[] (no) at (1.5,0) {$\vdots$};
				\begin{scope}
					\clip (0,-6) rectangle (3,6);
					
					\draw[] (3,-1.5) -- (0,-3.5);
					\phantomwhite(1.5,-2.5); 
					
					\draw[] (3,-3.5) -- (0,-5.5);
					\phantomwhite(1.5,-4.5); 
					\draw[] (3,-5.5) -- (0,-7.5);
					\phantomwhite(1.5,-4.5); 
					\draw[] (3,7.5) -- (0,5.5);
					\phantomwhite(1.5,6.5);  
					\draw[] (3,5.5) -- (0,3.5);
					\phantomwhite(1.5,4.5); 
				\end{scope}

				\node[scale=\labsclG] (no) at (-1,-5.5) {$1$};
				\node[scale=\labsclG] (no) at (-1,-3.5) {$2$};
				\node[scale=\vdotscl] (no) at (-1,-0.5) {$\vdots$}; 
				\node[scale=\labsclG] (no) at (-1,3.5) {$n-1$}; 
				\node[scale=\labsclG] (no) at (-1,5.5) {$n$};

			\end{scope}

			\begin{scope}[shift={(\shifta,0)},yscale=1.090909]
				\def\r{2};
				
				\fill[black!5]  (0,-5.5) rectangle (3,5.5);
				\draw[dashed,\dashcolor,-] (0,-5.5) rectangle (3,5.5) ;
				
				\draw[red,->,line width=\lw] (0,-1) .. controls +(1,0) and +(-1,0) ..
				(3,1);
				\draw[red,->,line width=\lw] (0,1) .. controls +(1,0) and +(-1,0) ..
				(3,-1) ;
				\draw[red,->,line width=\lw] (3,0) -- (0,0);
				\draw[red,->,line width=\lw]
				(3,2)--(0,2);
				\draw[red,->,line width=\lw]
				(0,3)--(3,3)
				;
				\draw[red,->,line width=\lw]
				(3,-2)--(0,-2);
				\draw[red,->,line width=\lw]
				(0,-3)--(3,-3)
				;
				
				\draw[red,->,line width=\lw]
				(3,5)--(0,5);
				\draw[red,->,line width=\lw]
				(0,-5)--(3,-5)
				;
				\node[scale=\labsclD] (no) at (-1,0) {$\overline{i}$}; 
				\node[scale=\labsclD] (no) at (-1,1) {$i+1$}; 
				\node[scale=\labsclD] (no) at (-1,-1) {$i$}; 
				\node[scale=\labsclD] (no) at (-1,-2) {$\overline{i-1}$}; 
				\node[scale=\labsclD] (no) at (-1,-3) {$i-1$}; 	
				
				\node[scale=\vdotscl] (no) at (-1,-4) {$\vdots$}; 
				\node[scale=\labsclD] (no) at (-1,-5) {$1$};
				
				\node[scale=\labsclD] (no) at (-1,2) {$\overline{i+1}$}; 
				\node[scale=\labsclD] (no) at (-1,3) {$i+2$}; 	
				
				\node[] (no) at (1.5,-4) {$\vdots$}; 
				\node[] (no) at (1.5,4) {$\vdots$}; 
				\node[scale=\vdotscl] (no) at (-1,4) {$\vdots$}; 
				\node[scale=\labsclD] (no) at (-1,5) {$\overline n$};

			\end{scope}

			\begin{scope}[shift={(2*\shifta+\shiftb,0)},yscale=1.090909]
				\def\r{2};
				\fill[black!5]  (0,-5.5) rectangle (3,5.5);
				\draw[dashed,\dashcolor,-] (0,-5.5) rectangle (3,5.5) ;
				
				\draw[red,<-,line width=\lw] (0,-1) .. controls +(1,0) and +(-1,0) ..
				(3,1);
				\draw[red,<-,line width=\lw] (0,1) .. controls +(1,0) and +(-1,0) ..
				(3,-1) ;
				\draw[red,<-,line width=\lw] (3,0) -- (0,0);
				\draw[red,<-,line width=\lw]
				(3,2)--(0,2);
				\draw[red,<-,line width=\lw]
				(0,3)--(3,3)
				;
				\draw[red,<-,line width=\lw]
				(3,-2)--(0,-2);
				\draw[red,<-,line width=\lw]
				(0,-3)--(3,-3)
				;
				
				\draw[red,->,line width=\lw]
				(3,5)--(0,5);
				\draw[red,->,line width=\lw]
				(0,-5)--(3,-5)
				;
				\node[scale=\labsclD] (no) at (-1,0) {$i$}; 
				\node[scale=\labsclD] (no) at (-1,1) {$\overline{i}$}; 
				\node[scale=\labsclD] (no) at (-1,-1) {${\overline {i-1}}$}; 
				\node[scale=\labsclD] (no) at (-1,-2) {$i-2$}; 
				\node[scale=\labsclD] (no) at (-1,-3) {$\overline{i-2}$}; 	
				
				\node[scale=\vdotscl] (no) at (-1,-4) {$\vdots$}; 
				\node[scale=\labsclD] (no) at (-1,-5) {$1$};
				
				\node[scale=\labsclD] (no) at (-1,2) {$i+1$}; 
				\node[scale=\labsclD] (no) at (-1,3) {$\overline{i+1}$}; 	
				
				\node[] (no) at (1.5,-4) {$\vdots$}; 
				\node[] (no) at (1.5,4) {$\vdots$}; 
				\node[scale=\vdotscl] (no) at (-1,4) {$\vdots$}; 
				\node[scale=\labsclD] (no) at (-1,5) {$\overline{n}$};

			\end{scope}
			
			\begin{scope}[shift={(3*\shifta+2*\shiftb,0)},yscale=1.090909]
				\def\r{2};
				\fill[black!5]  (0,-5.5) rectangle (3,5.5);
				\draw[dashed,\dashcolor,-] (0,-5.5) rectangle (3,5.5) ;
				\begin{scope}
					\clip (0,-5.5) rectangle (3,5.5) ;
					
					\draw[red,<-,line width=\lw] (0,-4) .. controls +(1,0) and +(-1,0) ..
					(3,-2);
					\draw[red,<-,line width=\lw] (0,-6) .. controls +(1,0) and +(-1,0) ..
					(3,-4);
					
					\draw[red,<-,line width=\lw] (0,1) .. controls +(1,0) and +(-1,0) ..
					(3,3);
					\draw[red,<-,line width=\lw] (0,3) .. controls +(1,0) and +(-1,0) ..
					(3,5);
					\draw[red,<-,line width=\lw] (0,5) .. controls +(1,0) and +(-1,0) ..
					(3,7);
					
					\draw[red,->,line width=\lw] (0,-5) .. controls +(1,0) and +(-1,0) ..
					(3,-3);
					\draw[red,->,line width=\lw] (0,-3) .. controls +(1,0) and +(-1,0) ..
					(3,-1);
					\draw[red,->,line width=\lw] (0,-7) .. controls +(1,0) and +(-1,0) ..
					(3,-5);
					\draw[red,->,line width=\lw] (0,2) .. controls +(1,0) and +(-1,0) ..
					(3,4);
					\draw[red,->,line width=\lw] (0,4) .. controls +(1,0) and +(-1,0) ..
					(3,6);
				\end{scope}

				\node[scale=\labsclD] (no) at (-1,4) {$n$};	
				\node[scale=\labsclD] (no) at (-1,3) {$\overline{n-1}$};	
				\node[scale=\labsclD] (no) at (-1,2) {$n-1$};	
				\node[scale=\labsclD] (no) at (-1,1) {$\overline{n-2}$};
				
				\node[scale=\labsclD] (no) at (-1,5) {$\overline{n}$};
				
				\node[scale=\labsclD] (no) at (-1,-4) {$\overline{1}$};	
				\node[scale=\labsclD] (no) at (-1,-3) {$2$};	
				
				\node[scale=\vdotscl] (no) at (-1,-1) {$\vdots$};
				\node[] (no) at (1.5,0) {$\vdots$};
				
				\node[scale=\labsclD] (no) at (-1,-5) {$1$};

			\end{scope}

		\end{tikzpicture}
		
		\caption{Plabic graphs and triple-crossing diagrams in $\A$ associated to generators.}\label{fig:tcdgenerators}
	\end{figure}
	\begin{figure}
		\def\twd{0.23\textwidth}
		
		\includegraphics[width=\twd]{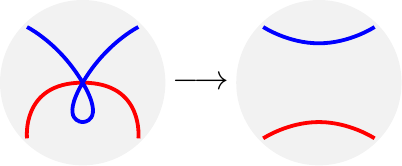}
		
		\caption{(R1)$''$ Thurston's $1-0$ move. }\label{fig:thurston10}
	\end{figure}
	Let $\daffper$ be a double affine permutation and let $w_1 w_2 \cdots w_l$ be an expression for $\daffper$, where $w_i \in S \sqcup \bar S \sqcup \{\Lan\}$. Following Fock and Marshakov \cite{FM}, we associate to the expression $w_1 w_2 \cdots w_l$ a triple-crossing diagram in $\A$ as follows. Each generator $s \in S \sqcup \bar S \sqcup \{\Lan\}$ is assigned a triple-crossing diagram $D(s)$ in $\A$ as shown in Figure \ref{fig:tcdgenerators}. The triple-crossing diagram $D(w_1 w_2 \cdots w_l)$ for the expression $w_1 w_2 \cdots w_l$ is obtained by concatenating the diagrams $D({w_1}),D(w_2),\dots,D({w_l})$ from left to right, so that the right boundary of $D({w_i})$ is glued to the left boundary of $D({w_{i+1}})$ for $i \in \Z/l\Z$. The corresponding plabic graph in $\T$ is called an \textit{affine plabic fence}. As explained in \cite[Appendix D]{FM}, each relation in \cref{eq:dsaffnrelations} can be realized using isotopy and moves on the corresponding triple-crossing diagrams, except for the relations $s_i^2=1$ and $s_{\overline{i}}^2=1$, which are realized using Thurston's $1-0$ move (R1)$''$ (Figure \ref{fig:thurston10}). Note that the left-hand side of (R1)$''$ is the same as (R1)$'$ (but the right-hand side is not), and therefore a triple-crossing diagram $D$ is move-reduced if and only if it is not move-equivalent to another triple-crossing diagram $D'$ to which either (R1)$''$ or (R2)$'$ can be applied.  
	\begin{remark}\label{rem:postnikov10}
		Postnikov's reduction (R1)$'$ leads to the relations $s_i^2=s_i$ and $s_{\overline {i}}^2=s_{\overline{i}}$ of the \emph{$0$-Hecke monoid}.
	\end{remark}
	\begin{remark} \label{rem:barsmap}
		Rotation by $180$ degrees acts on the triple-crossing diagrams by
		\[
		D(s_i) \mapsto D({s_{\overline{n-i+1}}}), \quad D(s_{\overline{i}}) \mapsto D({s_{{n-i+1}}}), \quad D(\Lan) \mapsto D(\Lan),\]
		and induces an antiautomorphism of $\dSaff$ sending $s_i \mapsto s_{\overline{n-i+1}}$, $s_{\overline{i}} \mapsto s_{n-i+1}$, and $\Lan \mapsto \Lan$. We have chosen $\perab(\daffper)=(\pera,\perb)$ so that rotation of $D(w)$ by $180$ degrees translates under $\perab$ into an automorphism of $\Saffn\times\Saffn$ sending $(\pera,\perb) \mapsto (\perb,\pera)$.
	\end{remark}

	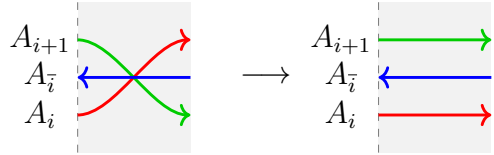
\begin{figure}
		\centering
		\begin{tikzpicture}[scale=0.5]
			
			\begin{scope}[shift={(3,0)}]
				\def\r{2};
				\fill[black!5] (0,-2) rectangle (3,2);
				\draw[dashed,\dashcolor,-] (0,-2)--(0,2) ;
				
				\draw[red,->,line width = \lw] (0,-1) --
				(3,-1);
				\draw[green!80!black,->,line width = \lw] (0,1)--
				(3,1) ;
				\draw[blue,->,line width = \lw] (3,0) -- (0,0);
				\node[] (no) at (-1,0) {$A_{\overline{i}}$}; 
				\node[] (no) at (-1,1) {$A_{i+1}$}; 
				\node[] (no) at (-1,-1) {$A_{i}$};

			\end{scope}

			\begin{scope}[shift={(-5,0)}]
				\def\r{2};
				
				\fill[black!5] (0,-2) rectangle (3,2);
				\draw[dashed,\dashcolor,-] (0,-2)--(0,2) ;
				
				\draw[red,->,line width = \lw] (0,-1) .. controls +(1,0) and +(-1,0) ..
				(3,1);
				\draw[green!80!black,->,line width = \lw] (0,1) .. controls +(1,0) and +(-1,0) ..
				(3,-1) ;
				\draw[blue,->,line width = \lw] (3,0) -- (0,0);
				\node[] (no) at (-1,0) {$A_{\overline{i}}$}; 
				\node[] (no) at (-1,1) {$A_{i+1}$}; 
				\node[] (no) at (-1,-1) {$A_{i}$};

			\end{scope}
			\node[](no) at (0,0){$\longrightarrow$};
			
		\end{tikzpicture}
		\caption{Uncrossing a triple crossing near the left boundary of $\A$ (dashed).}\label{fig:uncrossing}
	\end{figure}
	
	\begin{lemma} \label{lemma:tcdtoper}
		Suppose $D$ is a move-reduced triple-crossing diagram in $\T$ whose Newton polygon $N$ is not a single point. Then, there is a double affine permutation $\daffper=\daffper(D)$ such that $D$ is move-equivalent to $D(\daffper)$, and such that $\pera,\perb$ are both c-reduced, where $\perab(w)=(\pera,\perb)$.
	\end{lemma}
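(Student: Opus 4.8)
The plan is to deduce the statement from the cylinder reduction of \cref{thm:intro:vertical} together with the combinatorics of the $\to$-relation on $\Saffn$: first present $D$ as an affine plabic fence, and then transport a length-reducing sequence on the associated affine permutation into a reduction move on the fence.

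\emph{Reducing to a fence.} Since $N$ is not a single point, $D$ is not of the exceptional type of case~\ref{intro:Gamma_monogon} of \cref{thm:intro:vertical} (whose Newton polygon is a point), so case~\ref{intro:Gamma_vertical} applies: after an $\SL_2(\Z)$ change of the fundamental rectangle, $D$ is move-equivalent to a diagram whose every strand with homology class $(i,j)$ meets the $\l$-$\r$ side of $\rectangle$ exactly $|i|$ times. Presenting this diagram inside the cylinder $\A$, I would sweep it from left to right and, using move~(P), move~(T), \cref{lemma:cross}, and the boundary uncrossing of \cref{fig:uncrossing}, isotope it so that each thin vertical slab of $\A$ contains a single elementary event: a triple crossing of three consecutive wires (two of one orientation, one of the other) that realizes some $s_i$ or $s_{\bar i}$, or a simultaneous cyclic shift of all wires that realizes $\Lan$. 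Reading these off gives a word $w$ in $S\sqcup\bar S\sqcup\{\Lan\}$ with $D$ move-equivalent to $D(w)$; put $\daffper:=\daffper(D):=w$ and $(\pera,\perb):=\perab(\daffper)$. Note that since $D$, and hence every fence move-equivalent to it, is move-reduced, the defining word of every such fence is automatically a reduced expression: otherwise braid and commutation relations — which are realized by moves on the fence — would expose a subword $s_i^2$ or $s_{\bar i}^2$, and then reduction move~(R1)$''$ would apply to a diagram move-equivalent to $D$, contradicting the characterization recorded in \cref{sec:affine_plabic_fence}.

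\emph{$\pera$ and $\perb$ are c-reduced.} Suppose, for contradiction, that $\pera$ is not c-reduced. By definition there is a sequence $\pera=g_0,g_1,\dots,g_r=g$ with $g_{j-1}\rasi g_j$ for each $j$ and $\ell(g)<\ell(\pera)$; let $t$ be least with $\ell(g_t)<\ell(g_{t-1})$, so $\ell(g_t)=\ell(g_{t-1})-2$ while $g_0\rasi\cdots\rasi g_{t-1}$ are length-preserving. Each length-preserving step is a cyclic shift of a reduced word; after rearranging the un-barred block of the fence word by braid and commutation moves so that the relevant rung sits at the front, the step is realized by sliding that rung once around the cyclic fence — passing through the $\Lan$-block, where it is relabeled via $\Lan s_{i+1}=s_i\Lan$, and through the barred block, with which it commutes — which is an isotopy fixing $\perb$. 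Thus $D(w)$ is move-equivalent to a fence $D(\daffper_{t-1})$ with $\perab(\daffper_{t-1})=(g_{t-1},\perb)$. For the length-decreasing step, the same rearrangement gives $g_{t-1}$ a reduced word $s\,(\text{un-barred word})\,s'\,\Lan^{k}$ whose last un-barred letter $s'$ is exactly what the leading letter $s$ becomes upon passing left through $\Lan^{k}$; sliding the leading rung $s$ once around the cyclic fence — through the barred block and through $\Lan^{k}$, where it becomes $s'$ — produces a fence whose word contains the subword $(s')^{2}$. Hence reduction move~(R1)$''$ applies to a diagram move-equivalent to $D$, contradicting move-reducedness. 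The case of $\perb$ is symmetric via the $180^\circ$-rotation of \cref{rem:barsmap}. Therefore both $\pera$ and $\perb$ are c-reduced, and we take $\daffper(D):=\daffper$.

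\emph{Main obstacle.} The technical heart is the first step: reconciling the purely topological normalization inside the cylinder with the slab-by-slab decomposition into generators, all while staying inside move-equivalence, so that $D$ genuinely becomes an affine plabic fence. This is precisely where the argument of~\cite{FM} is confined to diagrams whose strands run monotonically from left to right, and handling strands that first go right then left (or that wind around the torus) is what forces the use of the full strength of moves~(P), (T) and \cref{lemma:cross}. The second step, by contrast, is a short combinatorial argument; its only delicate point is to route the sliding rung through the barred block (harmless, since $s$ commutes with every $s_{\bar j}$) and through the $\Lan^{k}$-twist so that it lands on its partner as an $(s')^{2}$ pattern rather than missing it.
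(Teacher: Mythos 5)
Your overall strategy coincides with the paper's: reduce to the cylinder via \cref{thm:intro:vertical}, extract a word in $S\sqcup\bar S\sqcup\{\Lan\}$ by peeling crossings off at the boundary of $\A$, and then argue that failure of c-reducedness would expose an $s_i^2$ and hence an application of (R1)$''$, contradicting move-reducedness. Your second half is in fact a correct and \emph{more} explicit version of the paper's argument: the paper simply invokes \cref{thm:HN} together with the fact (from \cite{FM}) that all defining relations except $s_i^2=1$, $s_{\bar i}^2=1$ are realized by isotopy and moves, whereas you spell out why a length-preserving conjugation is a cyclic rotation of the fence (a letter sliding through the $\Lan$-block and the barred block) and why the first length drop lands two rungs adjacent as $(s')^2$. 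That expansion is sound, including the relabelling through $\Lan^k$ and the treatment of the trivial case is implicit.

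The gap is in the first half, which you correctly identify as the technical heart but then only assert. "Sweep left to right so that each slab contains one elementary event" is the conclusion, not an argument: to peel off a crossing you need, at some triple crossing, two \emph{distinct} strands whose in-endpoints lie on the same side of $\A$ and can be brought adjacent there. The paper secures this by first proving that no strand of $D$ self-intersects inside $\A$ --- if a strand's word $xy^kz$ (\cref{rmk:strand_word}) produced a self-crossing, repeated applications of move (T) along the $\u$--$\d$ side plus \cref{thm:diskminimal} would contradict move-reducedness --- and then runs an induction on the number of triple crossings, using \cref{lemma:movep} and \cref{lemma:cross} to drag one crossing to the boundary and uncross it as in \cref{fig:uncrossing}. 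Without the no-self-intersection step, a triple crossing could involve fewer than three distinct strands of $\A$ and your slab decomposition need not exist; and without the induction, nothing guarantees the sweep terminates in a fence. You should supply both ingredients (they are exactly what makes strands monotone across the cylinder) before the word $\daffper(D)$ is well defined; the rest of your proof then goes through.
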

	\begin{proof}
		\def\Dp{D}
		\def\amDp{\am_{\Dp}}
		Since $N$ is not a point, after applying a move-equivalence using \cref{thm:intro:vertical}, we may assume that  
		the number of intersections of strands in $\Dp$ with the sides of $\A$ is minimal. Let $\am:=\amDp$ denote the affine matching of $\Dp$; cf. \cref{sec:affine-matchings}. Then, we have $\am(A)=B$ and $\am(\overline B)=\overline A$. 
		
		By \cref{rmk:strand_word}, for any strand $\sa$ in $\Dp$, the word $w_{\sa}$ is given by $w_{\sa}=xy^kx$ for $x\in\{\l,\r\}$ and $y\in\{\u,\d\}$. If the strand $\sa$ intersects itself in $\A$ then, after repeatedly applying move (T) to the $\u-\d$ side of $\A$ and applying \cref{thm:diskminimal}, we see that $\Dp$ is not move-reduced, a contradiction. From now on, we assume that no strand of $\Dp$ has a self-intersection in $\A$.

		We construct the element $\daffper=\daffper(\Dp)$ by induction on the number of triple crossings in $\Dp$. Suppose $\Dp$ contains no triple crossings. Then, the affine matching is of the form $\am(A_{i})=B_{i+m}$ and $\am(B_{\overline i})=A_{\overline{i-m}}$ for some $m \in \Z$. We assign the double affine permutation $\daffper:=\Lan^m$ to $\Dp$. 
		
		Suppose the number of triple crossings in $\Dp$ is nonzero. Since no strand has a self-intersection, there must be three distinct strands in $\A$ at a triple crossing, so two of them must have their in-endpoints on the same side of $\A$. This implies that there exists $i \in [n]$ such that the strands $\sa$ and $\sb$ emanating respectively from either $A_i$ and $A_{i+1}$ or from $B_{\overline i}$ and $B_{\overline {i+1}}$ cross in $\A$. Arguing as in the proofs of Lemmas \ref{lemma:movep} and \ref{lemma:cross}, we can create a triple crossing between $\sa_1$ and $\sb_1$ near the boundary of $\A$. Let $\Dp'$ be the triple-crossing diagram in $\A$ obtained by uncrossing this triple crossing (Figure \ref{fig:uncrossing}). We let $\daffper := s_i \daffper(\Dp')$ (resp., $\daffper:=\daffper(\Dp') s_{\overline{i}}$) if the two strands emanate from $A_i$ and $A_{i+1}$ (resp., $B_{\overline i}$ and $B_{\overline {i+1}}$). 
		
		Clearly, $D(\daffper)$ is isotopic to $\Dp$. Let $\perab(\daffper)=(\pera,\perb)$. We show that $\pera$ and $\perb$ are c-reduced. Suppose not. By~\cref{thm:HN}, there is a c-reduced pair $(\pera',\perb')$ such that $\pera \rightarrow \pera'$ and $\perb \rightarrow \perb'$, and we must have used either $s_i^2=1$ or $s_{\overline{i}}^2=1$ at least once. This implies that $D(\daffper)$ is not move-reduced, a contradiction.
	\end{proof}

	\subsection{Proof of Theorem~\ref{thm:tcdmove_red}}\label{sec:proof:move_red}

	\eqref{item:D:move_red} $\Longrightarrow$ \eqref{item:D:area}: Suppose $D$ is move-reduced. Since $N$ is not a point, by~\cref{lemma:tcdtoper}, $D$ is move-equivalent to a triple-crossing diagram $D(\daffper)$, where $\daffper$ is a double affine permutation. Therefore, $D$ and $D(\daffper)$ have the same number of triple crossings.  By~\cref{lemma:c_red_ell}, the number of triple crossings in $D(w)$ is $\ell(\pera)+\ell(\perb)=\Area(\Zon(\VC_\pera))+\Area(\Zon(\VC_\perb))+\excess{\bfla}$, where $\perab(w)=(\pera,\perb)$. %
	By~\eqref{eq:zon_vs_N} below, we have $\ell(\pera)+\ell(\perb)=2\Area(N)+\excess{\bfla}$. If $D$ had a contractible connected component $D'$, then $D'$ must have a loop strand. By~\cref{thm:diskminimal}, $D'$, and therefore $D$ is not move-reduced. 

For the converse implication, we will need the following result.
	\begin{lemma}\label{lem:movereduction}
		Let $D$ be a triple-crossing diagram with weakly decorated Newton polygon $\Nwdec$. If $D$ is not move-reduced, then there is a move-reduced triple-crossing diagram $D'$ with weakly decorated Newton polygon $\Nwdec$ containing strictly fewer triple crossings than $D$.
	\end{lemma}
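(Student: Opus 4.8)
The plan is to leverage the fact that the equivalence move (M1)$'$ and the two reduction moves (R1)$''$ (Thurston's $1$--$0$ move) and (R2)$'$ all preserve the weakly decorated Newton polygon, while (R1)$''$ strictly decreases the number of triple crossings and the other two never increase it. The first step is to verify these invariance and monotonicity statements. For (M1)$'$ this is immediate, as it is a local move fixing the homology class of every strand and preserving the triple-crossing count. For (R1)$''$ the configuration undone is a contractible ``finger'' of a single strand, so no homology class changes and no other strand is affected; hence $\Nwdec$ is unchanged and the triple-crossing count drops by one. For (R2)$'$ the deleted strand is a simple loop bounding a disk meeting no other strand --- an innermost contractible loop after an isotopy --- so it has trivial homology and carries no triple crossing; thus $\Nwdec$ and the triple-crossing count are unchanged, while the number of strands decreases by one.

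With this in hand I would argue by induction on the number of triple crossings of $D$. Since $D$ is not move-reduced, some diagram $\tilde D$ reached from $D$ by a sequence of (M1)$'$ moves admits one of the reduction moves, and $\tilde D$ has the same weakly decorated Newton polygon and the same number of triple crossings as $D$. If that reduction is (R1)$''$, performing it produces $D_1$ with $\Nwdec(D_1)=\Nwdec$ and one fewer triple crossing than $D$; if $D_1$ is already move-reduced we set $D':=D_1$, and otherwise the inductive hypothesis applied to $D_1$ provides a move-reduced $D'$ with $\Nwdec(D')=\Nwdec$ and strictly fewer triple crossings than $D_1$, hence than $D$. If instead the available reduction is (R2)$'$, it deletes a simple loop that is a connected component of $\tilde D$ contractible in $\T$; here I would appeal to \cref{thm:diskminimal}, applied inside the disk that component occupies: either the component contains a triple crossing, in which case it can be reduced, strictly lowering the crossing count while preserving $\Nwdec$, and we finish by induction; or it is a lone crossing-free loop, which we delete (leaving $\Nwdec$ and the crossing count unchanged) and start over with one fewer strand. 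Iterating, one reaches a move-reduced $D'$ with $\Nwdec(D')=\Nwdec$, and since at least one genuine (R1)$''$ reduction or reduction of a crossing-bearing contractible component is performed along the way, $D'$ has strictly fewer triple crossings than $D$.

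The substantive ingredient is \cref{thm:diskminimal} together with the disk-level dictionary of monogons, loops, and parallel bigons: it is what lets us make (R1)$''$ or (R2)$'$ applicable after an isotopy, and what controls the contractible components. The step I expect to require the most care is the bookkeeping around isolated contractible loops: because (R2)$'$ leaves the triple-crossing count fixed, one must ensure that the strict decrease ultimately comes from an honest (R1)$''$ move (or the reduction of a contractible component that carries a crossing), which is why the removal of contractible components is handled up front and why the forward implication \eqref{item:D:move_red}$\Rightarrow$\eqref{item:D:area} --- telling us a move-reduced diagram has no contractible component --- is implicitly in play. A secondary routine point to check is that (M1)$'$ indeed preserves the number of triple crossings, verified directly on the local picture.
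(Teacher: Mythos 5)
Your strategy coincides with the paper's: apply (M1)$'$, (R1)$''$ and (R2)$'$ until the diagram is move-reduced, after checking that all three preserve $\Nwdec$ and that only (R1)$''$ changes the number of triple crossings (decreasing it by one). The gap is at the very end, where you assert that at least one crossing-decreasing reduction must occur along the way. You flag this as the delicate point but never establish it, and it cannot be established from the hypotheses as stated: if $D$ is the disjoint union of a move-reduced diagram and one crossing-free contractible simple loop, then $D$ is not move-reduced, yet by the already-proved implication \eqref{item:D:move_red}~$\Longrightarrow$~\eqref{item:D:area} of \cref{thm:tcdmove_red} every move-reduced diagram with the same $\Nwdec$ has exactly $2\Area(N)+\excess{\bfla}$ triple crossings --- the same number as $D$ --- so no $D'$ with strictly fewer crossings exists. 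The missing ingredient is the hypothesis that $D$ has no contractible connected components; this is precisely how the lemma is invoked in the proof of \eqref{item:D:area}~$\Longrightarrow$~\eqref{item:D:move_red}, and the paper's proof uses it explicitly even though it is omitted from the statement.

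With that hypothesis, the paper closes the argument in one sentence: (M1)$'$ cannot create a contractible component, so (R2)$'$ is never applicable before (R1)$''$ has been used at least once, and that single use already forces the strict decrease, while no subsequent move increases the count. Your detour through \cref{thm:diskminimal} inside a disk containing a ``crossing-bearing contractible component'' is then unnecessary; note also that (R2)$'$ by definition only removes a loop that is already crossing-free and isolated, so such a component would in any case have to be dealt with by (M1)$'$ and (R1)$''$ rather than by (R2)$'$. Your preliminary verifications --- (M1)$'$ preserves the crossing count, (R1)$''$ preserves strand connectivity and hence $\Nwdec$ while dropping the count by one, (R2)$'$ deletes a zero-homology strand and so changes neither $\Nwdec$ nor the count --- are correct and match the paper, though describing the left-hand side of (R1)$''$ as a ``finger of a single strand'' is inaccurate (it is a triple crossing involving three strand segments); that slip does not affect the logic.
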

	\begin{proof}
		Recall the reduction move (R1)$''$ shown in Figure~\ref{fig:thurston10}. The move (R1)$''$ preserves the connectivity of the strands, and therefore the does not change $\Nwdec$. If $D$ is not move-reduced, then we can use moves (M1)$'$, (R1)$''$ and (R2)$'$ to get a move-reduced $D'$ with weakly decorated Newton polygon $\Nwdec$. Since $D$ has no contractible components, (M1)$'$ cannot create contractible components, and therefore we must use (R1)$''$ at least once before we can use (R2)$'$. Since we decrease the number of triple crossings when we apply (R1)$''$, $D'$ contains strictly fewer triple crossings than $D$.
	\end{proof}

         \eqref{item:D:area} $\Longrightarrow$\eqref{item:D:move_red}: Suppose that $D$ has $2\Area(N)+\excess{\bfla}$ triple crossings and that $D$ has no contractible connected components. If $D$ is not move-reduced, there is a move-reduced $D'$ with weakly decorated Newton polygon $\Nwdec$ with fewer than $2\Area(N)+\excess{\bfla}$ triple crossings by~\cref{lem:movereduction}, contradicting \eqref{item:D:move_red} $\Longrightarrow$ \eqref{item:D:area}. \qed

	\subsection{Proof of Proposition~\ref{prop:propertiesofmoveredtcd}}\label{sec:proof:propertiestcdmovered}
	
	\begin{figure}
		\begin{tabular}{ccc}
			\includegraphics[width=0.23\textwidth]{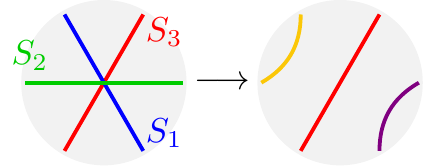}
			& \qquad &
			\includegraphics[width=0.23\textwidth]{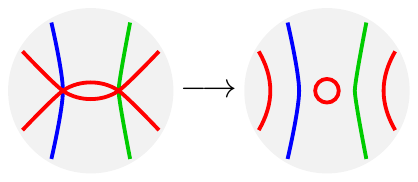}
			\\
			(a) & & (b)
		\end{tabular}
		\caption{\label{fig:uncrossingmove} (a) Uncrossing the strands $\sa_1$ (blue) and $\sa_2$ (green), while the strand $\sa_3$ (red) is unaffected. (b) The uncrossing move applied to the two strands participating in both triple crossings on the left-hand side of (M1)$'$.}
	\end{figure}

	Let $p$ be a triple-crossing at which three strands $\sa_1,\sa_2,\sa_3$ meet. We call the variant of the skein relation shown in \figref{fig:uncrossingmove}(a) \textit{uncrossing $\sa_1$ and $\sa_2$ at $p$}. 
	
	By~\cref{lemma:tcdtoper}, $D$ is move-equivalent to a triple-crossing diagram $D(\daffper)$, where $\daffper \in \dSaff$ for some $n$ and the associated affine permutations $\pera,\perb$ are c-reduced. 

        To show part~\eqref{prop:appB_propertiesofmoveredtcd1}, suppose there is a closed loop $\tilde \sa$ in $\tilde D$. Then, the projection $\sa:=\pi(\tilde \sa)$ of this closed loop is a strand with $[\sa]=(0,0)$. Since move-equivalence preserves homology classes of strands, $\sa$ becomes a zero-homology strand in $D(w)$. Since every strand in $D(w)$ moves monotonously to the left or to the right, there are no zero-homology strands in $D(w)$, a contradiction. If $\tilde D$ contains a strand $\tilde{\sa}$ with a self-intersection, then uncrossing $\sa:=\pi(\tilde \sa)$ at the triple point with the self-intersection yields a triple-crossing diagram with the same weakly decorated Newton polygon but with fewer triple crossings, contradicting~\cref{thm:tcdmove_red}.

We now show part~\eqref{prop:appB_propertiesofmoveredtcd2}. By~\cref{cor:c_red_charact}, parts~\eqref{prop:appB_propertiesofmoveredtcd2} and~\eqref{prop:appB_propertiesofmoveredtcd3} are true for $D(w)$. Suppose part~\eqref{prop:appB_propertiesofmoveredtcd2} is false for $D$. Then, $D$ is move-equivalent to $D'$ for which part~\eqref{prop:appB_propertiesofmoveredtcd2} is false, but upon applying (M1)$'$ to $D'$, it becomes true. Since (M1)$'$ only removes crossings between the two anti-parallel strands, the two anti-parallel strands $T_1$ and $T_2$ that cross on the left-hand side of (M1)$'$ should both be portions of $\sa$. Upon uncrossing $T_1$ and $T_2$ at both the triple crossings (see~\figref{fig:uncrossingmove}(b)), the Newton polygon is unchanged, and the strand $\sa$ splits into a loop and at most two other strands, so $2\Area(N)+\excess{\bfla}$ can decrease by at most one, but the number of triple crossings decreases by two, contradicting~\cref{thm:tcdmove_red}.
        
	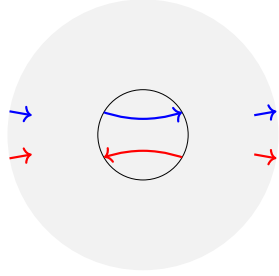
\begin{figure}
		\resizebox{1.5in}{!}{
			\begin{tikzpicture}[scale=0.4]
				\def\r{2};
				\fill[black!5] (0,0) circle (1*3*\r cm);
				\draw[] (0,0) circle (1.0*\r cm);
				\draw [line width = \lw,red,->] (190:3*\r) -- (190:2.5*\r);
				\draw [line width = \lw,blue,->] (170:3*\r) -- (170:2.5*\r);
				
				\draw [line width = \lw,red,<-] (-10:3*\r) -- (-10:2.5*\r);
				\draw [line width = \lw,blue,<-] (10:3*\r) -- (10:2.5*\r);

				\begin{scope}[rotate=45+90]

					\coordinate[] (b1) at (-0.5,0.5);
					\coordinate[] (b2) at (0.5,-0.5);

					\coordinate[] (t1) at (15:\r);
					\coordinate[] (t2) at (120-45:\r);
					\coordinate[] (t3) at (150-45:\r);
					\coordinate[] (t4) at (210-45:\r);
					\coordinate[] (t5) at (240-45:\r);
					\coordinate[] (t6) at (300-45:\r);
					\coordinate[] (t7) at (330-45:\r);
					\coordinate[] (t8) at (30-45:\r);
					
					\draw [line width = \lw,red,->] plot [smooth, tension=1] coordinates {(t5) (b1) (t2)};
					\draw [line width = \lw,blue,->] plot [smooth, tension=1] coordinates {(t1) (b2) (t6)};

				\end{scope}

			\end{tikzpicture}
		}
		\caption{There is no way to complete the red and blue strands so that they do not cross without creating a self-intersection.}\label{fig:monogonparallel}
              \end{figure}
              
              To show part~\eqref{prop:appB_propertiesofmoveredtcd3}, we will need the following lemma.
	\begin{lemma}\label{lem:abcd}
		Suppose $\sa,\sa' \in \SD$ are two distinct parallel strands that do not intersect. Let $R$ be a closed topological disk in $\T$ whose interior contains some portion of $\sa$ and $\sa'$. Let $a$ and $b$ (resp., $c$ and $d$) denote the in- and out-endpoints of $\sa$ (resp., $\sa'$) around the boundary of $R$. Then, the cyclic order of the endpoints around the boundary of $R$ cannot be $abcd$ or $dcba$. 
	\end{lemma}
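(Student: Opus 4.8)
The plan is to pass to the universal cover $\pi\colon\R^2\to\T$ and exploit that two disjoint parallel strands lift to disjoint, co-oriented, properly embedded bi-infinite lines, which therefore lie on opposite sides of one another. Write $\alpha:=\sa\cap R$ and $\beta:=\sa'\cap R$; by the hypothesis on the endpoints these are arcs, $\alpha$ running from the in-endpoint $a$ to the out-endpoint $b$ and $\beta$ from $c$ to $d$, with interiors in $\operatorname{int}R$. Let $v\in\Z^2$ be the primitive vector such that both $[\sa]$ and $[\sa']$ are positive multiples of $v$: this is exactly what ``parallel'' means, and it encodes that $\sa$ and $\sa'$ are coherently oriented.

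Suppose, for contradiction, that the counterclockwise cyclic order of $a,b,c,d$ around $\partial R$ is $a,b,c,d$; the order $d,c,b,a$ is handled identically after reversing the orientation of $\partial R$. First I would join $b$ to $c$ and $d$ to $a$ by the two sub-arcs of $\partial R$ that contain no other marked point and concatenate with $\alpha$ and $\beta$ to form an embedded loop $\kappa$ in $R$; since $\alpha$ and $\beta$ are ``adjacent'' in the order $a,b,c,d$, the curve $\kappa$ bounds a subdisk $R'\subseteq R$ having both $\alpha$ and $\beta$ on its boundary. Because the cyclic order is $a,b,c,d$, going once around $\partial R'$ traverses $\alpha$ from $a$ to $b$ and $\beta$ from $c$ to $d$, i.e.\ along the orientations of $\sa$ and $\sa'$; hence $R'$ lies to the left of $\sa$ along $\alpha$ and to the left of $\sa'$ along $\beta$, while $\operatorname{int}R'$ is disjoint from $\sa\cup\sa'$ (as $\operatorname{int}R'\subseteq\operatorname{int}R$ and $\sa\cap R=\alpha$, $\sa'\cap R=\beta$).

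Next I would lift: choose a component $\tilde R'$ of $\pi^{-1}(R')$, so $\pi|_{\tilde R'}$ is a homeomorphism onto $R'$, and let $\tilde\alpha,\tilde\beta\subset\partial\tilde R'$ be the corresponding lifts of $\alpha,\beta$; then $\tilde\alpha$ lies in a lift $L$ of $\sa$ and $\tilde\beta$ in a lift $L'$ of $\sa'$, and $L\cap L'=\varnothing$ since $\sa\cap\sa'=\varnothing$. By part~\eqref{prop:appB_propertiesofmoveredtcd1} of~\cref{prop:propertiesofmoveredtcd}, $L$ and $L'$ are embedded, and each is invariant under translation by a positive multiple of $v$; oriented as a lift of its strand, $L$ (and likewise $L'$) therefore stays within bounded distance of a line parallel to $v$ and escapes to infinity in the direction $+v$ at one end and $-v$ at the other. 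In particular $L$ and $L'$ are properly embedded lines, so $\R^2\setminus L$ and $\R^2\setminus L'$ each have exactly two components, which I call the left and right of the line relative to its orientation.

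The crux is the claim that $L$ and $L'$ lie on opposite sides of each other: if $L'\subseteq(\text{left of }L)$ then $L\subseteq(\text{right of }L')$, and symmetrically. To prove it I would compactify $\R^2$ to a closed disk by adding a circle at infinity, arranged (using the bounded-distance property) so that the $+v$ and $-v$ directions become two distinct boundary points $\mathbf n$ and $\mathbf s$; then $L\cup\{\mathbf n,\mathbf s\}$ and $L'\cup\{\mathbf n,\mathbf s\}$ are embedded arcs from $\mathbf s$ to $\mathbf n$, disjoint apart from their common endpoints and both oriented $\mathbf s\to\mathbf n$, so they cut the disk into a ``bigon'' between them and two outer regions, and chasing orientations shows the bigon is to the left of one arc and to the right of the other. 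Granting this, I would conclude: $\operatorname{int}\tilde R'$ is disjoint from $L\cup L'$ and meets $L$ from its left side (along $\tilde\alpha$) and $L'$ from its left side (along $\tilde\beta$), so $\operatorname{int}\tilde R'\subseteq(\text{left of }L)\cap(\text{left of }L')$; but by the opposite-sides claim this intersection lies entirely on the side of $L'$ not containing $L$ (or on the side of $L$ not containing $L'$), which would force $\tilde\alpha\subseteq L$ to avoid $L$ (or $\tilde\beta\subseteq L'$ to avoid $L'$) — absurd. This contradiction excludes the cyclic orders $a,b,c,d$ and $d,c,b,a$. I expect the compactification/opposite-sides step to be the only real obstacle: one must check that the $v$-periodic embedded lines genuinely limit to well-defined ideal endpoints and that the orientation bookkeeping really does pin the bigon to consistent sides. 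The remaining ingredients — existence and orientation of the subdisk $R'$, that $R'$ and its chosen lift are embedded, and that a lift of a strand-portion sits inside a single lift of the strand — are routine planar topology and covering-space theory.
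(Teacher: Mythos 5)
Your proof is correct and follows essentially the same route as the paper's: lift to the universal cover, use that two disjoint parallel strands lift (by part~\eqref{prop:appB_propertiesofmoveredtcd1} of \cref{prop:propertiesofmoveredtcd}) to disjoint embedded lines escaping to infinity in the directions $\pm v$, and extract a planar-separation contradiction from the cyclic order $abcd$. The paper compresses the final step into Figure~\ref{fig:monogonparallel} (a circle of radius $N\gg1$ around a lift of $R$, where completing the two arcs forces a crossing or a self-intersection); your left/right-side bookkeeping with the bigon at infinity is a carefully written-out version of that same contradiction.
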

	\begin{proof}
		Let $N\gg1$ be a large positive integer, and consider a circle of radius $N$ centered at a lift of $R$. Then, we have a strand with a self-intersection in the preimage of $D'$ (Figure \ref{fig:monogonparallel}) in $\R^2$ which contradicts part~\eqref{prop:appB_propertiesofmoveredtcd1} of~\cref{prop:propertiesofmoveredtcd}. 
	\end{proof}

        Similarly to the above, suppose that part~\eqref{prop:appB_propertiesofmoveredtcd3} is false for $D'$, but upon applying (M1)$'$ to $D'$, it becomes true. The two anti-parallel strands that cross on the left-hand side of (M1)$'$ should be portions of $\sa, \sa'$ respectively. Upon uncrossing $\sa$ and $\sa'$ at both triple crossings, the union of $\sa$ and $\sa'$ becomes the union of a loop and a strand $T$ with homology class $[T]=[\sa]+[\sa']$. Therefore, 
		$N$ is unchanged and $2\Area(N)+\excess{\bfla}$ decreases by one, but the number of triple crossings decreases by two, again contradicting~\cref{thm:tcdmove_red}.
		
		Finally, suppose there is a face $F$ of $D$ with portions of $\sa,\sa'$ in its boundary. Recall from \cref{dfn:tcd_T} that the strands in $D$ induce a consistent orientation around the boundary of $F$. We let $R$ be a disk that contains a portion of $F$ together with parts of $\sa$ and $\sa'$, and get a contradiction with \cref{lem:abcd}. 
	\qed

	\subsection{Proof of Proposition~\ref{prop:intro:exists}}\label{sec:proof:exists}
	Suppose $\Ndec=(N,\bfcc)$ is a strongly decorated Newton polygon and $\minv \in \Z/\clicks(\bfcc)$. Recall from \cref{sec:eps_straight_construct} that for $\e=(a,b)\in\Z^2$, we denote $\nop(\e):=a$ and $\kop(\e):=b$, and $\slp(\e)=\kop(\e)/\nop(\e)$. Using an $\operatorname{SL}_2(\Z)$ transformation, we can assume that $\nop(\e) \neq 0$ for all $\e \in E(N)$. We assign to $\Ndec$ the pair $(\VCDDe_+,\VCDDe_-)$ of strongly decorated vector configurations, consisting of edges of $N$ oriented to the right and left, respectively, as follows. We define:
	\begin{enumerate}
		\item $\VC_+:=\{\e\mid \e \in E(N),\ \nop(\e)>0\}$ and $\bfcc_+=(\cc^\e)_{\e \in \VCDDe_+}$; and 
		\item $\VC_-:=\{-\e\mid \e \in E(N),\ \nop(\e)< 0\}$ and $\bfcc_-=(\rev(\cc^\e))_{-\e \in \VCDDe_-}$, where for a cyclic composition $\cc=(\cc_1,\cc_2,\dots,\cc_m)$,  $\rev(\cc) := (\cc_m,\cc_{m-1},\dots,\cc_1)$ is the cyclic composition with the cyclic order reversed. 
	\end{enumerate}
	Similarly to \cref{rem:barsmap}, we have rotated the vectors in $\VC_-$ by $180$ degrees. We have the following basic relation between the area of $N$ and the areas of the zonotopes $\Zon(\VC_+)$, $\Zon(\VC_-)$:
	\begin{equation}\label{eq:zon_vs_N}
		2\Area(N)=\Area(\Zon(\VC_+))+\Area(\Zon(\VC_-)).
	\end{equation}
	To see this, observe that the lower boundary of $\Zon(\VC_+)$ coincides with the lower boundary of $N$ (given by the vectors in $\VC_+$), and the upper boundary of $\Zon(\VC_+)$ is obtained by rotating its lower boundary by $180$ degrees. A similar statement holds for $\Zon(\VC_-)$, from which the result follows; see \cref{fig:zon_vs_N}.
	
	\begin{figure}
		\def\twd{0.55\textwidth}
		\includegraphics[width=\twd]{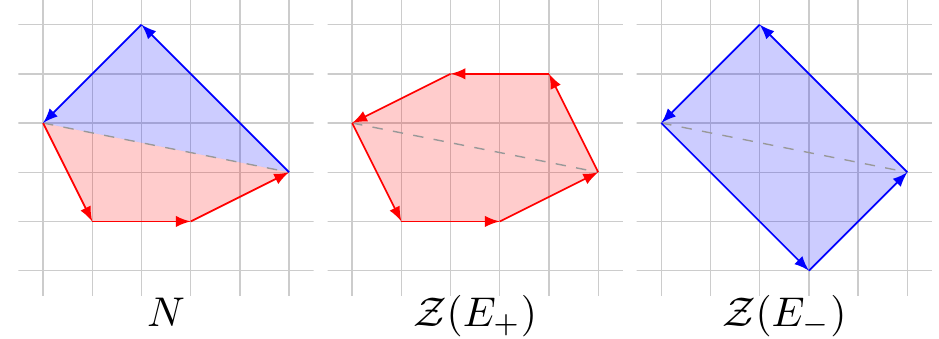}
		\caption{\label{fig:zon_vs_N} Proof of~\eqref{eq:zon_vs_N}: the dashed line subdivides $N$ into two polygons whose areas are $\frac12\Area(\Zon(\VC_+))$ and $\frac12\Area(\Zon(\VC_-))$.}
	\end{figure}

	Let $\pera$ and $\perb$ be a pair of c-reduced  affine permutations with $\VCDDe_{\pera}=\VCDDe_+$ and $\VCDDe_{\perb}=\VCDDe_-$ constructed as in \cref{sec:eps_straight_construct}. Observe that $\sum_{\e\in \VC_+} \kop(\e)=\sum_{\e\in\VC_-}\kop(\e)$, and thus by \cref{rmk:perab_invertible}, there exists $\daffper\in\dSaff$ satisfying $\perab(\daffper)=(\pera,\perb)$. 
	By~\eqref{eq:zon_vs_N}, the triple-crossing diagram $D:=D(\daffper)$ has the correct number of triple crossings, it is move-reduced by~\cref{thm:tcdmove_red}. 
	Let $\Gamma:=\Gamma(D)$ be the associated bipartite graph (cf.~\cref{sec:bipartite-to-tcd}). By construction, $\Ndec(\Gamma)=\Ndec$, and replacing $\pera$ with $\rotsig^r(\pera)$ for some $r$ while keeping $\perb$ fixed, we can achieve $\minv(\Gamma)=\minv$. 
	
	Finally, we show that $\Gamma$ has a perfect matching. Let $ \daffper=s_{i_1} s_{i_2} \cdots s_{i_l} \Lan^k s_{\overline{j_1}}s_{\overline{j_2}} \cdots s_{\overline{j_m}} $ be a reduced expression. Omitting all generators $s_{i_k}$ and $s_{\overline{i_k}}$ such that the corresponding vertical edge in $\Gamma$ is traversed by the same strand in the opposite directions (i.e., yields a self-intersection in $D(\daffper)$), we get a triple-crossing diagram $D'$ with strongly decorated Newton polygon $(N,\bfcc')$ satisfying $(\cc')^e=(1,1,\dots,1)$ for all $\e \in E(N)$. By~\cref{thm:tcdmove_red} and part~\eqref{prop:appB_propertiesofmoveredtcd2} of~\cref{prop:propertiesofmoveredtcd}, $D'$ is move-reduced, so it is minimal in the sense of \cite{GK13}. 
	The corresponding bipartite graph $\Gamma':=\Gamma(D')$ has a perfect matching by \cite[Lemma 3.11]{GK13}, and since $\Gamma'$ is obtained from $\Gamma$ by deleting a subset of edges, so does $\Gamma$.\qed

	\begin{figure}
		\begin{tikzpicture}[xscale=0.3,yscale=0.4] %
			
			\def\r{2};
			\def\labsclG{1.0}
			\fill[black!5]  (0,-1.5) rectangle (27,6.5);
			
			\draw[dashed,\dashcolor,-] (0,-1.5) rectangle (27,6.5);
			\def\labsclD{0.8}
			\node[scale=\labsclD] (no) at (-1,0) {$\overline{1}$}; 
			\node[scale=\labsclD] (no) at (-1,1) {$2$}; 
			\node[scale=\labsclD] (no) at (-1,-1) {$1$};

			\node[scale=\labsclD] (no) at (-1,2) {$\overline{2}$}; 
			\node[scale=\labsclD] (no) at (-1,3) {$3$}; 
			\node[scale=\labsclD] (no) at (-1,4) {$\overline{3}$};
			\node[scale=\labsclD] (no) at (-1,5) {$4$}; 
			\node[scale=\labsclD] (no) at (-1,6) {$\overline{4}$};
			
			\begin{scope}[]		
				\draw[red,line width=\lw] (0,-1) .. controls +(1,0) and +(-1,0) ..
				(3,1);
				\draw[red,line width=\lw] (0,1) .. controls +(1,0) and +(-1,0) ..
				(3,-1) ;
				\draw[blue,line width=\lw] (0,3) -- (3,3);
				\draw[blue,line width=\lw] (0,5) -- (3,5);
				\draw[green!80!black,->,line width=\lw] (3,0) -- (0,0);
				\draw[green!80!black,->,line width=\lw] (3,2) -- (0,2);
				\draw[green!80!black,->,line width=\lw] (3,4) -- (0,4);
				\draw[green!80!black,->,line width=\lw] (3,6) -- (0,6);
				\node[] (no) at (1.5,-2) {$s_1$};
				
			\end{scope}
			
			\begin{scope}[shift={(3,0)}]		
				\draw[blue,line width=\lw] (0,3) .. controls +(1,0) and +(-1,0) ..
				(3,5);
				\draw[blue,line width=\lw] (0,5) .. controls +(1,0) and +(-1,0) ..
				(3,3) ;
				\draw[red,line width=\lw] (0,1) -- (3,1);
				\draw[red,line width=\lw] (0,-1) -- (3,-1);
				
				\draw[green!80!black,line width=\lw] (3,0) -- (0,0);
				\draw[green!80!black,line width=\lw] (3,2) -- (0,2);
				\draw[green!80!black,line width=\lw] (3,4) -- (0,4);
				\draw[green!80!black,line width=\lw] (3,6) -- (0,6);

				\node[] (no) at (1.5,-2) {$s_3$};
				
			\end{scope}
			
			\begin{scope}[shift={(6,0)}]
				\begin{scope}
					\clip(0,-1.5) rectangle (3,6.5);
					\draw[blue,line width=\lw] (0,5) .. controls +(1,0) and +(-1,0) ..
					(3,7);
					\draw[red,line width=\lw] (0,7) .. controls +(1,0) and +(-1,0) ..
					(3,5) ;
					
					\draw[blue,line width=\lw] (0,-3) .. controls +(1,0) and +(-1,0) ..
					(3,-1);
					\draw[red,line width=\lw] (0,-1) .. controls +(1,0) and +(-1,0) ..
					(3,-3) ;
					\draw[red,line width=\lw] (0,1) -- (3,1);
					\draw[blue,line width=\lw] (0,3) -- (3,3);
					
					\draw[green!80!black,line width=\lw] (3,0) -- (0,0);
					\draw[green!80!black,line width=\lw] (3,2) -- (0,2);
					\draw[green!80!black,line width=\lw] (3,4) -- (0,4);
					\draw[green!80!black,line width=\lw] (3,6) -- (0,6);
				\end{scope}

				\node[] (no) at (1.5,-2) {$s_4$};
				
			\end{scope}
			
			\begin{scope}[shift={(9,0)}]		
				\draw[blue,line width=\lw] (0,3) .. controls +(1,0) and +(-1,0) ..
				(3,5);
				\draw[red,line width=\lw] (0,5) .. controls +(1,0) and +(-1,0) ..
				(3,3) ;
				\draw[red,line width=\lw] (0,1) -- (3,1);
				\draw[blue,line width=\lw] (0,-1) -- (3,-1);
				
				\draw[green!80!black,line width=\lw] (3,0) -- (0,0);
				\draw[green!80!black,line width=\lw] (3,2) -- (0,2);
				\draw[green!80!black,line width=\lw] (3,4) -- (0,4);
				\draw[green!80!black,line width=\lw] (3,6) -- (0,6);

				\node[] (no) at (1.5,-2) {$s_3$};
				
			\end{scope}
			
			\begin{scope}[shift={(12,0)}]		
				\draw[blue,line width=\lw] (0,-1) .. controls +(1,0) and +(-1,0) ..
				(3,1);
				\draw[red,line width=\lw] (0,1) .. controls +(1,0) and +(-1,0) ..
				(3,-1) ;
				\draw[red,line width=\lw] (0,3) -- (3,3);
				\draw[blue,line width=\lw] (0,5) -- (3,5);
				\draw[green!80!black,line width=\lw] (3,0) -- (0,0);
				\draw[green!80!black,line width=\lw] (3,2) -- (0,2);
				\draw[green!80!black,line width=\lw] (3,4) -- (0,4);
				\draw[green!80!black,line width=\lw] (3,6) -- (0,6);
				\node[] (no) at (1.5,-2) {$s_1$};
				
			\end{scope}
			
			\begin{scope}[shift={(15,0)}]
				\begin{scope}
					\clip(0,-1.5) rectangle (3,6.5);
					\draw[blue,line width=\lw] (0,5) .. controls +(1,0) and +(-1,0) ..
					(3,7);
					\draw[red,line width=\lw] (0,7) .. controls +(1,0) and +(-1,0) ..
					(3,5) ;
					
					\draw[blue,line width=\lw] (0,-3) .. controls +(1,0) and +(-1,0) ..
					(3,-1);
					\draw[red,line width=\lw] (0,-1) .. controls +(1,0) and +(-1,0) ..
					(3,-3) ;
					\draw[blue,line width=\lw] (0,1) -- (3,1);
					\draw[red,line width=\lw] (0,3) -- (3,3);
					
					\draw[green!80!black,line width=\lw] (3,0) -- (0,0);
					\draw[green!80!black,line width=\lw] (3,2) -- (0,2);
					\draw[green!80!black,line width=\lw] (3,4) -- (0,4);
					\draw[green!80!black,line width=\lw] (3,6) -- (0,6);
				\end{scope}	
				\node[] (no) at (1.5,-2) {$s_4$};
				
			\end{scope}
			
			\begin{scope}[shift={(18,0)}]
				\begin{scope}
					\clip(0,-1.5) rectangle (3,6.5);
					\draw[red,line width=\lw] (0,5) .. controls +(1,0) and +(-1,0) ..
					(3,7);

					\draw[red,line width=\lw] (0,3) .. controls +(1,0) and +(-1,0) ..
					(3,5);
					
					\draw[blue,line width=\lw] (0,1) .. controls +(1,0) and +(-1,0) ..
					(3,3);
					
					\draw[blue,line width=\lw] (0,-1) .. controls +(1,0) and +(-1,0) ..
					(3,1);
					\draw[red,line width=\lw] (0,-3) .. controls +(1,0) and +(-1,0) ..
					(3,-1);
					
					\draw[green!80!black,line width=\lw] (0,-2) .. controls +(1,0) and +(-1,0) ..
					(3,0);
					\draw[green!80!black,line width=\lw] (0,0) .. controls +(1,0) and +(-1,0) ..
					(3,2);
					\draw[green!80!black,line width=\lw] (0,2) .. controls +(1,0) and +(-1,0) ..
					(3,4);
					\draw[green!80!black,line width=\lw] (0,4) .. controls +(1,0) and +(-1,0) ..
					(3,6);
					
					\draw[green!80!black,line width=\lw] (0,6) .. controls +(1,0) and +(-1,0) ..
					(3,8);

				\end{scope}	
				\node[] (no) at (1.5,-2) {$\Lan$};
				
			\end{scope}
			\begin{scope}[shift={(21,0)}]
				\begin{scope}
					\clip(0,-1.5) rectangle (3,6.5);
					\draw[red,line width=\lw] (0,5) .. controls +(1,0) and +(-1,0) ..
					(3,7);

					\draw[blue,line width=\lw] (0,3) .. controls +(1,0) and +(-1,0) ..
					(3,5);
					
					\draw[blue,line width=\lw] (0,1) .. controls +(1,0) and +(-1,0) ..
					(3,3);
					
					\draw[red,line width=\lw] (0,-1) .. controls +(1,0) and +(-1,0) ..
					(3,1);
					\draw[red,line width=\lw] (0,-3) .. controls +(1,0) and +(-1,0) ..
					(3,-1);
					
					\draw[green!80!black,line width=\lw] (0,-2) .. controls +(1,0) and +(-1,0) ..
					(3,0);
					\draw[green!80!black,line width=\lw] (0,0) .. controls +(1,0) and +(-1,0) ..
					(3,2);
					\draw[green!80!black,line width=\lw] (0,2) .. controls +(1,0) and +(-1,0) ..
					(3,4);
					\draw[green!80!black,line width=\lw] (0,4) .. controls +(1,0) and +(-1,0) ..
					(3,6);
					
					\draw[green!80!black,line width=\lw] (0,6) .. controls +(1,0) and +(-1,0) ..
					(3,8);
					
				\end{scope}	
				\node[] (no) at (1.5,-2) {$\Lan$};
				
			\end{scope}
			
			\begin{scope}[shift={(24,0)}]		
				\draw[green!80!black,line width=\lw] (0,4) .. controls +(1,0) and +(-1,0) ..
				(3,6);
				\draw[green!80!black,line width=\lw] (0,6) .. controls +(1,0) and +(-1,0) ..
				(3,4) ;
				\draw[green!80!black,line width=\lw] (0,2) -- (3,2);
				\draw[green!80!black,line width=\lw] (0,0) -- (3,0);
				\draw[blue,line width=\lw,->] (0,3) -- (3,3);
				\draw[blue,line width=\lw,->] (0,5) -- (3,5);
				\draw[red,line width=\lw,->] (0,1) -- (3,1);
				\draw[red,line width=\lw,->] (0,-1) -- (3,-1);
				
				\node[] (no) at (1.5,-2) {$s_{\overline{4}}$};
				
			\end{scope}
			
		\end{tikzpicture}
		\caption{The triple-crossing diagram $D(w)$ with strongly decorated Newton polygon $\Ndec=(N,\bfcc)$ from~\cref {example:graphfromsdcnp}. }\label{fig:eg182}
		\bigskip
		\def\twd{0.35\textwidth}
		\scalebox{0.95}{
			\begin{tabular}{ccc}
				\includegraphics[width=\twd]{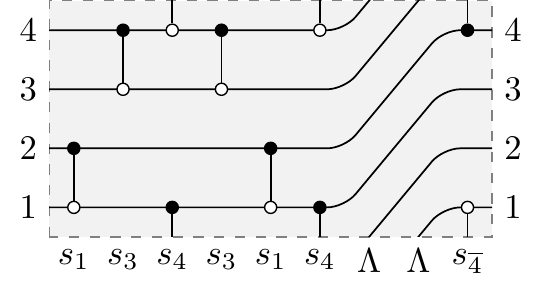} &
				\includegraphics[width=\twd]{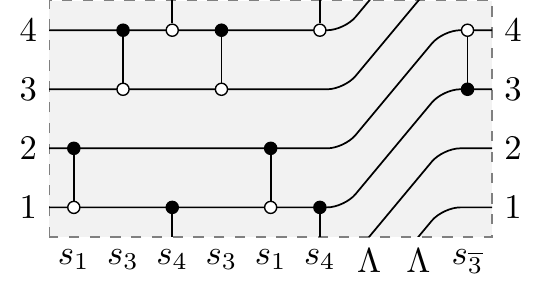}
				&
				\begin{tikzpicture}[baseline=(Z.base)]
					\coordinate(Z) at (0,-1.75);
					\node(A) at (0,0) { \includegraphics[width=0.23\textwidth]{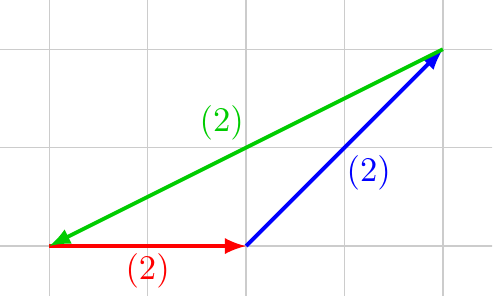}};
				\end{tikzpicture}
				\\
				(a) $\Gamma(w)$. & (b) $\Gamma(w')$. & (c) $\Ndec(\Gamma(w))=\Ndec(\Gamma(w'))$.
			\end{tabular}
		}
		\caption{\label{fig:two_big} Two plabic graphs $\Gamma(w)$, $\Gamma(w')$ from \cref{ex:two_big} having the same strongly decorated Newton polygons but different modular invariants. According to \cref{thm:intro:move_eq}, these graphs are not move-equivalent. 
		}
	\end{figure}
	\begin{example} \label{example:graphfromsdcnp}
		Let $\Ndec=(N,\bfcc)$ be the strongly decorated Newton polygon with edges $\eer = (2,0)$, $\eeb=(2,2)$ and $\eeg=(-4,-2)$, and $\cc^\eer=\cc^\eeb=\cc^\eeg=(2)$ shown in \figref{fig:two_big}(c). The strongly decorated vector configuration $\VCDDe_+$ and its $\eps$-straight arrow diagram $\DP(\VCDDe_+)$ are shown in  \figref{fig:arrowdiagram}(a--b). From $\DP(\VCDDe_+)$, we find the reduced expression $\pera = s_1 s_3 s_4 s_3 s_1 s_4 \Lan^2$. Similarly, we have $\perb= s_{{1}} \Lan^2$, so that $\daffper=s_1 s_3 s_4 s_3 s_1 s_4 \Lan^2 s_{\overline{4}}$. The corresponding triple-crossing diagram $D(w)$ is shown in~\cref{fig:eg182}.
	\end{example}
	\begin{example}\label{ex:two_big}
		Let $\daffper'=s_1 s_3 s_4 s_3 s_1 s_4 \Lan^2 s_{\overline{3}}$ be obtained from $\daffper$ in \cref{example:graphfromsdcnp} by replacing $s_{\overline{4}}$ with $s_{\overline{3}}$. The associated plabic\footnote{Strictly speaking, the graphs shown in \cref{fig:two_big} are not plabic in the language of \cref{sec:plabictcd} since they have edges with both endpoints black. To convert them into plabic graphs, one has to add a degree two white vertex in the middle of each such edge.} graphs $\Gamma(w),\Gamma(w')$ shown in \cref{fig:two_big} have the same strongly decorated Newton polygons but different modular invariants in $\Z/\clicks(\bfcc)\Z$, where $\clicks(\bfcc)=2$.
	\end{example}

	\subsection{Proof of Theorem~\ref{thm:intro:move_eq}}\label{sec:proof:move_eq} 
	
	The $\Longrightarrow$ direction is clear, since both $\Ndec$ and $\minv$ are invariant under move-equivalence; see Sections~\ref{sec:intro:move_eq} and~\ref{sec:intro:minv}.
	
	For the $\Longleftarrow$ direction, using Lemma \ref{lemma:tcdtoper}, we assume that the triple-crossing diagram $D$ (resp., $D'$) associated to $\Gamma$ (resp., $\Gamma'$) is of the form $D(\daffper)$ (resp., $D(\daffper')$) for some double affine permutations $\daffper,\daffper'$. Let $(\pera,\perb)$ (resp., $(\pera',\perb')$) be the pair of affine permutations associated to $\daffper$ (resp., $\daffper'$). Let $\rotsig(\daffper)=\Lan \daffper \Lan^{-1}$ be the rotation operator, and let $\rotsig(\Gamma)$ be the bipartite graph associated to the triple-crossing diagram $D(\sigma(\daffper))$. Note that $\minv(\rotsig(\Gamma)) = \minv(\Gamma)$, but $\minv(\rotsig(\pera))=\minv(\pera)+1$ and $\minv(\rotsig(\perb))=\minv(\perb)-1$. Therefore, replacing $\Gamma$ with $\rotsig^{\minv(\pera')-\minv(\pera)}(\Gamma)$, we can assume that $\minv(\pera)=\minv(\pera')$. 
	
	We will show that there is an $r \in \Z$ such that $\rotsig^r (\pera ) \approx \pera'$ and $\rotsig^r (\perb ) \approx \perb'$. Since $\Ndec(\Gamma)=\Ndec(\Gamma')$ implies that $\VCDD_{\pera}= \VCDD_{\pera'}$ and $\VCDD_{\perb}= \VCDD_{\perb'}$, by~\cref{thm:c-equivalent}, it suffices to show that there is an $r \in \Z$ such that $\minv(\rotsig^r(\pera))=\minv(\pera')$ and $\minv(\rotsig^r(\perb))=\minv(\perb')$, or equivalently, such that $r \equiv 0\pmod{\clicks(\bfccpera)}$ and $r  \equiv \minv(\perb )-\minv(\perb' ) \pmod{\clicks(\bfccperb)}$. 
	Note that $\clicks(\bfcc)= \gcd(\clicks(\bfccpera),\clicks(\bfccperb))$ and $\minv(\Gamma)\equiv\minv(\pera )+\minv(\perb )\pmod{\clicks(\bfcc)}$. Since $\mu(\Gamma)=\mu(\Gamma')$ and $\minv(\pera)=\minv(\pera')$, we have $\minv(\perb)-\minv(\perb') \equiv 0 \pmod{\clicks(\bfcc)}$. The existence of such an $r$ follows from \cref{lemma:rexists}.

	\begin{lemma}\label{lemma:rexists}
		Let $d_1,d_2$ be positive integers, and let $d = \gcd(d_1,d_2)$. Then, there exists $r \in \Z$ such that $r \equiv 0\pmod{d_1}$ and $r\equiv d\pmod{d_2}$.
	\end{lemma}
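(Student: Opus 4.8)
The plan is to reduce this to B\'ezout's identity. Since $d=\gcd(d_1,d_2)$, there exist integers $x,y$ with $d_1x+d_2y=d$. I would then simply set $r:=d_1x$. On one hand $r$ is a multiple of $d_1$, so $r\equiv 0\pmod{d_1}$; on the other hand, rearranging the B\'ezout relation gives $r=d-d_2y$, whence $r\equiv d\pmod{d_2}$. This already proves the lemma, so there is essentially no difficulty to overcome.

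If one prefers a more structural phrasing, the same statement is an instance of the solvability criterion for simultaneous congruences (the Chinese Remainder Theorem with non-coprime moduli): the system $r\equiv a\pmod{d_1}$, $r\equiv b\pmod{d_2}$ admits a solution precisely when $a\equiv b\pmod{\gcd(d_1,d_2)}$, and in our case $a=0$, $b=d$ satisfy $0\equiv d\pmod d$. The B\'ezout argument above is just the proof of this criterion specialized to the case at hand, and it is the route I would take, being short and entirely self-contained.

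The only point requiring (minimal) care is the bookkeeping of the congruences: one must record that it is $r=d_1x$ rather than $r=d_2y$ that does the job, and that the residue one lands on modulo $d_2$ is $+d$ and not $-d$ — both of which are immediate from the substitution $r=d-d_2y$. Beyond that, the argument is routine.
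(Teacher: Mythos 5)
Your proof is correct and is exactly the paper's argument: take Bézout coefficients with $xd_1+yd_2=d$ and set $r:=xd_1$, so that $r\equiv 0\pmod{d_1}$ and $r=d-yd_2\equiv d\pmod{d_2}$. Nothing further is needed.
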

	\begin{proof}
		Let $x,y \in \Z$ be such that $x d_1 + y d_2 = d$. Take $r:=x d_1$.
	\end{proof}
	
	\appendix
	\section{From bipartite graphs to triple-crossing diagrams}\label{sec:bipartite-to-tcd}

	\begin{figure}
		\def\twd{0.18\textwidth}
		\begin{tabular}{ccccccc}
			\includegraphics[width=\twd]{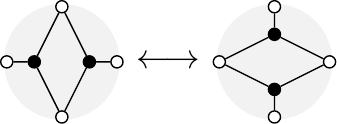}
			& \qquad &
			\includegraphics[width=\twd]{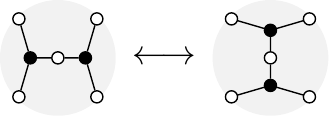}
			& \qquad &
			\includegraphics[width=\twd]{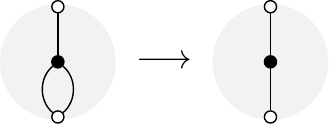}
			& \qquad &
			
			\includegraphics[width=\twd]{figures/move_R2}
			\\
			(M1b). &&(M2b\ithree) Resplit move.  & & (R1\ithree). & & (R2w\ithree).
		\end{tabular}
		\caption{\label{fig:bipartitetcdmoves} Moves and reductions on graphs that correspond to moves and reductions on triple-crossing diagrams. Under $\Gamma \mapsto D(\Gamma)$, (M1b) and (M2b\ithree) become (M1)$'$, and (R1\ithree) and (R2w\ithree) become (R1)$'$.
		}
	\end{figure}
	
	\begin{figure}
		\def\twd{0.18\textwidth}
		\begin{tabular}{ccccccc}
			\includegraphics[width=\twd]{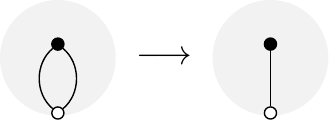}
			& \qquad &
			\includegraphics[width=\twd]{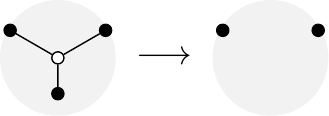}
			& \qquad &
			\includegraphics[width=\twd]{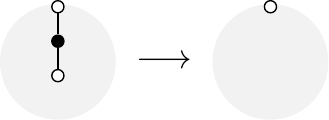}
			& \qquad &
			
			\includegraphics[width=\twd]{figures/move_Rdipole}
			\\
			(R1\itwo) White-based  && (R2b). & & (R2w\itwo). & & (R3).\\
			loop reduction.  &&  & & & & 
		\end{tabular}
		\caption{\label{fig:localbipspecial} Moves and reductions on graphs that are not compatible with triple-crossing diagrams. (R1\itwo), (R2b) and (R3) contain black leaves. Both sides of (R2w\itwo) correspond to the same triple-crossing diagram. 
		}
	\end{figure}
	
	The goal of this section is to give a relation (\cref{lem:parredmoveeq,lem:tcd=bipartite}) between bipartite graphs and triple-crossing diagrams; cf. \cref{rmk:translate2}. We will use these results to deduce \cref{thm:intro:move_red} from \cref{thm:tcdmove_red}. Unless otherwise stated, all graphs in this section are bipartite.

	Let (M1b) (resp., (M1w)) denote the version of (M1) with black (resp., white) trivalent interior vertices, and let (M2b) (resp., (M2w)) denote the version of (M2) 
	contracting/uncontracting black (resp., white) vertices.
	Note that (M1w) can be realized using (M1b) and (M2b) --- we uncontract all the black vertices using (M2b), apply (M1b) and then contract using (M2b).

	We say that $\Gamma$ is (M2w)-\textit{reduced} if contraction using (M2w) cannot be applied to $\Gamma$.
	
	\begin{lemma} \label{lem:m2reduced}
		Two (M2w)-reduced graphs $\Gamma$ and $\Gamma'$ are move-equivalent if and only if they are related by (M1b) and (M2b). An (M2w)-reduced graph is move-reduced if and only if it is not move-equivalent to an (M2w)-reduced graph to which one of \RRs can be applied. 
	\end{lemma}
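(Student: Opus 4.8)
The plan is to use the presentation of move-equivalence by the generators (M1b), (M2b), (M2w) and their inverses, which is legitimate because, as recorded just before the lemma, (M1w) is obtained by uncontracting all black vertices with (M2b), applying (M1b), and recontracting. For the first assertion it then suffices to prove that any chain of such elementary moves joining two (M2w)-reduced graphs can be rewritten so as to use only (M1b) and (M2b); the second assertion will follow from a short addendum.

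First I would build a canonical (M2w)-reduced form. An elementary (M2w)-contraction collapses a degree-two black vertex whose two neighbors are \emph{distinct} white vertices and merges those neighbors; it lowers the vertex count by two, so iterating it terminates. I then check local confluence: given two applicable elementary (M2w)-contractions, at black vertices $b$ and $b'$, their pairs of white neighbors overlap in $0$, $1$, or $2$ vertices, and in each case performing one contraction leaves the other applicable (possibly with a relabeled pair of endpoints) or turns it into a (M2w)-irreducible ``white loop'', the two outcomes being isomorphic. By Newman's lemma the (M2w)-reduced form $r(\Gamma)$ is well defined up to isomorphism, and $r(\Gamma)=\Gamma$ precisely when $\Gamma$ is (M2w)-reduced.

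The core step is to show that a single elementary move $\Gamma\to\Gamma''$ induces a (possibly empty) chain of (M1b), (M2b) moves from $r(\Gamma)$ to $r(\Gamma'')$. For a move of type (M2w)$^{\pm1}$ this is immediate, since $r(\Gamma)=r(\Gamma'')$ by uniqueness of the reduced form. For an (M1b) or (M2b) move, supported in a small disk, I would sort the contractions used to form $r(\Gamma)$ into those disjoint from the disk and from the vertices of the move---these commute with the move and can be carried out first---and the rest, which form ``threads'' of degree-two vertices attached to the move; the key observation is that such a thread collapses to a single vertex regardless of how its contractions are interleaved with the move, by the same confluence phenomenon as above applied to a path of alternately colored degree-two vertices. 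Comparing the two sides after all contractions are done identifies $r(\Gamma)\to r(\Gamma'')$ with a chain of (M1b), (M2b) moves. Telescoping over the original chain, with $r(\Gamma)=\Gamma$ and $r(\Gamma')=\Gamma'$ at the endpoints, gives the first assertion. I expect this interaction analysis---in particular tracking what an (M1b) or (M2b) move looks like after its adjacent threads are collapsed---to be the main obstacle, though it is a matter of case-checking rather than of real difficulty.

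For the second assertion, if an (M2w)-reduced $\Gamma$ is not move-reduced then it is move-equivalent to some $\Delta$ admitting one of (R1)--(R3). A direct inspection shows that each of (R1)--(R3) persists under an elementary (M2w)-contraction: the configuration it requires (two parallel edges; a leaf and its neighbor; an isolated edge) cannot contain the degree-two black vertex joining two distinct white vertices that is being collapsed, and a collapse never decreases the degree of any vertex, so it destroys none of these configurations. Hence (R1)--(R3) still applies to $r(\Delta)$, which is (M2w)-reduced and move-equivalent to $\Delta$, hence to $\Gamma$. Conversely, if $\Gamma$ is move-equivalent to an (M2w)-reduced graph to which one of (R1)--(R3) applies, then by definition $\Gamma$ is not move-reduced. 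This completes the second assertion.
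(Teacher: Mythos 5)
Your argument is correct and is essentially a systematic elaboration of the paper's own (much terser) proof: the paper simply observes by inspection that no move or reduction other than (M2b) involves an (M2w)-contractible degree-two black vertex, and that where (M2b) and (M2w) overlap the two contractions produce the same graph. Your confluence/normal-form formulation via $r(\Gamma)$, the descent of each generator to the reduced form, and the telescoping over a chain of elementary moves is exactly the bookkeeping needed to make that inspection precise, and your treatment of the second assertion (persistence of the configurations for (R1)--(R3) under (M2w)-contraction) matches the paper's parenthetical remark.
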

	\begin{proof}
		By inspection, we see that no move or reduction, except possibly (M2b), involves a degree-two black vertex that can be contracted using (M2w). (Moves (R1) and (R2) might have degree-two black vertices but they cannot be contracted using (M2w).) Applying (M2b) with a degree-two black vertex is the same as applying (M2w).
	\end{proof}
	
	For the rest of this section, we assume that our graphs are (M2w)-reduced. A \textit{white-based loop} in $\Gamma$ is a parallel edge in which the black vertex has degree two (see the left-hand side of (R1\itwo) in Figure \ref{fig:localbipspecial}). In (R1), if the black vertex has degree greater than three, then we can uncontract using (M2b) to make it degree three, and denote this case of (R1) by (R1\ithree). Otherwise, we have a white-based loop and we denote this case by (R1\itwo) (Figure \ref{fig:localbipspecial}).
	
	Let (R2b) (resp., (R2w)) denote black (resp., white) leaf removal. If the white leaf in (R2w) is incident to a black vertex of degree greater than three, we can uncontract the black vertex using (M2b) to get a white leaf incident to a black vertex of degree three and call this (R2w\ithree). If we have a black vertex of degree two, we call it (R2w\itwo) (Figure \ref{fig:localbipspecial}).

	Let $\Gamma$ be a graph in $\T$. Use (R2b), (R2w\itwo) and (R3) to remove all black leaves and white leaves incident to degree-two black vertices. Use (M2b) to uncontract black vertices with degree greater than three until every black vertex has degree either zero, two or three. We call such a graph \textit{partially reduced}. Remove isolated black vertices and omit all degree-two black vertices, converting the two incident edges into a single edge to get a plabic graph. Use the procedure in Figure \ref{fig:tcd=plabic} to obtain a triple-crossing diagram $D(\Gamma)$. Under this procedure, zig-zag paths in $\Gamma$ become strands of $D(\Gamma)$. The choices in applying (M2b) lead to move-equivalent triple-crossing diagrams. Let (M2b\ithree) denote the resplit move (Figure \ref{fig:bipartitetcdmoves}), which consists of two applications of (M2b).
	
	\begin{lemma} \label{lem:moveeqparred}
		Two partially reduced graphs $\Gamma$ and $\Gamma'$ are move-equivalent if and only if they are related by (M1b) and (M2b\ithree). A partially reduced graph $\Gamma$ is move-reduced if and only if it is not move-equivalent to a partially reduced graph $\Gamma'$ to which either (R1\itwo), (R1\ithree) or (R2w\ithree) can be applied.
	\end{lemma}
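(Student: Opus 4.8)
The plan is to bootstrap from \cref{lem:m2reduced}. The ``if'' directions of both assertions are immediate: a resplit move (M2b\ithree) is by definition a composition of two (M2b) moves, so (M1b)/(M2b\ithree)-equivalence implies move-equivalence; and (R1\itwo), (R1\ithree), (R2w\ithree) are special cases of the reduction moves (R1), (R2), so a partially reduced graph to which one of them applies cannot be move-reduced. It remains to prove the two converses.

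For the converse of the first assertion, let $\Gamma,\Gamma'$ be partially reduced and move-equivalent. By \cref{lem:m2reduced} they are joined by a finite sequence $\Gamma=\Gamma_0,\Gamma_1,\dots,\Gamma_N=\Gamma'$ in which consecutive graphs differ by (M1b) or (M2b), and I would rewrite this sequence so that it uses only (M1b) and (M2b\ithree). The key input is a local classification of (M2b): uncontracting a black vertex $b$ amounts to choosing a subset of the edges at $b$, reattaching them to a new black vertex $b'$, and inserting a new bivalent white vertex $w'$ so that $b-w'-b'$; when the chosen subset has size one this merely subdivides one edge of $b$ by a $w'$--$b'$ pair and does not change the associated plabic graph, when it is all of $b$'s edges it is the inverse of such a subdivision, and otherwise it splits $b$ into two black vertices of strictly smaller degree. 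Since a partially reduced graph has all black vertices of degree $0,2,3$, the only nontrivial splittings that can occur in the ``interior'' of the sequence are at trivalent vertices, and two consecutive splittings of complementary type -- contracting and then re-uncontracting around a common bivalent white vertex -- are exactly a resplit (M2b\ithree). So the plan is: commute the edge-subdividing (M2b) moves and their inverses past the (M1b) moves (they commute up to relabelling bivalent vertices and never obstruct a square move once the relevant vertex is trivalent), cancel the ones that undo each other, and group the remaining (M2b) moves into resplit pairs. The bookkeeping needed to make this rewriting terminate is the technical core, but it introduces no new ideas; see also \cref{rmk:translate2}.

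For the converse of the second assertion, suppose the partially reduced graph $\Gamma$ is not move-reduced, so $\Gamma$ is move-equivalent to some $\Gamma''$ admitting one of (R1), (R2), (R3). I would first observe that, since (M1b) and (M2b) introduce only vertices of degree $\ge2$ and are supported in small disks, a sequence of such moves starting from a leaf-free, isolated-edge-free graph stays leaf-free and isolated-edge-free (up to first undoing any gratuitous uncontractions of bivalent vertices); hence only (R1) can actually apply to $\Gamma''$. Thus $\Gamma''$ contains a parallel edge between a black vertex $b$ and a white vertex $w$. If $b$ has degree greater than three, uncontract using (M2b) to reach a partially reduced graph in which $b$ has degree three, so that (R1\ithree) applies; if $b$ already has degree three this is immediate; and if $b$ has degree two we are looking at a white-based loop, so (R1\itwo) applies. (In the degree-two case, removing one parallel edge produces a white leaf at a trivalent black vertex elsewhere only after an auxiliary uncontraction, which is the configuration for (R2w\ithree); the tracking argument records which of the three cases arises.) Combining this with the first part, $\Gamma$ is (M1b)/(M2b\ithree)-equivalent to a partially reduced graph admitting (R1\itwo), (R1\ithree), or (R2w\ithree), as required.

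I expect the main obstacle to be the rewriting step in the first converse: one must argue that whenever an (M2b) move temporarily leaves the class of partially reduced graphs -- creating a black vertex of degree $\ge 4$, or a bivalent vertex that does not subdivide an edge -- the later moves that eventually restore a partially reduced graph can be reorganized so that the net effect decomposes into (M1b) and (M2b\ithree) moves alone. Making this precise is a finite but careful analysis of how (M1b) and (M2b) commute and compose locally; everything else in the proof is routine once \cref{lem:m2reduced} is in hand.
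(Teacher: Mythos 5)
Your overall strategy is the same as the paper's: reduce to \cref{lem:m2reduced} and then argue that the (M2b) moves occurring between partially reduced graphs can be regrouped into resplits (M2b\ithree), while partial reducedness restricts which reduction configurations can occur. For the first assertion your rewriting argument is exactly what the paper does, and stated at a similar level of detail (the paper simply observes that no (M2b) move between partially reduced graphs involves a degree-two black vertex, and that an (M2b) move at a black vertex of degree greater than three decomposes into several applications of (M2b\ithree)); the ``bookkeeping'' you defer is also left implicit there.

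There is, however, a concrete error in your second converse. A partially reduced graph is \emph{not} leaf-free: by construction it has no black leaves and no white leaves incident to degree-two black vertices, but it may perfectly well have a white leaf incident to a degree-three black vertex --- and that is precisely the configuration (R2w\ithree), one of the three reductions appearing in the conclusion of the lemma. Your claim that ``only (R1) can actually apply to $\Gamma''$'' is therefore false, and the parenthetical in which you try to recover (R2w\ithree) as a by-product of removing a parallel edge at a degree-two black vertex does not describe how that case actually arises. The correct (and simpler) argument is the paper's: (M1b) preserves partial reducedness, and contracting/uncontracting via (M2b) does not change whether (R2b), (R2w\itwo) or (R3) can be applied; since a partially reduced graph admits none of these by definition, the only reductions that can ever be applied to a partially reduced graph in the move-equivalence class are (R1\itwo), (R1\ithree) and (R2w\ithree).
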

	\begin{proof}
		Since $\Gamma$ is partially reduced, no black vertices involved in (M2b) have degree two. Any applications of (M2b) involving black vertices of degree greater than three can be decomposed into multiple applications of (M2b\ithree). 
		
		Clearly, (M1b) preserves partial reducedness. Contracting/uncontracting using (M2b) does not change whether any of the moves (R2b), (R2w\itwo) or (R3) can be applied. Therefore, if $\Gamma$ is related to $\Gamma'$ using (M2b), then we can further apply (M2b) to make $\Gamma'$ partially reduced. The reductions (R1\itwo), (R1\ithree) and (R2w\ithree) are the only ones that can be applied to a partially reduced graph.
	\end{proof}
	
	Conversely, we obtain a graph $\Gamma(D)$ from a triple-crossing diagram $D$ as follows. Use the procedure in Figure \ref{fig:tcd=plabic} to obtain a plabic graph, contract any white-white edges incident to distinct white vertices and place a black vertex at the midpoint of each white-white edge to obtain a bipartite graph $\Gamma(D)$. The different choices in applying \figref{fig:tcd=plabic}(b) all lead to the same plabic graph when we contract any white-white edges incident to distinct white vertices. 
	
	Note that $\Gamma(D)$ is partially reduced and has no isolated black vertices. Therefore, $\Gamma \mapsto D(\Gamma)$ and $D \mapsto \Gamma(D)$ are inverse functions between partially reduced graphs without isolated black vertices and triple-crossing diagrams.
	
	\begin{lemma} \label{lem:parredmoveeq}
		The functions $\Gamma \mapsto D(\Gamma)$ and $D \mapsto \Gamma(D)$ between partially reduced graphs without isolated black vertices and triple-crossing diagrams respect move-equivalence.
	\end{lemma}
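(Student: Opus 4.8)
The plan is to reduce both directions to single generating moves and then to a finite local check of the conversion recipe of Figure~\ref{fig:tcd=plabic}. First I would note that, as established just above, $\Gamma\mapsto D(\Gamma)$ and $D\mapsto\Gamma(D)$ are mutually inverse bijections between partially reduced graphs without isolated black vertices and triple-crossing diagrams; hence it suffices to prove that $\Gamma\sim\Gamma'$ if and only if $D(\Gamma)\sim D(\Gamma')$, where $\sim$ denotes move-equivalence in the relevant category.

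For the forward implication, I would invoke \cref{lem:moveeqparred}: move-equivalence of partially reduced graphs is generated by (M1b) and (M2b\ithree), so it is enough to treat the case when $\Gamma'$ is obtained from $\Gamma$ by one such move. Since the move is supported in a small disk $U\subset\T$ and $\Gamma,\Gamma'$ coincide on $\T\setminus U$, the diagrams $D(\Gamma),D(\Gamma')$ coincide outside the image of $U$ up to an ambient isotopy of $\T$ (the recipe of Figure~\ref{fig:tcd=plabic} is local); inside $U$, the recipe carries the two sides of (M1b), and of (M2b\ithree), to the two sides of (M1)$'$, as recorded in Figure~\ref{fig:bipartitetcdmoves}. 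Hence $D(\Gamma)\sim D(\Gamma')$.

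For the reverse implication, by \cref{dfn:tcd_move_eq_red} move-equivalence of triple-crossing diagrams is generated by (M1)$'$, so I would assume $D'$ is obtained from $D$ by a single (M1)$'$, again supported in a disk $U$. Outside $U$, $\Gamma(D)$ and $\Gamma(D')$ agree by locality of the recipe; inside $U$, I would check that converting the two sides of (M1)$'$ via $\Gamma(\cdot)$ --- contracting white--white edges and inserting degree-two black vertices as prescribed --- produces two local graph pieces related by a sequence of (M1b) and (M2b\ithree) moves. This is a direct enumeration of the finitely many local pictures; the resplit move (M2b\ithree) enters precisely to absorb the ambiguity in how white vertices are arranged around a face in Figure~\ref{fig:tcd=plabic}(b). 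Since $\Gamma(D)$ never has isolated black vertices, this yields $\Gamma(D)\sim\Gamma(D')$ and completes the argument.

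The hard part will be the reverse implication: one must verify that the two triple-crossing configurations related by (M1)$'$, once each run through the conversion $\Gamma(\cdot)$, always land in graph pieces connected by (M1b) together with (M2b\ithree), and never require a reduction move or a move leaving the class of partially reduced graphs. Equivalently, this is the content that forces the resplit move to appear alongside the spider move among the generators in \cref{lem:moveeqparred}, and confirming it amounts to a careful bookkeeping of the local pictures of Figures~\ref{fig:tcd=plabic} and~\ref{fig:bipartitetcdmoves}.
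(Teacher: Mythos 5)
Your proposal is correct and follows the same route as the paper: the paper's proof is the one-line observation that under the correspondence (M1)$'$ becomes either (M1b) or (M2b\ithree) (as recorded in Figure~\ref{fig:bipartitetcdmoves}), so the result follows from \cref{lem:moveeqparred}. Your version merely spells out the locality and generator-by-generator reduction that the paper leaves implicit.
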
	
	\begin{proof}
		Under the correspondence, (M1)$'$ becomes either (M1b) or (M2b\ithree), so the result follows from~\cref{lem:moveeqparred}.
	\end{proof}
	
	\begin{figure}
		\def\twd{0.18\textwidth}
		\begin{tabular}{ccccccc}
			\includegraphics[width=\twd]{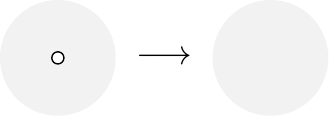}
			& \qquad &
			\includegraphics[width=\twd]{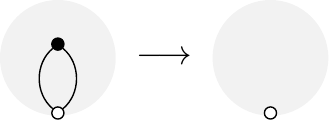}
			& \qquad &
			\includegraphics[width=\twd]{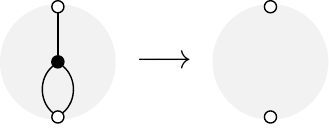}
			& \qquad &
			
			\includegraphics[width=\twd]{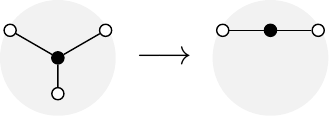}
			\\
			(a) &&(b)  && (c) && (d)
		\end{tabular}
		\caption{ Reductions on triple-crossing diagrams that do not correspond to reductions on graphs under $D \mapsto \Gamma(D)$. Here, (a) and (b) are the two versions of (R2)$'$, and (c) and (d) the two versions of (R1)$''$. In (d), we further contract the degree-two black vertex to get an (M2w)-reduced graph.
		}\label{fig:reductionstcdvsplabic} 
	\end{figure}

	\begin{remark}
		The functions $\Gamma \mapsto D(\Gamma)$ and $D \mapsto \Gamma(D)$ do not commute with reductions; see Figures \ref{fig:localbipspecial} and \ref{fig:reductionstcdvsplabic}. 
	\end{remark}

	\begin{lemma} \label{lem:tcd=bipartite}
		The function $\Gamma \mapsto D(\Gamma)$ is a bijection between move-equivalence classes of move-reduced graphs without isolated vertices and move-equivalence classes of move-reduced triple-crossing diagrams, with inverse $D \mapsto \Gamma(D)$.
	\end{lemma}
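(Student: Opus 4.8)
The plan is to deduce the lemma from \cref{lem:parredmoveeq}, which already supplies mutually inverse, move-equivalence-respecting maps $\Gamma\mapsto D(\Gamma)$ and $D\mapsto\Gamma(D)$ between partially reduced graphs without isolated black vertices and triple-crossing diagrams. Two points must be added: (a) every move-equivalence class of move-reduced graphs without isolated vertices contains a partially reduced representative with no isolated black vertices; and (b) under this correspondence, a partially reduced graph $\Gamma_0$ without isolated black vertices is move-reduced if and only if $D(\Gamma_0)$ is move-reduced. Granting (a) and (b), the two maps descend to mutually inverse bijections between the relevant sets of move-equivalence classes, which is exactly the assertion of the lemma.

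For (a): a move-reduced graph $\Gamma$ with no isolated vertices has no leaf vertex (else the leaf reduction among \RRs would already apply to $\Gamma$ itself), no isolated edge (else the dipole reduction would apply), and no isolated black vertex by hypothesis. Hence bringing $\Gamma$ to partially reduced form uses only the uncontraction moves (M2b) at black vertices of degree greater than three, so $\Gamma$ is move-equivalent to a partially reduced graph $\Gamma_0$ with no isolated black vertices, and $\Gamma_0$ is again move-reduced. Conversely, for any triple-crossing diagram $D$ the graph $\Gamma(D)$ is partially reduced with no isolated black vertices, and when $D$ is nonempty it also has no isolated white vertices (every white vertex is incident to a strand segment or a white--white loop); the empty diagram corresponds to the empty graph. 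Thus, modulo (b), $\Gamma\mapsto D(\Gamma)$ and $D\mapsto\Gamma(D)$ indeed restrict to inverse bijections on the classes in question.

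For (b) I would combine two characterizations. By \cref{lem:moveeqparred}, $\Gamma_0$ is move-reduced if and only if it is not move-equivalent to a partially reduced graph to which one of (R1\itwo), (R1\ithree), (R2w\ithree) applies; and by \cref{dfn:tcd_move_eq_red} together with the discussion in \cref{sec:affine_plabic_fence}, $D(\Gamma_0)$ is move-reduced if and only if it is not move-equivalent to a triple-crossing diagram to which (R1)$''$ or (R2)$'$ applies. The heart of the argument is a local matching of these reducible configurations: inspecting Figures~\ref{fig:bipartitetcdmoves}, \ref{fig:localbipspecial} and~\ref{fig:reductionstcdvsplabic}, one sees that, up to the auxiliary contractions and uncontractions relating the plabic, partially reduced, and (M2w)-reduced representatives, a partially reduced graph carries the source pattern of (R1\ithree) or (R2w\ithree) precisely when $D(\Gamma_0)$ carries the source pattern of (R1)$''$, and it carries the source pattern of (R1\itwo) precisely when $D(\Gamma_0)$ carries the source pattern of (R2)$'$. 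Since $\Gamma\mapsto D(\Gamma)$ and $D\mapsto\Gamma(D)$ respect move-equivalence (\cref{lem:parredmoveeq}), a move-equivalent witness to non-reducedness on one side transports to one on the other, and the two notions of move-reducedness coincide, giving (b).

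The main obstacle is exactly this matching step. As the remark preceding the lemma points out, $\Gamma\mapsto D(\Gamma)$ and $D\mapsto\Gamma(D)$ do not commute with individual reductions, so one cannot push a reduction directly across the correspondence; instead one must argue at the level of move-equivalence classes and carefully track how the bookkeeping (M2b) and (M2w) moves used to pass between the various normal forms interact with each local reducible pattern, checking that these moves neither create nor destroy such a pattern. Once that case analysis is in place, assembling (a) and (b) into the stated bijection of move-equivalence classes is routine.
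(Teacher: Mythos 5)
Your proposal follows essentially the same route as the paper's proof: pass to partially reduced representatives via (M2b), invoke \cref{lem:parredmoveeq} for the bijection on move-equivalence classes, and match the reducible local patterns (R1\itwo), (R1\ithree), (R2w\ithree) on the graph side against (R1)$'$/(R2)$'$ on the diagram side. The one point to tighten is your handling of isolated loops: the source of (R2)$'$ corresponds either to a white-based loop \emph{or} to an isolated white vertex, so your claim that $\Gamma(D)$ has no isolated white vertices for any nonempty $D$ is false in general (an isolated contractible loop in $D$ yields an isolated white vertex) and must instead be deduced from the move-reducedness of $D$, together with the observation that (M1)$'$ cannot create isolated loops — which is exactly how the paper closes this case.
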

	\begin{proof}
		If $\Gamma$ is move-reduced and has no isolated black vertices, then it can be transformed using (M2b) into a partially reduced graph. Therefore, move-equivalence classes of move-reduced graphs without isolated black vertices are in bijection with move-equivalence classes of move-reduced partially reduced graphs without isolated black vertices.  
		
		By~\cref{lem:parredmoveeq}, $\Gamma \mapsto D(\Gamma)$ is a bijection between move-equivalence classes of move-reduced graphs and some subset $T$ of move-equivalence classes of triple-crossing diagrams that we need to identify. 
		
		Let $\Gamma$ be move-reduced and partially reduced without isolated vertices. $\Gamma$ has no white-based loops; otherwise (R1\itwo) can be applied. Since isolated loops in $D:=D(\Gamma)$ correspond to isolated white vertices or white-based loops in $\Gamma$, $D$ contains no isolated loops. Since (M1)$'$ cannot create isolated loops, $D$ is move-reduced if and only if it is not move-equivalent to a $D'$ to which (R1)$'$ can be applied. Under the functions $\Gamma \mapsto D(\Gamma)$ and $D \mapsto \Gamma(D)$, (R1\ithree) and (R2w\ithree) become (R1)$'$ (Figure \ref{fig:bipartitetcdmoves}), so $D$ is move-reduced by~\cref{lem:moveeqparred}. Therefore, $T$ is contained in the set of move-equivalence classes of move-reduced triple-crossing diagrams. 
		
		Let $D$ be a move-reduced triple-crossing diagram, and let $\Gamma:=\Gamma(D)$. Since
		\begin{enumerate} 
			\item (R1\ithree) and (R2w\ithree) become (R1)$'$ (Figure \ref{fig:bipartitetcdmoves});
			
			\item An isolated white vertex becomes the left-hand side of (R2)$'$ (Figure \ref{fig:reductionstcdvsplabic}(a)); and
			\item The left-hand side of (R1\itwo) becomes the left-hand side of (R2)$'$ (Figure \ref{fig:reductionstcdvsplabic}(b)),
		\end{enumerate}  
		$\Gamma$ has no isolated white vertices, and $\Gamma$ is move-reduced by~\cref{lem:moveeqparred}. Therefore, $T$ contains the set of move-equivalence classes of move-reduced triple-crossing diagrams.
	\end{proof}

	The following lemma is used in the proof of Theorem~\ref{thm:intro:move_red}.
	
	\begin{lemma}\label{lem:affinelpabicfence}
		Let $\Gamma$ be a move-reduced graph without isolated vertices. Assume $N(\Gamma)$ is not a single point. The number of contractible faces of $\Gamma$ is equal to the number of degree-three black vertices of $\Gamma$.
	\end{lemma}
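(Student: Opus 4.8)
The plan is to reduce the identity to the case of an affine plabic fence, where it follows from a short count via Euler's formula, and to transport it back using move-invariance. We work with $\Gamma$ partially reduced, which is the situation in which this lemma is applied (uncontracting black vertices of degree $>3$ does not change the number of contractible faces). Then the triple-crossing diagram $D:=D(\Gamma)$ is defined, and by construction its triple crossings correspond bijectively to the degree-three black vertices of $\Gamma$; so it suffices to show that the number of contractible faces of $\Gamma$ equals the number of triple crossings of $D$. By \cref{lem:tcd=bipartite} the diagram $D$ is move-reduced, and since $N$ is not a single point, \cref{lemma:tcdtoper} produces an affine plabic fence $D(w)$ move-equivalent to $D$; the move-equivalences involved are instances of (M1)$'$, which preserves the number of triple crossings. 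Pulling this back through $D\mapsto\Gamma(D)$ and the dictionary of \cref{fig:bipartitetcdmoves}, $\Gamma$ is related to $\Gamma(w)$ by moves (M1b) and (M2b\ithree); each is supported in a disk, hence preserves the number of contractible faces, and each preserves the number of degree-three black vertices, since for a partially reduced graph that number equals the number of triple crossings of the associated diagram, which is unchanged under (M1)$'$. Thus it is enough to treat $\Gamma(w)$.

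For $\Gamma(w)$ I would argue by enumeration from \cref{fig:tcdgenerators}. Fix a reduced expression $w=s_{i_1}\cdots s_{i_l}\,\Lan^k\,s_{\overline{j_m}}\cdots s_{\overline{j_1}}$ and put $t:=l+m$. Each factor $s_{i_a}$ or $s_{\overline{j_b}}$ contributes exactly one black vertex of degree three and one white vertex to $\Gamma(w)$, the $\Lan$-columns contribute no vertices, and gluing consecutive columns neither creates nor identifies vertices; hence $\Gamma(w)$ has $t$ degree-three black vertices, $t$ white vertices, and no other vertices. So $V=2t$ with every vertex trivalent, giving $E=3t$. Since $N$ is not a single point, the fence fills the torus and every face of $\Gamma(w)$ is an open disk, so Euler's formula $0=\chi(\T)=V-E+F$ yields $F=t$, all faces contractible. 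This is exactly the number of degree-three black vertices of $\Gamma(w)$, as desired.

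The main obstacle is the invariance step for the count of degree-three black vertices: this count is \emph{not} preserved by arbitrary moves (uncontracting a black vertex of degree $\ge4$ via (M2b) changes it), so the argument must stay inside the partially reduced class and invoke \cref{lem:moveeqparred} (together with \cref{lem:parredmoveeq}) to know that (M1b) and (M2b\ithree) suffice to reach an affine plabic fence. A secondary point to handle carefully is that when $N$ is nondegenerate but ``thin'' --- in the extreme, a line segment --- the affine plabic fence still fills the torus, so that Euler's formula applies with all faces disks; this is visible from the explicit columns in \cref{fig:tcdgenerators} but should be spelled out. Finally, one should double-check the elementary bijection between degree-three black vertices of a partially reduced graph and triple crossings of its triple-crossing diagram, i.e. that omitting degree-two black vertices and blowing up white vertices introduces no triple crossings.
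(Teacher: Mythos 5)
Your overall strategy is the same as the paper's: restrict to partially reduced graphs, pass to the triple-crossing diagram, use \cref{lemma:tcdtoper} and \cref{lem:tcd=bipartite} to reach an affine plabic fence by moves that preserve both counts, and verify the identity on the fence. Your handling of the transport step is careful and correct (indeed more explicit than the paper's about which moves preserve the count of trivalent black vertices). The gap is in the base case. You assume that once $N$ is not a single point, every face of $\Gamma(\daffper)$ is an open disk, and you rely on this twice: to apply Euler's formula in the form $V-E+F=0$, and to conclude that all $F=t$ faces are contractible. Both uses fail, because move-reduced fences with nondegenerate $N$ routinely have annular faces. Concretely, take $n=3$ and $\daffper=s_1$ (so $\pera=\perb=s_1$ is c-reduced and $N$ is a horizontal segment of integer length $3$): the wire at height $3$ carries no vertex, the regions on either side of it are annuli, and $V-E+F=2-3+2=1\neq 0$; the lemma still holds (one contractible face, one trivalent black vertex), but your derivation does not produce it. More generally an annular face survives whenever two cyclically adjacent wires are never joined by a vertical edge, which happens systematically when $N$ is a segment --- a case \cref{thm:intro:move_red} must cover (compare the graphs in \cref{fig:twsq}, which have non-contractible faces). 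Your proposed patch, ``the fence still fills the torus, so Euler's formula applies with all faces disks,'' is precisely the false step.

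The paper's fence argument avoids this by inducting on the word: the fence for $\Lan^k$ has zero contractible faces and zero trivalent black vertices, and appending a generator $s_i$ or $s_{\ovl i}$ adds one trivalent black vertex and exactly one contractible face --- the new vertical edge either cuts an annular region into a disk or cuts a disk into two disks, and in either case the contractible count increases by one while annular faces elsewhere are left alone. Replacing your global Euler count with this local induction repairs the proof. A secondary inaccuracy, harmless for $V-E$ but worth noting: $\Gamma(\daffper)$ need not have exactly $t$ white vertices all of degree three, since consecutive white vertices on a wire are identified by white-white contraction, so the literal counts $V=2t$ and $E=3t$ are not right as stated.
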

	\begin{proof}
		If $\Gamma$ is the affine plabic fence associated to $\Lan^k$, then both numbers are zero. Each $s_i$ and $s_{\overline{i}}$ increases both numbers by one. Therefore, the result holds for affine plabic fences. If $\Gamma$ is move-reduced, then $\Gamma$ is move-equivalent to the bipartite graph associated with an affine plabic fence by~\cref{lemma:tcdtoper} and~\cref{lem:tcd=bipartite}, and move-equivalence does not change the number of contractible faces.
	\end{proof}

	\begin{proof}[Proof of Theorem~\ref{thm:intro:move_red}]
		If $\Gamma$ and $\Gamma'$ are related by (M2w) or (M2b), then each of the conditions \eqref{item:intro:move_red}$-$\eqref{item:intro:area} holds for $\Gamma$ if and only if it holds for $\Gamma'$. Each of \eqref{item:intro:move_red}$-$\eqref{item:intro:area} imply that $\Gamma$ is leafless. Therefore, we can assume that $\Gamma$ is partially reduced. Moreover, since $\Gamma$ has a perfect matching, $\Gamma$ has no isolated vertices.
		
		\eqref{item:intro:move_red} $\Longrightarrow$ \eqref{item:intro:area}: Since $\Gamma$ has a perfect matching, $N$ is not a single point by~\cref{thm:intro:vertical}. The implication follows from~\cref{lem:tcd=bipartite},~\cref{thm:tcdmove_red} and~\cref{lem:affinelpabicfence}.

		\eqref{item:intro:area} $\Longrightarrow$ \eqref{item:intro:move_red}:  Suppose $N$ is a single point. Then, $2\Area(N)+\excess{\bfla}=0$ so $\Gamma$ has no contractible faces. Since \MMs cannot create leaves or contractible faces, none of the reductions \RRs can be applied to any graph move-equivalent to $\Gamma$, so $\Gamma$ is move-reduced. 

		Assume $N$ is not a single point. By~\cref{lem:moveeqparred}, $\Gamma$ is not move-reduced if and only if it is move-equivalent to a partially reduced $\Gamma''$ to which either (R1\itwo), (R1\ithree) or (R2w\ithree) can be applied. Since $\Gamma$ has no leaves and \MMs cannot create leaves, either (R1\itwo) or (R1\ithree) can be applied to $\Gamma'$. Let $D:=D(\Gamma)$ be the associated triple-crossing diagram. Then, either (R1)$''$ or (R2)$'$ can be applied to $D'$. We decrease the number of contractible faces in $\Gamma$ when we apply either reduction (see Figure \ref{fig:reductionstcdvsplabic} (b) and (c)). Transform $D'$ into a move-reduced $D''$ by further using (M1)$'$, (R1)$''$ and (R2)$'$. The graph $\Gamma''=\Gamma(D'')$ has strictly fewer contractible faces than $\Gamma$, no isolated vertices, and satisfies $\Nwdec(\Gamma'')=\Nwdec(\Gamma)$. Since $N$ is not a single point, $\Gamma''$ has $2\Area(N)+\excess{\bfla}$ contractible faces by~\cref{lem:tcd=bipartite},~\cref{thm:tcdmove_red} and~\cref{lem:affinelpabicfence}, a contradiction.
	\end{proof}

	\bibliographystyle{alpha}
	\bibliography{biblio}

\end{document}